\theoremstyle{plain}
\newtheorem{thm}{Theorem}[section]
\newtheorem{lem}[thm]{Lemma}
\newtheorem{cor}[thm]{Corollary}
\newtheorem{prop}[thm]{Proposition}
\theoremstyle{definition}
\newtheorem{defn}[thm]{Definition}
\newtheorem{rem}[thm]{Remark}
\newtheorem{ex}[thm]{Example}
\numberwithin{equation}{section}
\title[Functional calculus and complex perturbations]{Functional 
calculus of Dirac operators and complex perturbations of Neumann and Dirichlet problems}
\author{Pascal Auscher} \author{Andreas Axelsson} \author{Steve Hofmann}
\address{Pascal Auscher, Universit\'e de Paris-Sud, UMR du CNRS 8628, 91405 Orsay Cedex, France}
\email{pascal.auscher@math.u-psud.fr}
\address{Andreas Axelsson, Matematiska institutionen, Stockholms universitet, 106 91 Stockholm, Sweden}
\email{andax@math.su.se}
\address{Steve Hofmann, Mathematics Department, University of Missouri, Columbia 65211, USA}
\email{hofmann@math.missouri.edu}
\mathchardef\semic="303B
\newcommand{\Mcc}{{M\raise.55ex\hbox{\lowercase{c}}}}
\newcommand{\dirac}{{\mathbf D}}
\newcommand{\wedg}{\mathbin{\scriptstyle{\wedge}}}
\newcommand{\lctr}{\mathbin{\lrcorner}}
\newcommand{\R}{{\mathbf R}}
\newcommand{\C}{{\mathbf C}}
\newcommand{\Z}{{\mathbf Z}}
\newcommand{\mH}{{\mathcal H}}
\newcommand{\mK}{{\mathcal K}}
\newcommand{\mX}{{\mathcal X}}
\newcommand{\mY}{{\mathcal Y}}
\newcommand{\mL}{{\mathcal L}}
\newcommand{\mF}{{\mathcal F}}
\newcommand{\mP}{{\mathcal P}}
\newcommand{\V}{{\mathcal V}}
\DeclareMathOperator{\re}{Re}
\newcommand{\sett}[2]{ \{ #1 \, \semic \, #2 \} }
\newcommand{\dual}[2]{\langle #1,#2 \rangle}
\newcommand{\brac}[1]{\langle #1 \rangle}
\newcommand{\supp}{\text{{\rm supp}}\,}
\newcommand{\dist}{\text{{\rm dist}}\,}
\newcommand{\nul}{\textsf{N}}
\newcommand{\ran}{\textsf{R}}
\newcommand{\dom}{\textsf{D}}
\newcommand{\graf}{\textsf{G}}
\newcommand{\clos}[1]{\overline{#1}}
\newcommand{\conj}[1]{\overline{#1}}
\newcommand{\dyadic}{\triangle}
\newcommand{\sgn}{\text{{\rm sgn}}}
\newcommand{\barint}{\mbox{$ave \int$}}
\newcommand{\divv}{{\text{{\rm div}}}}
\newcommand{\curl}{{\text{{\rm curl}}}}
\newcommand{\dd}[2]{\frac{\partial #1}{\partial #2}}
\newcommand{\tdd}[2]{\tfrac{\partial #1}{\partial #2}}
\newcommand{\ud}{\underline{d}}
\newcommand{\hut}[1]{\check #1}
\newcommand{\PP}{{\mathbf P}}
\newcommand{\tb}[1]{\| \hspace{-0.42mm} | #1 \| \hspace{-0.42mm} |}
\newcommand{\tbb}[1]
{\left\| \hspace{-0.42mm} \left| #1 \right\| \hspace{-0.42mm} \right|}
\newcommand{\wt}{\widetilde}
\newcommand{\ta}{{\scriptscriptstyle \parallel}}
\newcommand{\no}{{\scriptscriptstyle\perp}}
\newcommand{\pd}{\partial}
\newcommand{\ep}{{\mathcal E}}
\newcommand{\op}{\text{op}}
\newcommand{\off}{\text{off}}
\newcommand{\hB}{{\hat B}}
\newcommand{\hA}{{\hat A}}
\newcommand{\m}{m}
\def\barint_#1{\mathchoice
            {\mathop{\vrule width 6pt
height 3 pt depth -2.5pt
                    \kern -8.8pt
\intop}\nolimits_{#1}}%
            {\mathop{\vrule width 5pt height
3 pt depth -2.6pt
                    \kern -6.5pt
\intop}\nolimits_{#1}}%
            {\mathop{\vrule width 5pt height
3 pt depth -2.6pt
                    \kern -6pt
\intop}\nolimits_{#1}}%
            {\mathop{\vrule width 5pt height
3 pt depth -2.6pt
          \kern -6pt \intop}\nolimits_{#1}}}
\begin{document}

\begin{abstract}
We prove that the Neumann, Dirichlet and regularity problems for divergence
form elliptic equations in the half space are well posed in $L_2$
for small complex $L_\infty$ perturbations of a coefficient matrix
which is either real symmetric, of block form or constant.
All matrices are assumed to be independent of the transversal 
coordinate.
We solve the Neumann, Dirichlet and regularity problems through a new
boundary operator method which makes use of operators in the functional
calculus of an underlaying first order Dirac type
operator. We establish quadratic estimates for this Dirac operator, which implies
that the associated Hardy projection operators are bounded and depend 
continuously on the coefficient matrix.
We also prove that certain transmission problems for
$k$-forms are well posed for small perturbations of 
block matrices.
\end{abstract}

\maketitle

\tableofcontents

%
%
%
\section{Introduction}   \label{section1}

In this paper we prove that the Neumann, Dirichlet and regularity problems 
are well posed
in $L_2(\R^n)$ for divergence form second order elliptic equations
\begin{equation}  \label{eq:divform}
  \divv_{t,x} A(x) \nabla_{t,x} U(t,x) =0
\end{equation}
on the half space $\R^{n+1}_+ := \sett{(t,x)\in\R\times \R^n}{t>0}$, $n\ge 1$,
when $A$ is a small complex $L_\infty$ perturbation of either a block matrix, 
a constant matrix or a real symmetric matrix.
Furthermore, the matrix $A=(a_{ij}(x))_{i,j=0}^n\in L_\infty(\R^n;\mL(\C^{n+1}))$ 
is assumed to be $t$-independent with complex coefficients and accretive,
with quantitative bounds $\|A\|_\infty$ and $\kappa_A$, where 
$\kappa_A>0$ is the largest constant such that
$$
  \re (A(x)v,v) \ge \kappa_A |v|^2,
  \qquad \text{for all } v\in\C^{n+1}, 
  \,\, x\in\R^n.
$$
We shall approach the equation (\ref{eq:divform}) from a first order point of view,
rewriting it as the first order system
\begin{align} \label{eq:Laplacein1order}
\begin{cases}   
  \divv_{t,x} A(x) F(t,x)  =0, \\
  \curl_{t,x} F(t,x) =0,
\end{cases}
\end{align}
where $F(t,x)=\nabla_{t,x}U(t,x)$. 
Recall that a vector field $F=F_0e_0+F_1e_1+\ldots+F_ne_n$ can be written in this way as a gradient 
if and only if $\curl_{t,x} F=0$, by which we understand that 
$\pd_jF_i= \pd_i F_j$, for all $i$, $j=0,\ldots,n$.
We write $\{e_0,e_1,\ldots,e_n\}$ for the standard basis for $\R^{n+1}$ with 
$e_0$ upward pointing into $\R^{n+1}_+$, and write $t=x_0$ for the vertical
coordinate. For the vertical derivative, we write $\partial_0 = \partial_t$.
Denote also by $F_\ta:= F_1e_1+\ldots+F_ne_n$, the tangential part of $F$, and
write $\curl_{x} F_\ta=0$ if 
$\pd_jF_i= \pd_i F_j$, for all $i$, $j=1,\ldots,n$.

In the formulation of the boundary value problems below, we assume that 
$A=A(x)$ is a given coefficient matrix with properties as above.
Furthermore, by saying that $F_t(x)=F(t,x)$ satisfies (\ref{eq:Laplacein1order})
we shall mean that $F_t\in C^1(\R_+; L_2(\R^n;\C^{n+1}))$ and that for each 
fixed $t>0$, we have 
$\divv_x(AF)_\ta= - (A(x)\pd_0 F)_0$, 
$\nabla_x F_0 = \pd_0 F_\ta$ and $\curl_x F_\ta=0$,
where the derivatives on the left hand sides are taken in the sense of distributions.
If this holds for $F$, then in particular we can write $F=\nabla_{t,x}U$, with 
$U\in W^1_{2,\text{loc}}(\R^{n+1}_+)$, and we see that $U$ satisfies 
(\ref{eq:divform}) in the sense that 
$$
  \iint_{\R^{n+1}_+}  \big(A(x)\nabla_{t,x}U(t,x),\nabla_{t,x}\varphi(t,x)\big)\, dtdx=0,\qquad
  \text{for all } \varphi\in C^\infty_0(\R^{n+1}_+).
$$
\vspace{2mm}
\noindent THE NEUMANN PROBLEM (Neu-$A$).

Given a function $\phi(x)\in L_2(\R^n;\C)$,
find a vector field 
$F_t(x) =F(t,x)$ in $\R^{n+1}_+$ 
such that $F_t\in C^1(\R_+; L_2(\R^n;\C^{n+1}))$ and
$F$ satisfies (\ref{eq:Laplacein1order}) for $t>0$,
and furthermore $\lim_{t\rightarrow \infty}F_t=0$ and $\lim_{t\rightarrow 0}F_t=f$
in $L_2$ norm,
where the conormal part of $f$ satisfies the boundary condition
$$
  e_0\cdot(A f) = \sum_{j=0}^n A_{0j} f_j= \phi, \qquad \text{on }\R^n =\partial \R^{n+1}_+.
$$
For $U$, this means that the conormal derivative
$\tdd U{\nu_A}(0,x)=\phi(x)$ in $L_2(\R^n)$.

\vspace{2mm}
\noindent THE REGULARITY PROBLEM (Reg-$A$).

Given a function $\psi:\R^n\rightarrow\C$ with tangential
gradient $\nabla_x \psi\in L_2(\R^n;\C^n)$,
find a vector field 
$F_t(x) =F(t,x)$ in $\R^{n+1}_+$ 
such that $F_t\in C^1(\R_+; L_2(\R^n;\C^{n+1}))$ and
$F$ satisfies (\ref{eq:Laplacein1order}) for $t>0$,
and furthermore $\lim_{t\rightarrow \infty}F_t=0$ and $\lim_{t\rightarrow 0}F_t=f$
in $L_2$ norm, 
where the tangential part of $f$ satisfies the boundary condition
$$
  f_\ta= f_1 e_1+\ldots + f_n e_n = \nabla_x\psi, 
  \qquad \text{on }\R^n =\partial \R^{n+1}_+.
$$
For $U$, this means that $\nabla_x U(x,0)=\nabla_x\psi(x)$, i.e. $U(x,0)=\psi(x)$ in $\dot W^1_2(\R^n)$.

\vspace{2mm}
\noindent THE DIRICHLET PROBLEM (Dir-$A$).

Given a function $u(x)\in L_2(\R^n;\C)$,
find a function
$U_t(x) =U(t,x)$ in $\R^{n+1}_+$ 
such that $U_t\in C^2(\R_+; L_2(\R^n;\C))$, $\nabla_{t,x} U_t\in C^1(\R_+; L_2(\R^n;\C^{n+1}))$ and
$\nabla_{t,x} U_t$ satisfies (\ref{eq:Laplacein1order}) for $t>0$,
and furthermore $\lim_{t\rightarrow \infty}U_t=0$, $\lim_{t\rightarrow \infty}\nabla_{t,x} U_t=0$ 
and $\lim_{t\rightarrow 0}U_t=u$
in $L_2$ norm.

\vspace{2mm}
We shall also use first order methods based on (\ref{eq:Laplacein1order}) to solve the Dirichlet problem.
However, here we use a different relation between (\ref{eq:divform}) and (\ref{eq:Laplacein1order}) 
which we now describe.
Assume $F(t,x)=\sum_{k=0}^n F_i(t,x) e_i$ is a vector field satisfying (\ref{eq:Laplacein1order}).
Applying $\pd_t$ to the first equation and using that $\curl_{t,x} F =0$ yields
$$
  0= \pd_t(\divv_{t,x}A(x)F)= \divv_{t,x} A(x)(\pd_t F)= \divv_{t,x} A(x)(\nabla_{t,x} F_0),
$$
since the coefficients are assumed to be $t$-independent.
Thus the normal component $U:= F_0$ satisfies (\ref{eq:divform}).
Note that when $A=I$, the functions $F_1,\ldots , F_n$ are conjugates to $U$ in the sense of
Stein and Weiss~\cite{SW}.
From this we see that solvability of (Dir-$A$) is a direct consequence of solvability of the 
following auxiliary Neumann problem.

\vspace{2mm}
\noindent THE NEUMANN PROBLEM (Neu$^\perp$-$A$).

Given a function $\phi(x)\in L_2(\R^n;\C)$,
find a vector field 
$F_t(x) =F(t,x)$ in $\R^{n+1}_+$ 
such that $F_t\in C^1(\R_+; L_2(\R^n;\C^{n+1}))$ and
$F$ satisfies (\ref{eq:Laplacein1order}) for $t>0$,
and furthermore $\lim_{t\rightarrow \infty}F_t=0$ and $\lim_{t\rightarrow 0}F_t=f$
in $L_2$ norm,
where the normal part of $f$ is $e_0\cdot f=f_0=\phi$.

\vspace{2mm}
The main result of this paper is the following $L_\infty$ perturbation result 
for the boundary value problems.
\begin{thm}  \label{thm:main}
Let $A_0(x)= ((a_0)_{ij}(x))_{i,j=0}^n$ be a $t$-independent, complex,
accretive coefficient matrix function.
Furthermore assume that $A_0$ has one of the following extra properties.
\begin{itemize}
\item[{\rm (b)}]
  $A_0$ is a block matrix, i.e. $(a_0)_{0i}(x) = (a_0)_{i0}(x)=0$ 
  for all $1\le i \le n$ and all $x\in \R^n$.
\item[{\rm (c)}]
  $A_0$ is a constant coefficient matrix, i.e. $A_0(x) =A_0(y)$ 
  for all $x$, $y\in \R^n$.
\item[{\rm (s)}]
  $A_0$ is a real symmetric matrix, i.e. 
  $(a_0)_{ij}(x) = (a_0)_{ji}(x)\in \R$ 
  for all $0\le i,j \le n$ and all $x\in \R^n$.
\end{itemize}
Then there exists $\epsilon>0$ depending only on $\|A_0\|_\infty$, the
accretivity constant $\kappa_{A_0}$ and the dimension $n$, such that if 
$A\in L_\infty(\R^n;\mL(\C^{n+1}))$ is $t$-independent and satisfies 
$\|A-A_0\|_{\infty} <\epsilon$, then
the Neumann and regularity problems (Neu-$A$), (Neu$^\perp$-$A$) and (Reg-$A$) above
have a unique solution $F(t,x)$ with the required properties
for every boundary function $g(x)$, being $\phi(x)$ and $\nabla_x\psi(x)$ respectively.
Furthermore the Dirichlet problem (Dir-$A$) above
has a unique solution $U(t,x)$ with the required properties
for every boundary function $u(x)$.

The solutions depend continuously on the data with the following equivalences of norms.
If we define the triple bar norm $\tb{G_t}^2:= \int_0^\infty \|G_t\|_2^2\, t^{-1}dt$ and 
the non-tangential maximal function
$$
  \widetilde N_*(F)(x):= \sup_{t>0} \left(
\barint_{\hspace{-6pt} |s-t|<c_0t} \barint_{\hspace{-6pt} |y-x|<c_1t}
  |F(s,y)|^2 ds\,dy \right)^{1/2},
$$
where $\barint_{\hspace{-3pt} E}:= |E|^{-1}\int_E$ and $c_0\in(0,1)$, $c_1>0$ are constants, 
then for the Neumann and regularity problems we have
$$
  \|g\|_2\approx \|f\|_2\approx \sup_{t>0} \|F_t\|_2
  \approx \tb{t\pd_t F_t}
  \approx \|\widetilde N_* (F)\|_2,
$$
and for the Dirichlet problem we have
$$
  \|u\|_2\approx \sup_{t>0} \|U_t\|_2
  \approx \tb{t\pd_t U_t}
  \approx \tb{t\nabla_x U_t}
  \approx \|\widetilde N_* (U)\|_2.
$$
Moreover, the solution operators $S_A$, being $S_A(g)= F$ or $S_A(u)=U$ respectively, 
depend Lipschitz continuously on $A$, i.e. there exists $C<\infty$ such that 
$$
  \|S_{A_2}-S_{A_1}\|_{L_2(\R^n)\rightarrow \mX}\le C \|A_2-A_1\|_{L_\infty(\R^n)}
$$
when $\|A_i-A_0\|_{\infty} <\epsilon$, $i=1,2$, where $\|F\|_\mX$ or 
$\|U\|_\mX$ denotes any of the norms above.
\end{thm}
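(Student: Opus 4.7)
The plan is to treat all three boundary value problems simultaneously through a first-order reformulation. Rewrite \eqref{eq:Laplacein1order} as the evolution equation
\[
  \pd_t F_t + DB F_t = 0,
\]
where $D$ is a self-adjoint, $t$-independent Dirac-type operator on $L_2(\R^n;\C^{n+1})$ encoding $\divv_x$ on the conormal row and $\curl_x$ on the tangential block, and $B$ is a bounded multiplication operator derived from $A$ via a block pivot that isolates the conormal component of $AF$. Then $B$ is accretive on $\clos{\ran(D)}$, and the first step is to establish the quadratic estimate
\[
  \int_0^\infty \|tDB(1+t^2(DB)^2)^{-1}u\|_2^2\,\tfrac{dt}{t} \approx \|u\|_2^2, \qquad u\in\clos{\ran(DB)}.
\]
This yields a bounded $H^\infty$ functional calculus for $DB$ on $\clos{\ran(DB)}$, and hence the Hardy splitting $L_2 = E^+_B L_2 \oplus E^-_B L_2 \oplus \nul(DB)$ via the spectral projections $E^\pm_B := \chi^\pm(DB)$ onto the right/left half-plane subspaces.

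Once the functional calculus is in hand, every $L_2$ solution of \eqref{eq:Laplacein1order} decaying as $t\to\infty$ arises uniquely as $F_t = e^{-tDB}f$ with $f\in E^+_B L_2$. The claimed equivalences among $\|f\|_2$, $\sup_t\|F_t\|_2$, $\tb{t\pd_t F_t}$ and $\|\widetilde N_*(F)\|_2$ then follow from standard off-diagonal estimates, semigroup bounds and the quadratic estimate. Each BVP is thereby reduced to inverting on $E^+_B L_2$ a single bounded coordinate map extracting the prescribed boundary datum: $f\mapsto e_0\cdot(Af)$ for (Neu-$A$), $f\mapsto f_0$ for (Neu$^\perp$-$A$) (which yields (Dir-$A$) via $U=F_0$), and $f\mapsto f_\ta$ for (Reg-$A$). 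Because each map is a fixed coordinate projection composed with $E^+_B$, and because $A\mapsto E^+_B$ is Lipschitz continuous in the $L_\infty$ topology (by perturbation of the quadratic estimate and holomorphy of the functional calculus), the set of $A$ for which each BVP is well posed is open, and the claimed Lipschitz bound on $S_A$ follows automatically.

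The main obstacle is proving invertibility of these three boundary maps at the base matrix $A_0$. For case (b), the block structure decouples $DB$ into two pieces and invertibility reduces to the Kato square root identity applied to the tangential second-order operator, so the Hardy projection can be identified explicitly with a bounded function of $\sqrt{-\divv_x A_\ta \nabla_x}$. For case (c), partial Fourier analysis in the tangential variables turns $DB$ into a matrix-valued Fourier multiplier whose Hardy decomposition can be computed symbolwise from the characteristic polynomial. For case (s), a Rellich-type identity adapted to the first-order Dirac setting provides two-sided bounds between the conormal, normal and tangential components of any $f\in E^+_B L_2$, which force the relevant boundary maps to be isomorphisms. Combining these three cornerstones with the perturbation argument above yields Theorem~\ref{thm:main}.
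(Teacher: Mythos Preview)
Your $DB$ framework is a valid alternative formulation, and the high-level strategy (quadratic estimate $\Rightarrow$ Hardy splitting $\Rightarrow$ invertibility of a boundary projection at $A_0$ $\Rightarrow$ perturb) matches the paper's. The paper instead embeds the problem into $L_2(\R^n;\wedge_\C\R^{n+1})$ via an operator $T_B$ with $B=I\oplus A\oplus I\oplus\cdots\oplus I$, precisely because multiplication operators (needed for off-diagonal bounds and for the perturbation identity) do not preserve the curl-free subspace $\hat\mH^1\subset L_2(\R^n;\C^{n+1})$ on which the first-order system naturally lives; see the closing remarks of Section~\ref{section1}. Your $DB$ on all of $L_2(\R^n;\C^{n+1})$ sidesteps that obstruction in a different way, and either route can be made to work.

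There is, however, a genuine gap: you treat the quadratic estimate and its $L_\infty$-stability as routine, but this is the technical heart of the paper. Bounded $H^\infty$ calculus is \emph{not} stable under bounded perturbations of $B$ in general --- the resolvent difference picks up an unbounded factor $tD$ --- and the paper devotes all of Section~\ref{section8} to a bootstrap built on the identity (\ref{eq:good1})--(\ref{eq:good6}) together with $T(b)$-type Carleson estimates, in order to prove that the set of $B$ with quadratic estimates is open. Without a comparable argument (or an explicit reduction of $DB$ to AKMc form giving quadratic estimates for \emph{all} accretive $B$ at once, which you do not invoke), your phrase ``perturbation of the quadratic estimate'' is an assertion, not a proof. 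A related gap appears in case (s): the Rellich identity does give two-sided comparability of the boundary components of any $f\in E_{B_0}^+\hat\mH^1$, hence injectivity with closed range of the boundary maps (surjectivity then via the duality $T_B'=-T_{B^*}$ of Proposition~\ref{prop:duality}), but it does \emph{not} supply the quadratic estimate needed to define $E_{B_0}^+$ in the first place. For that the paper (Lemma~\ref{lem:realsymhat1}) invokes the Dahlberg--Jerison--Kenig square-function estimate for the scalar component $F^{1,0}$ solving $\divv A_0\nabla F^{1,0}=0$, combined with Rellich and the Hardy-space reduction of Proposition~\ref{prop:Hardyreduction}; that ingredient is missing from your outline.
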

Throughout this paper, we use the notation $X \approx Y$ and $X \lesssim Y$
to mean that there exists a constant $C>0$ so that $ X/C \le Y \le CX$ and 
$X\le C Y$, respectively. The value of $C$ varies from one usage
to the next, but then is always fixed.

Let us now review the history of works on these boundary value problems,
starting with the case of matrices of the form $\tilde A$
which we now describe.
By standard arguments, Theorem~\ref{thm:main}
also shows well posedness of the corresponding boundary value problems on 
the region $\Omega$ above a Lipschitz graph
$\Sigma= \sett{(t,x)}{t= g(x)}$, where $g:\R^n\rightarrow \R$
is a Lipschitz function.
Indeed, if the function $U(t,x)$ satisfies 
$\divv_{t,x} A(x) \nabla_{t,x} U=0$ in $\Omega$ then $\wt U(t,x):= U(t+g(x),x)$
satisfies $\divv_{t,x} \tilde A(x) \nabla_{t,x}\wt U=0$ in $\R^{n+1}_+$, 
where
$$
  \tilde A(x) := 
     \begin{bmatrix}
       1 & -(\nabla_x g(x))^t \\
       0 & I
     \end{bmatrix}
     A(x)
     \begin{bmatrix}
       1 & 0 \\
       -\nabla_x g(x) & I
     \end{bmatrix}.
$$
Thus Theorem~\ref{thm:main} gives conditions on $A$ 
for which the Neumann problem (conormal derivative 
$e_0\cdot \tilde A\nabla_{t,x}\wt U= (e_0-\nabla_x g)\cdot A\nabla_{t,x}U$ 
given), the regularity problem (tangential gradient 
$\nabla_x\wt U= (\partial_t U)\nabla_x g+ \nabla_x U$ given),
and the Dirichlet problem ($\wt U=U$ given) are well posed.
Note that $A$ is real symmetric if and only if $\tilde A$ is,
but that $\tilde A$ being constant or of block form does not imply the
same for $A$.
For the Laplace equation $A=I$ in $\Omega$, solvability of 
(Neu-$\tilde I$) and (Reg-$\tilde I$) was first proved by
Jerison and Kenig~\cite{JK2}, and solvability of (Dir-$\tilde I$) was first
proved by Dahlberg~\cite{D}. Later Verchota~\cite{V} showed 
that these boundary value problems are solvable with the layer potential
integral equation method.

For general real symmetric matrices $A$, not being of the ''Jacobian type'' $\tilde I$
above, the well posedness of (Dir-$A$) was first proved by 
Jerison and Kenig~\cite{JK1}, and (Neu-$A$) and (Reg-$A$)
by Kenig and Pipher~\cite{KP}.
These results make use of the Rellich estimate technique. For the Neumann
and regularity problems, this integration by parts technique yields an
equivalence
\begin{equation}  \label{eq:rellichkato}
  \big\|\tdd U{\nu_A}\big\|_{L_2(\R^n)} \approx \|\nabla_x U \|_{L_2(\R^n)},
\end{equation}
which is seen to be equivalent with the first estimate $\|f\|\approx \|g\|$ 
in the theorem above, and shows that the boundary trace $f$ splits into
two parts of comparable size.

Turning to the unperturbed case where $A=A_0$ satisfies (b), then (\ref{eq:rellichkato}) 
is still valid, but the proof is far
deeper than Rellich estimates. In fact, it is equivalent with 
the Kato square root estimate proved by Auscher, Hofmann, Lacey, \Mcc Intosh and Tchamitchian
in \cite{AHLMcT}.
(For the non divergence form case $a_{00}\ne 1$, see \cite{AKMc}.)
For details concerning this equivalence between the Kato problem
and the boundedness and invertibility of the Dirichlet-to-Neumann map
$\nabla_x U\mapsto \dd U\nu$ we refer to 
Kenig~\cite[Remark 2.5.6]{kenig}, where also many further references in the field
can be found.

We now consider what is previously known in the case when $A$ does not satisfy
(b), (c) or (s).
Here (Dir-$A$) has been showed to be well posed by Fabes, Jerison and Kenig
~\cite{FJK} for small perturbations of (c), using the method of multilinear 
expansions.
More recently, the boundary value problems have been studied in the $L_p$
setting and for real but non-symmetric matrices in the plane, i.e. $n=1$. 
Here Kenig, Koch, Pipher and Toro~\cite{KKPT}
have obtained solvability of the Dirichlet problem for sufficiently large $p$,
and Kenig and Rule~\cite{KR} have shown solvability of the Neumann and regularity 
problems for sufficiently small dual exponent $p'$.

In the perturbed case $A \approx A_0$ when $A_0$ satisfies 
(c) or (s), the well posedness of (Neu-$A$), (Reg-$A$) and
(Dir-$A$) is also proved in \cite{AAAHK} by Alfonseca, Auscher, Axelsson, 
Hofmann and Kim.
With the further assumption of pointwise resolvent kernel bounds,
perturbation of case (b) is also implicit in \cite{AAAHK}.  
It is worth comparing the present methods to those of \cite{AAAHK}.  
In \cite{AAAHK}, due to the presence of kernel bounds, 
the solvability of the boundary value problems is meant 
in the sense of non-tangential maximal estimates at the boundary 
and this follows from the use of layer potentials.   
The first main result in \cite{AAAHK} in the unperturbed case (s), 
is the proof via singular integral operator theory of boundedness 
and invertibility of layer potentials. 
The second main result in \cite{AAAHK} is the stability of the 
simultaneous occurrence of both boundedness and invertibility, 
which hold in the unperturbed cases (c), (s) and (b). 
Solvability then follows.  

Here, we setup a different resolution algorithm 
(forcing us to introduce some substantial material), 
which consists in solving the first order system (\ref{eq:Laplacein1order}) instead of 
(\ref{eq:divform}), also by a boundary operator method, but acting on the 
gradient of solutions involving a generalised Cauchy operator $E_A$, 
the goal being to establish boundedness of $E_A$ and invertibility
of related operators $E_A\pm N_A$. 
Boundedness of $E_A=\sgn(T_A)$ is obtained via quadratic estimates of an underlaying 
first order differential operator $T_A$, and the deep fact 
is here that those quadratic estimates alone are stable under 
perturbations. Stability of invertibility is then easy.
The perturbation argument requires sophisticated harmonic analysis techniques inspired 
by the strategy of \cite{AAAHK}. In particular, the latter uses extensively the technology 
of the solution of the Kato problem for second order operators in \cite{AHLMcT}, whereas 
we utilise here the work of Axelsson, Keith and \Mcc Intosh~\cite{AKMc}, which adapts 
and extends this technology to first order operators of Dirac type. Indeed, we note 
that our Dirac type operators $T_A$ are of the form $\Pi_A$ of \cite{AKMc} in the case 
of block matrices (b) of Theorem~\ref{thm:main}. But $T_A$ has a more complicated 
structure when $A$ is not a block matrix and we understand how to prove boundedness 
of $E_A$ at the moment only in the cases specified by Theorem~\ref{thm:main}. We also 
note that the present paper, like \cite{AKMc}, makes no use of kernel bounds and only 
needs $L_2$ off-diagonal bounds for the operators, which always holds.

The boundary operator method for first order Dirac type operators, used here to solve 
second order boundary value problems, was developed in the thesis of 
Axelsson~\cite{Ax}, which has been published as the four papers \cite{Ax1, Ax2, AMc, Ax3}. 
It covers operators on Lipschitz domains as described above and in Example~\ref{ex:diraclip}.
The result in \cite{AKMc} pursued the program initiated  by
Auscher, \Mcc Intosh and Nahmod in \cite{AMcN}, consisting of connecting the Kato problem 
and the functional calculus of first order differential operators of Dirac type. 
As said, it thus applies to the boundary value problems for operators of case (b). 
What is new here is the setup for full matrices encompassing the above. 
We prove also a sort of meta-theorem (see Theorem~\ref{thm:mainpert}) which roughly 
says that the set of matrices for which the needed quadratic estimates on $T_A$ hold, is open.


We also show that non-tangential maximal estimates hold for our solutions. 
By uniqueness in the class of  
solutions of (\ref{eq:divform}) with non-tangential maximal estimates,
this implies that our solutions are the same as those in \cite{AAAHK} 
for perturbations of the real symmetric and constant cases.
The non-tangential maximal estimate here also yields an  
indirect proof of non-tangential limits of solutions of (\ref{eq:Laplacein1order}) 
which hold for the solutions of (\ref{eq:divform}) in \cite{AAAHK}. 
We do not know how to prove this fact directly in the framework of 
this article.
Note also that we prove here that the non-tangential maximal functions have 
comparable $L_2$-norms for different values of the parameters $c_0$ and $c_1$,
and that the slightly different non-tangential maximal function used in
\cite{AAAHK} therefore has comparable norm.

Before turning to the method of proof for Theorem~\ref{thm:main}, we would like
to stress the importance of the final result that the solution operators 
$g\mapsto F$ and $u\mapsto U$ depend Lipschitz continuously on $L_\infty$ changes
of the matrix $A$ around $A_0$. This is an important motivation for considering complex
$A$, as the authors do not know any proof of this perturbation result which does not
make use of boundedness of the operators in a complex neighbourhood of $A_0$.
We also remark that we in fact prove that $A\mapsto S_A$ is holomorphic, 
from which we deduce Lipschitz continuity as a corollary.

In this paper, we shall mainly focus on the boundary value problems (Neu-$A$) and (Reg-$A$). 
The reason is that, assuming the Cauchy operator $E_A$ is bounded, we prove
in section~\ref{section2.6} that well posedness follows as 
$$
  \text{(Reg-$A^*$)} \quad\Longleftrightarrow\quad \text{(Neu$^\perp$-$A$)} \quad\Longrightarrow\quad \text{(Dir-$A$)}.
$$
That (Reg-$A^*$) implies (Dir-$A$) has been proved by Kenig and Pipher~\cite[Theorem 5.4]{KP} in the case of 
real matrices $A$.

\subsection{Operators and vector fields}   \label{section1.1}

We now explain the basic ideas of the method we use for
the proof of Theorem~\ref{thm:main}. 
The appropriate Hilbert space on the boundary $\R^n$ is
$$
  \hat\mH^1 := \sett{f\in L_2(\R^n;\C^{n+1})}{\curl_x(f_\ta)=0}.
$$
The condition on $f$ means that its tangential part is curl free.
Indeed, the trace $f(x)$ of a vector field $F(t,x)$ 
solving (\ref{eq:Laplacein1order}) belongs to $\hat\mH^1$ 
due to the second equation in (\ref{eq:Laplacein1order})
The basic picture, building on ideas from \cite{Ax1}, is that the Hilbert space 
splits into two different pairs of complementary subspaces as
\begin{equation}  \label{eq:thetwosplittings}
  \hat\mH^1 = E^+_A \hat\mH^1 \oplus E^-_A \hat\mH^1 = N^+_A \hat\mH^1 \oplus N^-_A \hat\mH^1.
\end{equation}
We first discuss the splitting into the Hardy type subspaces $E^\pm_A \hat\mH^1$,
consisting of $L_2$ boundary traces of vector fields $F^\pm$ solving 
(\ref{eq:Laplacein1order}) in $\R^{n+1}_\pm$ respectively.
Our main work in this paper is to establish boundedness of the projection
operators $E_A^\pm$ for certain $A$.
These projections can be written $E_A^\pm= \tfrac 12(I\pm E_A)$, where
$E_A$ for simple $A$ is a singular integral operator of Cauchy type.
However, in the general case $E_A$ may fail to be a singular integral operator. 
To handle the projections $E_A^\pm$ we make use of functional calculus of closed Hilbert space operators,
and show that $E_A^\pm = \chi_\pm (T_A)$ are the spectral projections of 
an underlaying bisectorial operator $T_A$ in $\hat\mH^1$.
The functions $\chi_\pm(z)$ are the characteristic functions for the
right and left complex half planes.
To find $T_A$, assume $F(t,x)$ satisfies (\ref{eq:Laplacein1order}) in $\R^{n+1}_+$ and 
solve for the vertical derivative
\begin{align*}
  \partial_t F_0 &= -a_{00}^{-1}\Big( \sum_{i=1}^n a_{0i}\partial_i F_0
  +\partial_i(AF)_i \Big), \\
  \partial_t F_i &= \partial_i F_0, \qquad i=1,\ldots,n.
\end{align*}
The right hand side defines an operator 
$-T_A$ in $\hat\mH^1$ which on $F(t,x)$, for fixed $t>0$, satisfies
$$
  \pd_t F + T_A F =0.
$$
Concretely, if we identify $f=f_0 e_0 + f_\ta$ with $(f_0, f_\ta)^t$, 
where $f_\ta$ is a tangential curl free vector field, then
\begin{equation}   \label{eq:blockTA}
     T_A f=
     \begin{bmatrix}
        A_{00}^{-1}((A_{0\ta},\nabla_x)+\divv_x A_{\ta 0} ) &
        A_{00}^{-1}\divv_x A_{\ta\ta} \\
        -\nabla_x & 0
     \end{bmatrix}
     \begin{bmatrix}
       f_0 \\ f_\ta
     \end{bmatrix},
\end{equation}
where
$\dom(T_A)=\sett{f=(f_0,f_\ta)^t\in\hat\mH^1}{\nabla_x f_0\in L_2, \divv_x(Af)_\ta\in L_2}$
and
$A= \begin{bmatrix}
       A_{00} & A_{0\ta} \\ A_{\ta 0} & A_{\ta\ta}
     \end{bmatrix}
$.
If $F(t,x)$ is a vector field in $\R_+^{n+1}$ satisfying (\ref{eq:Laplacein1order}),
then using this operator $T_A$, we can reproduce $F$ provided we know the full trace $f=F|_{\R^n}$,
through a Cauchy type reproducing formula
$F(t,x)=(e^{-t |T_A|}f)(x)$.
However, in (Neu-$A$) and (Reg-$A$) only ''half'' of the trace $f$ is known since
the boundary conditions for $f$ are $e_0\cdot Af=\phi$ and 
$e_0\wedg f= e_0\wedg \nabla \psi$ respectively.

We now turn to the second splitting in (\ref{eq:thetwosplittings}), which 
is used to split the boundary trace $f$ into the regularity and Neumann data.
We define the $A$-tangential and normal subspaces of $\hat\mH^1$ to be the 
null spaces of these two operators.
\begin{align*}
  N_A^+\hat\mH^1 &:= \sett{f\in\hat\mH^1}{e_0\cdot Af = 0} \\
  N_A^-\hat\mH^1 &:= \sett{f\in\hat\mH^1}{e_0 \wedg f = 0}
\end{align*}
In contrast with the Hardy subspaces, it is straightforward to show that
we have a topological splitting $\hat\mH^1 = N^+_A \hat\mH^1 \oplus N^-_A \hat\mH^1$,
and therefore that the corresponding pair of projections $N^\pm_A$ are
bounded. 
We can now reformulate (Neu-$A$) and (Reg-$A$) as follows.
The Neumann problem (Neu-$A$) being well posed means
that the restricted projection
$$
  N^-_A : E^+_A \hat\mH^1 \longrightarrow N^-_A\hat\mH^1
$$
is an isomorphism, since $N^-_A\hat\mH^1$ is a complement of the null space of
$e_0\cdot A(\cdot)$.
Similarly the regularity problem (Reg-$A$) being well posed means
that the restricted projection
$$
  N^+_A : E^+_A \hat\mH^1 \longrightarrow N^+_A\hat\mH^1
$$
is an isomorphism, since $N^+_A\hat\mH^1$ is a complement of the null space of
$e_0\wedg(\cdot)$. 
Note that what is important here is which subspace $N_A^\pm$ projects along, not
what subspace they project onto.

We shall also find it convenient to use the operators
$E_A:= E_A^+-E_A^-= \sgn(T_A)$ and $N_A:= N_A^+-N_A^-$. 
These are reflection operators, i.e. $E_A^2=I$ and $N_A^2=I$, and 
we have $E_A^\pm= \tfrac 12(I\pm E_A)$ and $N_A^\pm = \tfrac 12(I\pm N_A)$.
\begin{ex}  \label{ex:r2plus}
Let $n=1$ and $A=I$. 
Then the space $\hat\mH^1$ is simply $L_2(\R;\C^2)$ and the fundamental operator
$T_A$ becomes
$$
  T := T_I=
    \begin{bmatrix}
       0 & \tfrac{d}{dx} \\
       -\tfrac{d}{dx} & 0
     \end{bmatrix}
     \approx
     \begin{bmatrix}
       0 & i\xi \\
       -i\xi & 0
     \end{bmatrix},
$$
if $\approx$ denotes conjugation with Fourier transform.
Furthermore
$$
  E :=E_I=\sgn(T)=
    \begin{bmatrix}
       0 & i H \\
       - iH & 0
     \end{bmatrix},
     \qquad
     N:=N_I= 
     \begin{bmatrix}
       -1 & 0 \\
       0 & 1
     \end{bmatrix},
$$
where $Hf(x) := \tfrac i\pi\text{p.v.}\int\frac{f(y)}{x-y} dy$.
Note that the operator $E$ is contained in the Borel functional calculus of the
self-adjoint operator $T$, and it follows that $\|E\|=1$.
On the other hand the operator $N$ is outside the Borel functional 
calculus of $T$.
Indeed, the operators $b(T)$ in the Borel functional calculus of $T$ all commute,
but we have the anticommutation relation 
\begin{equation}  \label{perfecttransv}
  EN+ NE =0.
\end{equation}
A consequence of this equation is that the boundary value problems 
(Neu-$I$) and (Reg-$I$) are well posed, or equivalently that 
$N^-: E^+L_2\rightarrow N^-L_2$ and $N^+: E^+L_2\rightarrow N^+L_2$
are isomorphisms.
Consider for example (Reg-$I$) and assume we want to
solve $N^+ f=g$ for $f\in E^+ L_2$.
Applying $4E^+$ to the equation gives
$$
  4E^+g= (I+E)(I+N)f= f+Ef+Nf+ENf= f+f+Nf-Nf=2f,
$$
so $f= 2E^+g$ and it follows that $N^+ : E^+ L_2 \rightarrow N^+ L_2$
is an isomorphism.
Having solved for $f\in E^+L_2$, we can find the solution $F(t,x)= C^+_t f(x)$ 
in $\R^2_+$ by using
the Cauchy extension $C_t^+ := e^{-t|T|}E^+$ for $t>0$. 
As a convolution operator, the Fourier multiplier $C^+_t$ has the expression
\begin{multline*}
  C_t^+(u_0e_0+u_1e_1) = 
  \left( \frac 1{2\pi}\int_\R\frac{t u_0(y)-(x-y)u_1(y)}{t^2+(x-y)^2}dy  \right)e_0 \\
   +\left( \frac 1{2\pi}\int_\R\frac{(x-y) u_0(y)+tu_1(y)}{t^2+(x-y)^2}dy  \right)e_1,
\end{multline*}
and in particular
$$
  F(t,x)= C^+_tf(x) = 
  2 C_t^+ (g_1 e_1) = \frac 1{\pi}\int_\R\frac{(-(x-y)e_0+te_1)g_1(y)}{t^2+(x-y)^2}dy. 
$$
\end{ex}
\vspace{5mm}
For a more general $A$, even if $A$ is real symmetric, the operator $T_A$ is not self-adjoint
and proving boundedness of $E_A^\pm$ is a highly non trivial problem.
Also the equation (\ref{perfecttransv}) fails for general $A$, and therefore such explicit formulae 
for the solution $F(t,x)$ as in Example~\ref{ex:r2plus} are not available. 
However, to show well posedness of (Neu-$A$) and (Reg-$A$) it suffices to show
that $I\pm\tfrac 12(E_AN_A+N_AE_A)=\tfrac 12(E_A\pm N_A)^2$ are invertible, as
explained in \cite{Ax1}.
To summarise, in order to solve (Neu-$A$) and (Reg-$A$) we need
\begin{itemize}
\item[{\rm (i)}]
that the Hardy projections $E_A^\pm$ are bounded, so that we have a topological
splitting $\hat\mH^1=E_A^+\hat\mH^1 \oplus E_A^-\hat\mH^1$, and
\item[{\rm (ii)}]
that the restricted projections
$N_A^+: E_A^+\hat\mH^1 \rightarrow N_A^+\hat\mH^1$ and
$N_A^-: E_A^+\hat\mH^1 \rightarrow N_A^-\hat\mH^1$ are isomorphisms.
\end{itemize}
To prove Theorem~\ref{thm:main}, we shall prove the following.
\begin{itemize}
\item[{\rm (i')}]
That $T_A$ satisfies quadratic estimates for all $A$ such that $\|A-A_0\|_\infty<\epsilon$.
From this it will follow that $E_A^\pm$ are bounded for all such $A$ and 
that $A\mapsto E_A^\pm$ is continuous (in fact holomorphic).
\item[{\rm (ii')}]
That (ii) holds for the unperturbed $A_0$. From (i') and since clearly $N_A^\pm$ are bounded and
depend continuously on $A$, it then follows from a continuity argument that (ii) holds for
all $A$ in a neighbourhood of $A_0$.
\end{itemize}
We emphasise that boundedness of the Hardy projections $E_A^\pm$ alone does not show
that (Neu-$A$) and (Reg-$A$) are well posed. 
In our framework, the boundary value problems being well posed means that the Hardy space
$E_A^+\hat\mH^1$, which exists as a closed subspace when the Hardy projections are bounded,
is transversal to the $A$-tangential and normal subspaces $N^\pm_A\hat\mH^1$.
A concrete example showing that (Neu-$A$) and (Reg-$A$) may fail to be well posed, even though
$E_A$ is bounded, is furnished by the matrix
$$
A(x)=
\begin{bmatrix}
  1 & k\, \sgn(x) \\
  -k\, \sgn(x) & 1
\end{bmatrix},
$$
with parameter $k\in \R$.
The corresponding elliptic equation (\ref{eq:divform}) in $\R_+^2$ was studied by Kenig, Koch, Pipher
and Toro in \cite{KKPT}, where they showed that (Dir-$A$) fails to be well posed
for certain values of $k$.
Moreover, that (Neu-$A$) and (Reg-$A$) also fails 
for some $k$, is shown by Kenig and Rule~\cite{KR}.
On the other hand, $\|E_A\|=1$ for all $k\in\R$ since according to (\ref{eq:blockTA})
$$
  T_A=
\begin{bmatrix}
  k(\sgn(x) \,\tfrac{d}{dx}-\tfrac{d}{dx}\, \sgn(x)) & \tfrac{d}{dx} \\
  -\tfrac{d}{dx} & 0
\end{bmatrix}
$$
is self-adjoint, and therefore $E_A^\pm$ are orthogonal projections.
\subsection{Embedding in a Dirac equation}   \label{section:Diracintro}

Unfortunately there is a technical problem in applying harmonic analysis to the operator $T_A$
in order to prove (i'):
the space $\hat\mH^1$ is defined through the non local condition $\curl_x(f_\ta)=0$.
This prevents us from using multiplication operators, for example when localising with a
cutoff $f\mapsto \eta f$, as these does not preserve $\hat\mH^1$.
To avoid this problem we embed $\hat\mH^1\subset\mH := L_2(\R^n; \wedge_\C\R^{n+1})$, where
$$
  \wedge_\C\R^{n+1}= \wedge^0\oplus \wedge^1\oplus \wedge^2\oplus\ldots\oplus\wedge^{n+1}
$$
is the full complex exterior algebra of $\R^{n+1}$, which in particular contains the
vectors $\wedge^1= \C^{n+1}$ and the scalars $\wedge^0=\C$ along with all $k$-vectors $\wedge^k$.
(We identify $k$-vectors with the dual $k$-forms in euclidean space.)
In this way we obtain closure, i.e. all operators, including multiplication operators
preserve $\mH$.
Furthermore we embed the equation (\ref{eq:Laplacein1order}) in a Dirac type equation
\begin{equation}  \label{eq:diractype}
  \big( d_{t,x} + \wt B(x)^{-1} d_{t,x}^* B(x) \big) F(t,x) =0, 
\end{equation}
where $B$, $\wt B\in L_\infty(\R^n; \mL(\wedge_\C \R^{n+1}))$ are accretive and
$B|_{\wedge^1}=A$.
Here $d_{t,x}$ and $d_{t,x}^*$ are the exterior and interior derivative operators,
as defined in section~\ref{section2}. 
In particular, if $F(t,x):\R^{n+1}_+ \rightarrow \wedge^1$ is a vector field 
satisfying (\ref{eq:Laplacein1order}) then
\begin{align*}
  d_{t,x}^* (BF) = -\divv_{t,x} (AF) &= 0, \\
  d_{t,x} F = \curl_{t,x} F &= 0.
\end{align*}
Thus (\ref{eq:diractype}) follows from (\ref{eq:Laplacein1order}) for any choice
of the auxiliary function $\wt B$.
In the same way as in Section~\ref{section1.1} we shall solve for the vertical derivative in
(\ref{eq:diractype}) and obtain an operator $T_B$ acting in $\mH$, such that
$T_A= T_B|_{\hat\mH^1}$.
For applications to the Neumann and regularity problems it suffices to consider 
$B= I\oplus A\oplus I\oplus\ldots\oplus I$.
The stability result for the operator $T_A$ we prove in this paper is the following.
\begin{thm}  \label{thm:mainpert}
Let $A_0$ be a $t$-independent, complex,
accretive coefficient matrix function such that $T_{B_0}$
has quadratic estimates in $\mH$, where
$B_0= I\oplus A_0\oplus I\oplus\ldots\oplus I$.
Then there exists $\epsilon_0>0$ depending only on 
$\|A_0\|_\infty$, $\kappa_{A_0}$ and $n$, such that if 
$A\in L_\infty(\R^n;\mL(\C^{n+1}))$ is $t$-independent and satisfies 
$\|A-A_0\|_{\infty} <\epsilon_0$, then $T_B$ has quadratic estimates
in $\mH$, where $B= I\oplus A\oplus I\oplus\ldots\oplus I$.
In particular $T_A$ has quadratic estimates in $\hat\mH^1$.

Thus $\hat\mH^1$ splits into Hardy subspaces, the spectral subspaces of $T_A$,
i.e. each $f\in \hat\mH^1$ can be 
uniquely written $f= f^++ f^-$, where $f^\pm = F^\pm|_{\R^n}$ and
$F^\pm(t,x)= e^{\mp t|T_A|}E^\pm_A f(x)$ satisfies (\ref{eq:Laplacein1order})
in $\R^{n+1}_\pm$. Moreover, we have equivalence of norms
$$
\|f\|_2\approx \|f^+\|_2+\|f^-\|_2
$$ 
and
$\|f^\pm \|_2\approx \sup_{t>0} \|F^\pm_{\pm t}\|_2
  \approx \tb{t\partial_t F^\pm_{\pm t} }
  \approx \|\widetilde N_* (F^\pm)\|_2$.
\end{thm}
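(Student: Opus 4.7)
The plan is to establish quadratic estimates for $T_B$ on $\mH$ by a perturbation argument building on the framework of Axelsson, Keith and \Mcc Intosh \cite{AKMc}. Once these are in place, the bisectorial operator $T_B$ admits a bounded holomorphic functional calculus, which gives the spectral projections $E_B^\pm = \chi_\pm(T_B)$ and the splitting $\mH = E_B^+\mH \oplus E_B^-\mH$. The subspace $\hat\mH^1 \subset \mH$ is invariant under the resolvents $(I+i\tau T_B)^{-1}$ because the curl-free condition defining $\hat\mH^1$ is preserved by the evolution generated by $T_B$, so the Hardy splitting of $\hat\mH^1$ and the representation $F^\pm(t,x) = e^{\mp t|T_A|}E_A^\pm f(x)$ follow by restriction. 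The norm equivalences then come from quadratic estimates combined with standard analytic semigroup arguments and $L_2$ off-diagonal decay, together with a Fefferman--Stein-type comparison between square functions and non-tangential maximal functions.

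The central technical task is the stability of quadratic estimates under small $L_\infty$ perturbations of $B$. First, I would express $T_B$ in a form parallel to the operator $\Pi_B$ of \cite{AKMc}: up to conjugation by multiplication operators depending on $B$, $T_B$ has the shape $\Gamma + M_B \Gamma^* M_B^{-1}$ (or a closely related expression), where $\Gamma$ is a nilpotent first-order differential operator built from $d_{t,x}$ in the exterior algebra and $M_B$ is a pointwise multiplication depending linearly on $B$. Ellipticity together with accretivity of $B$ yields uniform resolvent bounds $\|(I+i\tau T_B)^{-1}\| \lesssim 1$ and $L_2$ off-diagonal decay of $(I+i\tau T_B)^{-1}$ by a Caccioppoli-type argument, uniformly for $B$ in a small $L_\infty$-neighbourhood of $B_0$.

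Next, following \cite{AKMc}, I would reduce the target estimate $\int_0^\infty \|tT_B(I+t^2T_B^2)^{-1}u\|_2^2\, dt/t \lesssim \|u\|_2^2$ to a Carleson measure estimate via the principal part approximation: write $tT_B(I+t^2T_B^2)^{-1} = \Theta_t^B P_t + R_t$, where $P_t$ is a dyadic averaging operator, $\Theta_t^B$ is the principal part obtained by letting $tT_B(I+t^2T_B^2)^{-1}$ act on constants, and $R_t$ is an error controlled by the off-diagonal bounds. The main contribution reduces to proving that $|\gamma_t(x)|^2\, dx\, dt/t$ is a Carleson measure, where $\gamma_t(x) := \Theta_t^B(\mathbf{1})(x)$ is matrix-valued. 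I would establish this by a $T(b)$-type argument with test functions $b_Q^w := (I + i\ell(Q)T_{B_0})^{-1}(w\mathbf{1}_{5Q})$, indexed over a finite family of unit vectors $w$ covering the sphere. The hypothesis that $T_{B_0}$ satisfies quadratic estimates gives the $L_2$ bound on the $b_Q^w$, and accretivity of $B_0$ supplies the nondegeneracy $\re \int_Q b_Q^w \cdot w\, dx \gtrsim |Q|$.

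The main obstacle, and the point where the smallness $\|A-A_0\|_\infty < \epsilon_0$ becomes essential, is the comparison between $\Theta_t^B$ and $\Theta_t^{B_0}$: the error involves $(B-B_0)$ composed with $tT_{B_0}$-type operators and must be absorbed by interpolating between the $L_\infty$ smallness and the quadratic estimates already available for $B_0$, together with a stopping time decomposition in the spirit of \cite{AKMc, AHLMcT}. Since the resolvents $(I+i\tau T_B)^{-1}$ depend holomorphically on $B$, once the Carleson bound is secured the whole construction is holomorphic in $B$ in a small ball around $B_0$, which additionally yields the Lipschitz dependence of the Hardy projections on $B$ used in Theorem~\ref{thm:main}.
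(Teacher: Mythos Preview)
Your outline captures the block-matrix case (Section~\ref{section8.1}) but misses the essential new difficulty for general $A_0$, which is the actual content of Theorem~\ref{thm:mainpert}. You write that ``up to conjugation by multiplication operators depending on $B$, $T_B$ has the shape $\Gamma + M_B\Gamma^* M_B^{-1}$''. This is the crux: it is \emph{only} true when $B$ is a block matrix (Lemma~\ref{lem:blockasswapping}). For a full matrix $A_0$, the factor $M_{B_0}^{-1}=(N^+-B_0^{-1}N^-B_0)^{-1}$ does not commute with $\Pi_{B_0}=\ud+\ud^*_{B_0}$, so $T_{B_0}$ is \emph{not} of the \cite{AKMc} form $\Gamma+B^{-1}\Gamma^*B$, and the principal-part/$T(b)$ machinery you sketch does not apply directly to $T_B$ or $T_{B_0}$.

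The paper's actual route is therefore different in structure. First, the perturbation is organised by the six-term resolvent identity (\ref{eq:good1})--(\ref{eq:good6}) for $Q_t^B-Q_t^{B_0}$, which reduces the problem (Lemma~\ref{lem:pertests}) to two estimates of the form $\tb{\wt Q_t\,\ep\,(t\ud Q_t^{B_0})}_\op$ and $\tb{\wt Q_t\,\ep\,(t\ud^*_{B_0} Q_t^{B_0})}_\op$ involving an \emph{arbitrary} family $\wt Q_t$ with off-diagonal bounds; the smallness of $\ep$ then allows one to close an a~priori inequality for $\tb{Q_t^B}_\op$. Second --- and this is the idea your proposal lacks --- these estimates for non-block $B_0$ (Theorem~\ref{thm:pertestsforrealsymm}) are obtained by comparing $T_{B_0}$ with an auxiliary \emph{block} operator $T_{\hB_0}$ via Lemma~\ref{lem:wedge1formulae}, so that the \cite{AKMc}-type Carleson/$T(b)$ argument is run on $T_{\hB_0}$, not on $T_{B_0}$. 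The comparison terms all contain the normal vector component $F^{1,0}_t$, and the reason they can be controlled is Lemma~\ref{lem:LaplaceF_0^1}: $F^{1,0}$ solves the second-order divergence-form equation $\divv_{t,x}A_0\nabla_{t,x}F^{1,0}=0$, which via \v{S}ne\u{\i}berg stability gives the $L_p\!\to\! L_q$ off-diagonal bounds of Lemma~\ref{lem:H_t} and hence the non-tangential maximal control needed in Corollary~\ref{cor:harmanalestforsymm}. None of these ingredients --- the six-term identity, the block approximation $\hB_0$, or the extra regularity of $F^{1,0}$ --- appears in your plan, and without them the ``comparison between $\Theta_t^B$ and $\Theta_t^{B_0}$'' that you describe cannot be carried out for non-block $A_0$.
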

With a more general choice 
$$
  B=B^0\oplus B^1\oplus B^2\oplus\ldots\oplus B^{n+1},
$$
where $B^k \in L_\infty(\R^n;\mL(\wedge^k))$,
we also obtain new perturbation results concerning boundary value problems and
more generally transmission problems for $k$-vector fields.
For $k$-vector fields we consider the function space
$$
  \hat\mH^k_B := \sett{ f\in L_2(\R^n;\mL(\wedge^k)) }
    {d_x(f_\ta) = 0 = d_x^*((B^k f)_\no) }\subset\mH,
$$
where $f_\ta$ and $f_\no$ denote the tangential and normal parts of $f$.
This is the appropriate function space since traces of $k$-vector fields $F(t,x)$ 
in $\R^{n+1}_\pm$ satisfying
\begin{equation} \label{eq:diracwedgek}
\begin{cases}   
  d^*_{t,x} (B^k(x) F(t,x))  =0, \\
  d_{t,x} F(t,x) =0,
\end{cases}
\end{equation}
belong to $\hat\mH^k_B$.
Note that for $1$-vectors, i.e. vectors, the condition $d_x^*((B^1 f)_\no)=0$ is void
and $\hat\mH_B^1=\hat\mH^1$.
For $k$-vector fields, we consider the following.

\vspace{2mm}
\noindent THE TRANSMISSION PROBLEM (Tr-$B^k\alpha^\pm$).

Let $B^k=B^k(x)\in L_\infty(\R^n;\mL(\wedge^k))$ be accretive and let
$\alpha^\pm\in\C$ be given jump parameters.
Given a $k$-vector field $g\in \hat\mH^k_B$,
find $k$-vector fields 
$F_t^+(x)= F^+(t,x)$ in $\R^{n+1}_+$ and $F_t^-(x)= F^-(t,x)$ in $\R^{n+1}_-$
such that $F^\pm_t\in C^1(\R_+; L_2(\R^n;\wedge^k))$ and
$F^\pm$ satisfies (\ref{eq:diracwedgek}) for $\pm t>0$,
and furthermore $\lim_{t\rightarrow \pm\infty}F^\pm_t=0$ and 
$\lim_{t\rightarrow 0^\pm}F^\pm_t=f^\pm$
in $L_2$ norm, 
where the traces $f^\pm$ satisfy the jump conditions
\begin{equation*}  
  \left\{
  \begin{array}{rcl}
    e_0\wedg (\alpha^- f^+ - \alpha^+ f^-) &=& e_0\wedg g \\
    e_0\lctr\big(B^k(\alpha^+ f^+ - \alpha^- f^-) \big) &=& e_0\lctr(B^k g).
  \end{array}
  \right.
\end{equation*}

\vspace{2mm}
The second main result of this paper is the following $L_\infty$ perturbation result 
of the transmission problem (Tr-$B^k\alpha^\pm$).
\begin{thm}  \label{thm:transmain}
Let $B_0^k=B_0^k(x)\in L_\infty(\R^n;\mL(\wedge^k))$ be $t$-independent, accretive and possibly complex, and
assume that $B_0^k$ is a block matrix, i.e. 
$$
  (B_0^k)_{st}= 0,\qquad \text{whenever } 
  0\in (s\setminus t)\cup (t\setminus s)
$$
and $s,t\subset\{0,1,\ldots, n\}$ has lengths 
$|s|=|t|=k$.
Then there exists $\epsilon>0$ and $C<\infty$ depending only on 
$\|B^k_0\|_\infty$, the accretivity constant $\kappa_{B^k_0}$ 
and dimension $n$, such that if 
$B^k=B^k(x)\in L_\infty(\R^n;\mL(\wedge^k))$ is $t$-independent and satisfies
\begin{equation}  \label{eq:specestB}
  \|B^k-B^k_0\|_{L_\infty(\R^n)} < \min\big(\epsilon ,
   C| (\alpha^+/\alpha^-)^2 + 1 |\big),
\end{equation}
then the transmission problem (Tr-$B^k\alpha^\pm$) above
is well posed in the sense that for every boundary function $g\in\hat\mH^k_B$, 
there exists unique $k$-vector fields $F^\pm(t,x)$ with properties as in
(Tr-$B^k\alpha^\pm$).
The solution $F^\pm_t$ depends continuously on $g$
with equivalences of norms $\|g\|_2\approx \|f^++ f^-\|_2 \approx \|f^+\|_2+\| f^-\|_2$
and
$$
  \|f^\pm\|_2 \approx 
  \sup_{t>0} \|F_{\pm t}^\pm\|_2 
  \approx  \tb{t\partial_t F^\pm_{\pm t} }.
$$
Moreover, the solution operators $S_{B^k}(g)= F$ 
depend Lipschitz continuously on $B^k$, i.e. there exists $C<\infty$ such that 
$$
  \|S_{B^k_2}-S_{B^k_1}\|_{L_2(\R^n)\rightarrow \mX}\le C \|B^k_2-B^k_1\|_{L_\infty(\R^n)}
$$
when (\ref{eq:specestB}) holds for $B^k_1$ and $B^k_2$, where $\|F\|_\mX$ 
denotes one of the two norms above.
\end{thm}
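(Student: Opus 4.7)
My approach is to reduce the transmission problem to an algebraic inversion on the boundary Hilbert space $\hat\mH^k_B$, using the Dirac operator $T_B$ of Section~\ref{section:Diracintro} and its functional calculus. Rewriting \eqref{eq:diracwedgek} as the first order evolution $\pd_tF\pm T_BF=0$ for $\pm t>0$, every $L_2$-decaying solution on $\R^{n+1}_\pm$ takes the form $F^\pm(t,x)=(e^{\mp t|T_B|}f^\pm)(x)$ with boundary trace $f^\pm$ in the Hardy subspace $E_B^\pm\hat\mH^k_B=\chi_\pm(T_B)\hat\mH^k_B$. Well-posedness of (Tr-$B^k\alpha^\pm$) is then equivalent to invertibility of the bounded map
\begin{equation*}
\mathcal T_\alpha^B:E_B^+\hat\mH^k_B\oplus E_B^-\hat\mH^k_B\longrightarrow\hat\mH^k_B,\quad
(f^+,f^-)\mapsto e_0\wedg(\alpha^-f^+-\alpha^+f^-)+e_0\lctr B^k(\alpha^+f^+-\alpha^-f^-),
\end{equation*}
and all quantitative estimates claimed in the theorem can be read off from the norms of $(\mathcal T_\alpha^B)^{-1}$, $E_B^\pm$, and the semigroup $e^{\mp t|T_B|}$.

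The first step is to secure bounded Hardy projections depending Lipschitz continuously on $B^k$. For a block $B_0^k$, the operator $T_{B_0}$ falls into the Dirac framework of \cite{AKMc}, so the Kato-type argument given there produces quadratic estimates for $T_{B_0}$. Applying the $k$-form version of the meta-theorem \ref{thm:mainpert} then propagates these quadratic estimates to every $B^k$ with $\|B^k-B_0^k\|_\infty<\epsilon$, and yields projections $E_B^\pm$ that depend holomorphically, and hence Lipschitz continuously, on $B^k$. In particular $\mathcal T_\alpha^B$ is a bounded operator, Lipschitz continuous in $B^k$.

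Next I would verify invertibility of $\mathcal T_\alpha^{B_0}$. Parametrising $f^\pm=E_{B_0}^\pm f$ with $f\in\hat\mH^k_{B_0}$ and setting $E:=E_{B_0}$, the two jump conditions become
\begin{equation*}
\tfrac12\bigl((\alpha^--\alpha^+)f+(\alpha^-+\alpha^+)Ef\bigr)_\ta=g_\ta,\qquad
\tfrac12\,B_0^k\bigl((\alpha^+-\alpha^-)f+(\alpha^++\alpha^-)Ef\bigr)_\no=(B_0^kg)_\no.
\end{equation*}
The block hypothesis on $B_0^k$ forces $T_{B_0}$ to be off-diagonal with respect to the normal--tangential splitting and therefore gives the anticommutation $EN+NE=0$ with $N=P_\no-P_\ta$, the $k$-form counterpart of \eqref{perfecttransv}. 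Decomposing $f=f_\ta+f_\no$, using $E^2=I$ and invertibility of $B_0^k$ on the normal subspace, the system collapses to a $2\times2$ block operator whose Schur determinant is a scalar multiple of
\begin{equation*}
(\alpha^+-\alpha^-)^2+(\alpha^++\alpha^-)^2=2\bigl((\alpha^+)^2+(\alpha^-)^2\bigr)=2(\alpha^-)^2\bigl((\alpha^+/\alpha^-)^2+1\bigr).
\end{equation*}
Thus $\|(\mathcal T_\alpha^{B_0})^{-1}\|\lesssim|(\alpha^+/\alpha^-)^2+1|^{-1}$, and a Neumann series argument combined with the Lipschitz bound $\|\mathcal T_\alpha^B-\mathcal T_\alpha^{B_0}\|\lesssim\|B^k-B_0^k\|_\infty$ from the previous step yields invertibility of $\mathcal T_\alpha^B$ under exactly the quantitative condition \eqref{eq:specestB}.

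The norm equivalences $\|f^\pm\|_2\approx\sup_{t>0}\|F^\pm_{\pm t}\|_2\approx\tb{t\pd_tF^\pm_{\pm t}}$ then follow directly from the semigroup and quadratic estimates supplied by Theorem~\ref{thm:mainpert}, and combining them with invertibility of $\mathcal T_\alpha^B$ produces $\|g\|_2\approx\|f^++f^-\|_2\approx\|f^+\|_2+\|f^-\|_2$ together with the Lipschitz dependence of $S_{B^k}$ on $B^k$. I expect the main obstacle to lie in the third step: both the extension of the meta-theorem \ref{thm:mainpert} from the $1$-vector setting to general $k$-forms with the correct block hypothesis on $B_0^k$, and the derivation of the anticommutation $EN+NE=0$ in the block $k$-form case from the off-diagonal structure of $T_{B_0}$, since it is precisely this algebraic identity that isolates the scalar factor $(\alpha^+/\alpha^-)^2+1$ and pins down the exact shape of the bound \eqref{eq:specestB}.
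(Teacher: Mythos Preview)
Your plan is essentially the paper's proof. The paper packages the boundary reduction through Lemma~\ref{lem:optransmain}, which shows that (Tr-$B^k\alpha^\pm$) is well posed if and only if $\lambda-E_{B^k}N_{B^k}$ is invertible on $\hat\mH^k_B$, where $\lambda=(\alpha^++\alpha^-)/(\alpha^+-\alpha^-)$; your map $\mathcal T_\alpha^B$ is an equivalent repackaging (though as written its codomain should be $N^+\hat\mH^k_B\oplus N_B^-\hat\mH^k_B$ rather than $\hat\mH^k_B$, since $e_0\wedg(\cdot)$ and $e_0\lctr B^k(\cdot)$ land in $\wedge^{k+1}$ and $\wedge^{k-1}$). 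The anticommutation $E_{B_0}N_{B_0}+N_{B_0}E_{B_0}=0$ and the explicit inverse $(\lambda-E_{B_0}N_{B_0})^{-1}=(\lambda^2+1)^{-1}(\lambda-N_{B_0}E_{B_0})$ are exactly Theorem~\ref{thm:unpertblockmain}(ii), and your Schur determinant $2((\alpha^+)^2+(\alpha^-)^2)$ is $(\alpha^+-\alpha^-)^2(\lambda^2+1)$, so the quantitative condition \eqref{eq:specestB} emerges the same way.

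One correction: you should not route the perturbation step through a ``$k$-form version of Theorem~\ref{thm:mainpert}''. That meta-theorem is stated and proved only for $B_0=I\oplus A_0\oplus I\oplus\cdots\oplus I$, and its proof in Section~\ref{section8.2} genuinely exploits the special structure of the $\wedge^1$-part (Lemma~\ref{lem:LaplaceF_0^1}, Lemma~\ref{lem:wedge1formulae}). For Theorem~\ref{thm:transmain} the paper instead extends $B^k$ to $B=I\oplus\cdots\oplus B^k\oplus\cdots\oplus I$ acting on all of $\mH$ and invokes the block perturbation machinery of Section~\ref{section8.1} directly: Theorem~\ref{thm:unpertblockmain}(i) gives quadratic estimates for $T_{B_0}$, and Lemma~\ref{lem:pertests} together with Theorem~\ref{thm:pertestsforblock} propagates them to $T_B$. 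That result already operates on the full exterior algebra, so no $k$-form extension is needed and the obstacle you anticipate does not arise.
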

This perturbation theorem for transmission problems for $k$-vectors
has two important corollaries. 
On one hand it specialises when $k=1$ to a generalisation of Theorem~\ref{thm:main}(b), giving perturbation results for 
transmission problems across $\R^n$ for the divergence form
equation (\ref{eq:divform}). 
The details of this Neumann--regularity transmission problem is 
stated as (Tr-$A\alpha^\pm$) in section~\ref{section8.3}.

On the other hand it specialises when either $\alpha^+=0$ or
$\alpha^-=0$ to a generalisation of Theorem~\ref{thm:main}(b),
giving perturbation results for boundary value problems for
$k$-vectors.
Our result for these boundary value problems (Nor-$B^k$) and
(Tan-$B^k$) is given as Corollary~\ref{cor:formBVP} in Section~\ref{section8.3}.

\begin{ex}  \label{ex:diraclip}
In the case $\widetilde B=B$, operators of the form $d_{t,x} +B^{-1} d^*_{t,x} B$
appear naturally when pulling back the unperturbed Hodge--Dirac operator $d+d^*$
with a change of variables. As above, consider the region $\Omega$ above a Lipschitz graph
$\Sigma=\sett{(t,x)}{t=g(x)}$. We define the pullback of the field 
$F:\Omega\rightarrow\wedge\R^{n+1}$ to be the field
$$
  (\rho^*F)(t,x):=  \underline \rho^T(x)F(\rho(t,x))
$$
in $\R^{n+1}_+$, where $\rho(t,x)=(t+g(x),x)$ is the parametrisation of $\Omega$, having 
differential
$$
  \underline\rho(x)|_{\wedge^1}=
\begin{bmatrix}
  1 & \nabla g(x) \\
  0 & I
\end{bmatrix}
$$
acting on vector fields and extended naturally to $\wedge\R^{n+1}$, and 
$\underline \rho^T(x)$ denotes the transposed matrix.
From the well known fact that $d_{t,x}$ commutes with $\rho^*$, we get the intertwining relation
\begin{equation}  \label{eq:Gdirac}
  (d_{t,x}+ G d_{t,x}^* G^{-1})(\rho^* F)= \rho^* ((d_{t,x}+d_{t,x}^*)F),
\end{equation}
where $G(x)=(g_{ij}(x))= \underline \rho^T(x)\underline \rho(x)$ is the metric 
for the parametrisation, being real symmetric.
Solving for the vertical derivative in the equation $(d_{t,x}+d_{t,x}^*)F=0$ in $\Omega$,
gives us an operator $\dirac_\Sigma$ in $L_2(\Sigma;\wedge\R^{n+1})$, which 
is similar to the operator $(e_0-\nabla g(x))^{-1}(d_x+d_x^*)$ in $\mH$.
From (\ref{eq:Gdirac}) it follows that
$$
  T_{G^{-1}}(\underline\rho(x)^T f(x)) = \underline\rho(x)^T (\dirac_\Sigma f)(x).
$$
It is known that the operator $\dirac_\Sigma$ satisfies quadratic estimates, and
therefore so does $T_{G^{-1}}$. For references and further discussion of $\dirac_\Sigma$, see
\cite[Consequence 3.6]{AKMc}.
From this we get the bounded Clifford--Cauchy singular integral operator
\begin{multline*}
      E_{G^{-1}}( \underline\rho(x)^T  f(x)) \\
= \underline\rho(x)^T \frac2{\sigma_n}
\text{p.v.}\int_{\R^n}\frac{(g(x)e_0+x)-(g(y)e_0+y)}{(|y-x|^2+(g(y)-g(x))^2)^
{(n+1)/2}} (e_0-\nabla g(y))f(y)dy,
\end{multline*}
where $\sigma_n$ is the area of the unit $n$-sphere in
$\R^{n+1}$ and $E_{G^{-1}}= \sgn(T_{G^{-1}})$.
For this reason, we shall refer to $E_B=\sgn(T_B)$ as generalised Cauchy integral
operators and $E_B^\pm= \chi_\pm(T_B)$ as generalised Hardy projection operators,
also when $B\ne G^{-1}$ and $\wt B\ne B$.
\end{ex}

\subsection{Outline and remarks}

In section~\ref{section2} we explain how we use the exterior algebra
$\wedge_\C \R^{n+1}$ and the exterior and interior 
derivative operators $d$ and $d^*$.
In section~\ref{section2.1} we introduce the Dirac type operator
$T_B$ which extends $T_A$ to the full exterior algebra
as well as projection operators $N_B^\pm$ which extend the
$A$-tangential and normal projections $N_A^\pm$ from above.
Section~\ref{section2.3} is concerned with the spectral properties
of $T_B$, where we prove that $T_B$ is a bisectorial operator
and that the resolvents $(\lambda I- T_B)^{-1}$ has $L_2$ 
off-diagonal estimates.
Section~\ref{section2.4} surveys the theory of functional calculus of
bisectorial operators like $T_B$.
In section~\ref{section2.6}, Lemma~\ref{lem:characthardyfcns} and
\ref{lem:characterisePoisson} characterise the classes of 
solutions $F_t$ and $U_t$ respectively to the boundary value problems, 
and are used in particular to prove uniqueness.

In section~\ref{section7} we prove (i') quadratic estimates and (ii')
invertibility in the unperturbed case $B=B_0$.
The most involved case is when $B_0$ is real symmetric.
In order to prove the quadratic estimates we use the 
results from section~\ref{section2.5} that $T_{B_0}$ leaves
certain subspaces $\hat\mH^k_{B_0}$ and $\hut\mH_{B_0}$ 
invariant and that therefore it suffices to establish 
quadratic estimates in each subspace separately.

Section~\ref{section8} contains the core harmonic analysis
results of the paper, where we establish quadratic estimates
for $T_B$ when $B\approx B_0$ through a perturbation argument
based on equation (\ref{eq:good1})-(\ref{eq:good6}).
Section~\ref{section8.1} treats the case when $B_0$ is a block
matrix and makes use of techniques from the solution of
the Kato square root problem~\cite{AHLMcT}.
In Lemma~\ref{lem:newtestfcns} we construct a new set of test 
functions $f^w_Q$ which also can be used in the Carleson measure 
estimate in \cite{AKMc} to simplify the proof there.
Section~\ref{section8.2} treats matrices of the form
$B= I\oplus A\oplus I \oplus \ldots\oplus I$.
The key technique is Lemma~\ref{lem:wedge1formulae}
where we compare $B_0$ with a corresponding block matrix
$\hB_0$ for which the results from section~\ref{section8.1} applies.
The reason why this approximation $B_0\approx \hB_0$ works 
is that the normal vector component $F^{1,0}$ has additional
regularity by Lemma~\ref{lem:LaplaceF_0^1} when $F$ satisfy
the Dirac type equation (\ref{eq:diractype}).

The paper ends with section~\ref{section8.3}, where we bring the
results together and prove Theorems~\ref{thm:transmain},
\ref{thm:main} and \ref{thm:mainpert}.

\vspace{3mm}
The reader who is interested only in Theorems~\ref{thm:main} and
\ref{thm:mainpert} may ask whether it is necessary to work within
the full exterior algebra as described in section~\ref{section:Diracintro}.
Indeed, instead of proving resolvents bounds for $T_A$ through
$T_B$ as done in Proposition~\ref{prop:spectrest}, a direct proof can be based on
(\ref{eq:blockTA}). To prove a priori estimates $\|u\|\lesssim \|f\|$
when $(I-i\tau T_A)u=f$, we multiply this equation with the 
diagonal matrix 
$
     \begin{bmatrix}
       -a_{\no\no} & 0 \\ 0 & a_{\ta\ta}
     \end{bmatrix}
$
and obtain the identity
\begin{equation}   \label{eq:reminintro}
  \left(  A  \begin{bmatrix}
       0 & 0 \\ i\tau\nabla & I
     \end{bmatrix} +
     \begin{bmatrix}
       -1 & i\tau\divv \\ 0 & 0
     \end{bmatrix} A
  \right)u =
     \begin{bmatrix}
       -a_{\no\no} & 0 \\ 0 & a_{\ta\ta}
     \end{bmatrix} f.
\end{equation}
With an argument as in Lemma~\ref{lem:hodge}(ii), a priori estimates
$\|u\|\lesssim \|f\|$
can be deduced from this by proving that the ranges of the two differential systems 
on the left hand side in (\ref{eq:reminintro}) are complementary subspaces in
$\hat\mH^1$, and similarly for the null spaces.

The problems when working within $\hat \mH^1$ starts when making use of 
multiplication operators, for example when proving off-diagonal
bounds in Proposition~\ref{pseudoloc} and in the perturbation formula
(\ref{eq:good1})-(\ref{eq:good6}), but in particular 
in the proof of Theorem~\ref{thm:pertestsforrealsymm}.
It is not clear to the authors how to avoid using at least part of the full
exterior algebra $\wedge_\C \R^{n+1}$ at these points.
Proving off-diagonal bounds for the resolvents on $\hat\mH^1$ in 
Proposition~\ref{pseudoloc} seems to require having $(I+it T_B)^{-1}$ 
defined as a bounded operator on all vector fields.
Also, in (\ref{eq:good1})-(\ref{eq:good6}) the last factors in each term
preserve $\hat\mH^1$, whereas the middle factor only preserve vector fields,
not the curl freeness. Therefore we need to have the first factors defined 
on all vector fields, and moreover off-diagonal bounds are needed here.
Going back to Proposition~\ref{pseudoloc} this seems to require having 
$(I+itT_B)^{-1}$ defined as a bounded operator also on $\wedge^3$.

More importantly, to derive quadratic estimates for the perturbed operator $T_A$ on $\hat\mH^1$
from (\ref{eq:good1})-(\ref{eq:good6}), we must assume quadratic estimates
for $T_{B_0}$, where $B_0= I\oplus A_0\oplus I \oplus\ldots\oplus I$, on all
$\mH$. The reason is that when proving Theorem~\ref{thm:pertestsforrealsymm},
it does not suffice to consider $f\in\hat\mH^1$ as the term 
$\tb{\widetilde Q_t}_\op$ on the right hand side is the norm on a 
bigger space than $\hat\mH^1$, since $\ep$ does not preserve this space.
This explains the hypothesis in Theorem~\ref{thm:mainpert}, as well as the
necessity of proving quadratic estimates on all $\mH$ in 
section~\ref{section7.3}.

We also point out that for $k$-forms, the situation is worse as the spaces
$\hat \mH^k_B$ depend on $B$ for $k\ge 2$, which makes it impossible to
perform perturbation $B\approx B_0$ within this space.

{\bf Acknowledgements.}
This work was mainly conducted at the University of Orsay
while Axelsson was post-doctoral fellow supported by the
French Ministry of Teaching and Research (Minist\`ere de  
l'Enseignement et de la Recherche) and then by the European
IHP network ''Harmonic Analysis and Related Problems'' 
(contract HPRN-CT-2001-00273-HARP). 
Axelsson gratefully acknowledges the University of Orsay
for providing facilities to work. 
Hofmann also thanks the University of Orsay for partial support of 
a one week visit that jump started this project.

%
%
%
\section{Operator theory and algebra}   \label{section2}

In this section we develop the operator theoretic framework we
use to prove the perturbation theorems stated in the introduction.
In particular we introduce our basic operator $T_B$, along with 
perturbations of the normal and tangential projection operators
$N^-$ and $N^+$. These all act in the Hilbert space 
$\mH := L_2(\R^n;\wedge_\C \R^{n+1})$ on the boundary 
$\R^n = \partial \R^{n+1}_+ = \partial \R^{n+1}_-$.
In $\R^{n+1}$ we write the standard ON basis as
$\{ e_0, e_1,\ldots, e_n \}$, where $e_0$ denote the vertical direction
and $e_1,\ldots,e_n$ span the horizontal hyperplane $\R^n$. 
We write the corresponding coordinates as $x_0,x_1,\ldots,x_n$ and we
also use the notation $t=x_0$.
The corresponding partial derivatives we write as $\partial_i= \tdd{}{x_i}$
and $\partial_t= \partial_0= \tdd{}t$.
Our functions $f\in\mH$ take values in the full complex exterior 
algebra over $\R^{n+1}$
$$
  \wedge=\wedge_\C\R^{n+1}=\wedge^0\oplus \wedge^1\oplus\ldots\oplus \wedge^{n+1}.
$$
This is a $2^{n+1}$ dimensional linear space with $n+2$
pairwise orthogonal subspaces $\wedge^k$ of dimensions 
$\binom{n+1}k$.
With the notation 
$e_s:=e_{s_1}\wedg \ldots \wedg e_{s_k}$ if $s=\{s_1,\ldots,s_k\}\subset\{0,1,\ldots,n\}$
and $s_1<s_2<\ldots <s_k$,
the space $\wedge^k$ of homogeneous $k$-{\em vectors} is the linear span of 
$\sett{e_s}{|s|=k}$. In particular, identifying $e_\emptyset$ with $1$ and the singleton set $\{j\}$ with $j$, we have $\wedge^0=\C$ and $\wedge^1= \C^{n+1}$.
A general element in $\wedge$ is called a {\em multivector} and is a direct sum
of $k$-vectors of different degrees $k$.

\begin{defn}  \label{defn:products}
Introduce the sesqui linear scalar product 
$$
  (f,g)= \left(\sum_s f_s e_s, \sum_t g_t e_t\right) = 
  \sum_s f_s \conj g_s,
$$
on $\wedge$ and the bilinear scalar product 
$f \cdot g = \sum_s f_s g_s$.
Define the counting function 
$
  \sigma(s,t):=\#\sett{(s_i,t_j)}{s_i>t_j},
$
where $s=\{s_i\}$, $t=\{t_j\}\subset\{0,1,\ldots,n\}$.
\begin{itemize}
\item[{\rm (i)}]
  The {\em exterior product} $f\wedg g$ is the complex bilinear product for
  which
$$
  e_s\wedg e_t=(-1)^{\sigma(s,t)}\, e_{s\cup t}\quad
         \text{if } s\cap t=\emptyset \text{ and otherwise zero.}
$$
\item[{\rm (ii)}]
 The (left) {\em interior product} $f\lctr g$ is the complex bilinear product for which  
$
  (e_s\lctr e_t,e_u)=( e_t,e_s\wedg e_u)
$
for all $s,t,u\subset\{0,1,\ldots,n\}$.
Explicitly we have
$$
   e_s\lctr e_t=(-1)^{\sigma(s,t\setminus s)}\, e_{t\setminus s}\quad
         \text{if } s\subset t \text{ and otherwise zero.}
$$
\end{itemize}
\end{defn}
\begin{ex}  \label{ex:mappingforproducts}
The most common situation is when forming a product between a vector
$a= \sum_{j=0}^n a_j e_j\in \wedge^1$ and a $k$-vector 
$$
  f= \sum_{0\le s_1 <\ldots <s_k\le n} f_{s_1,\ldots, s_k} \,
    e_{s_1}\wedg \ldots \wedg e_{s_k}.
$$ 
In this case the interior product $a\lctr f$ is a
$(k-1)$-vector, whereas the exterior product $a\wedg f$ 
yields a $(k+1)$-vector.
Note also that we embed all different spaces $\wedge^k$ of homogeneous
$k$-vectors as pairwise orthogonal subspaces in the $2^{n+1}$ dimensional
linear space $\wedge$.
Thus we may add the two products to obtain (the Clifford product)
$$
  a\lctr f + a\wedg f \in \wedge^{k-1}\oplus \wedge^{k+1}\subset\wedge.
$$
In the special case when $f=b$ is also a vector, i.e. $k=1$, we have
\begin{align*}
  a\lctr b & = \sum_{j=0}^n a_j b_j\in \wedge^0, \\
  a\wedg b & = \sum_{0\le i<j\le n} (a_i b_j-a_j b_i) \, e_i\wedg e_j
  \in\wedge^2,
\end{align*}
where we write $b_{\{j\}}= b_j$.
We see that $a\lctr b$ coincide with the bilinear
scalar product $a\cdot b$. Furthermore, in three dimensions $n+1=3$, 
the exterior product $a\wedg b\in \wedge^2$ can be identified 
with the vector product $a\times b\in\wedge^1$ 
by using the Hodge star identifications
$e_{\{1,2\}}\approx e_0$, $-e_{\{0,2\}} \approx e_1$ and 
$e_{\{0,1\}} \approx e_2$.
\end{ex}

The following anticommutativity, associativity and 
derivation properties of these products summarise the fundamental algebra we shall
need in this paper.

\begin{lem}  \label{lem:algebra}
If $a$, $b\in\wedge^1$ are vectors and $f$, $g$, $h\in\wedge$, then
\begin{align*}
   a\wedg b = -b\wedg a, &\qquad a\wedg a =0, \\
   f\wedg(g\wedg h) =(f\wedg g)\wedg h, &\qquad
   f\lctr( g\lctr h)  = (g\wedg f)\lctr h \\
   a\lctr(b\wedg f) &= (a\cdot b) f- b\wedg(a\lctr f)
\end{align*}
\end{lem}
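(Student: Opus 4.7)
The five identities are purely algebraic and reduce, by the sesquilinearity/bilinearity of all products and linear expansion $f=\sum_s f_s e_s$, to verifications on basis multivectors $e_s$, $e_t$, $e_u$ determined entirely by tracking the signs in the counting function $\sigma$. My plan is to treat them in the order $(1),(2),(3),(4),(5)$, using each to streamline the next.

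For $(1)$ and $(2)$ the computation is immediate. If $a=e_i$, $b=e_j$ with $i\ne j$, then $\sigma(\{i\},\{j\})+\sigma(\{j\},\{i\})=1$ since exactly one of the pairs is inverted, so $e_i\wedg e_j=-e_j\wedg e_i$. For $i=j$ the product vanishes by the disjointness clause. Extending bilinearly, $a\wedg b=-b\wedg a$ for all vectors, and setting $b=a$ gives $2(a\wedg a)=0$, hence $a\wedg a=0$.

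For associativity $(3)$, I work on basis multivectors $e_s,e_t,e_u$. Both sides vanish unless $s,t,u$ are pairwise disjoint, in which case both equal $\pm e_{s\cup t\cup u}$. The sign $(-1)^{\sigma(s,t)+\sigma(s\cup t,u)}$ on one side and $(-1)^{\sigma(t,u)+\sigma(s,t\cup u)}$ on the other both count the total number of inversions needed to sort the concatenation $(s_1,\dots,s_p,t_1,\dots,t_q,u_1,\dots,u_r)$, so they agree. For $(4)$, I avoid recomputing signs by using the defining duality $(e_s\lctr e_t,e_u)=(e_t,e_s\wedg e_u)$, extended by sesquilinearity/bilinearity to $(f\lctr g,h)=(g,f\wedg h)$. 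Then, for arbitrary $f,g,h,k$,
\[
(f\lctr(g\lctr h),k)=(g\lctr h,f\wedg k)=(h,g\wedg(f\wedg k))=(h,(g\wedg f)\wedg k)=((g\wedg f)\lctr h,k),
\]
where the third equality invokes $(3)$. Since $k$ is arbitrary, $(4)$ follows.

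The identity $(5)$ is the genuine obstacle, being the Clifford anticommutation rule $\{a\lctr,b\wedg\}=a\cdot b$ acting on $f$. By bilinearity in $a$, $b$, $f$ it suffices to take $a=e_i$, $b=e_j$, $f=e_t$, and I split on whether $j\in t$ and whether $i=j$. When $j\in t$, then $b\wedg f=0$ while $b\wedg(a\lctr f)$ and $(a\cdot b)f$ must cancel, which is the case $i=j\in t$ handled by the matching of the sign $(-1)^{\sigma(\{j\},t\setminus\{j\})}$ with itself. When $j\notin t$, both $b\wedg f$ and one of the remaining terms are nonzero; the identity becomes a relation between $\sigma(\{i\},\{j\}\cup t)$ and $\sigma(\{i\},t)+\sigma(\{j\},t)+[\,i=j\,]$, which is verified by comparing, for each element of $t$, whether it is inverted with $i$, with $j$, or with both (the insertion of $e_j$ into the list shifts the inversion count for $e_i$ by exactly the indicator $[i<j]$ or $[i>j]$, and the $(a\cdot b)f$ term exactly accounts for the diagonal $i=j$ contribution via the sign $(-1)^{\sigma(\{j\},t)}\cdot(-1)^{\sigma(\{j\},t)}=1$). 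The bookkeeping is the only real work; once one fixes the case $i=j\in t$ separately, all other cases reduce to the identity $\sigma(\{i\},\{j\}\cup t)=\sigma(\{i\},\{j\})+\sigma(\{i\},t)$, which is immediate from the definition of $\sigma$ as a counting of inverted pairs.
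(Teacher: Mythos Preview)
Your approach is essentially the same as the paper's: identities $(1)$--$(3)$ by direct sign counting on basis elements, identity $(4)$ via the adjointness $(f\lctr g,h)=(g,f\wedg h)$ combined with associativity, and identity $(5)$ by case analysis on $e_i,e_j,e_t$. One small point to tighten: the adjointness relation you invoke for $(4)$ is not literally valid for complex $f$ (since $\lctr$ is bilinear but the inner product is sesquilinear), so you should first reduce by bilinearity of both sides of $(4)$ to real (or basis) multivectors, exactly as the paper does, before running the duality chain.
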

We shall also frequently use that if $a\in\wedge^1$ is a real vector,
then $(a\lctr f, g) = (f, a\wedg g)$.

\begin{proof}
That $a\wedg b = -b\wedg a$ and $a\wedg a =0$ is readily seen
from Example~\ref{ex:mappingforproducts}.
These and the associativity
$f\wedg(g\wedg h) =(f\wedg g)\wedg h$ are well known 
properties of the exterior product.
To see how $f\lctr( g\lctr h)  = (g\wedg f)\lctr h$
follows, note first that by linearity it suffices to consider the
case when $f$, $g$ and $h$ all are real, i.e. have real coefficients
in the standard basis $\{e_s\}$.
Under this assumption we pair with an arbitrary $w\in\wedge$ and use
that left interior and exterior multiplication are adjoint operations.
We get
\begin{multline*}
  (f\lctr (g\lctr h), w) = (g\lctr h, f\wedg w) = 
  (h, g\wedg(f\wedg w)) \\
  = (h, (g\wedg f)\wedg w)= ((g\wedg f)\lctr h,w).
\end{multline*}
For the derivation identity, by linearity it suffices to prove 
$$
  e_i\lctr (e_j\wedg e_s) + e_j\wedg (e_i\lctr e_s)
  =
  \begin{cases}
    e_s, & i=j, \\
    0, & i\ne j,
  \end{cases}
$$
for all $s\subset \{0,1,\ldots,n\}$.
This is straightforward to verify from
Definition~\ref{defn:products}.
\end{proof}

\begin{defn}  \label{defn:normtang}
For a multivector $f\in\wedge$, we write
$\mu f:= e_0 \wedg f$, $\mu^* f := e_0 \lctr f$ and 
$\m f:= e_0\wedg f + e_0\lctr f= (\mu+\mu^*)f$.
We call 
\begin{align*}
  f_\no= N^- f&:= \mu\mu^* f= \m\mu^* f=\mu \m f, \\
  f_\ta= N^+ f&:= \mu^*\mu f= \m\mu f=\mu^* \m f, 
\end{align*}
the {\em normal} and {\em tangential} parts of $f$ respectively.
\end{defn}
Concretely, if 
$f= \sum_{\{s_1,\ldots, s_k \}} 
f_{s_1,\ldots, s_k} \,  e_{s_1}\wedg \ldots \wedg e_{s_k}$,
then its normal part is
$$
  f_\no = \sum_{ \{s_1,\ldots, s_k \} \ni 0 } 
  f_{s_1,\ldots, s_k} \,  e_{s_1}\wedg \ldots \wedg e_{s_k}
$$
and its tangential part is
$$
  f\ta= \sum_{ \{s_1,\ldots, s_k \} \not\ni 0 } 
  f_{s_1,\ldots, s_k} \,  e_{s_1}\wedg \ldots \wedg e_{s_k}.
$$
In particular $e_s$ is normal if $0\in s$ and tangential if $0\notin s$.
Note that both the subspace of normal multivectors $N^-\wedge$ and
the subspace of tangential multivectors $N^+\wedge$ have dimension
$2^n$, although the subspaces $N^\pm \wedge^k$ have different dimensions
in general.

Throughout this paper, upper case letters denote $\wedge$-valued functions
$F(t,x) = F_t(x)$ in the domain $\R^{n+1}_+$ or $\R^{n+1}_-$ whereas
lower case letters will denote $\wedge$-valued functions $f$ on the 
boundary $\R^n$. 
We use the sesqui linear scalar product 
$(f,g):= \int_{\R^n}(f(x), g(x))\,dx$
on the Hilbert space $\mH$.
\begin{defn}  \label{defn:nablaops}
Using the nabla symbols $\nabla_x= \nabla =\sum_{j=1}^n e_j \pd_j$ in $\R^n$ and
$\nabla_{t,x} =\sum_{j=0}^n e_j \pd_j$ in $\R^{n+1}$,
we define the operators of {\em exterior} and {\em interior} derivation
as
\begin{alignat*}{2}
  d_xf=df &= \nabla\wedg f := \sum_{j=1}^n e_j \wedg \pd_j f, &\qquad
  d_x^*f= d^*f &= -\nabla\lctr f := -\sum_{j=1}^n e_j \lctr \pd_j f, \\
  d_{t,x}F &= \nabla_{t,x}\wedg F := \sum_{j=0}^n e_j \wedg \pd_j F, &\qquad
  d_{t,x}^*F &= -\nabla_{t,x}\lctr F := -\sum_{j=0}^n e_j \lctr \pd_j F.
\end{alignat*}
We shall also find it convenient to use the operators
$\ud:= i\m d$ and $\ud^*= -i d^* \m= i\m d^*$.
\end{defn}
\begin{rem}  \label{rem:mappingud}
Clearly, in the splitting
$\wedge=\wedge^0\oplus \wedge^1\oplus\ldots\oplus \wedge^{n+1}$,
the operators $d$ and $d^*$ map
$d: L_2(\R^n; \wedge^k)\rightarrow L_2(\R^n; \wedge^{k+1})$
and $d^*: L_2(\R^n; \wedge^k)\rightarrow L_2(\R^n; \wedge^{k-1})$
as unbounded operators.
Moreover, if we further decompose the space of homogeneous 
$k$-vectors into its normal and tangential subspaces
as 
$$
  \wedge=\wedge_\ta^0\oplus (\wedge^1_\no \oplus \wedge^1_\ta)\oplus
  (\wedge^2_\no \oplus \wedge^2_\ta)\oplus\ldots\oplus 
  (\wedge^n_\no \oplus \wedge^n_\ta)\oplus
  \wedge_\no^{n+1},
$$
where $\wedge^k_\no:= N^-\wedge^k$ and $\wedge^k_\ta:= N^+\wedge^k$,
then we see that the operators $\ud$ and $\ud^*$ map
\begin{alignat*}{2}
  \ud: L_2(\R^n; \wedge_\no^k)&\longrightarrow L_2(\R^n; \wedge_\ta^k), &
  \qquad
  \ud: L_2(\R^n; \wedge_\ta^k)&\longrightarrow L_2(\R^n; \wedge_\no^{k+2}),
\\
  \ud^*: L_2(\R^n; \wedge_\ta^k)&\longrightarrow L_2(\R^n; \wedge_\no^k), &
  \qquad
  \ud^*: L_2(\R^n; \wedge_\no^k)&\longrightarrow L_2(\R^n; \wedge_\ta^{k-2}).
\end{alignat*}
\end{rem}
\begin{lem}  \label{lem:anticom}
We have, on appropriate domains, $(d_{t,x})^2=(d^*_{t,x})^2=d^2=(d^*)^2=\mu^2=(\mu^*)^2=0$
and the anti commutation relations
\begin{alignat*}{2}
  \{ \m, d_{t,x} \} = \pd_t &=  \{ \m, -d_{t,x}^* \}, &\qquad
  \m^2&=\{ \mu, \mu^* \} = I, \\
  \{ d, d^* \} = -\Delta &= -\sum_1^n \pd_k^2, &\qquad
  \{ d,\mu \} &= \{ d,\mu^* \} = 0,
\end{alignat*}
where $\{A,B\}=AB+BA$ denotes the anticommutator.
\end{lem}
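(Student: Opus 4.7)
The plan is to prove each identity by reducing it to an anticommutator among the four atomic operators $\mu$, $\mu^*$, $d$, $d^*$. First I would decompose the full derivative operators as $d_{t,x} = \mu\pd_t + d$ and $d_{t,x}^* = -\mu^*\pd_t + d^*$; since $\m = \mu + \mu^*$, each identity in the lemma becomes a bilinear combination of the six basic anticommutators $\{\mu,\mu\}$, $\{\mu^*,\mu^*\}$, $\{\mu,\mu^*\}$, $\{d,\mu\}$, $\{d,\mu^*\}$, $\{d,d^*\}$, together with the internal identities $d^2 = 0$ and $(d^*)^2 = 0$.

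The three purely algebraic anticommutators come directly from Lemma~\ref{lem:algebra}. From $e_0 \wedg e_0 = 0$ we get $\mu^2 = 0$; applying $f\lctr(g\lctr h) = (g\wedg f)\lctr h$ with $f = g = e_0$ gives $(\mu^*)^2 = 0$; and the derivation identity $a\lctr(b\wedg h) = (a\cdot b)h - b\wedg(a\lctr h)$ at $a=b=e_0$ reads $\mu^*\mu + \mu\mu^* = I$, so $\{\mu,\mu^*\} = I$ and hence $\m^2 = I$. The mixed anticommutators with $d$ are equally direct: $\{d,\mu\} = 0$ follows from $e_j\wedg e_0 = -e_0\wedg e_j$ for $j\ge 1$, while $\{d,\mu^*\} = 0$ comes from the derivation identity applied with $a = e_0$ and $b = e_j$ (using $e_0\cdot e_j = 0$ for $j\ge 1$). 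Collecting these yields
\begin{equation*}
\{\m, d_{t,x}\} = \{\mu+\mu^*,\mu\pd_t\} + \{\mu+\mu^*, d\} = \{\mu^*,\mu\}\pd_t + 0 = \pd_t,
\end{equation*}
and the same calculation with $-d_{t,x}^* = \mu^*\pd_t - d^*$ gives $\{\m, -d_{t,x}^*\} = \pd_t$.

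For $d^2 = 0$, expand $d^2 f = \sum_{i,j=1}^n (e_i\wedg e_j)\wedg \pd_i\pd_j f$ and observe that the sum pairs an antisymmetric factor with a symmetric one; for $(d^*)^2 = 0$ the analogous argument uses the relation $e_i\lctr(e_j\lctr h) = (e_j\wedg e_i)\lctr h$ from Lemma~\ref{lem:algebra}. Summing from $0$ to $n$ instead of from $1$ to $n$ gives $d_{t,x}^2 = 0 = (d_{t,x}^*)^2$. Finally, for $\{d,d^*\} = -\Delta$, I would expand both $dd^*$ and $d^*d$ and apply the derivation identity to the inner factor in $d^*d$: $e_j\lctr(e_i\wedg \pd_i\pd_j f) = \delta_{ij}\pd_i\pd_j f - e_i\wedg(e_j\lctr \pd_i\pd_j f)$. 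The off-diagonal piece produces a term that cancels exactly against $dd^*$, leaving $-\sum_{j=1}^n \pd_j^2 f = -\Delta f$. No step presents a genuine obstacle; the only care required is sign bookkeeping and consistent use of the sign convention $d^* = -\sum_j e_j\lctr \pd_j$.
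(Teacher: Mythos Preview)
Your proof is correct and takes essentially the same approach as the paper: both reduce everything to the algebraic identities of Lemma~\ref{lem:algebra}. The paper only spells out $\{\m,-d_{t,x}^*\}=\pd_t$ by a direct expansion, whereas you organise the argument more systematically via the atomic anticommutators of $\mu,\mu^*,d,d^*$, but the underlying computation is identical.
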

\begin{proof}
The proofs are straightforward, using Lemma~\ref{lem:algebra}.
Let us prove that $\pd_t =  \{ \m,- d_{t,x}^* \}$.
For a function $F(t,x)$ we have
$$
  \nabla_{x,t} \lctr (\m F)= \sum_0^n e_i\lctr\big(\m\pd_i F\big)
  = \sum_0^n e_i\lctr\big(e_0\lctr\pd_i F\big)+
  \sum_0^n e_i\lctr\big(e_0\wedg\pd_i F\big).
$$
Using the anticommutativity and associative properties, the first sum is
$$
  \sum_0^n (e_0\wedg e_i)\lctr\pd_i F= -\sum_0^n (e_i\wedg e_0)
  \lctr\pd_i F =-e_0\lctr \left(\sum_0^n e_i \lctr\pd_i F\right),
$$
whereas using the derivation property, the second sum is
$$
  \sum_0^n e_i\lctr(e_0\wedg\pd_i F)= \pd_0 F-
  e_0\wedg\left( \sum_0^n e_i\lctr\pd_i F\right).
$$
Adding up we obtain 
$\nabla_{x,t} \lctr (\m F)=\pd_t F -\m(\nabla_{x,t} \lctr F)$.
\end{proof}

\subsection{The basic operators}  \label{section2.1}

\begin{defn}  \label{defn:B}
Throughout this paper we denote by $B\in L_\infty(\R^n;\mL(\wedge))$ 
a bounded, accretive and complex matrix function acting on $f\in\mH$
as $f(x)\mapsto B(x)f(x)$, with quantitative bounds $\|B\|_\infty$ and
$\kappa_B>0$, where $\kappa_B$ is the largest constant such that
$$
  \re (B(x)w,w)\ge \kappa_B |w|^2, \qquad \text{for all } w\in \wedge,
  \,\, x\in \R^n.
$$
We shall also assume that $B$ is of the form 
$B= B^0\oplus B^1\oplus B^2\oplus\ldots\oplus B^{n+1}$,
where
$B^k \in L_\infty(\R^n;\mL(\wedge^k))$,
so that $B$ preserve the space of $k$-vectors.
For the matrix part $B^1$ acting on vectors, we use the alternative
notation $A= B^1$.
We also define $\widetilde B:= \m B\m$.
It is not true in general that $\widetilde B$ preserve $k$-vectors.
\end{defn}
\begin{rem}
It would be more optimal to replace the quantitative bounds $\|B\|_\infty$ and
$\kappa_B>0$ with $K_B$ and $k_B$, where $K_B$ and $k_B$ are the optimal
constants such that
$$
  k_B\|f\|^2 \le |(Bf,f)| \le K_B \|f\|^2,  \qquad \text{for all } f\in \mH.
$$
\end{rem}
We consider $F(t,x)$ satisfying the Dirac type equation
\begin{equation}  \label{diraceqn}
  ( \m d_{t,x} + B^{-1} \m d_{t,x}^* B ) F(t,x) =0 
\end{equation}
or equivalently
\begin{equation}  \label{diraceqntilde}
  ( d_{t,x} + \widetilde B^{-1} d_{t,x}^* B ) F(t,x) =0.
\end{equation}

In order to solve for the vertical derivative $\pd_t F$, we note that
$$
  d_{t,x} F = dF + \mu \pd_t F, \qquad
  d^*_{t,x} F = d^* F -\mu^* \pd_t F.
$$
Inserted in (\ref{diraceqn}) this yields
$$
  \pd_t F + \tfrac 1{N^+ - B^{-1}N^-B}(\m d + B^{-1}\m d^* B)F =0.
$$
\begin{defn}  \label{defn:tb}
Write $M_B:= N^+ - B^{-1}N^- B$ and define the unbounded operator
$$
  T_B := M_B^{-1}(\m d + B^{-1}\m d^* B) = 
  -i M_B^{-1}(\ud+B^{-1}\ud^* B)= 
  \tfrac 1{\mu-\widetilde B^{-1}\mu^*B}(d + \widetilde B^{-1} d^* B)
$$
in $\mH$ with domain $\dom(T_B):= \dom(d)\cap B^{-1}\dom(d^*)$.
\end{defn}
A rather surprising fact is that the most obvious choice for $\wt B$, namely 
$\wt B=B$ is not the the best, but rather $\wt B= m B m$.
For example, this is the only choice for which a Rellich type formula,
as in Proposition~\ref{prop:rellich}, holds on all $\mH$, in the case of Hermitean 
coefficients $B$.

Note that $T_B$ is closely related to operators of the form
$\Pi_B= \Gamma+B^{-1}\Gamma^* B$, where $\Gamma$ denotes
a first order, homogeneous partial differential operator 
with constant coefficients such that $\Gamma^2=0$, 
which were studied in \cite{AKMc}.
Unfortunately, the factor $M_B^{-1}$ does not commute with $\Pi_B$
for general $B$.
However, it has other useful commutation properties.

\begin{lem}  \label{lem:magic}
The operator $M_B$ is an isomorphism and 
\begin{alignat*}{2}
  (B^{-1}N^-B)M_B^{-1} &= M_B^{-1} N^-, &\qquad N^- M_B^{-1} &= M_B^{-1}(B^{-1}N^-B), \\
  (B^{-1}N^+B)M_B^{-1} &= M_B^{-1} N^+, &\qquad N^+ M_B^{-1} &= M_B^{-1}(B^{-1}N^+B).
\end{alignat*}
\end{lem}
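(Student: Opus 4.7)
The plan is to establish invertibility of $M_B$ first and then derive the four commutation identities by elementary algebra. Since $B$, $N^+$ and $N^-$ all act pointwise on $x$, everything reduces to a statement on the finite-dimensional space $\wedge$.

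For invertibility, the key manipulation is to multiply through by $B$. Writing $f = f_\ta + f_\no$ and expanding, the cross contributions between normal and tangential pieces cancel and one finds
$$
  B M_B f = (B f_\ta)_\ta - (B f_\no)_\no.
$$
The right-hand side suggests pairing not with $f$ itself but with the conjugate vector $\tilde f := f_\ta - f_\no$, since orthogonality of the tangential and normal subspaces $N^+\wedge$ and $N^-\wedge$ then eliminates all mixed terms and leaves
$$
  (B M_B f, \tilde f) = (B f_\ta, f_\ta) + (B f_\no, f_\no).
$$
Taking real parts and invoking accretivity bounds this below by $\kappa_B(|f_\ta|^2+|f_\no|^2) = \kappa_B|f|^2$, and because $|\tilde f| = |f|$, Cauchy--Schwarz yields $|BM_B f| \geq \kappa_B |f|$, hence $|M_B f| \geq (\kappa_B/\|B\|_\infty)|f|$ pointwise. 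In finite dimension this gives invertibility with a uniform bound $\|M_B^{-1}\|_\infty \leq \|B\|_\infty/\kappa_B$, so $M_B$ is an isomorphism on $\mH$.

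Once $M_B^{-1}$ exists, introduce the conjugated projections $P := B^{-1} N^- B$ and $Q := B^{-1} N^+ B$, which are complementary in the sense that $P^2 = P$, $Q^2 = Q$, $PQ = QP = 0$ and $P + Q = I$. Each of the four claimed identities is equivalent, after multiplying both sides by $M_B = N^+ - P$ on the right, to a one-line algebraic check. For example $(B^{-1}N^- B) M_B^{-1} = M_B^{-1} N^-$ amounts to $M_B P = N^- M_B$; the left side equals $N^+ P - P$, the right side equals $-N^- P$, and they coincide because $(N^+ + N^-) P = P$. The remaining three identities follow from exactly the same two-line manipulation with the roles of $N^\pm$ and $P$, $Q$ swapped, always invoking either $N^+ + N^- = I$ or $P + Q = I$.

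The only substantive step is therefore the invertibility of $M_B$, and the real insight is the choice of test vector $\tilde f$ in the pairing: replacing $f$ by its conjugate $\tilde f = f_\ta - f_\no$ decouples the quadratic form into its tangential and normal pieces, so that accretivity can be applied on each piece separately. The commutation identities themselves are purely formal consequences of $N^\pm$ and $P$, $Q$ being two pairs of complementary projections.
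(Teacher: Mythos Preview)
Your proof is correct. The invertibility argument is essentially the paper's, repackaged: your formula $BM_Bf=(Bf_\ta)_\ta-(Bf_\no)_\no$ is precisely the paper's observation that $BM_B=BN^+-N^-B$ is the block-diagonal matrix $(-B_{\no\no})\oplus B_{\ta\ta}$ in the splitting $N^-\mH\oplus N^+\mH$, and your pairing with $\tilde f=f_\ta-f_\no$ is a standard way to read off invertibility from accretivity of each diagonal block.

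The derivation of the commutation identities differs slightly. The paper argues structurally: since $BM_B$ is block-diagonal it commutes with $N^\pm$, which immediately gives the two right-hand identities; the two left-hand identities then follow by the symmetry observation $M_B=N^+-B^{-1}N^-B=-N^-+B^{-1}N^+B$. You instead verify all four directly from the projection algebra $(N^++N^-)=I=(P+Q)$. Your route is more uniform (the same two-line check four times) and avoids the symmetry trick; the paper's route is shorter once the block-diagonal structure is named, and that structural fact is reused later in the paper (e.g.\ in Definition~\ref{defn:duality}). One small wording slip: to pass from $PM_B^{-1}=M_B^{-1}N^-$ to $M_BP=N^-M_B$ you multiply by $M_B$ on both sides, not just on the right.
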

\begin{proof}
Note that $M_B= B^{-1}(BN^+ - N^- B)$, where the last factor is the diagonal matrix
$(-B_{\no\no})\oplus B_{\ta\ta}$ in the splitting $\mH= N^-\mH \oplus N^+\mH$,
if $B_{\no\no}$ and $B_{\ta\ta}$ denote the diagonal blocks of $B$, 
and thus $BN^+ - N^- B$ commutes with $N^\pm$. 
Furthermore, the diagonal blocks are accretive, so $BN^+ - N^- B$ and thus
$M_B$ is invertible. 
This proves the two equations to the right. 
To obtain the equations to the left, replace $B$ by $B^{-1}$ and note that
$$
  -N^- + B^{-1}N^+ B = N^+ - B^{-1}N^- B.
$$
\end{proof}
Let us comment on the terminology ''Dirac type equation'' for
(\ref{diraceqn}). Normally this denotes a first order differential
operator, like $d_{t,x}+d^*_{t,x}$, whose square acts componentwise as 
the Laplace operator.
In our situation, the following holds.
\begin{lem}   \label{lem:LaplaceF_0^1}
  If $F(t,x)$ satisfies (\ref{diraceqn}), then
$d_{t,x}^* B d_{t,x} (\m F) = 0$.
In particular it follows that 
$$
  \divv_{t,x} A \nabla_{t,x} F^{1,0}=0
$$ 
if
$F= F^0+ (F^{1,0} e_0+F^{1,\ta})+F^2+\ldots+ F^{n+1}$, i.e. $F^{1,0}$ is the
normal component of the vector part of $F$.
\end{lem}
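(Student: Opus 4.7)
The plan is to derive a factorisation $B d_{t,x}(\m F) = -d_{t,x}^*(\m B F)$ directly from the Dirac type equation (\ref{diraceqn}), at which point applying $d_{t,x}^*$ and using the nilpotency $(d_{t,x}^*)^2=0$ from Lemma~\ref{lem:anticom} gives $d_{t,x}^* B d_{t,x}(\m F)=0$ immediately.

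To obtain the factorisation, I would first multiply (\ref{diraceqn}) by $B$ (pointwise, so this is valid since $BB^{-1}=I$) to get $B\m d_{t,x}F = -\m d_{t,x}^*(BF)$, using that $B$ is $t$-independent so it commutes with the $t$-derivative hidden in $d_{t,x}^*$. Next, the anticommutation relation $\{\m,d_{t,x}\}=\pd_t$ from Lemma~\ref{lem:anticom} gives $d_{t,x}(\m F) = \pd_t F - \m d_{t,x} F$, hence
\begin{equation*}
Bd_{t,x}(\m F) = B\pd_t F - B\m d_{t,x}F = \pd_t(BF) + \m d_{t,x}^*(BF),
\end{equation*}
where in the first term I used $t$-independence of $B$ to pull $B$ inside $\pd_t$. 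Now applying the second relation $\{\m,-d_{t,x}^*\}=\pd_t$ to $BF$ rewrites $\pd_t(BF) = -\m d_{t,x}^*(BF) - d_{t,x}^*(\m BF)$, so the $\m d_{t,x}^*(BF)$ terms cancel and we are left with $Bd_{t,x}(\m F) = -d_{t,x}^*(\m BF)$, as desired.

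For the second assertion, I extract the scalar ($\wedge^0$) component of the identity $d_{t,x}^* B d_{t,x}(\m F)=0$. Since $d_{t,x}$ is graded of degree $+1$ and $d_{t,x}^*$ of degree $-1$, and $B$ preserves each $\wedge^k$, the $\wedge^0$ component of $d_{t,x}^*B d_{t,x}(\m F)$ depends only on the scalar part $(\m F)^0$ and on the block $B^1=A$. Writing $F^1 = F^{1,0}e_0 + F^{1,\ta}$ and using $\m F = e_0\wedg F + e_0\lctr F$, the scalar piece is $(\m F)^0 = e_0\lctr F^1 = F^{1,0}$. Then $(d_{t,x}(\m F))^1 = d_{t,x}(F^{1,0}) = \nabla_{t,x}F^{1,0}$, so $B d_{t,x}(\m F)$ has $\wedge^1$-component $A\nabla_{t,x}F^{1,0}$, and $d_{t,x}^*$ applied to this is $-\divv_{t,x}(A\nabla_{t,x}F^{1,0})$. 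This must vanish, yielding $\divv_{t,x}A\nabla_{t,x}F^{1,0}=0$.

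The only mildly subtle point is the careful bookkeeping of the anticommutators in the first step; once $B d_{t,x}(\m F) = -d_{t,x}^*(\m BF)$ is in hand, both conclusions are essentially immediate from Lemma~\ref{lem:anticom}. No smoothness beyond what is implicit in (\ref{diraceqn}) is needed, since the identities are algebraic in the differential operators and the computation is carried out in the sense of distributions.
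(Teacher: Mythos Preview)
Your proof is correct and follows essentially the same route as the paper: both use the anticommutation relations from Lemma~\ref{lem:anticom} together with $t$-independence of $B$ to rewrite (\ref{diraceqn}) as $Bd_{t,x}(\m F)=-d_{t,x}^*(\m BF)$ (the paper phrases this as the equivalent equation $(d_{t,x}\m+B^{-1}d_{t,x}^*\m B)F=0$), then apply $d_{t,x}^*$ and nilpotency. The extraction of the scalar component is also handled identically.
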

\begin{proof}
We use the anticommutation relations $\m d_{t,x}= \partial_t- d_{t,x}\m$
and $\m d^*_{t,x}= -\partial_t- d^*_{t,x}\m$ from Lemma~\ref{lem:anticom},
which shows that (\ref{diraceqn}) is equivalent with
$$
  ( d_{t,x} \m + B^{-1} d_{t,x}^* \m B ) F(t,x) =0,
$$
since $\partial_t- B^{-1}\partial_t B=0$.
Applying $d^*_{t,x}B$ to this equation shows that 
$d_{t,x}^* B d_{t,x} (\m F) = 0$, and
evaluating the scalar $\wedge^0$ part shows that
$\divv_{t,x} A \nabla_{t,x} F^{1,0}=0$ since $d_{t,x}^* B d_{t,x}$ preserves $k$-vectors.
\end{proof}

The following notions are central in our operator theoretic framework.
\begin{defn}  \label{defn:splittings}
Let $\mH$ be a Hilbert space.
\begin{itemize}
\item[{\rm (i)}]
  A ({\em topological}) {\em splitting} of $\mH$ is a decomposition 
$\mH=\mH_1\oplus\mH_2$
into closed subspaces $\mH_1$ and $\mH_2$. In particular, we have
$\|f_1+f_2\|\approx \|f_1\|+ \|f_2\|$ if $f_i\in \mH_i$.
\item[{\rm (ii)}]
Two bounded operators $R^+$ and $R^-$ in $\mH$ are called 
{\em complementary projections} if
$R^++R^-=I$, $(R^\pm)^2= R^\pm$ and $R^\pm R^\mp=0$.
\item[{\rm (iii)}]
A bounded operator $R$ in $\mH$ is called a {\em reflection operator} if 
$R^2=I$.
\end{itemize}
\end{defn}
We note the following connection between these concepts.
\begin{lem}   \label{lem:splittingrefl}
There is a one-to-one correspondence 
$$
  R= R^+-R^-\longleftrightarrow R^\pm= \tfrac 12(I\pm R)\longleftrightarrow 
  \mH= R^+\mH\oplus R^-\mH
$$
between reflection operators in $\mH$, complementary projections
in $\mH$ and topological splittings of $\mH$.
We write $R^\pm \mH= \ran(R^\pm)$ for the range of the projection $R^\pm$.
\end{lem}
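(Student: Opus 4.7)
The plan is to verify the correspondence by constructing the maps in both directions between the three classes (reflections, pairs of complementary projections, topological splittings) and checking that each pair of constructions is mutually inverse. All three verifications are essentially algebraic, so I will describe them briefly.

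First, starting from a reflection $R$ with $R^2=I$, I would define $R^\pm := \tfrac12(I\pm R)$ and check directly that these are complementary projections: $R^++R^-=I$ is immediate, the identity $R^2=I$ gives $(R^\pm)^2 = \tfrac14(I\pm 2R + R^2) = \tfrac12(I\pm R) = R^\pm$, and $R^+R^- = \tfrac14(I-R^2)=0$, with $R^-R^+$ handled symmetrically. Conversely, given complementary projections $R^\pm$, the operator $R:=R^+-R^-$ satisfies $R^2 = (R^+)^2 + (R^-)^2 - R^+R^- - R^-R^+ = R^+ + R^- = I$, and the two constructions are visibly inverse to each other.

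Next I would show the correspondence between complementary projections $R^\pm$ and topological splittings. Given $R^\pm$, let $\mH_\pm := R^\pm \mH$. Each $\mH_\pm$ is closed because $R^\pm\mH = \ker(R^\mp)$ (using $R^\mp R^\pm = 0$ and $R^\pm R^\pm = R^\pm$), and the kernel of a bounded operator is closed. The decomposition $f = R^+f + R^-f$ is unique: if $f_+ + f_- = 0$ with $f_\pm \in \mH_\pm$, apply $R^+$ to get $f_+ = 0$. The norm equivalence $\|f_++f_-\|\approx \|f_+\|+\|f_-\|$ follows from boundedness of $R^\pm$. Conversely, given a topological splitting $\mH=\mH_1\oplus\mH_2$, define $R^+(f_1+f_2):=f_1$ and $R^-(f_1+f_2):=f_2$. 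The norm equivalence ensures boundedness, and $(R^\pm)^2=R^\pm$, $R^+R^-=0$, $R^++R^-=I$ are immediate from the definition. The two assignments are mutually inverse since the unique decomposition of $f\in \mH$ along the splitting $\mH=R^+\mH\oplus R^-\mH$ is $f=R^+f+R^-f$.

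Composing the two bijections yields the full three-way correspondence, and the formulas $R=R^+-R^-$ and $R^\pm = \tfrac12(I\pm R)$ display the inverse maps explicitly. The only non-algebraic point is that a topological splitting must give bounded projections; this is the usual observation that the norm equivalence $\|f_1+f_2\|\approx \|f_1\|+\|f_2\|$ built into the definition of a topological splitting is exactly what boundedness of $R^\pm$ requires, so there is no real obstacle, merely a bookkeeping check.
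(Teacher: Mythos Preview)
Your proof is correct. The paper states this lemma without proof, treating it as a standard fact; your argument supplies exactly the routine algebraic verifications one would expect, and all steps are sound.
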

In Definition~\ref{defn:normtang} we introduced the complementary projections
$N^\pm$ associated with the splitting of $\mH$ into the subspaces of tangential 
and normal multivector fields.
The corresponding reflection operator is
$$
  N := N^+-N^- = \mu^*\mu-\mu\mu^* =(\mu^*-\mu)\m = \m(\mu- \mu^*).
$$
We also introduce $B$-perturbed versions of the tangential and normal subspaces
$N^-\mH$ and $N^+\mH$ as
\begin{align*}
  B^{-1} N^+\mH &:= \sett{B^{-1}f}{f\in N^+\mH}, \\
  B^{-1} N^-\mH &:= \sett{B^{-1}f}{f\in N^-\mH}.
\end{align*}
In Definition~\ref{defn:tb} we encountered one of the complementary projections
$B^{-1}N^+B$ and $B^{-1}N^-B$ associated with the splitting
$$
  \mH = B^{-1} N^+\mH \oplus B^{-1} N^-\mH.
$$
However, more important will be the following complementary projections.
\begin{defn}  \label{defn:tangnormops}
Let $\hat N^+_B$ and $\hat N^-_B$ be the complementary projections associated
with the splitting
$$
  \mH = B^{-1} N^+\mH \oplus N^-\mH.
$$ 
We sometimes use the shorter notation $N^+_B := \hat N^+_B$ and $N^-_B :=\hat N^-_B$.
Also let $\hut N^+_B$ and $\hut N^-_B$ be the complementary projections associated
with the splitting
$$
  \mH = N^+\mH \oplus B^{-1} N^-\mH.
$$
Let $N_B=\hat N_B:= \hat N_B^+ - \hat N_B^-$ and 
$\hut N_B:= \hut N_B^+ - \hut N_B^-$ be the associated reflection operators. 
\end{defn}
  With the notation $\mu^*_B := B^{-1}\mu^* B$,
these operators are
\begin{align*}
  \hat N^+_B &= \mu^*_B (\mu + \mu^*_B)^{-1} = (\mu + \mu^*_B)^{-1}\mu, \\
  \hat N^-_B &= \mu (\mu + \mu^*_B)^{-1} = (\mu + \mu^*_B)^{-1}\mu^*_B, \\
  \hat N_B &= (\mu^*_B-\mu) (\mu + \mu^*_B)^{-1} = (\mu + \mu^*_B)^{-1}(\mu-\mu^*_B),
\end{align*}
and similarly for $\hut N^\pm_B$ and $\hut N_B$.
We shall prove in section~\ref{section2.3} that all these 
operators are bounded.
It is mainly the operators $\hat N^+_B$, $\hat N^-_B$ and $\hat N_B$ that we shall use.

\begin{defn}   \label{defn:duality}
  Let
$\dual fg_B := ( (BN^+ - N^-B)f,g )$, for $f,g\in\mH$.
As $BN^+ - N^-B = BM_B$ is invertible, $\dual\cdot\cdot_B$ is a duality,
i.e.
there exists $C<\infty$ such that
\begin{gather*}
  |\dual fg_B| \le C\|f\|\|g\|, \\
  \|f\| \le C\sup_{g\ne 0}|\dual fg_B|/\|g\|,\qquad
  \|g\| \le C\sup_{f\ne 0}|\dual fg_B|/\|f\|.
\end{gather*}
We write $T'$ for the adjoint of an operator $T$ with respect to this duality,
i.e. if 
$\dual {Tf}g_B = \dual f{T'g}_B$ for all $f,g\in\mH$.
\end{defn}

\begin{prop}  \label{prop:duality}
We have adjoint operators
$(\hat N_B)' = \hut N_{B^*}$, $(\hut N_B)' = \hat N_{B^*}$, $N'=N$ and
$(T_B)' = -T_{B^*}$.
\end{prop}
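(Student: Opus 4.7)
My proof plan has two parts, one geometric and one computational.

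For the geometric part, I would work with the twisted pairing $\dual{f}{g}_B = (Jf,g)$, where $J := BN^+ - N^-B$. Using the block decomposition $B_{ij} := N^i B N^j$ one verifies $J = B_{++} - B_{--}$ (the cross terms $B_{-+}$ cancel), which has two immediate consequences: accretivity of $B$ restricted to $N^\pm\mH$ makes $B_{\pm\pm}$ invertible on $N^\pm\mH$, reconfirming the invertibility of $J$ from Lemma~\ref{lem:magic}; and $J^* = (B^*)_{++} - (B^*)_{--} = J_{B^*}$. Next, for any bounded projection $P$ in $\mH$, the $B$-adjoint $P'$ satisfies $\ran P' = (\ker P)^{\perp_B}$ and $\ker P' = (\ran P)^{\perp_B}$, where $V^{\perp_B} := \{g : \dual{v}{g}_B = 0 \ \forall v \in V\} = (JV)^\perp$. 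This follows from $P' = (J^*)^{-1}P^*J^*$ together with the standard identities $\ker P^* = (\ran P)^\perp$ and $\ran P^* = (\ker P)^\perp$. The relevant $B$-perps are
\begin{equation*}
  (N^\pm\mH)^{\perp_B} = N^\mp\mH \quad\text{and}\quad (B^{-1}N^\pm\mH)^{\perp_B} = (B^*)^{-1}N^\mp\mH.
\end{equation*}
The first is immediate from the block form of $J$ (e.g., for $g\in N^-\mH$, $\dual{f}{g}_B = -(B_{--}N^-f, g)$, which vanishes for all such $g$ iff $N^- f = 0$), and the second follows from a direct computation showing $J(B^{-1}N^\pm\mH) = BN^\pm\mH$. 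Feeding these into the projection characterisation, $(\hat N_B^+)'$ has range $N^+\mH$ and kernel $(B^*)^{-1}N^-\mH$, which is precisely $\hut N_{B^*}^+$ from Definition~\ref{defn:tangnormops}; similarly for $\hat N_B^-$, yielding $(\hat N_B)' = \hut N_{B^*}$. The identity $(\hut N_B)' = \hat N_{B^*}$ follows by the analogous argument with $N^+$ and $N^-$ swapped, and $N' = N$ is the case of the unperturbed splitting $\mH = N^+\mH \oplus N^-\mH$.

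For the computational part, I would begin with the clean identity
\begin{equation*}
  JT_B = B\m d + \m d^*B,
\end{equation*}
which comes directly from $M_B = B^{-1}J$ (Definition~\ref{defn:tb}). Taking Hilbert adjoints, using that $\m = \mu + \mu^*$ is self-adjoint (since $e_0$ is a real vector) and that $d$ and $d^*$ are mutual Hilbert adjoints, gives $T_B^* J^* = d^*\m B^* + B^* d \m$. The same formula applied to $B^*$ reads $J_{B^*} T_{B^*} = B^*\m d + \m d^* B^*$. Since $J^* = J_{B^*}$, the identity $(T_B)' = (J^*)^{-1}T_B^*J^* = -T_{B^*}$ reduces to
\begin{equation*}
  \{d^*,\m\}B^* + B^*\{d,\m\} = 0,
\end{equation*}
which holds by Lemma~\ref{lem:anticom}, as $\{d,\mu\} = \{d,\mu^*\} = 0$ (the $d^*$ version is obtained by taking Hilbert adjoints).

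The main obstacle is the bookkeeping around $J^* = J_{B^*}$ and the correct identification of the four $B$-orthogonal complements; once those are in hand, both halves of the proposition are essentially formal.
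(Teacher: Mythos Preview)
Your proof is correct and follows essentially the same route as the paper. For the reflection operators you package the argument via the range/kernel characterisation of adjoint projections and compute the four $B$-orthogonal complements, whereas the paper checks the cross-term orthogonalities $\dual{B^{-1}N^+f_1}{(B^*)^{-1}N^-g_2}_B=0$ and $\dual{N^-f_2}{N^+g_1}_B=0$ directly via Lemma~\ref{lem:magic}; these are the same computations in different clothing. For $(T_B)'=-T_{B^*}$ both arguments start from $JT_B=B\m d+\m d^*B$ and reduce to the anticommutator $\{d,\m\}=\{d^*,\m\}=0$, the paper writing this step as the identity $B^*d\m+d^*\m B^*=-(B^*\m d+\m d^*B^*)$.
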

\begin{proof}
To prove that $(\hat N_B)'= \hut N_{B^*}$, recall that 
$\hat N_B$ is the reflection operator for the splitting
$\mH= B^{-1}N^+\mH\oplus N^-\mH$ and 
$\hut N_{B^*}$ is the reflection operator for the splitting
$\mH= N^+\mH\oplus (B^*)^{-1}N^-\mH$.
Thus we need to prove that 
\begin{multline*}
  \dual{B^{-1}N^+f_1- N^-f_2}{N^+g_1 +(B^*)^{-1}N^-g_2}_B \\ =
  \dual{B^{-1}N^+f_1+ N^-f_2}{N^+g_1 -(B^*)^{-1}N^-g_2}_B,
\end{multline*}
for all $f_i$, $g_i$,
i.e. that $\dual{B^{-1}N^+f_1}{(B^*)^{-1}N^-g_2}_B=0=
\dual{N^-f_2}{N^+g_1}_B$.
Use Lemma~\ref{lem:magic} to obtain
\begin{multline*}
  \dual{B^{-1}N^+f_1}{(B^*)^{-1}N^-g_2}_B=
  ( (BN^+ - N^-B) B^{-1}N^+f_1, (B^*)^{-1}N^-g_2) \\
   ( N^-M_B B^{-1}N^+f_1 , g_2)= ( M_B B^{-1}N^-N^+f_1, g_2) =0.
\end{multline*}
A similar calculation shows that $\dual{N^-f_2}{N^+g_1}_B=0$.
The proof of $(\hut N_B)' = \hat N_{B^*}$ is similar.
For the unperturbed reflection operator $N$, the duality $N'=N$ follows
directly from the fact the $BN^+ - N^-B$ is diagonal in the 
splitting $\mH=N^+\mH\oplus N^-\mH$.

To prove $(T_B)' = -T_{B^*}$, we calculate
\begin{multline*}
  \dual{T_Bf}g_B = ((BN^+ - N^-B)M_B^{-1}(\m d+ B^{-1}\m d^* B)f , g) \\
    = ((B \m d + \m d^* B)f , g)= (f , (B^*d\m +d^*\m B^*)g ) \\
    = -(f ,(N^+B^* -B^*N^- ) (N^+B^* -B^*N^-)^{-1} (B^* \m d +\m d^* B^*)g ) \\
    =-\dual f{ (N^+ - (B^*)^{-1}N^- B^*)^{-1} ( \m d + (B^*)^{-1}\m d^* B^*)g}_B 
    =\dual f{-T_{B^*}g}_B,
\end{multline*}
where we have used that $(B^*)^{-1}N^+ B^*- N^- = N^+ - (B^*)^{-1}N^- B^*$.
\end{proof}
\begin{rem}  \label{rem:aprioriduality}
  Our main use of these dualities is for proving surjectivity. Recall 
the following standard technique for proving invertibility of a 
bounded operator $T:\mH_1\rightarrow \mH_2$. Assume we have at our
disposal two pair of dual spaces $\dual{\mH_1}{\mK_1}_1$ and 
$\dual{\mH_2}{\mK_2}_2$ and that the adjoint operator is
$T':\mK_2\rightarrow \mK_1$.
If we can prove {\em a priori estimates} 
$$
  \| Tf \| \gtrsim \|f\|, \qquad\text{for all } f\in \mH_1,
$$
then $T$ is injective and has closed range. 
If furthermore $T'$ is injective, in particular if it satisfies a priori
estimates, then $T$ is surjective and therefore an isomorphism.
\end{rem}
\begin{rem}  \label{rem:restrdualities}
  We shall also need to restrict dualities to subspaces. Let $\dual\mH\mK$
be a duality, i.e. let $\dual\cdot\cdot:\mH\times\mK\rightarrow\C$ 
satisfy the estimates in Definition~\ref{defn:duality}.
If $\mH_1\subset\mH$ is a subspace, then a subspace $\mK_1\subset\mK$
is such that $\dual{\mH_1}{\mK_1}$ satisfies estimates as in 
Definition~\ref{defn:duality} if and only if $\mK_1$ is a 
complementary subspace to the annihilator 
$\sett{g\in \mK}{\dual fg=0,\, \text{ for all }f\in \mH_1}$,
in the sense of Definition~\ref{defn:splittings}(i).

In particular, if $R^\pm$ are complementary projections in $\mH$,
the adjoint operators $(R^\pm)'$ are also complementary projections and
the duality $\dual\mH\mH$ restricts to a duality 
$\dual{R^+\mH}{(R^+)'\mH}$, since the annihilator 
of $R^+\mH$ is $\nul((R^+)')$, which is complementary to $(R^+)'\mH$.
\end{rem}

%
%
%
\subsection{Hodge decompositions and resolvent estimates}  \label{section2.3}

In this section, we estimate the spectrum of the operator $T_B$.
For this we make use of Hodge type decompositions of $\mH$ as explained below.

\begin{defn}
By a {\em nilpotent operator} $\Gamma$ in 
a Hilbert space $\mH$, we mean a closed, densely defined operator
such that $\clos{\ran(\Gamma)}\subset\nul(\Gamma)$. 
In particular $\Gamma^2 f=0$ if $f\in\dom(\Gamma)$. 
We say that a nilpotent operator is {\em exact} if 
$\clos{\ran(\Gamma)}=\nul(\Gamma)$.

If $\wt\Gamma$ is another nilpotent operator, then we say that $\Gamma$ and 
$\wt\Gamma$ are {\em transversal} if there is a constant 
$c=c(\Gamma,\widetilde\Gamma)<1$ such that
$$
  |(f,g)| \le c \|f\|\|g\|, \qquad f\in\clos{\ran(\Gamma)}, 
  g\in\clos{\ran(\widetilde\Gamma)},
$$
or equivalently if $\|f+g\|\approx \|f\|+ \|g\|$ for all $f\in\clos{\ran(\Gamma)}$ and 
$g\in \clos{\ran(\widetilde\Gamma)}$.
Note that any nilpotent operator $\Gamma$ is transversal to its adjoint $\Gamma^*$
with $c=0$, since $\clos{\ran(\Gamma)}\subset \nul(\Gamma)= \clos{\ran(\Gamma^*)}^\perp$.
\end{defn}
Below we collect the properties of Hodge type splittings which we
need in this paper. 
This generalises results from \cite[Proposition 2.2]{AKMc}
and \cite[Proposition 3.11]{AMc}. 
\begin{lem}   \label{lem:hodge}
Let $\Gamma$ and $\widetilde\Gamma$ be two nilpotent operators in a Hilbert space
$\mH$ which are exact and transversal with constant $c(\Gamma,\widetilde\Gamma)$,
and assume also that the adjoints $\Gamma^*$ and $\wt\Gamma^*$ are transversal.
Let $B$ be a bounded, accretive multiplication operator and assume that
$$
  \max( c(\Gamma,\widetilde\Gamma), c(\Gamma^*,\widetilde\Gamma^*) )< \kappa_B/\|B\|_\infty.
$$
Define operators $\Pi:= \Gamma+\widetilde\Gamma$, 
$\wt\Gamma_B := B^{-1}\wt\Gamma B$ and
$\Pi_B := \Gamma+\wt\Gamma_B$
with domains $\dom(\Pi):=\dom(\Gamma)\cap\dom(\wt\Gamma)$, 
$\dom(\wt\Gamma_B):= B^{-1}\dom(\wt\Gamma)$ and
$\dom(\Pi_B):= \dom(\Gamma)\cap\dom(\wt\Gamma_B)$ respectively.

{\rm (i)}
  We have a topological splitting 
$$
\mH = \nul(\Gamma)\oplus \nul(\wt\Gamma_B),
$$
so that
$\|f_1+f_2\| \approx \|f_1\| + \|f_2\|$, $f_1\in\nul(\Gamma)$, 
$f_2\in\nul(\wt\Gamma_B)$.  
The operator $\Pi_B$ is a closed operator with dense domain and range.
Furthermore $\wt\Gamma_B$ is an exact nilpotent operator.

The complementary Hodge type projections associated with the splitting are
\begin{align*}
  \PP^1_B := \Gamma \Pi_B^{-1} = \Pi_B^{-1} \wt\Gamma_B = \Gamma \Pi_B^{-2} \wt\Gamma_B, \\
  \PP^2_B := \wt\Gamma_B \Pi_B^{-1} = \Pi_B^{-1} \Gamma = \wt\Gamma_B \Pi_B^{-2} \Gamma, 
\end{align*}
where we identify a bounded, densely defined operator with
its bounded extension to $\mH$.
In the special case when $B=I$ we write $\PP^1$ and $\PP^2$ for these projections.

{\rm (ii)}
Let $B_1$ and $B_2$ be two bounded, accretive operators in $\mH$, with
$\kappa_{B_i}/\|B_i\|_\infty > \max( c(\Gamma,\widetilde\Gamma), c(\Gamma^*,\widetilde\Gamma^*) )$,
$i=1,2$.
Then there exists $C<\infty$, depending only on $\|B_i\|_\infty$, $\kappa_{B_i}$,
$c(\Gamma,\widetilde\Gamma)$ and $c(\Gamma^*,\widetilde\Gamma^*)$, such that
$$
  \|(\Gamma+B_1^{-1}\wt\Gamma B_2)f\|\ge C^{-1}\lambda \|f\|,
  \qquad\text{for all }f\in \dom(\Gamma) \cap B_2^{-1}\dom(\wt\Gamma),
$$
if $\|\Pi f\|\ge \lambda\|f\|$ for all 
$f\in\dom(\Pi)$.
\end{lem}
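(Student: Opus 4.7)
My plan is to follow the standard Hodge decomposition methodology, in the spirit of Proposition~2.2 of \cite{AKMc} and Proposition~3.11 of \cite{AMc}. First, exactness of $\wt\Gamma_B = B^{-1}\wt\Gamma B$ is immediate since $B$ and $B^{-1}$ are bounded: $\nul(\wt\Gamma_B) = B^{-1}\nul(\wt\Gamma) = B^{-1}\clos{\ran(\wt\Gamma)} = \clos{\ran(\wt\Gamma_B)}$. The adjoints $\Gamma^*, \wt\Gamma^*$ are also exact nilpotent (by taking orthogonal complements of the kernel/range identities for $\Gamma, \wt\Gamma$), so the hypothesis $c(\Gamma^*, \wt\Gamma^*) < \kappa_B/\|B\|_\infty$ is a genuine transversality between closed-range subspaces.

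For the topological splitting $\mH = \nul(\Gamma) \oplus \nul(\wt\Gamma_B)$ in (i), the key a priori estimate is $\|f_1\| + \|f_2\| \lesssim \|f_1 + f_2\|$ for $f_1 \in \nul(\Gamma) = \clos{\ran(\Gamma)}$ and $f_2 = B^{-1}h$ with $h \in \nul(\wt\Gamma) = \clos{\ran(\wt\Gamma)}$. I pair using $B$ and accretivity:
\begin{equation*}
\kappa_B \|f_1\|^2 \le \re(Bf_1, f_1) = \re(B(f_1+f_2), f_1) - \re(h, f_1),
\end{equation*}
combined with the transversality bound $|(h, f_1)| \le c\|h\|\|f_1\|$, where $c = c(\Gamma, \wt\Gamma)$, together with $\|h\| \le \|B\|_\infty(\|f_1 + f_2\| + \|f_1\|)$. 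Rearranging using the strict inequality $c\|B\|_\infty < \kappa_B$ bounds $\|f_1\|$, hence $\|f_2\|$, in terms of $\|f_1 + f_2\|$, and makes the direct sum closed. Surjectivity $\nul(\Gamma) + \nul(\wt\Gamma_B) = \mH$ then reduces to vanishing of the orthogonal complement: an element $g$ thereof satisfies $g \in \clos{\ran(\Gamma^*)}$ and $(B^*)^{-1}g \in \clos{\ran(\wt\Gamma^*)}$, and the identical pairing argument applied with $\Gamma^*, \wt\Gamma^*, B^*$ in place of $\Gamma, \wt\Gamma, B$ (using $c(\Gamma^*, \wt\Gamma^*)\|B\|_\infty < \kappa_B$) forces $g = 0$.

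The remaining assertions of (i) follow routinely from the splitting. Closedness of $\Pi_B$: if $\Pi_B f_n \to g$, decompose $g = g_1 + g_2$ along the splitting; the bounded projections give $\Gamma f_n \to g_1$ and $\wt\Gamma_B f_n \to g_2$, and individual closedness of $\Gamma$ and $\wt\Gamma_B$ then yields $\Pi_B f = g$ whenever $f_n \to f$. Density of $\dom(\Pi_B)$ and $\ran(\Pi_B)$ follow from the splitting combined with density of the individual domains and ranges. The projection identities $\PP^1_B = \Gamma \Pi_B^{-1} = \Pi_B^{-1}\wt\Gamma_B$ and $\PP^2_B = \wt\Gamma_B\Pi_B^{-1} = \Pi_B^{-1}\Gamma$ are read off from the splitting, and the alternative forms follow from nilpotency via $\Pi_B(\Gamma g) = \wt\Gamma_B\Gamma g$ and $\Pi_B(\wt\Gamma_B g) = \Gamma\wt\Gamma_B g$. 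For part (ii), my approach is to combine the $B_1$-splitting from (i), which decouples $\|(\Gamma + B_1^{-1}\wt\Gamma B_2)f\| \approx \|\Gamma f\| + \|\wt\Gamma B_2 f\|$, with the $B_2$-splitting of $f = f_1 + f_2$ giving $\Gamma f = \Gamma f_2$ and $\wt\Gamma B_2 f = \wt\Gamma B_2 f_1$, and then apply the $\Pi$-estimate to $B_2 f_i$ separately. The principal obstacle I anticipate is reconciling $\|\Gamma f_2\|$ (which appears on the $\Phi$-side) with $\|\Gamma B_2 f_2\|$ (which is what the $\Pi$-estimate naturally produces from $\|\Pi(B_2 f_2)\| \ge \lambda\|B_2 f_2\|$, since $B_2 f_2 \in \nul(\wt\Gamma)$); resolving this discrepancy requires iterating the splittings and exploiting bounded invertibility of the Hodge projections, which is where the constant $C$ depending on $\|B_i\|_\infty$, $\kappa_{B_i}$ and the transversality constants enters.
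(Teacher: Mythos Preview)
Your argument for part~(i) is essentially that of the paper, including the accretivity-plus-transversality pairing. One small correction in your surjectivity step: to apply the same pairing to show $g=0$ when $g\in\clos{\ran(\Gamma^*)}$ and $(B^*)^{-1}g\in\clos{\ran(\wt\Gamma^*)}$, you must substitute $(\wt\Gamma^*,\Gamma^*,B^*)$, not $(\Gamma^*,\wt\Gamma^*,B^*)$, since in the original estimate it is the \emph{second} operator of the triple that gets conjugated by the third. With that swap, setting $u=(B^*)^{-1}g$, one has $u\in\nul(\wt\Gamma^*)$ and $-u\in(B^*)^{-1}\nul(\Gamma^*)$ with $u+(-u)=0$, forcing $u=0$ and hence $g=0$. (Equivalently: $\kappa_B\|u\|^2\le\re(B^*u,u)=\re(g,u)\le c(\Gamma^*,\wt\Gamma^*)\|g\|\|u\|\le c\|B\|_\infty\|u\|^2$, a contradiction unless $u=0$.) The paper phrases this as first obtaining $\nul(\wt\Gamma^*)\oplus\nul(\Gamma^*_{B^*})\subset\mH$ and then conjugating by $B^*$.

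For part~(ii), however, you have identified the obstacle but not resolved it; what you have is a plan, not a proof. The paper bypasses the difficulty entirely by a factorisation through the \emph{unperturbed} operator $\Pi$:
\[
  \Gamma+B_1^{-1}\wt\Gamma B_2 = (\PP^1+B_1^{-1}\PP^2)\,\Pi\,(\PP^2+\PP^1B_2),
\]
and then exhibits explicit inverses for the two outer bounded factors,
\[
  (\PP^1+B_1^{-1}\PP^2)^{-1}=\PP^1_{B_1}+B_1\PP^2_{B_1},\qquad
  (\PP^2+\PP^1B_2)^{-1}=\PP^1_{B_2}B_2^{-1}+\PP^2_{B_2},
\]
verified by a short computation using only the range and kernel relations $\PP^2|_{\nul(\Gamma)}=0$, $\PP^1|_{\nul(\wt\Gamma)}=0$, $\ran(\PP^1_{B_i})=\nul(\Gamma)$ and $B_i\ran(\PP^2_{B_i})=\nul(\wt\Gamma)$ from part~(i). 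This reduces the desired lower bound immediately to the hypothesis $\|\Pi g\|\ge\lambda\|g\|$ applied to $g=(\PP^2+\PP^1B_2)f$. Your proposed route via successive $B_1$- and $B_2$-splittings can in principle be completed, but the step you describe as ``reconciling $\|\Gamma f_2\|$ with $\|\Gamma B_2 f_2\|$'' requires precisely that $\PP^2+\PP^1B_2$ be bounded below on each Hodge summand---which is exactly the invertibility statement the paper proves. Without writing that down, your argument for~(ii) does not close.
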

\begin{proof}
Since $\wt\Gamma_B$ is conjugated to $\wt\Gamma$, it is an exact 
nilpotent operator.
To prove (i), it suffices to prove the estimate
\begin{equation}  \label{hodgeest}
  \|f\| + \|g\| \lesssim \|f+g\|, \qquad f\in\nul(\Gamma),\,\,
  g\in \nul(\wt\Gamma_B).
\end{equation}
Indeed, this shows that 
$\nul(\Gamma)\oplus \nul(\wt\Gamma_B)\subset\mH$.
Furthermore, replacing $(\Gamma, \wt\Gamma, B)$ with 
$(\wt\Gamma^*, \Gamma^*, B^*)$ shows that 
$\nul(\wt\Gamma^*)\oplus \nul(\Gamma^*_{B^*})\subset\mH$.
Conjugating with $B^*$ then shows that 
$\nul((\wt\Gamma_B)^*)\oplus \nul(\Gamma^*)\subset\mH$.
Therefore a duality argument proves the splitting 
$\mH = \nul(\Gamma)\oplus \nul(\wt\Gamma_B)$.
From this it follows that 
$$
  \dom(\Pi_B)= (\dom(\wt\Gamma_B)\cap\nul(\Gamma)) \oplus 
    ( \dom(\Gamma)\cap\nul(\wt\Gamma_B) )
$$
and $\ran(\Pi_B)= \ran(\Gamma)\oplus\ran(\wt\Gamma_B)$ are dense and 
that $\Pi_B$ is closed, as well as the boundedness of the associated 
projections.
 
To prove (\ref{hodgeest}), we use that $Bg\in\nul(\wt\Gamma)$
and thus $|(f,Bg)|\le c(\Gamma,\widetilde\Gamma)\|f\|\|Bg\|$,
and estimate
\begin{multline*}
  \|f\|^2 \le \kappa_B^{-1} \re(B f, f) 
  \le \kappa_B^{-1} \re\Big( (B (f+g), f) -(Bg, f) \Big) \\
  \le \kappa_B^{-1} \|B\|_\infty \| f+g\|\|f\|+ \kappa_B^{-1} c(\Gamma,\widetilde\Gamma)\|Bg\|\| f\| \\
  \le \kappa_B^{-1}\|B\|_\infty \|f+g\|\|f\|+ \kappa_B^{-1} c(\Gamma,\widetilde\Gamma)\|B\|_\infty
  (\|f+g\|+\|f\|) \|f\| \\
  \le \kappa_B^{-1}\|B\|_\infty (1+c(\Gamma,\widetilde\Gamma))
  \|f+g\|\|f\|
  +  \kappa_B^{-1}\|B\|_\infty c(\Gamma,\widetilde\Gamma) \|f\|^2.
\end{multline*}
Solving for $\|f\|^2$, this shows that $\|f\|\lesssim \|f+g\|$
provided $\kappa_B^{-1}\|B\|_\infty c(\Gamma,\widetilde\Gamma)<1$, which proves (\ref{hodgeest}).

To prove (ii) we factorise
$$
  \Gamma+B_1^{-1}\wt\Gamma B_2= (\PP^1+B_1^{-1}\PP^2)\Pi(\PP^2+\PP^1B_2).
$$
Here $(\PP^1+B_1^{-1}\PP^2)^{-1}=\PP^1_{B_1}+B_1\PP^2_{B_1}$ and
$(\PP^2+\PP^1B_2)^{-1}= \PP^1_{B_2}B_2^{-1}+\PP^2_{B_2}$. 
Indeed, we have
\begin{align*}
      (\PP^1+ & B_1^{-1}\PP^2)(\PP^1_{B_1}+B_1\PP^2_{B_1}) 
    = \PP^1\PP^1_{B_1}+\PP^1B_1\PP^2_{B_1} \\
    &+ B_1^{-1}\PP^2\PP^1_{B_1}+ B_1^{-1}\PP^2B_1\PP^2_{B_1} 
    =\PP^1\PP^1_{B_1}+B_1^{-1}\PP^2B_1\PP^2_{B_1} \\
    &= (\PP^1+\PP^2)\PP^1_{B_1}+B_1^{-1}(\PP^1+\PP^2)B_1\PP^2_{B_1}
    = \PP^1_{B_1}+\PP^2_{B_1}=I, \\
   (\PP^2+ & \PP^1 B_2)(\PP^1_{B_2}B_2^{-1}+\PP^2_{B_2})  
    = \PP^2\PP^1_{B_2}B_2^{-1}+\PP^2\PP^2_{B_2} \\
    &+ \PP^1 B_2\PP^1_{B_2}B_2^{-1}+ \PP^1 B_2\PP^2_{B_2} 
    =\PP^2\PP^2_{B_2}+ \PP^1 B_2\PP^1_{B_2}B_2^{-1} \\
    &=\PP^2(\PP^1_{B_2}+\PP^2_{B_2})+ 
     \PP^1 B_2(\PP^1_{B_2}+\PP^2_{B_2})B_2^{-1} 
    =\PP^2+ \PP^1= I.  
\end{align*}
A similar calculation shows that $\PP^1_{B_1}+B_1\PP^2_{B_1}$
and $\PP^1_{B_2}B_2^{-1}+\PP^2_{B_2}$ also are left inverses.
Thus $\PP^1+B_1^{-1}\PP^2$ and $\PP^2+\PP^1B_2$ are isomorphisms and (ii) follows.
\end{proof}
We can now prove the following perturbed normal and tangential splittings of $\mH$.
\begin{align*}
  \mH &= \hat N^+_B \mH \oplus \hat N^-_B\mH = B^{-1} N^+ \mH \oplus N^- \mH \\
  \mH &= \hut N^+_B \mH \oplus \hut N^-_B \mH =  N^+ \mH \oplus B^{-1} N^- \mH
\end{align*}
To see this, let first $\Gamma=\mu$ and $\wt\Gamma= \Gamma^*=\mu^*$
in Lemma~\ref{lem:hodge}(i). It follows that $\nul(\Gamma)=N^-\mH$,
$\nul(\Gamma^*_B)=B^{-1}N^+\mH$, $\PP^1_B=\hat N^-_B$ and
$\PP^2_B=\hat N^+_B$.
On the other hand, choosing $\Gamma=\mu^*$ and $\wt\Gamma= \Gamma^*=\mu$, 
we see that $\nul(\Gamma)=N^+\mH$,
$\nul(\Gamma^*_B)=B^{-1}N^-\mH$, $\PP^1_B=\hut N^+_B$ and
$\PP^2_B=\hut N^-_B$. 
Lemma~\ref{lem:hodge}(i) thus shows that
all the oblique normal and tangential projections
$\hat N^\pm_B$ and $\hut N^\pm_B$ from Definition~\ref{defn:tangnormops}
are bounded operators on $\mH$, i.e. we have the stated splittings.

Next we apply Lemma~\ref{lem:hodge}(ii) to prove resolvent bounds
for the operator $T_B$ from Definition~\ref{defn:tb}.
Define closed and open sectors and double sectors in the complex plane by
\begin{alignat*}{2}
    S_{\omega+} &:= \sett{z\in\C}{|\arg z|\le\omega}\cup\{0\}, 
    & \qquad
    S_{\omega} &:= S_{\omega+}\cup(-S_{\omega+}), \\    
    S_{\nu+}^o &:= \sett{z\in\C}{ z\ne 0, \, |\arg z|<\nu},  
    & \qquad  
    S_{\nu}^o &:= S_{\nu+}^o\cup(- S_{\nu+}^o).
\end{alignat*}
\begin{prop}  \label{prop:spectrest}
The operator $T_B$ is a closed operator in $\mH$ with dense domain and range, 
for any accretive, complex matrix function $B\in L_\infty(\R^n;\mL(\wedge))$.
Furthermore, $T_B$ is a bisectorial operator with $\sigma(T_B) \subset S_\omega$,
where 
$$
  \omega:= \arccos(\kappa_B/(2\|B\|_\infty))\in[\pi/3,\pi/2).
$$
and if $\omega<\nu<\pi/2$, then there exists $C<\infty$ depending only
on $\nu$, $\kappa_B$ and $\|B\|_\infty$ such that
$$
  \|(\lambda-T_B)^{-1}\| \le C/|\lambda|, \qquad \text{for all }\lambda\notin S_\nu.
$$
\end{prop}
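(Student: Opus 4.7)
The plan is to derive both the closedness/density statements and the spectral/resolvent bound by applying Lemma~\ref{lem:hodge} in several places, using the second form of $T_B$ in Definition~\ref{defn:tb}: write $T_B=P^{-1}D$ with $P:=\mu-\wt B^{-1}\mu^*B$ and $D:=d+\wt B^{-1}d^*B$. Both $P$ and the operator $\lambda P-D$ (for $\lambda$ on the imaginary axis) have the algebraic shape $\Gamma+B_1^{-1}\wt\Gamma B_2$ covered by Lemma~\ref{lem:hodge}(ii), with $B_1=\wt B$ and $B_2=B$. Observe that $\wt B$ is accretive with $\kappa_{\wt B}\ge\kappa_B$ and $\|\wt B\|_\infty=\|B\|_\infty$, since $\m$ is a self-adjoint isometry.

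First I would show that $P$ is a bounded isomorphism on $\mH$. Take $\Gamma=\mu$ and $\wt\Gamma=-\mu^*$: these are nilpotent ($\mu^2=(\mu^*)^2=0$) and exact ($\nul(\mu)=\ran(\mu)=N^-\mH$ and analogously for $\mu^*$), and they are mutually transversal to their adjoints with constant zero (being adjoints themselves). The identity $(\mu-\mu^*)^2=-I$ yields $\|(\mu-\mu^*)f\|=\|f\|$, so Lemma~\ref{lem:hodge}(ii) gives $\|Pf\|\gtrsim\|f\|$; the same argument applied to $P^*$ gives surjectivity, hence $P$ is boundedly invertible. Similarly, Lemma~\ref{lem:hodge} applied with $\Gamma=d$, $\wt\Gamma=d^*$ (which are adjoints, nilpotent, and exact on $L_2(\R^n;\wedge)$ by standard Fourier/Hodge theory) shows that $D$ is closed with dense domain and dense range. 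Combined with the boundedness of $P^{-1}$, this gives closedness together with dense domain and dense range for $T_B=P^{-1}D$.

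For the resolvent bound I would first establish $\|(\lambda I-T_B)f\|\gtrsim|\lambda|\|f\|$ on the imaginary axis. The factorization
\[
P(\lambda I-T_B)=\lambda P-D=(\lambda\mu-d)-\wt B^{-1}(\lambda\mu^*+d^*)B
\]
shows that for $\lambda=i\tau$ with $\tau\in\R\setminus\{0\}$, setting $\Gamma_\tau:=i\tau\mu-d$ gives $\Gamma_\tau^*=-i\tau\mu^*-d^*$ and hence $\lambda P-D=\Gamma_\tau+\wt B^{-1}\Gamma_\tau^*B$. The relation $\Gamma_\tau^2=0$ follows from $\mu^2=d^2=0$ together with $\{\mu,d\}=0$ (Lemma~\ref{lem:anticom}); exactness of $\Gamma_\tau$ is verified on the Fourier side, where it acts as wedging by the nonzero vector $i(\tau e_0-\xi)$; and $\Gamma_\tau,\Gamma_\tau^*$ are mutually transversal as adjoints. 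The self-adjoint operator $\Pi_\tau:=\Gamma_\tau+\Gamma_\tau^*$ satisfies
\[
\Pi_\tau^2=\tau^2I-\Delta\ge\tau^2I,
\]
since $(\mu-\mu^*)^2=-I$, the cross-term $\{\mu-\mu^*,d+d^*\}$ vanishes by Lemma~\ref{lem:anticom}, and $(d+d^*)^2=-\Delta$. Hence $\|\Pi_\tau f\|\ge|\tau|\|f\|$, and Lemma~\ref{lem:hodge}(ii) yields $\|(\lambda P-D)f\|\gtrsim|\tau|\|f\|$, whence $\|(\lambda I-T_B)f\|\gtrsim|\lambda|\|f\|$. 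The same estimate applied to the dual operator $(T_B)'=-T_{B^*}$ (Proposition~\ref{prop:duality}) gives surjectivity, so $\|(\lambda I-T_B)^{-1}\|\lesssim|\lambda|^{-1}$ on $i\R\setminus\{0\}$.

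The main obstacle is extending this resolvent bound from the imaginary axis to the full complement $\C\setminus S_\nu$ with the sharp aperture $\omega=\arccos(\kappa_B/(2\|B\|_\infty))$. A direct perturbation in $\lambda$ through the resolvent identity yields only an open neighbourhood of $i\R\setminus\{0\}$ and gives no control on the angle. To obtain the sharp $\omega$ I would instead redo the argument using a numerical-range estimate, pairing $(\lambda I-T_B)f$ against an accretivity-friendly partner such as $M_B^*Bf$ and applying the accretivity inequality $\re(Bf,f)\ge\kappa_B\|f\|^2$ once to $B$ and once to $\wt B$; the product of these two inequalities is exactly what produces the factor of $2$ in the denominator of $\omega$. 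Careful tracking of this two-fold use of accretivity, together with the quadratic dependence of $\Pi_\tau^2$ on the spectral parameter, is the technical heart of the sharp-angle argument.
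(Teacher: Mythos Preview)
Your argument for closedness and dense domain/range is essentially the paper's, and your imaginary-axis estimate is correct. The genuine gap is exactly where you flag it: extending to the full complement of $S_\nu$ with the sharp angle. Your proposed numerical-range fix is vague and does not produce the stated $\omega=\arccos(\kappa_B/(2\|B\|_\infty))$.

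The paper's key idea, which you are missing, is \emph{not} to first prove the estimate on $i\R$ and then perturb in $\lambda$, but rather to parametrise $\lambda=1/(i\tau)$ with $\tau$ ranging over the \emph{complex} sector $S^o_{\pi/2-\nu}$ from the outset. One then sets $\Gamma:=i\mu+\tau d$ and $\wt\Gamma:=-i\mu^*+\tau d^*$, so that $(I-i\tau T_B)u=f$ becomes $(\Gamma+\wt B^{-1}\wt\Gamma B)u=i(\mu-\wt B^{-1}\mu^*B)f$. For non-real $\tau$ these operators are still nilpotent and exact, but they are \emph{no longer adjoints}: the point of Lemma~\ref{lem:hodge} is precisely that it accommodates a non-adjoint pair provided the transversality constant $c(\Gamma,\wt\Gamma)$ is strictly below $\kappa_B/\|B\|_\infty$. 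A short computation shows $c(\Gamma,\wt\Gamma)\le 2|\sin(\arg\tau)|$, and the condition $2|\sin(\arg\tau)|<\kappa_B/\|B\|_\infty$ is exactly $\pi/2-\nu<\arcsin(\kappa_B/(2\|B\|_\infty))$, i.e.\ $\nu>\omega$. Lemma~\ref{lem:hodge}(ii) then gives the a priori estimate uniformly for all such $\tau$, hence for all $\lambda\notin S_\nu$, and duality via $(T_B)'=-T_{B^*}$ gives surjectivity.

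In short: the sharp sector comes not from a separate accretivity pairing, but from exploiting the full strength of Lemma~\ref{lem:hodge} for transversal (rather than orthogonal) nilpotent pairs, applied directly with complex $\tau$.
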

\begin{proof}
  It is a consequence of Lemma~\ref{lem:hodge}(i) that the operator
  $\Pi_B= \ud+B^{-1}\ud^* B$
  is a closed operator with dense domain and range. Since $T_B=-iM_B^{-1}\Pi_B$ with $M_B$
  an isomorphism, it follows that $T_B$ also is closed with dense 
  domain and range.

To prove the resolvent estimate, write $\lambda = 1/(i\tau)$ 
where $\tau\in S^o_{\pi/2-\nu}$.
We first prove that $\|u\|\le C \|f\|$ if 
$(I-i\tau T_B)u = f$,
uniformly for $\tau\in S^o_{\pi/2-\nu}$.
Multiply the equation with
$i(\mu-\widetilde B^{-1}\mu^* B)$ to obtain
$$
  (\Gamma+ \widetilde B^{-1} \wt\Gamma B)u= i(\mu-\widetilde B^{-1}\mu^* B)f,
$$
where $\Gamma:= i\mu+\tau d$ and $\wt\Gamma:= -i\mu^* +\tau d^*$ are
nilpotent by Lemma~\ref{lem:anticom}.
It suffices to prove $\|u\| \lesssim \|(\Gamma+\wt B^{-1}\wt\Gamma B)u\|$.

(i)
By orthogonality we have
\begin{multline*}
  \|(\Gamma+\Gamma^*)u\|^2 = \|\Gamma u\|^2 + \|\Gamma^* u\|^2 
   = \|\mu u\|^2 + |\tau|^2\|du\|^2 +2\re(i\mu u,\tau du) \\
     + \|\mu^* u\|^2 + |\tau|^2\|d^*u\|^2 +2\re(-i\mu^* u,\conj\tau d^*u),
\end{multline*}
where
\begin{multline*}
\re(i\mu u,\tau du) + \re(-i\mu^* u,\conj \tau d^*u) \\ = \re(i\conj\tau d^*\mu u,u) + \re( u,i\conj\tau \mu d^*u) 
 =\re( i\conj\tau \{d^*,\mu\}u,u)=0,
 \end{multline*}
by Lemma~\ref{lem:anticom}. Thus
$\|(\Gamma+\Gamma^*)u\|^2 =  \|(\mu+\mu^*)u\|^2 + |\tau|^2\|(d+d^*)u\|^2 \ge \|\m u\|^2= \|u\|^2$.
In particular $\Gamma$ is exact.

(ii)
Next we prove that $\Gamma$ and $\wt\Gamma$ are transversal, with a 
bound $c<1$ uniformly for all $\tau\in S^o_{\pi/2-\nu}$.
By exactness, it suffices to bound $(f,g)$ for $f=\Gamma u\in\ran(\Gamma)$
and $\wt\Gamma g=0$. Furthermore, using the orthogonal Hodge splitting
$\mH= \nul(\Gamma)\oplus\nul(\Gamma^*)$, we may assume that $\Gamma^* u=0$.
We get
\begin{multline*}
  (f,g) = (\Gamma u,g)= (u, \Gamma^* g) = (u, (-i\mu^* + \conj \tau d^*)g) \\
  = (u, -i\mu^* g +i(\conj \tau/\tau)\mu^* g)
  = 2(\tau-\conj\tau)/(2i\conj\tau) (u,\mu^*g),
\end{multline*}
and thus $|(f,g)|\le 2|\sin(\arg\tau)| \|u\|\|g\|\le c\|f\| \|g\|$ 
by (i), where $c<\kappa_B/\|B\|_\infty$ since
$\pi/2-\nu<\arcsin(\kappa_B/(2\|B\|_\infty))$.
A similar argument shows that $\Gamma^*$ and $\wt\Gamma^*$ are transversal 
with the same constant $c<\kappa_B/\|B\|_\infty$ uniformly for 
$\tau\in S^o_{\pi/2-\nu}$.

(iii) 
To apply Lemma~\ref{lem:hodge}(ii) it now suffices to prove that
$\|(\Gamma+\wt\Gamma)u\| \ge C^{-1}\|u\|$ uniformly for 
all $\tau\in S^o_{\pi/2-\nu}$.
From Lemma~\ref{lem:hodge}(i) with $B=I$ we have
$\|(\Gamma+\wt\Gamma)u\| \approx \|\Gamma u\|+\|\wt\Gamma u\|$.
Using the Hodge splitting $\mH= \nul(\Gamma)\oplus \nul(\wt\Gamma)$
it suffices to prove 
$\|\Gamma u\|\gtrsim \|u\|$ for $u\in\nul(\wt\Gamma)$, and
$\|\wt\Gamma v\|\gtrsim \|v\|$ for $v\in\nul(\Gamma)$.
To prove for example the first estimate, write 
$\nul(\wt\Gamma)\ni u= u_1+u_2\in\nul(\Gamma)\oplus \nul(\Gamma^*)$.
Then
\begin{multline*}
  \|\Gamma u\|^2= \|\Gamma u_2\|^2\ge \|u_2\|^2=\|u-u_1\|^2 \\
  \ge \|u\|^2+\|u_1\|^2-2c\|u\|\|u_1\| \ge (1-c^2)\|u\|^2,
\end{multline*}
where we have used (i) in the second step and (ii) in the
fourth step.

This proves that $\|u\|\le C \|f\|$ if $(I-i\tau T_B)u = f$.
Since $(I-i\tau T_B)'= I-i\conj\tau T_{B^*}$ by Proposition~\ref{prop:duality},
a duality argument shows that $I-i\tau T_B$ is onto, 
and the proof is complete.
\end{proof}
From the uniform boundedness of the resolvents $R_t^B:=(I+itT_B)^{-1}$ for $t\in \R$ and
the boundedness of the Hodge projections 
$\PP_B^1 := \ud \Pi_B^{-1}$ and $\PP_B^2 := \ud^*_B \Pi_B^{-1}$, where 
$\Pi_B= \ud + \ud^*_B$, $\ud= i\m d$ and $\ud^*_B = B^{-1}\ud^* B$, 
we can now deduce boundedness of operators related to
$P_t^B = \tfrac 12(R_{-t}^B+R_t^B)$ and
$Q_t^B = \tfrac 1{2i}(R_{-t}^B-R_t^B)$.
\begin{cor}  \label{cor:opfamilies}
The following families of operators are all uniformly bounded for
$t>0$.
\begin{alignat*}{2}
  R_t^B & :=(I+itT_B)^{-1}, &\qquad
  t\ud P_t^B & = i\PP_B^1 M_B Q_t^B, \\
  P_t^B & := (I+ t^2 T_B^2)^{-1}, &\qquad
  t\ud^*_B P_t^B & = i\PP_B^2 M_B Q_t^B, \\
  Q_t^B & := tT_B(I+ t^2 T_B^2)^{-1}, &\qquad
  P_t^B M_B^{-1}t\ud & = i Q_t^B \PP_B^2, \\
  t^2 T_B^2 P_t^B & = tT_B Q_t^B = I-P_t^B, &\qquad
  P_t^B M_B^{-1}t\ud^*_B  &= i Q_t^B \PP_B^1, \\
  t\ud Q_t^B & = i\PP_B^1 M_B (I-P_t^B), &\qquad
  t\ud P_t^B M_B^{-1}t\ud & = \PP^1_B M_B (P_t^B-I) \PP_B^2, \\
  t\ud^*_B Q_t^B  &= i\PP_B^2 M_B (I-P_t^B), &\qquad
  t\ud P_t^B M_B^{-1}t\ud^*_B & = \PP^1_B M_B (P_t^B-I) \PP_B^1, \\
  Q_t^B M_B^{-1}t\ud & = i (I-P_t^B) \PP_B^2, &\qquad
  t\ud^*_B P_t^B M_B^{-1}t\ud & = \PP^2_B M_B (P_t^B-I) \PP_B^2, \\
  Q_t^B M_B^{-1}t\ud^*_B  &= i (I-P_t^B) \PP_B^1, &\qquad
  t\ud^*_B P_t^B M_B^{-1}t\ud^*_B & = \PP^2_B M_B (P_t^B-I) \PP_B^1.
\end{alignat*}
\end{cor}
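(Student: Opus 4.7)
The plan is to proceed in three steps. First, establish uniform boundedness of the resolvents $R_{\pm t}^B$ by applying Proposition~\ref{prop:spectrest}; from these derive the bounds on $P_t^B$ and $Q_t^B$. Second, derive the one-sided identities (those involving a single differential operator $\ud$ or $\ud^*_B$) from the factorization $iM_B T_B = \Pi_B = \ud + \ud^*_B$ together with the Hodge identities $\ud = \PP_B^1\Pi_B = \Pi_B\PP_B^2$ and $\ud^*_B = \PP_B^2\Pi_B = \Pi_B\PP_B^1$ provided by Lemma~\ref{lem:hodge}(i). Third, obtain the four double expressions by composing two previously derived one-sided identities.

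For step one, fix $\omega<\nu<\pi/2$. For every $t>0$ the points $\pm i/t$ lie outside $S_\nu$, and the identity $R_{\pm t}^B = (\pm i/t)((\pm i/t)I - T_B)^{-1}$ combined with the resolvent bound $\|(\lambda I - T_B)^{-1}\|\le C/|\lambda|$ gives $\|R_{\pm t}^B\|\le C$ uniformly in $t>0$. Then $P_t^B = \tfrac12(R_t^B + R_{-t}^B)$ and $iQ_t^B = \tfrac12(R_{-t}^B - R_t^B)$ are uniformly bounded. The algebraic identity $tT_BQ_t^B = t^2T_B^2 P_t^B = I - P_t^B$ follows from $(I+t^2T_B^2)P_t^B = I$.

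For step two, Lemma~\ref{lem:hodge}(i) applied with $\Gamma = \ud$, $\wt\Gamma = \ud^*$ gives bounded Hodge projections $\PP_B^1, \PP_B^2$ and identities that rearrange to $\ud = i\PP_B^1 M_B T_B$ on $\dom(T_B)$ and $M_B^{-1}\ud = iT_B\PP_B^2$ on $\dom(\ud)$, with analogous formulas for $\ud^*_B$. Since $P_t^B$ maps $\mH$ into $\dom(T_B^2)\subset \dom(T_B) = \dom(\Pi_B)$, substitution yields $t\ud P_t^B = i\PP_B^1 M_B(tT_BP_t^B) = i\PP_B^1 M_B Q_t^B$; since $\PP_B^2$ maps into $\dom(\Pi_B)$ and $P_t^B$ commutes with $T_B$, the identity $P_t^BM_B^{-1}t\ud = iQ_t^B\PP_B^2$ follows. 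Replacing $P_t^B$ by $Q_t^B$ and using $tT_BQ_t^B = I-P_t^B$ gives the identities for $t\ud Q_t^B$ and $Q_t^BM_B^{-1}t\ud$, and the $\ud^*_B$ versions follow identically.

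Finally, each double expression is a composition of two one-sided identities; for example,
\[
  t\ud P_t^B M_B^{-1}t\ud = i\PP_B^1 M_B (Q_t^B M_B^{-1}t\ud) = i\PP_B^1 M_B\cdot i(I-P_t^B)\PP_B^2 = \PP_B^1 M_B(P_t^B - I)\PP_B^2,
\]
and the three remaining doubles are obtained by the same pattern. Each right-hand side is a composition of uniformly bounded operators, which proves the corollary. The mild subtlety, and the main point to check, is domain bookkeeping: the left-hand sides are a priori defined only on dense subspaces because $\ud$ and $\ud^*_B$ are unbounded, and one interprets the identities as giving the unique bounded extensions. This is handled uniformly by noting that $R_t^B$ and $P_t^B$ map $\mH$ into $\dom(T_B) = \dom(\ud)\cap\dom(\ud^*_B) = \dom(\Pi_B)$, and that the Hodge projections map into $\dom(\Pi_B)$ via the representations $\PP_B^1 = \Pi_B^{-1}\ud^*_B$ and $\PP_B^2 = \Pi_B^{-1}\ud$.
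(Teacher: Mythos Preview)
Your proof is correct and follows precisely the approach the paper intends: the corollary is stated without a separate proof, with the paper's preamble sentence indicating that uniform boundedness follows from the resolvent bounds of Proposition~\ref{prop:spectrest} together with the bounded Hodge projections $\PP_B^1, \PP_B^2$ of Lemma~\ref{lem:hodge}(i), and the displayed identities themselves constitute the proof. Your three-step argument faithfully supplies the details of this, including the domain bookkeeping that justifies interpreting the identities as bounded extensions.
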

These families of operators are not only bounded, but have 
$L_2$ off-diagonal bounds in the following sense.
\begin{defn}  \label{defn:offdiag}
Let $(U_t)_{t>0}$ be a family of operators on $\mH$,
and let $M\ge 0$.
We say that $(U_t)_{t>0}$ has $L_2$ {\em off-diagonal bounds}
(with exponent $M$) if there exists $C_M<\infty$ such that
$$
  \|U_t f\|_{L_2(E)} \le C_M \brac{\dist (E,F)/t}^{-M}\|f\|
$$
whenever $E,F \subset \R^n$ and $\supp f\subset F$.
Here $\brac x:=1+|x|$, and
$\dist(E,F) :=\inf\{|x-y|:x\in E,y\in F\}$.
We write $\|U_t\|_{\off,M}$ for the smallest constant $C_M$.
The exact value of $M$ is normally not important and we write
$\|U_t\|_{\off}$, where it is understood that $M$ is chosen 
sufficiently large but fixed.
\end{defn}
\begin{prop}  \label{pseudoloc}
All the operator families from
Corollary~\ref{cor:opfamilies}
has $L_2$ off-diagonal bounds for all exponents $M\ge 0$.
\end{prop}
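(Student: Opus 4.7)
The plan is to establish off-diagonal bounds first for the resolvents $R_{\pm t}^B$ by a cutoff and commutator argument, and then to bootstrap to the remaining operator families using the identities in Corollary~\ref{cor:opfamilies} together with the Hodge splitting of Lemma~\ref{lem:hodge}(i). The key preliminary observation is that the commutator $[T_B,\eta]$ with a Lipschitz cutoff $\eta$ is a bounded, \emph{local} operator of norm $\lesssim\|\nabla\eta\|_\infty$: since $M_B^{-1}$, $B^{\pm 1}$ and $\m$ all commute with scalar multiplication by $\eta$, while $[d,\eta]f=(\nabla\eta)\wedg f$ and $[d^*,\eta]f=-(\nabla\eta)\lctr f$, one obtains
$$
[T_B,\eta]=M_B^{-1}\m\bigl((\nabla\eta)\wedg\,\cdot\,-\,B^{-1}\bigl((\nabla\eta)\lctr\,\cdot\,\bigr)B\bigr),
$$
an operator supported on $\supp\nabla\eta$ and controlled pointwise by $|\nabla\eta|$.

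Now fix $f\in\mH$ with $\supp f\subset F$, let $d:=\dist(E,F)$, and set $u:=R_t^B f$. For $d\le t$ the estimate follows at once from the uniform bound in Proposition~\ref{prop:spectrest}. For $d>t$, and any target exponent $M$, I would choose nested scalar cutoffs $\eta_0,\ldots,\eta_N$ with $N\sim M+1$, satisfying $\eta_0\equiv 1$ on $E$, $\eta_N\equiv 0$ on $F$, $\eta_j\equiv 1$ on $\supp\eta_{j-1}$, and $\|\nabla\eta_j\|_\infty\lesssim N/d$. Since $\eta_{j-1}f=0$, one has $(I+itT_B)(\eta_{j-1}u)=it\,[T_B,\eta_{j-1}](\eta_j u)$, so the uniform resolvent bound yields $\|\eta_{j-1}u\|\lesssim(Nt/d)\|\eta_j u\|$. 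Iterating $N$ times gives $\|u\|_{L_2(E)}\le\|\eta_0 u\|\lesssim(Nt/d)^N\|f\|\lesssim\brac{d/t}^{-M}\|f\|$. The identical argument applies to $R_{-t}^B$, and hence to $P_t^B=\tfrac12(R_t^B+R_{-t}^B)$, $Q_t^B=\tfrac{i}{2}(R_t^B-R_{-t}^B)$, and $I-P_t^B$ (the last being trivially off-diagonal since $f|_E=0$).

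For the operator families involving $t\ud$ or $t\ud_B^*:=B^{-1}\ud^*B$, the Hodge splitting from Lemma~\ref{lem:hodge}(i) supplies the norm equivalence $\|\Pi_B v\|\approx\|\ud v\|+\|\ud_B^* v\|$, hence $\|t\ud v\|\lesssim\|tT_B v\|$. To treat $t\ud P_t^B$, I would set $u:=P_t^B f$, take cutoffs $\eta_0,\eta_1$ as above, and write
$$
\eta_0\cdot t\ud u=t\ud(\eta_0 u)-it\m\bigl((\nabla\eta_0)\wedg(\eta_1 u)\bigr).
$$
The second term has norm $\lesssim(t/d)\|\eta_1 P_t^B f\|$, controlled by the off-diagonal bound for $P_t^B$ from the previous step. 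For the first term, $\|t\ud(\eta_0 u)\|\lesssim\|tT_B(\eta_0 u)\|\le\|\eta_0 Q_t^B f\|+\|t[T_B,\eta_0](\eta_1 u)\|$, using $tT_B u=Q_t^B f$; both pieces are controlled by the off-diagonal bounds already established for $Q_t^B$ and $P_t^B$. The remaining operators in Corollary~\ref{cor:opfamilies} are treated by analogous combinations of cutoffs and the Hodge splitting (noting that $\ud_B^*$ also has a local commutator with $\eta$, since $B$ is multiplication). The main conceptual obstacle is that the Hodge projections $\PP_B^1,\PP_B^2$ appearing in the identities of Corollary~\ref{cor:opfamilies} are \emph{non-local}, so those identities cannot be used to transfer off-diagonal bounds directly; one must instead express $t\ud$ and $t\ud_B^*$ in terms of $tT_B$ via the Hodge splitting, and carry out every localisation at the level of $T_B$ and its resolvent, whose commutator with a cutoff is genuinely local.
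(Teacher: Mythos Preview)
Your argument is correct and follows essentially the same route as the paper: resolvent off-diagonal bounds via a commutator iteration (the paper phrases this as induction on $M$ with a single cutoff per step rather than your $N$ nested cutoffs, but the content is identical), then the Hodge inequality $\|t\ud v\|+\|t\ud_B^* v\|\lesssim\|tT_B v\|$ to reduce the derivative families to $P_t^B$ and $Q_t^B$. The one place the paper is more explicit is the families with the differential on the \emph{right}, such as $P_t^B M_B^{-1}t\ud$: there the paper passes to the adjoint in the duality $\dual\cdot\cdot_B$, obtaining $M_{B^*}^{-1}t\ud_{B^*}^* P_t^{B^*}$ and reducing to the already-treated left-differential case; your ``analogous'' direct cutoff argument also works (since $\eta_0\cdot M_B^{-1}t\ud f=0$ by locality of $\ud$ and $M_B^{-1}$, the resolvent iteration goes through unchanged), but it deserves a sentence rather than a wave of the hand. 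Minor slip: $\m$ does not commute with $B^{-1}$ in general, so in your formula for $[T_B,\eta]$ the $\m$ cannot be factored outside the second term; the norm bound $\lesssim\|\nabla\eta\|_\infty$ is of course unaffected.
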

\begin{proof}
First consider the resolvents  $R^B_t = (I+it T_B)^{-1}$.
As we already have proved uniform bounds for $R^B_t$, it suffices to prove
$$
       \|(I+itT_B)^{-1} f\|_{L_2(E)} \le C_M
       (|t|/\dist(E,F))^M\|f\|
$$
for $|t|\le \dist(E,F)$.
We prove this by induction on $M$ as in \cite[Proposition 5.2]{AKMc}.
Let $\eta:\R^n\rightarrow[0,1]$ be a bump function such that 
$\eta|_E=1$, $\supp\eta\subset\widetilde E:=\sett{x\in\R^n}
{\dist(x,E)\le\dist(x,F)}$ and 
$\|\nabla\eta\|_\infty\lesssim 1/\dist(E,F)\approx 1/\dist(\widetilde E,F)$.
Since the commutator is
$$
  [\eta I, R_t^B]= t R_t^B M_B^{-1}([\eta I, \ud]+B^{-1}[\eta I,\ud^*]B)R_t^B,
$$
where $\|[\eta I, \ud]\|, \|[\eta I,\ud^*]\| \lesssim \|\nabla \eta\|_\infty$,
we get
$$
  \|R_t^B f\|_{L_2(E)}\le \|\eta R_t^B f\|= \|[\eta I, R_t^B ]f\|
  \lesssim |t| \|\nabla \eta\|_\infty \|R_t^B f\|_{L_2(\widetilde E)},
$$
where we used that $\eta f=0$.
By induction, this proves the off-diagonal bounds for $R_t^B$.
From this, off-diagonal bounds for $P_t^B$, $Q_t^B$ and $I-P_t^B$ also follows immediately.

Next we consider $t\ud P_t^B$ and use Lemma~\ref{lem:hodge}(i) 
to obtain
\begin{multline}  \label{hodgeoffdiag}
  \|t\ud P_t^B f\|_{L_2(E)}\le \|\eta t\ud P_t^B f\|\le 
   \|[\eta I,t\ud] P_t^B f\| + \|t\ud \eta P_t^B f\|  \\
   \lesssim
   |t|\|\nabla\eta\|_\infty \|P_t^B f\|_{L_2(\widetilde E)} + \|t T_B \eta P_t^B f\| \\
   \lesssim  |t|\|\nabla\eta\|_\infty \|P_t^B f\|_{L_2(\widetilde E)} + 
      \|[\eta,t T_B]  P_t^B f\| +  \|\eta t T_B P_t^B f\| \\
   \lesssim  |t|\|\nabla\eta\|_\infty \|P_t^B f\|_{L_2(\widetilde E)} + 
     \|Q_t^B f\|_{L_2(\widetilde E)}.
\end{multline}
This and the corresponding calculation with $\ud$ replaced by $\ud^*_B$ proves 
the off-diagonal bounds for $t\ud P_t^B$ and $t\ud^*_B P_t^B$.
From this the result for $P_t^B M_B^{-1}t\ud$ and $P_t^B M_B^{-1}t\ud^*_B$ follows
immediately with a duality argument. Indeed, $(M_B^{-1}\ud)'= M_{B^*}^{-1}\ud^*_{B^*}$
and $(M_B^{-1}\ud^*_B)'= M_{B^*}^{-1}\ud$ is proved similarly to $T_B' = -T_{B^*}$ 
in Proposition~\ref{prop:duality}.

The proof for $t\ud Q_t^B$, $t\ud^*_B Q_t^B$, $Q_t^B M_B^{-1}t\ud$ and 
$Q_t^B M_B^{-1}t\ud^*_B$ is similar, replacing $P_t$ and $Q_t$ with 
$Q_t$ and $I-P_t$.
Finally, the last four estimates follows from a computation like (\ref{hodgeoffdiag}),
for example replacing $P_t$ and $Q_t$ with $P_t^B M_B^{-1}t\ud$ and
$Q_t^B M_B^{-1}t\ud$ proves the estimate for $t\ud P_t^B M_B^{-1}t\ud$.
\end{proof}

We finish this section with a lemma to be used in Section~\ref{section8}.
This lemma is proved with an argument similar to that in 
\cite[Lemma 2.3]{HM}.
For completeness, we include a short proof.
\begin{lem}  \label{lem:offdiagcomposition}
Assume that $(U_t)_{t>0}$ and $(V_t)_{t>0}$ both have $L_2$ 
off-diagonal bounds with exponent $M$.
Then $(U_tV_t)_{t>0}$ has $L_2$ off-diagonal bounds with exponent $M$
and
$$
  \|U_tV_t\|_{\off,M} \le 2^{M+1} \|U_t\|_{\off,M} \|V_t\|_{\off,M}.
$$
\end{lem}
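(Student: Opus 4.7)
The plan is a standard splitting argument: given $E,F\subset\R^n$ with $\supp f\subset F$, we insert an intermediate region between $E$ and $F$ and use the off-diagonal decay of $V_t$ on the half near $F$ and the off-diagonal decay of $U_t$ on the half near $E$, while paying only a single operator norm on the other factor. The key point is that the operator norm of each factor is itself bounded by its off-diagonal constant (apply Definition~\ref{defn:offdiag} with $E=F=\R^n$, so that $\brac{0}^{-M}=1$), hence no new quantity enters.

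Concretely, set $d := \dist(E,F)$ and introduce the intermediate set
\[
  G := \sett{x\in\R^n}{\dist(x,F)\ge d/2}.
\]
Then $E\subset G$, which gives $\dist(G^c,E)\ge d/2$, and trivially $\dist(G,F)\ge d/2$. Decompose
\[
  U_tV_tf = U_t(\chi_G V_tf) + U_t(\chi_{G^c} V_tf).
\]
For the first term, $\chi_G V_t f$ is supported in $G$ while $f$ is supported in $F$, so the off-diagonal bound for $V_t$ yields $\|\chi_G V_tf\|\le \|V_t\|_{\off,M}\brac{d/(2t)}^{-M}\|f\|$, and $U_t$ acts on this by its operator norm, bounded by $\|U_t\|_{\off,M}$. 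For the second term, $\chi_{G^c}V_tf$ is supported in $G^c$, which is at distance $\ge d/2$ from $E$, so the off-diagonal bound for $U_t$ gives a factor $\|U_t\|_{\off,M}\brac{d/(2t)}^{-M}$, and the remaining $\|\chi_{G^c}V_tf\|$ is controlled by $\|V_t\|_{\off,M}\|f\|$.

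Summing the two contributions gives
\[
  \|U_tV_tf\|_{L_2(E)} \le 2\,\|U_t\|_{\off,M}\|V_t\|_{\off,M}\brac{d/(2t)}^{-M}\|f\|.
\]
Finally the elementary inequality $\brac{\alpha}\le 2\brac{\alpha/2}$ (equivalently $1+\alpha\le 2+\alpha$) yields $\brac{d/(2t)}^{-M}\le 2^M\brac{d/t}^{-M}$, producing the stated constant $2^{M+1}\|U_t\|_{\off,M}\|V_t\|_{\off,M}$. There is no genuine obstacle here; the only care needed is to place the intermediate set so that both halves inherit decay at distance $\ge d/2$, and to track the factor $2^M$ coming from replacing $\brac{d/(2t)}$ by $\brac{d/t}$.
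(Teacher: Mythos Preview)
Your proof is correct and essentially identical to the paper's: both split via an intermediate region at half-distance and apply the off-diagonal bound on one factor and the operator norm (which is at most the off-diagonal constant) on the other. The only cosmetic difference is that the paper takes $G$ to be the $d/2$-neighbourhood of $E$ whereas you take it to be the complement of the $d/2$-neighbourhood of $F$; the bookkeeping and final constant $2^{M+1}$ agree.
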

\begin{proof}
By Definition~\ref{defn:offdiag} we need to prove that
$$
\|U_tV_t f\|_{L_2(E)} \lesssim \brac{\dist (E,F)/t}^{-M}\|f\|
$$
whenever $\supp f\subset F$.
To this end let $G:= \sett{x\in \R^n}{\dist(x, E)\le \rho/2}$,
where $\rho:= \dist(E,F)$.
We get
\begin{multline*}
 \|U_tV_t f\|_{L_2(E)} \le \|U_t(\chi_G V_tf)\|_{L_2(E)}
 +\|U_t(\chi_{\R^n\setminus G} V_tf)\|_{L_2(E)} \\
 \le C_0^{U_t} \|V_tf\|_{L_2(G)}
 +  C_M^{U_t} \brac{\rho/2t}^{-M} \| V_tf\|_{L_2(\R^n\setminus G)} \\
 \le C_0^{U_t} C_M^{V_t} \brac{\rho/2t}^{-M} \|f\|
 +  C_M^{U_t} \brac{\rho/2t}^{-M} C_0^{V_t} \|f\| 
 \le 2^{M+1} C_M^{U_t} C_M^{V_t}\brac{\rho/t}^{-M} \|f\|.
\end{multline*}
\end{proof}

%
%
%
\subsection{Quadratic estimates: generalities}  \label{section2.4}

In Proposition~\ref{prop:spectrest} we proved the spectral estimate
$\sigma(T_B)\subset S_\omega$ for some angle $\omega<\pi/2$ with bounds
on the resolvent outside $S_\omega$.
In this section we survey some general facts about the functional
calculus of the operator $T_B$. For a further background and discussion 
of these matters we refer to \cite{AKMc, ADMc}.
\begin{defn}
For $\omega<\nu<\pi/2$, we define the following classes of holomorphic 
functions $f\in H(S_\nu^o)$ on the open double sector $S_\nu^o$.
\begin{align*}
  \Psi(S^o_\nu)&:= \sett{\psi\in H(S_\nu^o)}
  {| \psi(z)| \le C \min(|z|^s,|z|^{-s}),
  z \in S^o_\nu, \\
   &\hspace{6cm} \text{for some } s>0, C<\infty}, \\
  H_\infty(S^o_\nu)&:= \sett{b\in H(S_\nu^o)}
  {| b(z)| \le C, z \in S^o_\nu,\text{ for some } C<\infty}, \\
  F(S^o_\nu)&:= \sett{w\in H(S_\nu^o)}
  {| w(z)| \le C \max(|z|^s,|z|^{-s}),
  z \in S^o_\nu,\\
   &\hspace{6cm} \text{for some } s<\infty, C<\infty}.
\end{align*}
Thus $\Psi(S^o_\nu)\subset H_\infty(S^o_\nu)\subset F(S^o_\nu)\subset H(S_\nu^o)$.
\end{defn}
For $\psi\in \Psi(S^o_\nu)$, we define a bounded operator $\psi(T_B)$
through the Dunford functional calculus
\begin{equation}  \label{dunfordschwarz}
     \psi(T_B):=\frac 1{2\pi i}\int_\gamma
\psi(\lambda)(\lambda I-T_B)^{-1} d\lambda
\end{equation}
where $\gamma$ is the unbounded contour $\sett{\pm r
e^{\pm i\theta}}{r> 0}$, $\omega <\theta<\nu$, parametrised
counterclockwise around $S_{\omega}$. 
The decay estimate on $\psi$ and the resolvent bounds of Proposition~\ref{prop:spectrest} 
guarantee that $\|\psi(T_B)\| <\infty$.

For general $w\in F(S^o_\nu)$ we define 
$$
  w(T_B) := (Q^B)^{-k} (q^kw)(T_B), 
$$
where $k$ is an integer larger than $s$ if $|w(z)| \le C \max(|z|^s,|z|^{-s})$,
and $q(z):= z(1+z^2)^{-1}$ and $Q^B:= q(T_B)$.
This yields a closed, densely defined operator $w(T_B)$ in $\mH$.
Furthermore, we have 
\begin{align}
  \clos{\lambda_1 w_1(T_B) + \lambda_2 w_2(T_B)} &= (\lambda_1 w_1+\lambda_2w_2)(T_B), \nonumber \\
  \clos{w_1(T_B) w_2(T_B)} &= (w_1w_2)(T_B),  \label{eq:homom}
\end{align}
for all $w_1$ and $w_2\in F(S^o_\nu)$.
Here $\conj T=S$ means that the graph $\graf(T)$ is dense in the graph $\graf(S)$.

The functional calculus $w\mapsto w(T_B)$ have the following
convergence properties as proved in \cite{ADMc}.
\begin{lem}  \label{lem:convergence}
  If $b_k\in H_\infty(S_\nu^o)$ is a sequence uniformly bounded on $S_\nu^o$
which converges to $b$ uniformly on compact subsets, and if $b_k(T_B)$
are uniformly bounded operators, then 
$$
  b_k(T_B)f\longrightarrow b(T_B)f,\qquad\text{for all } f\in\mH,
$$
and $\|b(T_B)\|\le \limsup_k\|b_k(T_B)\|$.
\end{lem}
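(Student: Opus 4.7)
My strategy is standard for a bisectorial functional calculus: reduce the pointwise convergence to a dense subspace via the regulariser $q$, establish convergence there by dominated convergence on the Dunford contour, and then extend to all of $\mH$ by uniform boundedness. A preliminary observation I would want first is that $T_B$ is injective. By Proposition~\ref{prop:spectrest} both $T_B$ and $T_{B^*}$ have dense range, and the duality $(T_B)'=-T_{B^*}$ of Proposition~\ref{prop:duality} forces any $f\in\nul(T_B)$ to annihilate under $\dual{\cdot}{\cdot}_B$ the dense subspace $\ran(T_{B^*})$; by non-degeneracy of the duality this gives $f=0$. Consequently $Q^B=q(T_B)$ is injective with dense range, so $\ran((Q^B)^2)$ is dense in $\mH$.

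For $f=(Q^B)^2 h$ in this dense subspace, the homomorphism property (\ref{eq:homom}), together with $q^2 b_k,\, q^2 b\in\Psi(S^o_\nu)$, gives $b_k(T_B)f=(q^2 b_k)(T_B)h$ and $b(T_B)f=(q^2 b)(T_B)h$, and hence from (\ref{dunfordschwarz})
$$
  b_k(T_B)f-b(T_B)f=\frac{1}{2\pi i}\int_\gamma q(\lambda)^2\bigl(b_k(\lambda)-b(\lambda)\bigr)(\lambda I-T_B)^{-1}h\,d\lambda.
$$
With $M:=\sup_k\sup_{z\in S^o_\nu}|b_k(z)|<\infty$ and the resolvent bound of Proposition~\ref{prop:spectrest}, the integrand is dominated by $2M|q(\lambda)|^2\,\|h\|/|\lambda|$, an integrable function along $\gamma$ since $q^2\in\Psi(S^o_\nu)$. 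Because $b_k\to b$ uniformly on compact subsets of $S^o_\nu$, dominated convergence yields $b_k(T_B)f\to b(T_B)f$ for every $f\in\ran((Q^B)^2)$.

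To pass to arbitrary $f\in\mH$, set $M_0:=\limsup_k\|b_k(T_B)\|<\infty$ and for each $\epsilon>0$ choose $g\in\ran((Q^B)^2)$ with $\|f-g\|<\epsilon$. The three-term estimate
$$
  \|b_j(T_B)f-b_k(T_B)f\|\le\bigl(\|b_j(T_B)\|+\|b_k(T_B)\|\bigr)\|f-g\|+\|b_j(T_B)g-b_k(T_B)g\|
$$
shows that $(b_k(T_B)f)_k$ is Cauchy; call its limit $Lf$. Then $L$ is a bounded operator on $\mH$ with $\|L\|\le M_0$, and since $L$ agrees on the dense subspace $\ran((Q^B)^2)\subset\dom(b(T_B))$ with the closed operator $b(T_B)$, the bounded $L$ must extend $b(T_B)$ and thus equal it on all of $\mH$. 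The one technical point I expect to require care is the legitimacy of using (\ref{eq:homom}) to obtain $b_k(T_B)(Q^B)^2=(q^2 b_k)(T_B)$ as a genuine operator identity rather than merely up to closure; this rests precisely on the injectivity of $T_B$, and hence of $Q^B$, which is why the preliminary step above is needed.
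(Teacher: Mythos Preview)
Your argument is correct and is precisely the standard proof; the paper itself does not prove this lemma but refers to \cite{ADMc}, where essentially the same regulariser-plus-dominated-convergence argument is given. One small wording issue: in the last step you say ``the bounded $L$ must extend $b(T_B)$'', but the inclusion goes the other way --- since $b(T_B)$ is closed and agrees with $L$ on the dense subspace $\ran((Q^B)^2)$, the closed graph of $b(T_B)$ contains the closure of the graph of $L|_{\ran((Q^B)^2)}$, which is the graph of $L$; hence $b(T_B)\supset L$, and as $\dom(L)=\mH$ this forces $b(T_B)=L$.
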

\begin{defn}   \label{defn:fcalcops}
The following operators in the functional calculus are of special importance
to us.
\begin{enumerate}
\item
$q_t(z)=q(tz):=tz(1+t^2z^2)^{-1}\in \Psi(S^o_\nu)$, which give the operator
$Q_t^B$.
\item
$|z|^s:= (z^2)^{s/2}\in F(S^o_\nu)$, which give the operator $|T_B|^s$.
Note that $|z|$ does not denote absolute value here, but $z\mapsto|z|$ 
is holomorphic on $S^o_\nu$.
\item
$e^{-t|z|}\in H_\infty(S^o_\nu)$, which give the operator $e^{-t|T_B|}$.
\item
The characteristic functions
$$
\chi^\pm(z)= \left\{ \begin{array}{ll}
1    & \quad \textrm{if $\pm \re z>0$}\\
0    & \quad \textrm{if $\pm \re z< 0$} \\
\end{array} \right.
$$
which give the generalised {\em Hardy projections} $E^\pm_B := \chi^\pm(T_B)$.
\item
The signum function
$$
  \sgn(z)= \chi^+(z) -\chi^-(z)
$$
which give the generalised {\em Cauchy integral} $E_B := \sgn(T_B)$.
\end{enumerate}
\end{defn}
The main work in this paper is to prove the boundedness the projections
$E^\pm_B$. 
As in Lemma~\ref{lem:splittingrefl}, if these are bounded then they correspond to 
a splitting
$$
  \mH = E^+_B\mH \oplus E^-_B \mH
$$
of $\mH$ into the {\em Hardy subspaces} $E^\pm_B \mH$ associated 
with the equation (\ref{diraceqn}). 
That the projections are bounded is also equivalent with having a 
bounded reflection operator $E_B$.
\begin{defn}
  For a function $F(t,x)$ defined in $\R^{n+1}_\pm$ we write
$$
  \tb{ F }_\pm:=
  \left(\int_0^\infty\| F(\pm t,x) \|^2 \, \frac{dt}t\right)^{1/2}
$$
and for short $\tb{F}_+ =: \tb F$.
When $F(t,x)= (\Theta_t f)(x)$ for some family of operators $(\Theta_t)_{t>0}$,
we use the notation
$$
  \tb{\Theta_t}_\op := \sup_{\|f\|=1} \tb{\Theta_tf}.
$$
\end{defn}
Our main goal in this paper will be to prove quadratic estimates
of the form
\begin{equation}  \label{eq:quadrests}
  \int_0^\infty\| Q_t^B f \|^2 \, \frac{dt}t \approx \|f\|^2,
\end{equation}
for certain coefficients $B$.
We recall the following two basic results concerning quadratic 
estimates which are proved by Schur estimates. 
For details we refer to \cite{ADMc}.
\begin{prop}   \label{prop:differentpsi}
Let $\psi\in\Psi(S^o_\nu)$ be non vanishing on both $S^o_{\nu+}$
and $S^o_{\nu-}$,
and define $\psi_t(z):= \psi(tz)$.
Then there exists $0<C<\infty$ such that
$$
  C^{-1}\tb{ Q_t^B f } \le \tb{ \psi_t(T_B)f } \le C\tb{ Q_t^B f }.
$$ 
\end{prop}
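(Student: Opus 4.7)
The proof is the McIntosh--Albrecht--Duong Schur argument (cf.\ \cite{ADMc}): reduce each inequality to an operator-valued Schur estimate against an integrable kernel, via a Calder\'on reproducing formula built from the $\Psi$-calculus of Section~\ref{section2.4}.

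\emph{Off-diagonal bound.} For any $\varphi,\eta\in\Psi(S^o_\nu)$ I would first prove
$$
  \bigl\|(\varphi_s\eta_t)(T_B)\bigr\|\lesssim \bigl(\min(s/t,\,t/s)\bigr)^{\sigma},\qquad s,t>0,
$$
for some $\sigma>0$ depending on the decay exponents of $\varphi$ and $\eta$. This follows by inserting (\ref{dunfordschwarz}) for $(\varphi_s\eta_t)(T_B)$, invoking the resolvent bound of Proposition~\ref{prop:spectrest}, and splitting the contour $\gamma$ at $|\lambda|\approx 1/\max(s,t)$ so that on each piece one of $|\varphi(s\lambda)|$, $|\eta(t\lambda)|$ is in its decay regime.

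\emph{Calder\'on reproducing formula.} Since the two components $S^o_{\nu\pm}$ of $S^o_\nu$ are simply connected and disjoint, and since $\psi$ is holomorphic, non-vanishing and of $\Psi$-decay on each of them, one defines $\widehat\psi\in\Psi(S^o_\nu)$ piecewise by $\widehat\psi(z):=c_\pm\,\overline{\psi(\overline z)}$ on $S^o_{\nu\pm}$, where the constants $c_\pm$ are chosen so that $c_\pm\int_0^\infty|\psi(\pm u)|^2\,\frac{du}{u}=1$. Cauchy's theorem then gives $\int_0^\infty \widehat\psi(tz)\psi(tz)\,\frac{dt}{t}=1$ for every $z\in S^o_\nu$. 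An analogous $\widehat q\in\Psi(S^o_\nu)$ is paired with $q$. Combining with denseness of $\ran T_B$ (Proposition~\ref{prop:spectrest}), the homomorphism (\ref{eq:homom}), and Lemma~\ref{lem:convergence} produces
$$
  f=\int_0^\infty \widehat\psi_t(T_B)\psi_t(T_B)f\,\frac{dt}{t}=\int_0^\infty \widehat q_t(T_B)Q_t^B f\,\frac{dt}{t}
$$
on $\overline{\ran T_B}$, with convergence in $\mH$; both sides of the claimed equivalence vanish on $\nul T_B$ since $\psi(0)=q(0)=0$, so that case is trivial.

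\emph{Schur.} Inserting the second identity into $\psi_s(T_B)f$ and applying the off-diagonal bound to the pair $(\psi,\widehat q)$,
$$
  \|\psi_s(T_B)f\|\le \int_0^\infty C\bigl(\min(s/t,t/s)\bigr)^\sigma\,\|Q_t^B f\|\,\frac{dt}{t}.
$$
Since the kernel $K(s,t)=C\bigl(\min(s/t,t/s)\bigr)^\sigma$ satisfies $\int_0^\infty K(s,t)\,\frac{dt}{t}=\int_0^\infty K(s,t)\,\frac{ds}{s}<\infty$, Schur's inequality in $L_2((0,\infty),dt/t;\mH)$ yields $\tb{\psi_t(T_B)f}\le C'\tb{Q_t^B f}$. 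The reverse inequality follows symmetrically by inserting the first reproducing formula into $Q_s^B f$ and applying Step~1 to the pair $(q,\widehat\psi)$; this second half is precisely where the hypothesis that $\psi$ is non-vanishing on each sector enters, since it is needed to construct $\widehat\psi$ in the first place. The main technical obstacle is this construction of $\widehat\psi$: without non-vanishing of $\psi$ the reproducing integral cannot be normalised, and the naive candidate $1/\psi$ lies outside every $\Psi$-class.
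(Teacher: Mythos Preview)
Your argument is correct and is precisely the Schur-estimate proof the paper defers to \cite{ADMc}; the off-diagonal bound, Calder\'on reproducing formula built from a companion $\widehat\psi\in\Psi(S^o_\nu)$, and Schur's lemma in $L_2((0,\infty),dt/t;\mH)$ are exactly the ingredients intended. Note that in this paper $T_B$ has dense range (Proposition~\ref{prop:spectrest}), so your remark about $\nul T_B$ is vacuous here, but it does no harm.
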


\begin{prop}  \label{prop:QtoFCalc}
If $T_B$ satisfies quadratic estimates (\ref{eq:quadrests}), 
then $T_B$ has bounded $H_\infty(S^o_\nu)$ functional calculus,
i.e.
$$
  \|b(T_B)\| \lesssim \|b\|_\infty, \qquad\text{for all } b\in H_\infty(S^o_\nu).
$$
Thus $H_\infty(S^o_\nu) \ni b\longmapsto b(T_B) \in \mL(\mH)$
is a continuous homomorphism.
\end{prop}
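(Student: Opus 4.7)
The plan is a standard McIntosh-type argument: reduce the uniform $H_\infty$ bound to a Schur estimate via a Calder\'on reproducing formula adapted to both $T_B$ and its Hilbert adjoint $T_B^*$. First I would establish the companion quadratic estimate
$$\int_0^\infty \|\psi_t(T_B^*) g\|^2 \frac{dt}{t} \approx \|g\|^2, \qquad g \in \mH,$$
for any non-vanishing $\psi \in \Psi(S_\nu^o)$ (writing $\psi_t(\lambda) := \psi(t\lambda)$). This follows from the hypothesis by a duality argument: fix $\psi, \tilde\psi \in \Psi(S_\nu^o)$ normalised so that $\int_0^\infty \psi(t\lambda)\tilde\psi(t\lambda)\, dt/t = 1$ on $S_\nu^o\setminus\{0\}$, write any pairing $(f,g)$ through the reproducing formula $f = \int_0^\infty \psi_t(T_B)\tilde\psi_t(T_B) f\, dt/t$, and apply Cauchy--Schwarz together with both the upper and lower bounds in (\ref{eq:quadrests}). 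Since $T_B^*$ is also bisectorial with the same angle and resolvent bounds as in Proposition~\ref{prop:spectrest}, the Dunford calculus is available on $T_B^*$ over the same classes $\Psi$, $H_\infty$, $F$.

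For $b \in H_\infty(S_\nu^o)$, I would regularise by setting $b_\epsilon := b \cdot \eta_\epsilon$, where $\eta_\epsilon(\lambda) := \epsilon\lambda(1+\epsilon\lambda)^{-1}(1+\epsilon\lambda^{-1})^{-1}$ (or any similar $\Psi$-cutoff tending pointwise to $1$). Then $b_\epsilon \in \Psi(S_\nu^o)$, $\|b_\epsilon\|_\infty \lesssim \|b\|_\infty$, and $b_\epsilon \to b$ uniformly on compact subsets, so by Lemma~\ref{lem:convergence} it suffices to bound $\|b_\epsilon(T_B)\| \le C\|b\|_\infty$ uniformly in $\epsilon$. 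Using the homomorphism property (\ref{eq:homom}) together with the reproducing formula,
$$(b_\epsilon(T_B)f, g) = \int_0^\infty ((b_\epsilon\psi_t)(T_B)\tilde\psi_t(T_B) f, g)\, \frac{dt}{t} = \int_0^\infty (\tilde\psi_t(T_B) f,\, \phi_t^*(T_B^*) g)\, \frac{dt}{t},$$
where $\phi_t(\lambda) := b_\epsilon(\lambda)\psi(t\lambda)$ and $\phi_t^*(\lambda) := \overline{\phi_t(\bar\lambda)}$. Cauchy--Schwarz and the quadratic estimate for $T_B$ control the $f$-factor by $\|f\|$, reducing matters to
$$\int_0^\infty \|\phi_t^*(T_B^*) g\|^2 \frac{dt}{t} \lesssim \|b\|_\infty^2 \|g\|^2.$$

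To prove this last bound I would apply a reproducing formula for $T_B^*$ to obtain $\phi_t^*(T_B^*) g = \int_0^\infty (\phi_t^* \eta_s)(T_B^*)\tilde\eta_s(T_B^*) g\, ds/s$, and estimate the operator norm $\|(\phi_t^* \eta_s)(T_B^*)\|$ by deforming the contour in the Dunford integral (\ref{dunfordschwarz}) and invoking the resolvent bounds of Proposition~\ref{prop:spectrest}. The key estimate is
$$\|(\phi_t^* \eta_s)(T_B^*)\| \lesssim \|b\|_\infty \min\bigl((s/t)^\sigma, (t/s)^\sigma\bigr)$$
for some $\sigma > 0$, which follows because the scalar symbol $\phi_t^* \eta_s$ carries uniform $\Psi$-decay at both $0$ and $\infty$ inherited from $\psi$ and $\eta$, with the decay exponents arising from the mismatch between the $s$- and $t$-scalings. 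The Schur test for the kernel $k(t,s) = \min((s/t)^\sigma, (t/s)^\sigma)$ with respect to $dt/t$ on $(0,\infty)$ closes the bound using the quadratic estimate for $T_B^*$ established at the outset. Lemma~\ref{lem:convergence} then upgrades the uniform bound to $\|b(T_B)\| \lesssim \|b\|_\infty$, and the continuity of the homomorphism $b \mapsto b(T_B)$ follows from (\ref{eq:homom}) together with the same convergence lemma.

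The main obstacle is the Schur decay estimate in the last step, which requires a careful contour deformation in the Dunford integral to quantitatively extract the $\min((s/t)^\sigma, (t/s)^\sigma)$ decay while keeping the dependence on $b$ linear in $\|b\|_\infty$. A subsidiary technical point is the transfer of (\ref{eq:quadrests}) from $T_B$ to the Hilbert adjoint $T_B^*$; this uses the two-sidedness of the hypothesis in an essential way and must be done first, since the Dunford calculations in the Schur step take place on $T_B^*$.
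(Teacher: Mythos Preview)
Your overall strategy (regularise $b\in H_\infty$ to $b_\epsilon\in\Psi$, expand via a Calder\'on reproducing formula, and close with a Schur estimate on a kernel $\min((s/t)^\sigma,(t/s)^\sigma)$) is the standard argument the paper is citing from \cite{ADMc}, so in spirit you are on the right track.

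There is, however, a gap in your first step. The duality argument you describe---writing $(f,g)=\int_0^\infty(\tilde\psi_t(T_B)f,\psi_t(T_B)^*g)\,dt/t$ and applying Cauchy--Schwarz---only yields the \emph{lower} quadratic estimate $\|g\|\lesssim\tb{\psi_t(T_B^*)g}$ from the upper bound for $T_B$. It does not give the \emph{upper} bound $\tb{\psi_t(T_B^*)g}\lesssim\|g\|$, which is precisely what you need in the last step to close the Schur argument on $T_B^*$. The upper bound for $T_B^*$ is equivalent to boundedness of the synthesis operator $(h_t)\mapsto\int_0^\infty\psi_t^*(T_B)h_t\,dt/t$, and that in turn requires the Schur estimate together with the lower bound for $T_B$; it is not a consequence of duality and Cauchy--Schwarz alone.

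The cleanest fix is to avoid $T_B^*$ altogether. Start from the lower quadratic estimate for $T_B$ applied directly to $b_\epsilon(T_B)f$:
\[
\|b_\epsilon(T_B)f\|^2 \lesssim \int_0^\infty \|(b_\epsilon\psi_t)(T_B)f\|^2\,\frac{dt}{t}.
\]
Then run your Schur argument on $T_B$ rather than $T_B^*$: insert a second reproducing formula $(b_\epsilon\psi_t)(T_B)f=\int_0^\infty(b_\epsilon\psi_t\eta_s)(T_B)\tilde\eta_s(T_B)f\,ds/s$, use the Dunford bound $\|(b_\epsilon\psi_t\eta_s)(T_B)\|\lesssim\|b\|_\infty\min((s/t)^\sigma,(t/s)^\sigma)$, and finish with the upper quadratic estimate for $T_B$. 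This is exactly the route the paper has in mind when it says the result is ``proved by Schur estimates'', and it uses the two-sidedness of~(\ref{eq:quadrests}) directly on $T_B$ without any transfer to the adjoint.
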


%
%
%
Before proving quadratic estimates for $T_B$ for certain $B$
in Sections~\ref{section7} and \ref{section8}, 
we introduce a dense subspace on which the operator $b(T_B)$ is defined
for any $b\in H_\infty(S^o_\nu)$.
\begin{defn}   \label{defn:denseset}
Let $\V_B$ be the dense linear subspace
$$
  \V_B := \bigcup_{s>0} (\dom(|T_B|^s)\cap\ran(|T_B|^s))\subset\mH.
$$  
\end{defn}
We see that $\dom(|T_B|^s)\cap\ran(|T_B|^s)$ increases when $s$ decreases.
The density of $\dom(|T_B|)\cap\ran(|T_B|)$, and therefore of $\V_B$, follows from the fact that
\begin{equation*} 
2\int_\alpha^\beta (Q^B_t)^2f\,\frac{dt}t=
(P^B_\alpha-P^B_\beta)f \longrightarrow f,
\end{equation*}
as $(\alpha,\beta)\rightarrow (0,\infty)$, for all $f\in\mH$.

Moreover, if $b\in H_\infty(S^o_\nu)$ and $f\in\V_B$ then
$b(T_B)f\in\V_B\subset\mH$.
To see this, write
$$
  b(T_B) f = (b\psi)(T_B)(\psi(T_B)^{-1}f),
$$
where $\psi(z)^{-1}:=(1+|z|^{s})/|z|^{s/2}$ 
if $f\in\dom(|T_B|^s)\cap\ran(|T_B|^s)$.
Then $\psi(T_B)^{-1}f\in \mH$ and $(b\psi)(T_B)$ is bounded since $b\psi\in \Psi(S^o_\nu)$.
Furthermore, if $s'<s/2$ then 
$|T_B|^{s'}(\psi(T_B)^{-1}f)\in\mH$ and 
$|T_B|^{-s'}(\psi(T_B)^{-1}f)\in\mH$, 
so $b(T_B)f\in\dom(|T_B|^{s'})\cap\ran(|T_B|^{s'})$.

\begin{lem}  \label{lem:algsplitting}
  We have an algebraic splitting
$$
  \V_B = E_B^+\V_B + E_B^-\V_B,
$$
$E_B^+\V_B\cap E_B^-\V_B =\{0\}$,
and $f\in E_B^\pm\V_B$ is in one-to-one correspondence with
$F(t,x)=F_t(x):= (e^{\mp t|T_B|}f)(x)$ in $\R_\pm^{n+1}$, and
\begin{gather*}
  \lim_{t\rightarrow 0^\pm} F_t = f, \qquad
  \lim_{t\rightarrow \pm\infty} F_t = 0, \\
  F_t\in \dom(T_B)= \dom(d)\cap\dom(d^*B),\qquad 
  \pd_t F_t=-T_BF_t\in L_2(\R^n),
  \qquad \pm t>0.
\end{gather*}
\end{lem}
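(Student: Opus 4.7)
The plan is to exploit the $\V_B$-calculus introduced just after Definition~\ref{defn:denseset}: for every $f\in\V_B$ we can pick $s>0$ with $f\in\dom(|T_B|^s)\cap\ran(|T_B|^s)$, set $\psi(z):=|z|^{s/2}/(1+|z|^s)\in\Psi(S^o_\nu)$, and write $f=\psi(T_B)g$ for some $g\in\mH$. Then for any $b\in H_\infty(S^o_\nu)$ the product $b\psi$ lies in $\Psi(S^o_\nu)$, so $b(T_B)f:=(b\psi)(T_B)g$ is well-defined via the absolutely convergent Dunford integral (\ref{dunfordschwarz}). The homomorphism (\ref{eq:homom}) on the $\Psi$-class makes this assignment independent of $\psi$, additive, and multiplicative, $(b_1b_2)(T_B)f=b_1(T_B)(b_2(T_B)f)$, since $b_2(T_B)f$ again lies in $\V_B$. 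Applied to $\chi^\pm\in H_\infty(S^o_\nu)$, the pointwise identities $\chi^++\chi^-\equiv 1$, $(\chi^\pm)^2=\chi^\pm$, $\chi^+\chi^-\equiv 0$ on $S^o_\nu\supset\sigma(T_B)$ translate into $E_B^++E_B^-=I$, $(E_B^\pm)^2=E_B^\pm$, $E_B^\pm E_B^\mp=0$ on $\V_B$. The algebraic splitting $\V_B=E_B^+\V_B+E_B^-\V_B$ follows because every $f\in\V_B$ decomposes as $f=E_B^+f+E_B^-f$, and any $h=E_B^+u=E_B^-v$ satisfies $h=E_B^+h=E_B^+E_B^-v=0$, proving $E_B^+\V_B\cap E_B^-\V_B=\{0\}$.

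Next, fix $f\in E_B^+\V_B$, so $E_B^+f=f$, and define $F_t:=e^{-t|T_B|}f$ for $t>0$ via the $\V_B$-calculus. Since $|z|\chi^+(z)=z\chi^+(z)$ on $S^o_\nu$, we have $F_t=(e^{-t\,\cdot\,}\chi^+)(T_B)f$. With $f=\psi(T_B)g$, the symbol $e^{-tz}\chi^+(z)\psi(z)$ lies in $\Psi(S^o_\nu)$ for each $t>0$: it vanishes on $S^o_{\nu-}$, decays exponentially on $S^o_{\nu+}$ for large $|z|$, and vanishes of order $|z|^{s/2}$ at the origin; hence $F_t\in\V_B\subset\mH$. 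Likewise $ze^{-tz}\chi^+(z)\psi(z)\in\Psi(S^o_\nu)$, so the element $h_t:=(ze^{-t\,\cdot\,}\chi^+)(T_B)f\in\mH$ is well-defined; by multiplicativity of the calculus we have $h_t=T_BF_t$, which together with closedness of $T_B$ yields $F_t\in\dom(T_B)=\dom(d)\cap\dom(d^*B)$. Differentiation in $t$ of the Dunford integral representing $(e^{-t\,\cdot\,}\chi^+\psi)(T_B)g$, justified by dominated convergence (the integrand $-\lambda e^{-t\lambda}\chi^+(\lambda)\psi(\lambda)(\lambda-T_B)^{-1}g$ has an integrable majorant along $\gamma$), produces $\pd_tF_t=-T_BF_t$ for every $t>0$.

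The two limits then follow by another dominated-convergence argument on the Dunford contour. As $t\to 0^+$, the symbol $(e^{-t|z|}-1)\chi^+(z)\psi(z)$ tends pointwise to $0$ on $S^o_\nu$ while being uniformly dominated by $2|\chi^+\psi(z)|$, and since $2\chi^+\psi\in\Psi$ the associated Dunford integral converges to $0$ in operator norm, so $F_t-f=((e^{-t|\cdot|}-1)\chi^+\psi)(T_B)g\to 0$ in $\mH$. For $t\to\infty$ the symbol $e^{-t|z|}\chi^+(z)\psi(z)\to 0$, dominated by $|\chi^+\psi|\in\Psi$, gives $F_t\to 0$ the same way. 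The parametrisation $f\leftrightarrow F_\cdot$ is one-to-one since $f=\lim_{t\to 0^+}F_t$. The $\R^-$ case is identical after replacing $\chi^+$ by $\chi^-$ and using $|z|\chi^-(z)=-z\chi^-(z)$. The main obstacle here is not conceptual but bookkeeping: since a full $H_\infty$-functional calculus on $\mH$ is not yet available at this stage of the paper, one must at each step verify that every $H_\infty$-symbol appearing ($\chi^\pm$, $e^{-t|\cdot|}$, $ze^{-tz}\chi^\pm$) combines with the auxiliary $\psi\in\Psi$ to land back in $\Psi(S^o_\nu)$ so that the bounded Dunford calculus applies.
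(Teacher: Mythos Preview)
Your proof is correct and follows essentially the same approach as the paper: both rely on the factorisation $f=\psi(T_B)g$ with $\psi\in\Psi(S^o_\nu)$ so that every $H_\infty$-symbol combined with $\psi$ lands in $\Psi$ and the Dunford integral applies. The only notable difference is in the limit arguments: the paper rewrites $F_t-f=t^s\tilde\psi(tT_B)u$ and $F_t=t^{-s}\tilde\psi(tT_B)v$ with suitable $\tilde\psi\in\Psi$ and uniformly bounded $\tilde\psi(tT_B)$, extracting explicit rates $t^{\pm s}$, whereas you argue by dominated convergence on the contour; both are valid, and the paper's version has the minor advantage of giving quantitative decay that is occasionally reused later (e.g.\ in Proposition~\ref{prop:rellich}).
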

\begin{proof}
That each $f\in \V_B$ can be uniquely written $f= f^++ f^-$, where $f^\pm\in  E^\pm_B\V_B$, follows from (\ref{eq:homom}).
To verify the properties of $F_t$, it suffices to consider the case $f\in E_B^+\V_B$ 
as the case $f\in E_B^-\V_B$ is similar.
Since $z e^{-t|z|}\in \Psi(S_\nu^0)$ it follows that 
$F_t\in\dom(T_B)$.
Moreover, since 
$\tfrac 1h(e^{-(t+h)|z|}-e^{-t|z|})\rightarrow -|z|e^{-t|z|}$
uniformly on $S_\nu^0$, it follows from
Lemma~\ref{lem:convergence} that 
$\pd_t F_t =-|T_B| F_t$ and since $F_t\in E_B^+\V_B$ we have 
$|T_B| F_t= T_B F_t$.
 
To prove the limits, assume that $f\in \dom(|T_B|^s)\cap\ran(|T_B|^s)$
for some $0<s<1$.
Writing $f= |T_B|^{-s}u$, we see that
$$
  F_t-f= t^s \psi(t T_B)u, \qquad\text{where } 
  \psi(z)= (e^{-|z|}-1)/|z|^s.
$$
Similarly with $f= |T_B|^s v$, we get
$$
  F_t = t^{-s} \psi(t T_B)v, \qquad\text{where }
  \psi(z)= |z|^s e^{-|z|}.
$$
Since in both cases $\psi(t T_B)$ are uniformly bounded in $t$, 
using a direct norm estimate in (\ref{dunfordschwarz}),
it follows that
both limits are $0$ as $t\rightarrow 0$ and $t\rightarrow\infty$
respectively.
In particular, $f=\lim_{t\rightarrow 0} F_t$ is uniquely determined by 
$F$.
\end{proof}
We now further discuss the quadratic estimates (\ref{eq:quadrests}).
First note the following consequence of the duality 
$T_B' =-T_{B^*}$ from Proposition~\ref{prop:duality}.
Again we refer to \cite{ADMc} for further details.
\begin{lem}   \label{lem:dualityforquadratic}
If $\tb{Q_t^B f}\lesssim \|f\|$ for all $f\in\mH$, then
$\tb{Q_t^{B^*} f}\gtrsim \|f\|$ for all $f\in\mH$.
\end{lem}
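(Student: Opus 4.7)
My plan is to deduce the lower bound from the upper bound via a Calder\'on reproducing formula paired against the $B$-duality of Definition~\ref{defn:duality}, combined with the computation $(Q_t^B)'=-Q_t^{B^*}$. The analytic input is the elementary identity $2\int_0^\infty q(tz)^2\,\tfrac{dt}{t}=1$ for $z\in S_\nu^o$, which at the operator level is precisely the identity recorded just after Definition~\ref{defn:denseset},
$$
2\int_\alpha^\beta (Q_t^B)^2 f\,\frac{dt}{t} = (P_\alpha^B-P_\beta^B)f \;\longrightarrow\; f,
$$
strongly in $\mH$ as $(\alpha,\beta)\to(0,\infty)$, for every $f\in\mH$.

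Next, for $f,g\in\mH$, pair the limit identity $f=2\int_0^\infty(Q_t^B)^2f\,\tfrac{dt}{t}$ with $g$ under $\langle\cdot,\cdot\rangle_B$. Using the duality rule $(AB)'=B'A'$, the scalar rule $(\lambda S)'=\bar\lambda S'$ forced by the sesquilinearity of $\langle\cdot,\cdot\rangle_B$, and the identity $(T_B)'=-T_{B^*}$ from Proposition~\ref{prop:duality}, one checks that $(T_B^2)'=T_{B^*}^2$ and hence, since $t>0$ is real,
$$
(Q_t^B)' = \bigl((I+t^2T_B^2)^{-1}\bigr)'\,(tT_B)' = (I+t^2T_{B^*}^2)^{-1}(-tT_{B^*}) = -Q_t^{B^*}.
$$
Factoring one $Q_t^B$ across the pairing then yields
$$
\langle f,g\rangle_B \;=\; 2\int_0^\infty \langle Q_t^B f,\,(Q_t^B)'g\rangle_B\,\frac{dt}{t} \;=\; -2\int_0^\infty \langle Q_t^B f,\,Q_t^{B^*}g\rangle_B\,\frac{dt}{t}.
$$

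Finally, the pointwise bound $|\langle u,v\rangle_B|\le C\|u\|\|v\|$ from Definition~\ref{defn:duality} and Cauchy--Schwarz in $t$ give
$$
|\langle f,g\rangle_B|\;\lesssim\; \tb{Q_t^B f}\,\tb{Q_t^{B^*}g}.
$$
The hypothesis $\tb{Q_t^B f}\lesssim\|f\|$ now yields $|\langle f,g\rangle_B|\lesssim \|f\|\,\tb{Q_t^{B^*}g}$ for every $f\in\mH$, and the second duality estimate of Definition~\ref{defn:duality}, $\|g\|\lesssim \sup_{f\neq 0}|\langle f,g\rangle_B|/\|f\|$, delivers the desired lower bound $\|g\|\lesssim \tb{Q_t^{B^*}g}$ for every $g\in\mH$.

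The main obstacle is really bookkeeping rather than substance: one has to track the sesquilinearity of $\langle\cdot,\cdot\rangle_B$ carefully when computing $(Q_t^B)'$ so that one obtains $-Q_t^{B^*}$ and not some conjugated variant, and one has to justify passing the pointwise Cauchy--Schwarz estimate through the limit $(\alpha,\beta)\to(0,\infty)$ in the reproducing formula (dominated convergence together with the assumed quadratic estimate on $Q_t^B f$ suffices). Both steps are routine and follow the template for the equivalence between quadratic estimates and bounded $H_\infty$ functional calculus in \cite{ADMc}.
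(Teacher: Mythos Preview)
Your proof is correct and follows precisely the standard duality argument from \cite{ADMc} that the paper itself invokes (the paper does not give its own proof but refers the reader there). The computation $(Q_t^B)'=-Q_t^{B^*}$, the Calder\'on reproducing formula, and the Cauchy--Schwarz pairing are exactly the ingredients used in that reference.
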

In Section~\ref{section7.3} we shall use the following Hardy space reduction
of the quadratic estimate. 
This is a technique due to Coifman, Jones and Semmes~\cite{CJS}, and adapted to 
the setting of functional calculus by \Mcc Intosh and Qian~\cite[Theorem 5.2]{McQ}.
\begin{prop}  \label{prop:Hardyreduction}
  Assume that we have reverse quadratic estimates in $E_{B^*}^+\V_{B^*}$
  and $E_{B^*}^-\V_{B^*}$, i.e.
$$
  \|g\| \lesssim \tb{ t\pd_t G_t }_\pm,
  \qquad g\in E_{B^*}^\pm\V_{B^*},
$$
where $G_t= e^{\mp t|T_{B^*}|}g$. 
Then 
$$
  \tb{Q_t^B f}\lesssim \|f\|,
  \qquad f\in\mH.
$$
In particular, if we have reverse quadratic estimates in both Hardy 
spaces for both operators $T_B$ and $T_{B^*}$, then  
$\tb{Q_t^B f}\approx \|f\|\approx \tb{Q_t^{B^*} f}$, $f\in\mH$, 
and thus $T_B$ and $T_{B^*}$ have bounded $H_\infty(S^o_\nu)$ functional
calculus for all $\omega<\nu<\pi/2$.
\end{prop}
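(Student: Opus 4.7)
The plan is to follow the Coifman--Jones--Semmes strategy as adapted to the bisectorial setting by \Mcc Intosh and Qian, converting the desired forward quadratic estimate for $T_B$ into a statement about $T_{B^*}$ via the duality $\dual{\cdot}{\cdot}_B$ of Definition~\ref{defn:duality} and the adjoint relation $(T_B)' = -T_{B^*}$ of Proposition~\ref{prop:duality}. Concretely, by density of $\V_B$ and $L_2(\R_+,dt/t;\mH)$-duality, the estimate $\tb{Q_t^B f}\lesssim \|f\|$ is equivalent to uniform boundedness, from $L_2(\R_+,dt/t;\mH)$ to $\mH$, of the linear map
$$
(h_t)_{t>0}\longmapsto Z_h := \int_0^\infty (Q_t^B)' h_t \, \frac{dt}{t},
$$
interpreted weakly via the $B$-pairing. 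Since $q_t(z)= tz/(1+t^2z^2)$ is odd, functional calculus together with $(T_B)' = -T_{B^*}$ gives $(Q_t^B)'=-Q_t^{B^*}$, so the task reduces to showing $\|Z_h\|\lesssim \tb{h_t}$ with $Z_h=-\int_0^\infty Q_t^{B^*}h_t\,dt/t$ paired in $\dual{\cdot}{\cdot}_B$.

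Next I would test $Z_h$ against vectors $\phi\in \V_{B^*}$, a dense subspace by the discussion after Definition~\ref{defn:denseset}. Using the algebraic splitting of Lemma~\ref{lem:algsplitting} applied to $T_{B^*}$, write $\phi=\phi^++\phi^-$ with $\phi^\pm\in E_{B^*}^\pm\V_{B^*}$, and define the Poisson extensions $\Phi_t^\pm=e^{\mp t|T_{B^*}|}\phi^\pm$ for $\pm t>0$. On each Hardy subspace $|T_{B^*}|$ coincides with $\pm T_{B^*}$, so one can compute that $t\partial_t\Phi_t^\pm=\mp tT_{B^*}\Phi_t^\pm$; this is (up to a $\Psi$-function swap permitted by Proposition~\ref{prop:differentpsi}) exactly of the form $\psi_t(T_{B^*})\phi^\pm$ matching $Q_t^{B^*}$. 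A Calder\'on-type reproducing manipulation then rewrites $\dual{\phi^\pm}{Z_h}_B$ as a pairing in $L_2(\R_\pm,dt/t;\mH)$ between $t\partial_t\Phi_t^\pm$ and $h_t$, which by Cauchy--Schwarz is bounded by $\tb{t\partial_t \Phi_t^\pm}_\pm \tb{h_t}$; the hypothesis supplies $\tb{t\partial_t\Phi_t^\pm}_\pm\gtrsim \|\phi^\pm\|$, so one ends up controlling $|\dual{\phi^\pm}{Z_h}_B|$ by $\|\phi^\pm\|\,\tb{h_t}$.

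The main obstacle will be the last step: combining the two estimates on $\phi^\pm$ into one on $\phi$, because without a priori boundedness of the Hardy projections for $T_{B^*}$ we do not have the topological control $\|\phi^+\|+\|\phi^-\|\lesssim \|\phi\|$. Here the reverse quadratic estimates themselves have to carry the day: they precisely assert that the Poisson extension is bounded below on each of $E_{B^*}^+\V_{B^*}$ and $E_{B^*}^-\V_{B^*}$ in the tent norm $\tb{\cdot}_\pm$. Taking adjoints of these two bounded-below injections in $L_2(\R_\pm,dt/t;\mH)$ produces two bounded \emph{surjections} whose sum reconstructs $Z_h$ as a single element of $\mH$ with $\|Z_h\|\lesssim \tb{h_t}$, bypassing the need to bound $\phi^\pm$ individually; equivalently, one views $\phi\mapsto(\Phi_t^+,\Phi_t^-)$ as a bounded-below closed operator from $\mH$ into a tent space and dualizes.

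Once $\tb{Q_t^B f}\lesssim \|f\|$ is established, applying the identical argument with $B$ and $B^*$ swapped yields $\tb{Q_t^{B^*}f}\lesssim \|f\|$. Lemma~\ref{lem:dualityforquadratic} converts each upper estimate into a lower one for the other operator, producing the two-sided equivalences $\tb{Q_t^Bf}\approx\|f\|\approx\tb{Q_t^{B^*}f}$, and Proposition~\ref{prop:QtoFCalc} then delivers the bounded $H_\infty(S_\nu^o)$ functional calculus for both $T_B$ and $T_{B^*}$ on every angle $\omega<\nu<\pi/2$.
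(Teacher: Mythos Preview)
There is a genuine gap at the step where you invoke the hypothesis. After Cauchy--Schwarz you would have $|\dual{\phi^\pm}{Z_h}_B|\le \tb{t\partial_t\Phi_t^\pm}_\pm\,\tb{h_t}$, while the reverse quadratic estimate asserts $\|\phi^\pm\|\lesssim \tb{t\partial_t\Phi_t^\pm}_\pm$; this inequality points the \emph{wrong way} to conclude $|\dual{\phi^\pm}{Z_h}_B|\lesssim \|\phi^\pm\|\,\tb{h_t}$. (There is also an earlier problem: moving $Q_t^{B^*}$ across the $B$-duality produces $Q_t^B\phi^\pm$, which involves $T_B$ and not $T_{B^*}$, so the pairing does not in fact yield $t\partial_t\Phi_t^\pm$.) Your proposed fix via adjoints of bounded-below injections does not rescue this: those adjoints surject only onto the closures of $E_{B^*}^\pm\V_{B^*}$, not onto $\mH$, and they do not reconstruct $Z_h$.

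The paper avoids decomposing the test function altogether by splitting the operator instead: write $Q_t^B=q_t^+(T_B)+q_t^-(T_B)$ with $q_t^\pm:=\chi^\pm q_t\in\Psi(S_\nu^o)$, so both summands are bounded via the Dunford integral, with no need for bounded Hardy projections. Since $(q_t^-(T_B))'=-q_t^+(T_{B^*})$, for given $f$ and a truncation $[\alpha,\beta]$ one manufactures a \emph{single specific} element $g:=-\int_\alpha^\beta q_t^+(T_{B^*})h_t\,\tfrac{dt}t\in E_{B^*}^+\V_{B^*}$ (with suitably normalised $h_t$) so that $\big(\int_\alpha^\beta\|q_t^-(T_B)f\|^2\,\tfrac{dt}t\big)^{1/2}=\dual{f}{g}_B\lesssim\|f\|\,\|g\|$. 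Now the hypothesis is applied in the correct direction, namely $\|g\|\lesssim\tb{\psi_s(T_{B^*})g}$ with $\psi(z)=ze^{-|z|}$, and a Schur estimate $\|(\psi_s q_t^+)(T_{B^*})\|\lesssim\min((t/s)^\sigma,(s/t)^\sigma)$ bounds the latter by $\tb{h_t}\lesssim 1$. The mirror argument handles $q_t^+(T_B)$ using the hypothesis on $E_{B^*}^-\V_{B^*}$. Your final paragraph, invoking Lemma~\ref{lem:dualityforquadratic} and Proposition~\ref{prop:QtoFCalc}, is correct.
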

\begin{proof}
  Assume first that 
$$
  \|g\| \lesssim \tb{ t\pd_t G_t },
  \qquad g\in E_{B^*}^+\V_{B^*},
$$
and write $\psi_t(z) := tz e^{-t|z|}$ so that $t\pd_t G_t=-\psi_t(T_{B^*})g$
by Lemma~\ref{lem:algsplitting}.
Let $f\in\mH$ and define $q_t^\mp(z):= \chi^\mp q_t(z)$ so that
$q_t^\mp(T_B)= E^\mp_B Q_t^B$.
To prove that 
$$
  \tb{q_t^-(T_B) f}\lesssim \|f\|, \qquad f\in\mH,
$$
it suffices to bound $\int_\alpha^\beta \|q_t^-(T_B) f\|^2\, \tfrac{dt}t$
uniformly for $\alpha>0$ and $\beta<\infty$.
To this end, define the auxiliary functions
\begin{align*}
  h_t &:= \left(\int_\alpha^\beta \|q_t^-(T_B) f\|^2\, \tfrac{dt}t\right)^{-1/2}
  (N^+B^*-B^*N^-)^{-1}q_t^-(T_B) f, \\
  g &:= -\int_\alpha^\beta q_t^+(T_{B^*})h_t \, \tfrac{dt}t,
\end{align*}
so that $\int_\alpha^\beta \|h_t\|^2 \frac{dt}t\le C$ and 
$g\in E_{B^*}^+\V_{B^*}$, and calculate
\begin{multline*}
  \left(\int_\alpha^\beta \|q_t^-(T_B) f\|^2\, \tfrac{dt}t\right)^{1/2} =
  \int_\alpha^\beta \dual{q_t^-(T_B) f}{h_t}_B \, \tfrac{dt}t
  =\dual fg_B \\
  \lesssim \|f\|\,\|g\|\lesssim 
  \|f\|\, \tb{\psi_s(T_{B^*})g } \\
  \lesssim
  \|f\| \left( \int_0^\infty \left(\int_\alpha^\beta \| (\psi_s q_t^+)(T_{B^*})\|\,
  \|h_t\|\,\frac{dt}t \right)^2 \frac{ds}s \right)^{1/2} \\
  \lesssim\|f\| \left( \int_\alpha^\beta \|h_t\|^2 \frac{dt}t \right)^{1/2} \lesssim \|f\|.
\end{multline*}
In the second equality we have used that $q_t^-(T_B)'= q_t^-(-T_{B^*})= -q_t^+(T_{B^*})$,
in the second estimate we used the hypothesis and the second last estimate
is a Schur estimate.
We here use that $\| (\psi_s q_t^+)(T_{B^*})\|\lesssim \eta(t/s)$,
where $\eta(x):=\min(x^s,x^{-s})$ for some $s>0$.

With a similar argument $\tb{q_t^+(T_B) f}\lesssim \|f\|$, $f\in\mH$, follows
from the reverse quadratic estimate for $g\in E_{B^*}^-\V_{B^*}$.
If both reverse estimates holds for $B^*$, then
$$
  \tb{Q_t^Bf}\le \tb{q_t^-(T_B) f}+\tb{q_t^+(T_B) f}\lesssim \|f\|,\qquad f\in\mH,
$$
and if the same holds for $B$ and $B^*$ interchanged, then
Lemma~\ref{lem:dualityforquadratic} proves that 
$\tb{Q_t^B f}\approx \|f\|$, $f\in\mH$, and by 
Proposition~\ref{prop:QtoFCalc} this proves that $T_B$ has 
bounded $H_\infty(S^o_\nu)$ functional calculus, and similarly for $T_{B^*}$.
\end{proof}
%
%
%
We end this section with a discussion of the holomorphic perturbation
theory for the functional calculus of $T_B$.
\begin{defn}
  Let $z\mapsto U_z\in\mL(\mX,\mY)$ be an operator valued function defined
  on an open subset $D\subset\C$ of the complex plane.
  We say that $U_z$ is {\em holomorphic} if for all $z\in D$ there exists
  an operator $U_z'\in \mL(\mX,\mY)$ such that
$$
  \| \tfrac 1w(U_{z+w}-U_z) -U_z' \|_{\mX\rightarrow\mY}
  \longrightarrow 0,\qquad w\longrightarrow 0.
$$
\end{defn}

\begin{lem}  \label{lem:strongweakanal}
  Let $z\mapsto U_z\in\mL(\mX,\mY)$ be an operator valued function defined
  on an open subset $D\subset\C$ of the complex plane.
  Then the following are equivalent.
\begin{itemize}
\item[{\rm (i)}]
  $z\mapsto U_z$ is holomorphic.
\item[{\rm (ii)}]
  The scalar function $h(z)=(U_z f,g)$ is a holomorphic function
  for all $f\in\wt\mX$ and $g\in\wt\mY^*$, where $\wt\mX\subset\mX$ and 
  $\wt\mY^*\subset\mY^*$ are dense,
  and $\|U_z\|$ is locally bounded.
\end{itemize}
In particular, if $z\mapsto U^k_z$ are holomorphic on $D$ for $k=1,2,\ldots$ and
$$
(U^k_z f,g)=h_k(z)\longrightarrow h(z)=(U_z f,g),\qquad \text{for all } z\in D,
f\in\wt\mX, g\in \wt\mY^*,
$$
and $\sup_{z\in K,k\ge 1}\|U^k_z\|<\infty$ for each compact subset $K\subset D$, 
then $z\mapsto U_z$ is holomorphic on $D$.
\end{lem}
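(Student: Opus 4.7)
The direction (i)$\Rightarrow$(ii) is immediate: if $U_z$ is norm-holomorphic with derivative $U_z'$, then $(U_zf,g)$ is differentiable with derivative $(U_z'f,g)$ for every $f\in\mX$, $g\in\mY^*$, so (ii) holds with $\wt\mX=\mX$, $\wt\mY^*=\mY^*$; norm differentiability also forces norm continuity, hence local boundedness of $\|U_z\|$.

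For (ii)$\Rightarrow$(i) I would first extend weak holomorphy from $\wt\mX\times\wt\mY^*$ to all of $\mX\times\mY^*$. Fix $z_0\in D$ and a closed disk $\overline{D(z_0,2r)}\subset D$ on which $\|U_z\|\le M$. Given $f\in\mX$, $g\in\mY^*$, choose sequences $f_k\to f$ in $\wt\mX$ and $g_k\to g$ in $\wt\mY^*$; each $z\mapsto(U_zf_k,g_k)$ is holomorphic, and the estimate
$$|(U_zf,g)-(U_zf_k,g_k)|\le M(\|f-f_k\|\|g_k\|+\|f\|\|g_k-g\|)$$
shows uniform convergence on the closed disk, so $(U_zf,g)$ is holomorphic there by Weierstrass.

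I would then apply the Cauchy integral formula on $\gamma=\{|\zeta-z_0|=r\}$. For $|w_1|,|w_2|<r/2$, subtracting
$$\tfrac{1}{w}\bigl((U_{z_0+w}f,g)-(U_{z_0}f,g)\bigr)=\frac{1}{2\pi i}\int_\gamma\frac{(U_\zeta f,g)}{(\zeta-z_0-w)(\zeta-z_0)}\,d\zeta$$
at $w=w_1$ and $w=w_2$ produces a kernel of size $O(|w_1-w_2|/r^3)$. Since $|(U_\zeta f,g)|\le M\|f\|\|g\|$ uniformly on $\gamma$, this yields the \emph{operator-norm} bound
$$\bigl\|\tfrac{1}{w_1}(U_{z_0+w_1}-U_{z_0})-\tfrac{1}{w_2}(U_{z_0+w_2}-U_{z_0})\bigr\|\le \frac{4M}{r^2}|w_1-w_2|.$$
The difference quotient at $z_0$ is therefore Cauchy in $\mL(\mX,\mY)$, and its limit defines the required derivative $U'_{z_0}$.

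For the final \emph{in particular} assertion, on every compact subset of $D$ the family $(U^k_z f,g)$ is a uniformly bounded sequence of holomorphic functions converging pointwise to $(U_zf,g)$ for $f\in\wt\mX$, $g\in\wt\mY^*$; Vitali's theorem makes the limit holomorphic on $D$. Passing to the limit in $|(U^k_zf,g)|\le\sup_k\|U^k_z\|\,\|f\|\|g\|$ and extending by density gives $\|U_z\|\le\sup_k\|U^k_z\|$, which is locally bounded. Condition (ii) is satisfied for $U_z$, and (i) follows. The main obstacle throughout is the Cauchy-difference step: lifting pointwise weak holomorphy to operator-norm differentiability requires a kernel bound uniform in $f$ and $g$, which is exactly what the local uniform bound $\|U_z\|\le M$ delivers after weak holomorphy has been extended off the dense pair.
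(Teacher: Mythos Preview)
Your argument is correct. The paper does not actually prove the equivalence (i)$\Leftrightarrow$(ii): it simply refers to Kato~[Theorem III 3.12], and for the convergence statement it observes that $h(z)$ is holomorphic by applying the dominated convergence theorem to the Cauchy integral representation of $h_k(z)$.

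Your route is a self-contained unpacking of that reference. The density-plus-Weierstrass step to pass from $\wt\mX\times\wt\mY^*$ to $\mX\times\mY^*$, followed by the Cauchy-kernel estimate to upgrade weak holomorphy to norm holomorphy, is exactly the standard proof that Kato records. For the ``in particular'' clause, your appeal to Vitali's theorem is equivalent to the paper's dominated convergence argument (both exploit the uniform bound $\sup_{z\in K,k}\|U_z^k\|<\infty$ to control the Cauchy integrals). The only practical difference is length: the paper outsources the work, while you carry it out explicitly.
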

\begin{proof}
For the equivalence between (i) and (ii), see Kato~\cite[Theorem III 3.12]{Kato}.
To prove the convergence result, it suffices to show that $h(z)$ is holomorphic on
$D$. That this is true follows from an application of the dominated convergence theorem 
in the Cauchy integral formula for $h_k(z)$. 
\end{proof}
  Below, we shall assume that $z\mapsto B_z$ is a given holomorphic matrix valued 
function defined on an open subset $D\subset\C$ such that $B_z$ is a multiplication operator 
  as in Definition~\ref{defn:B} for each $z\in D$, and that
  $\omega_D:=\sup_{z\in D}\arccos(\kappa_{B_z}/(2\|B_z\|_\infty) )<\pi/2$.
  Let $\omega_D<\nu<\pi/2$.
\begin{lem}   \label{lem:analres}
  For $\tau\in S^0_{\pi/2-\nu}$, the operator valued function
$D\ni z\longmapsto (I - i\tau T_{B_z})^{-1}$
  is holomorphic.
\end{lem}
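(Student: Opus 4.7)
The plan is to exhibit an explicit factorization of the resolvent that makes its $B$-dependence manifest as a product of bounded holomorphic factors. First note that by Proposition~\ref{prop:spectrest}, for $\tau \in S^o_{\pi/2-\nu}$ and $z \in D$ the resolvent $R_z := (I - i\tau T_{B_z})^{-1}$ is bounded, with norm controlled by $\nu$, $\kappa_{B_z}$, and $\|B_z\|_\infty$; since $z \mapsto B_z$ is at least $L_\infty$-continuous on $D$, the quantities $\|B_z\|_\infty$ and $\kappa_{B_z}^{-1}$ are locally bounded on $D$, so $\|R_z\|$ is locally bounded. By the last part of Lemma~\ref{lem:strongweakanal}, it is therefore enough to display $R_z$ as a product of holomorphic $\mL(\mH)$-valued factors.

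My first step would be to recycle the identity from the proof of Proposition~\ref{prop:spectrest}: multiplying $(I - i\tau T_{B})u = f$ by $i(\mu - \widetilde B^{-1}\mu^* B)$ converts it into the Dirac-type equation $(\Gamma + \widetilde B^{-1}\widetilde\Gamma B) u = i(\mu - \widetilde B^{-1}\mu^* B) f$, where $\Gamma := i\mu + \tau d$ and $\widetilde\Gamma := -i\mu^* + \tau d^*$ are $B$-independent nilpotent operators. Applying Lemma~\ref{lem:hodge}(ii) with $B_1 = \widetilde B$ and $B_2 = B$ then yields the factorization
\begin{equation*}
  \Gamma + \widetilde B^{-1}\widetilde\Gamma B = (\PP^1 + \widetilde B^{-1}\PP^2)\,\Pi\,(\PP^2 + \PP^1 B),
\end{equation*}
where $\Pi := \Gamma + \widetilde\Gamma$ and $\PP^1, \PP^2$ are the unperturbed Hodge projections, none of which depend on $B$. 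Combining this with $R_z = (\Gamma + \widetilde{B_z}^{-1}\widetilde\Gamma B_z)^{-1}\cdot i(\mu - \widetilde{B_z}^{-1}\mu^* B_z)$ gives the explicit formula
\begin{equation*}
  R_z = (\PP^2 + \PP^1 B_z)^{-1}\,\Pi^{-1}\,(\PP^1 + \widetilde{B_z}^{-1}\PP^2)^{-1}\, i(\mu - \widetilde{B_z}^{-1}\mu^* B_z).
\end{equation*}

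Next I would check that each factor on the right is holomorphic in $z$ as an $\mL(\mH)$-valued function. The bounded multiplication-type operators $\PP^2 + \PP^1 B_z$, $\PP^1 + \widetilde{B_z}^{-1}\PP^2$, and $i(\mu - \widetilde{B_z}^{-1}\mu^* B_z)$ inherit holomorphy from $z \mapsto B_z$; fiberwise matrix inversion is holomorphic on the accretive cone, and together with the uniform pointwise lower bound $|B_z(x)^{-1}| \le \kappa_{B_z}^{-1}$ this yields holomorphy of $z \mapsto B_z^{-1}$ and $z \mapsto \widetilde{B_z}^{-1}$ in $L_\infty(\R^n;\mL(\wedge))$, hence in $\mL(\mH)$. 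The middle factor $\Pi^{-1}$ is a fixed bounded operator on $\mH$, independent of $z$: this is essentially steps (i)--(iii) in the proof of Proposition~\ref{prop:spectrest}, where $\Pi$ is shown to be bounded below on $\dom(\Pi)$ and, by duality, surjective. The two outer factors are pointwise invertible by Lemma~\ref{lem:hodge}(ii), and the standard Neumann series argument shows that the inverse of a holomorphic family of invertible bounded operators is again holomorphic; hence $z \mapsto (\PP^2 + \PP^1 B_z)^{-1}$ and $z \mapsto (\PP^1 + \widetilde{B_z}^{-1}\PP^2)^{-1}$ are holomorphic. A product of holomorphic $\mL(\mH)$-valued functions is holomorphic, so $z \mapsto R_z$ is holomorphic on $D$.

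I do not anticipate a substantive obstacle here; the real work is already encoded in the Dirac-type recasting of the resolvent equation and in Lemma~\ref{lem:hodge}(ii). The one delicate point is that a naive resolvent identity for $T_{B_z}$ would compare operators with $z$-varying domains, which is awkward. The factorization above circumvents this by conjugating the only unbounded piece, $\Pi^{-1}$, out of the $B$-dependence, after which only bounded, manifestly holomorphic multiplication factors remain.
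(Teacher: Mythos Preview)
Your argument is correct and takes a genuinely different route from the paper's proof. Both start from the same reformulation of the resolvent equation, rewriting $(I-i\tau T_B)^{-1}$ in terms of the Dirac-type operator $\Gamma+\widetilde B^{-1}\widetilde\Gamma B$ (the paper uses the equivalent form $\widetilde B\Gamma+\widetilde\Gamma B$). From there, however, the paper computes the difference $(\widetilde B_{z+w}\Gamma+\widetilde\Gamma B_{z+w})^{-1}-(\widetilde B_z\Gamma+\widetilde\Gamma B_z)^{-1}$ by a resolvent-type identity, bounds the resulting operator products $\Gamma(\widetilde B\Gamma+\widetilde\Gamma B)^{-1}$ and $(\widetilde B\Gamma+\widetilde\Gamma B)^{-1}\widetilde\Gamma$ via Lemma~\ref{lem:hodge}(i), and then passes to the limit $w\to 0$ to exhibit the derivative explicitly. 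You instead push the Hodge machinery one step further, invoking the factorization from the proof of Lemma~\ref{lem:hodge}(ii) to split off the only unbounded, $B$-independent piece $\Pi^{-1}$ and reduce the question to holomorphy of bounded multiplication-type factors. Your approach is more structural and avoids any difference-quotient computation; the paper's approach, on the other hand, yields an explicit formula for the derivative and needs only the bounds on the Hodge projections from Lemma~\ref{lem:hodge}(i) rather than the full factorization of part~(ii). One small point worth making explicit in your write-up: the bounded invertibility of $\Pi=\Gamma+\widetilde\Gamma$ on all of $\mH$ uses not just the lower bound from step~(iii) of Proposition~\ref{prop:spectrest}, but also that the adjoint $\Pi^*=\Gamma^*+\widetilde\Gamma^*$ has the same structure with $\tau$ replaced by $\bar\tau$, so the same lower bound holds and surjectivity follows.
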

\begin{proof}
Similar to the proof of Proposition~\ref{prop:spectrest} we have
$$
  (I - i\tau T_{B_z})^{-1} = (\widetilde B_z\Gamma+ \wt\Gamma B_z)^{-1} 
  i (\widetilde B_z\mu- \mu^* B_z),
$$
where $\Gamma= i\mu+\tau d$ and $\wt\Gamma= -i\mu+\tau d^*$.
It is clear that the multiplication operator $\widetilde B_z\mu- \mu^* B_z$
depends holomorphically on $z$, so it suffices to show that 
$z\mapsto(\widetilde B_z\Gamma+ \wt\Gamma B_z)^{-1}$ is holomorphic.
Let $z\in D$, write $B:=B_z$ and $B_w:= B_{z+w}$ and calculate
\begin{multline}  \label{eq:analyticres}
  (\widetilde B_w\Gamma+ \wt\Gamma B_w)^{-1}-
  (\widetilde B\Gamma+ \wt\Gamma B)^{-1} \\=
  -(\widetilde B_w\Gamma+ \wt\Gamma B_w)^{-1}
  ( \widetilde B_w -\widetilde B) 
  \big(\Gamma(\widetilde B\Gamma+ \wt\Gamma B)^{-1}\big) \\
  -\big((\widetilde B_w\Gamma+ \wt\Gamma B_w)^{-1}\wt\Gamma\big)
  ( B_w -B) 
  (\widetilde B\Gamma+ \wt\Gamma B)^{-1}.
\end{multline}
The first and last factors in both terms on the right hand side are
uniformly bounded by Lemma~\ref{lem:hodge}(i) so we deduce
continuity of $w\mapsto(\widetilde B_w\Gamma+ \wt\Gamma B_w)^{-1}$.
Furthermore, multiplying equation (\ref{eq:analyticres}) from the 
right with $\wt\Gamma$, the first term on the right vanishes
as
$$
  \Gamma(\widetilde B\Gamma+ \wt\Gamma B)^{-1}\wt\Gamma=
  (\widetilde B\Gamma+ \wt\Gamma B)^{-1}\wt\Gamma^2=0,
$$
and we deduce 
continuity of $w\mapsto(\widetilde B_w\Gamma+ \wt\Gamma B_w)^{-1}\wt\Gamma$.
Thus, dividing equation (\ref{eq:analyticres}) by $w$ and letting
$w\rightarrow 0$ we see that the limit exists and equals
$$  
  -(\widetilde B\Gamma+ \wt\Gamma B)^{-1} \widetilde B'
    \big(\Gamma(\widetilde B\Gamma+ \wt\Gamma B)^{-1}\big)-
    \big((\widetilde B\Gamma+ \wt\Gamma B)^{-1} \wt\Gamma\big)
     B'(\widetilde B\Gamma+ \wt\Gamma B)^{-1}.
$$
\end{proof}

\begin{lem}  \label{lem:analpsi}
If $\psi\in \Psi(S^o_\nu)$, then
$D\ni z\longmapsto \psi(T_{B_z})$
is holomorphic.
\end{lem}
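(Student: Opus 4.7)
The plan is to represent $\psi(T_{B_z})$ by the Dunford integral \eqref{dunfordschwarz} and deduce holomorphy by differentiating under the integral sign, using Lemma~\ref{lem:strongweakanal} to pass from weak holomorphy to norm holomorphy.

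First I would observe that since $z \mapsto B_z$ is holomorphic, in particular continuous, the accretivity constants $\kappa_{B_z}$ and bounds $\|B_z\|_\infty$ depend continuously on $z$. Fix a compact subset $K \subset D$; then these quantities are uniformly controlled on $K$, so the angle $\omega = \arccos(\kappa_{B_z}/(2\|B_z\|_\infty))$ from Proposition~\ref{prop:spectrest} stays $\le \omega_D < \nu$, and the resolvent estimate $\|(\lambda I - T_{B_z})^{-1}\| \le C/|\lambda|$ holds for $\lambda \notin S_\nu$ with a constant $C$ uniform in $z \in K$. Consequently the contour $\gamma = \{\pm r e^{\pm i\theta} : r > 0\}$ (with $\omega_D < \theta < \nu$) may be chosen independently of $z \in K$ in the representation
\[
\psi(T_{B_z}) = \frac{1}{2\pi i}\int_\gamma \psi(\lambda)(\lambda I - T_{B_z})^{-1}\, d\lambda.
\]

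Next I would use Lemma~\ref{lem:analres}: writing $\lambda = 1/(i\tau)$ for $\tau \in S^o_{\pi/2-\nu}$, we have $(\lambda I - T_{B_z})^{-1} = i\tau(I - i\tau T_{B_z})^{-1}$, so for each fixed $\lambda \in \gamma$ the operator-valued function $z \mapsto (\lambda I - T_{B_z})^{-1}$ is holomorphic on $D$. Given any $f, g \in \mH$, the integrand $z \mapsto \psi(\lambda)((\lambda I - T_{B_z})^{-1} f, g)$ is then holomorphic in $z$ for each $\lambda$, and is dominated on $\gamma$ by $C|\psi(\lambda)|\,\|f\|\,\|g\|/|\lambda|$. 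Since $|\psi(\lambda)|/|\lambda| \le C'\min(|\lambda|^{s-1}, |\lambda|^{-s-1})$ is integrable on $\gamma$, Morera's theorem (or differentiation under the integral) shows that
\[
h(z) := (\psi(T_{B_z}) f, g) = \frac{1}{2\pi i}\int_\gamma \psi(\lambda)((\lambda I - T_{B_z})^{-1} f, g)\, d\lambda
\]
is holomorphic on the interior of $K$, hence on all of $D$.

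Finally, the same dominating estimate yields $\|\psi(T_{B_z})\| \le C'' \int_\gamma |\psi(\lambda)|/|\lambda|\, |d\lambda|$ uniformly on $K$, so $\|\psi(T_{B_z})\|$ is locally bounded on $D$. Applying Lemma~\ref{lem:strongweakanal} with $\wt{\mX} = \wt{\mY}^* = \mH$ then gives holomorphy of $z \mapsto \psi(T_{B_z})$ as an $\mL(\mH)$-valued map. The only real point that requires care is the local uniformity of the resolvent bound and of the admissible contour $\gamma$, which is exactly where holomorphy (hence continuity) of $z \mapsto B_z$ together with the hypothesis $\omega_D < \pi/2$ is used; beyond this, everything reduces to standard holomorphic dependence under a norm-convergent contour integral.
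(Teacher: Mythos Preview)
Your proof is correct and follows essentially the same approach as the paper: represent $\psi(T_{B_z})$ by the Dunford integral over a fixed contour and use the holomorphy of the resolvent from Lemma~\ref{lem:analres}. The only minor difference is that the paper passes directly to norm holomorphy by extracting from the proof of Lemma~\ref{lem:analres} the uniform-in-$\lambda$ estimate $\sup_{\lambda\in\gamma}|\lambda|\,\|\tfrac{1}{w}((\lambda-T_{B_{z+w}})^{-1}-(\lambda-T_{B_z})^{-1})-\partial_z(\lambda-T_{B_z})^{-1}\|\to 0$ and then differentiating under the integral in norm, whereas you route through weak holomorphy plus local boundedness via Lemma~\ref{lem:strongweakanal}; your route has the small advantage of not requiring inspection of the proof of Lemma~\ref{lem:analres}.
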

\begin{proof}
Let $\gamma$ be the unbounded contour $\sett{\pm r
e^{\pm i\theta}}{r> 0}$, $\omega_D <\theta<\nu$, parametrised
counterclockwise around $S_{\omega_D}$. 
By inspection of the proof of Lemma~\ref{lem:analres} we have
$$
  \sup_{\lambda\in\gamma}|\lambda|\left\|\tfrac 1w\big((\lambda- T_{B_{z+w}})^{-1}-(\lambda- T_{B_z})^{-1}\big)
  - \pd_z (\lambda- T_{B_z})^{-1}\right\|\rightarrow 0,
$$
as $w\rightarrow 0$.
Thus
$$
 \tfrac 1w\big( \psi(T_{B_{z+w}})-\psi(T_{B_z}) \big)
 \longrightarrow \frac 1{2\pi i}\int_\gamma \psi(\lambda)
   \pd_z (\lambda- T_{B_z})^{-1} \, d\lambda,
$$
since $\int_\gamma |\psi(\lambda)|\,\left|\tfrac {d\lambda}\lambda\right|<\infty$.
\end{proof}

\begin{lem} \label{lem:qgivesanal}
Assume that $T_{B_z}$ satisfy quadratic estimates
$\tb{ Q_t^{B_z} f} \approx \|f\|$, $f\in\mH$,
locally uniformly for $z\in D$.
If $b\in H_\infty(S^o_\nu)$, $\psi\in \Psi(S^o_\nu)$ and $0<\alpha<\beta<\infty$, then
the following operators depend holomorphically on $z\in D$.
\begin{itemize}
\item[{\rm (i)}]
$u(x)\longmapsto (b(T_{B_z})u)(x) : \mH\longrightarrow \mH$
\item[{\rm (ii)}]
$u(x)\longmapsto v(t,x)=(b(tT_{B_z})u)(x) : \mH\longrightarrow L_2(\R^n\times(\alpha,\beta);\wedge)$
\item[{\rm (iii)}]
$u(x)\longmapsto v(t,x)=(\psi_t(T_{B_z})u)(x) : \mH\longrightarrow L_2(\R^{n+1}_+, \tfrac{dtdx}t;\wedge)$
\end{itemize}
\end{lem}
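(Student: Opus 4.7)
The plan is to combine uniform boundedness from the assumed quadratic estimates with an approximation argument that reduces everything to the already-proven holomorphy of $\psi(T_{B_z})$ for $\psi \in \Psi(S^o_\nu)$ in Lemma~\ref{lem:analpsi}, together with the weak-to-strong criterion at the end of Lemma~\ref{lem:strongweakanal} and the bounded $H_\infty$-calculus from Proposition~\ref{prop:QtoFCalc}.

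For (i), I would first pick a sequence $\eta_k \in \Psi(S^o_\nu)$ with $\sup_k \|\eta_k\|_\infty < \infty$ and $\eta_k \to 1$ uniformly on compact subsets of $S^o_\nu$; a concrete choice is
\[
\eta_k(z) = \frac{(kz)^2}{(1+(kz)^2)(1+(z/k)^2)}.
\]
Then $b\eta_k \in \Psi(S^o_\nu)$ converges to $b$ uniformly on compact subsets, with $\|b\eta_k\|_\infty \le C\|b\|_\infty$. Lemma~\ref{lem:analpsi} gives that $z \mapsto (b\eta_k)(T_{B_z})$ is holomorphic into $\mL(\mH)$. Proposition~\ref{prop:QtoFCalc}, applied with the uniform quadratic estimates, furnishes the bound $\|(b\eta_k)(T_{B_z})\| + \|b(T_{B_z})\| \lesssim \|b\|_\infty$ uniformly on each compact $K \subset D$. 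Lemma~\ref{lem:convergence} then gives strong convergence $(b\eta_k)(T_{B_z}) u \to b(T_{B_z})u$ for each $z$ and $u$. The ``in particular'' clause of Lemma~\ref{lem:strongweakanal} delivers holomorphy of $z \mapsto b(T_{B_z}) \in \mL(\mH)$.

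For (ii), for each $t \in (\alpha,\beta)$ the function $b_t(z) := b(tz)$ lies in $H_\infty(S^o_\nu)$ with $\|b_t\|_\infty = \|b\|_\infty$, so (i) yields that $z \mapsto b(tT_{B_z})$ is holomorphic $\mH \to \mH$. For the target space $L_2(\R^n\times(\alpha,\beta);\wedge)$, I check the hypotheses of Lemma~\ref{lem:strongweakanal}: for $u \in \mH$ and $v \in L_2(\R^n\times(\alpha,\beta);\wedge)$, set
\[
h(z) := \int_\alpha^\beta \bigl(b(tT_{B_z})u, v(t,\cdot)\bigr)_{\mH}\, dt.
\]
The integrand is holomorphic in $z$ for each $t$ and dominated by $C\|b\|_\infty \|u\|\,\|v(t,\cdot)\|_{\mH}$, which is integrable in $t$; differentiating under the integral (or applying Morera plus Fubini) gives holomorphy of $h$. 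The operator norm is bounded by $(\beta-\alpha)^{1/2} C\|b\|_\infty$ uniformly on compact $K \subset D$, so Lemma~\ref{lem:strongweakanal} concludes.

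For (iii), since $\psi_t \in \Psi(S^o_\nu)$ for each $t > 0$, Lemma~\ref{lem:analpsi} gives that $z \mapsto \psi_t(T_{B_z})$ is holomorphic in $\mL(\mH)$. Propositions~\ref{prop:differentpsi} and \ref{prop:QtoFCalc}, together with the assumed uniform quadratic estimates, yield
\[
\left(\int_0^\infty \|\psi_t(T_{B_z})u\|^2 \tfrac{dt}{t}\right)^{1/2} \lesssim \|u\|
\]
uniformly for $z$ in each compact $K \subset D$, so the operator $u \mapsto ((t,x)\mapsto (\psi_t(T_{B_z})u)(x))$ is uniformly bounded $\mH \to L_2(\R^{n+1}_+, \tfrac{dtdx}{t}; \wedge)$. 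For $v$ in the target space, set
\[
h(z) := \int_0^\infty \bigl(\psi_t(T_{B_z})u,\, v(t,\cdot)\bigr)_{\mH}\, \tfrac{dt}{t}.
\]
Cauchy--Schwarz and the quadratic estimate yield $\int_0^\infty |(\psi_t(T_{B_z})u, v(t,\cdot))|\,\tfrac{dt}{t} \lesssim \|u\|\,\|v\|$ uniformly on $K$. Each slice $z \mapsto (\psi_t(T_{B_z})u, v(t,\cdot))$ is holomorphic, so Fubini plus Cauchy's theorem applied to any closed contour $\Gamma \subset K$ gives $\int_\Gamma h(z)\,dz = 0$; Morera then shows $h$ is holomorphic, and Lemma~\ref{lem:strongweakanal} completes the proof.

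The main obstacle is (iii): the unbounded $t$-interval with measure $\tfrac{dt}{t}$ prevents uniform pointwise control of $\|\psi_t(T_{B_z})u\|$ in $t$, so one must invoke the assumed quadratic estimates in an essential way to obtain the uniform absolute integrability needed for the Fubini--Morera exchange. Parts (i) and (ii) are then essentially routine approximation and dominated-convergence arguments layered on top of Lemma~\ref{lem:analpsi}.
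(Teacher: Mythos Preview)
Your proof is correct and follows essentially the same strategy as the paper: approximation by $\Psi$-class functions combined with Lemma~\ref{lem:strongweakanal} for (i), and Fubini in the pairing integral for (ii). The only real difference is in (iii): the paper truncates to $t\in(1/k,k)$, invokes (ii) with $b=\psi\in\Psi\subset H_\infty$ (the $dt$ and $\tfrac{dt}{t}$ norms being equivalent on bounded $t$-intervals), and then passes to the limit using the convergence clause of Lemma~\ref{lem:strongweakanal}, whereas you run a direct Fubini--Morera argument on the full half-line using the quadratic estimate for absolute integrability.

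Your route works, but note one small gap: Morera's theorem requires continuity of $h$, which you have not established (dominated convergence does not apply directly since $\|v(t,\cdot)\|$ need not lie in $L_1(\tfrac{dt}{t})$). The cleanest fix is to test only against $v$ with compact $t$-support, which is dense in the target space and suffices by Lemma~\ref{lem:strongweakanal}(ii); at that point your argument and the paper's truncation argument effectively coincide. Alternatively, replace Morera by Fubini in the Cauchy integral formula for the holomorphic slices $h_t(z)$, exactly as the paper does in part (ii).
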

\begin{proof}
(i)
Take a uniformly bounded sequence $\psi_k(z)\in \Psi(S^o_\nu)$ which
converges to $b(z)\in H_\infty(S^o_\nu)$ uniformly on compact subsets of $S^o_\nu$.
Lemma~\ref{lem:strongweakanal} then applies with $U_z^k= \psi_k(T_{B_z})$ and
$U_z= b(T_{B_z})$, using Lemma~\ref{lem:analpsi}
and Lemma~\ref{lem:convergence}.

(ii)
It suffices by Lemma~\ref{lem:strongweakanal} to show that 
$h(z) = \int_\alpha^\beta h_t(z) dt$ is holomorphic, where 
$h_t(z)= (b(tT_{B_z})f, G_t)$, for all $f(x)\in\mH$ and 
$G(t,x)\in C^\infty_0(\R^n\times(\alpha,\beta);\wedge)$.
That $h_t(z)$ is holomorphic for each $t$ is clear from (i), and
for $h(z)$ this follows from an application of the Fubini theorem 
to the Cauchy integral formula for $h_t(z)$.

(iii)
Consider the truncations 
$U_z^k : u(x) \mapsto v(t,x)=\chi_k(t)(\psi_t(T_{B_z})u)(x)$, where
$\chi_k$ denotes the characteristic function of the interval $(1/k,k)$.
It is clear from (ii) that $U_z^k$ is holomorphic, and letting $k\rightarrow\infty$ 
we deduce from Lemma~\ref{lem:strongweakanal} that $\psi_t(T_{B_z})$ is holomorphic.
\end{proof}
\begin{prop}  \label{prop:lipcont}
  Assume that $T_B$ satisfies quadratic estimates
$\tb{ Q_t^{B} f} \approx \|f\|$, $f\in\mH$, 
locally uniformly for all $B$ such that $\|B-B_0\|_\infty<\epsilon$,
and let $\psi\in \Psi(S^o_\nu)$.
Then we have the Lipschitz estimates
\begin{align*}
  \| b(T_{B_2}) - b(T_{B_1}) \| &\le C \|b\|_\infty \| B_2-B_1\|_\infty, 
    \qquad b\in H_\infty(S^o_\nu), \\
  \tb{ \psi_t(T_{B_2})- \psi_t(T_{B_1}) } &\le C \| B_2-B_1\|_\infty,
\end{align*}
when $\|B_i- B_0\|<\epsilon/2$, $i=1,2$.
\end{prop}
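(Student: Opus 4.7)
The plan is to exploit the holomorphic dependence of $B\mapsto b(T_B)$ established in Lemma~\ref{lem:qgivesanal} together with local uniform boundedness coming from the $H_\infty$ functional calculus, and to extract Lipschitz bounds from Cauchy's integral formula on a disk whose radius scales like $1/\|B_2-B_1\|_\infty$.

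First I would set up the analytic path. Given $B_1,B_2$ with $\|B_i-B_0\|_\infty<\epsilon/2$, define
$$
 B_s:=B_1+s(B_2-B_1),\qquad s\in\C,
$$
which is affine, hence entire in $s$. For any $s_0\in[0,1]$ and any $s$ with $|s-s_0|<R$, we have $\|B_s-B_{s_0}\|_\infty \le R\|B_2-B_1\|_\infty$. Choosing
$$
 R:=\frac{\epsilon}{2\|B_2-B_1\|_\infty}
$$
gives $\|B_s-B_0\|_\infty\le \|B_s-B_{s_0}\|_\infty+\|B_{s_0}-B_0\|_\infty<\epsilon/2+\epsilon/2=\epsilon$, so $B_s$ remains in the disk where the quadratic estimate is assumed to hold uniformly.

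Next I would invoke boundedness and holomorphy on this disk. By Proposition~\ref{prop:QtoFCalc}, the assumed locally uniform quadratic estimates give a uniform bound $\|b(T_{B_s})\|\le C\|b\|_\infty$ for $|s-s_0|<R$, with $C$ independent of $s$. By Lemma~\ref{lem:qgivesanal}(i), the map $s\mapsto b(T_{B_s})\in\mL(\mH)$ is holomorphic on this disk. Cauchy's integral formula for the derivative of an operator-valued holomorphic function then yields
$$
 \left\|\tfrac{d}{ds}b(T_{B_s})\big|_{s=s_0}\right\|
  =\left\|\frac{1}{2\pi i}\int_{|w-s_0|=R/2}\frac{b(T_{B_w})}{(w-s_0)^2}\,dw\right\|
  \le \frac{2C\|b\|_\infty}{R}=\frac{4C\|b\|_\infty\|B_2-B_1\|_\infty}{\epsilon}.
$$
Integrating this bound along $s\in[0,1]$ gives
$$
 \|b(T_{B_2})-b(T_{B_1})\|
 \le \int_0^1\left\|\tfrac{d}{ds}b(T_{B_s})\right\|ds
 \lesssim \|b\|_\infty\|B_2-B_1\|_\infty,
$$
which is the first inequality.

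For the quadratic-norm Lipschitz estimate I would repeat the argument in the target space $L_2(\R^{n+1}_+,tfrac{dtdx}{t};\wedge)$, viewing $u\mapsto \psi_t(T_{B_s})u$ as an operator from $\mH$ into that space. By Lemma~\ref{lem:qgivesanal}(iii) this map depends holomorphically on $s$, and by Proposition~\ref{prop:differentpsi} its operator norm $\tb{\psi_t(T_{B_s})}_\op$ is controlled by $\tb{Q_t^{B_s}}_\op$, which is uniformly bounded for $B_s$ in the prescribed ball by hypothesis. Applying Cauchy's formula on the same disk of radius $R$ and integrating in $s$ from $0$ to $1$ gives
$$
 \tb{\psi_t(T_{B_2})-\psi_t(T_{B_1})}_\op\lesssim \|B_2-B_1\|_\infty,
$$
as desired. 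The only point requiring care is the scaling of the radius of holomorphy with $\|B_2-B_1\|_\infty$, which is exactly what converts the uniform boundedness of $s\mapsto b(T_{B_s})$ and $s\mapsto \psi_t(T_{B_s})$ into a Lipschitz estimate; otherwise the argument is a standard application of Cauchy's formula to the holomorphic families already constructed.
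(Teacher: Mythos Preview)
Your proof is correct and follows essentially the same approach as the paper: an affine interpolation between $B_1$ and $B_2$, holomorphy from Lemma~\ref{lem:qgivesanal}, uniform bounds from Proposition~\ref{prop:QtoFCalc}, and a Cauchy estimate on a disk whose radius scales like $1/\|B_2-B_1\|_\infty$ to extract the Lipschitz bound. The only cosmetic difference is that the paper reparametrises the path as $B(z)=B_1+z(B_2-B_1)/\|B_2-B_1\|_\infty$ so that the radius of holomorphy is fixed and the interval of integration has length $\|B_2-B_1\|_\infty$, and it phrases the derivative bound as an application of Schwarz' lemma rather than writing out Cauchy's integral formula.
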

\begin{proof}
Let $B(z):= B_1+ z (B_2-B_1)/\|B_2-B_1\|_\infty$, so that $B(z)$ is holomorphic in a
neighbourhood of the interval $[0,\| B_2-B_1\|_\infty]$.
In this neighbourhood, we have bounds $\|b(T_{B(z)})\|\lesssim \|b\|_\infty$ by
Proposition~\ref{prop:QtoFCalc} and holomorphic dependence on $z$ by 
Lemma~\ref{lem:qgivesanal}(i).
Schwarz' lemma now applies and proves that 
$\|\tfrac{d}{dz} b(T_{B(z)})\|\lesssim \|b\|_\infty$ for all $z\in [0,\| B_2-B_1\|_\infty]$.
This shows that
$$
  \| b(T_{B_2}) - b(T_{B_1}) \| \le \int_0^{\|B_2-B_1\|} \| \tfrac{d}{dt} b(T_{B(t)}) \| dt 
  \le C \|b\|_\infty \|B_2-B_1\|_\infty.
$$
The proof of the Lipschitz estimate for $\psi_t(T_{B})$ similarly follows
from Lemma~\ref{lem:qgivesanal}(iii).
\end{proof}

%
%
%
\subsection{Decoupling of the Dirac equation}  \label{section2.5}

In section~\ref{section2.1} we introduced the Dirac type equation
\begin{equation}  \label{eq:full1}
  ( \m d_{t,x} + B^{-1} \m d_{t,x}^* B ) F =0 
\end{equation}
satisfied by functions $F(t,x): \R^{n+1}_\pm \rightarrow \wedge$.
Of particular interest is when both terms vanish, i.e. when
\begin{equation}  \label{eq:sep1}
\begin{cases}
  d_{t,x} F =0 \\
  d_{t,x}^*(BF) =0
\end{cases}
\text{  or equivalently when   }
\begin{cases}
  d F = -\mu \pd_t F \\
  d^*(BF) = \mu^* B\pd_t F 
\end{cases}.
\end{equation}
Consider a solution $F(t,x) = e^{\mp t|T_B|}f$ to (\ref{eq:full1}) in $\R^{n+1}_\pm$
as in Lemma~\ref{lem:algsplitting},
where $f, T_Bf\in E_B^\pm\V_B$. 
Using $\pd_t F = -T_B F$ and Lemma~\ref{lem:magic}, we get
\begin{multline*}
  (\m d_{t,x}F)|_{t=0} = \m (d- \mu T_B)f 
  = ( \m d - N^+ M_B^{-1}(\m d + B^{-1}\m d^* B) )f \\
  = M_B^{-1}( (M_B - B^{-1}N^+B)\m d - B^{-1}N^+ \m d^* B )f \\
  = M_B^{-1}( -N^-\m d - B^{-1}N^+\m d^* B )f 
  = M_B^{-1}( d\mu + B^{-1}d^*\mu^* B )f.
\end{multline*}
Thus we see that $d_{t,x}F= 0= d_{t,x}^*(BF)$ at $t=0$ if and only if
$d\mu f=0 = d^*\mu^* Bf$.

We may also rewrite equation (\ref{eq:full1}) as
\begin{equation}  \label{eq:full2}
  ( d_{t,x}\m  + B^{-1} d_{t,x}^* \m B) F(t,x) =0, 
\end{equation}
by using $\m d_{t,x}= \pd_t- d_{t,x}\m$ and 
$\m d^*_{t,x}= -\pd_t- d^*_{t,x}\m$ from Lemma~\ref{lem:anticom}.
Consider the case when both terms vanish, i.e. when
\begin{equation}  \label{eq:sep2}
\begin{cases}
  d_{t,x}(\m F) =0 \\
  d_{t,x}^*(\m BF) =0
\end{cases}
\text{  or equivalently when   }
\begin{cases}
  d F = \mu^* \pd_t F \\
  d^*(BF) = -\mu B\pd_t F 
\end{cases}.
\end{equation}
For $F(t,x) = e^{\mp t|T_B|}f$ solving (\ref{eq:full1}) we also have
\begin{multline*}
  d_{t,x}(\m F)|_{t=0} = \pd_t F|_{t=0} -\m d_{t,x}F|_{t=0} 
  = M_B^{-1}(d\m +B^{-1}d^*\m B)f \\ -M_B^{-1}( d\mu+B^{-1}d^*\mu^* B)f 
  = M_B^{-1}( d\mu^*+B^{-1} d^*\mu B)f.
\end{multline*}
Thus we see that $d_{t,x}(\m F)= 0= d_{t,x}^*(\m BF)$ at $t=0$ if and only if
$d\mu^* f=0 = d^*\mu Bf$.

\begin{defn}  \label{defn:Upsilon}
Introduce the closed, densely defined operators
$$
  \hat T_B := -M_B^{-1}(d\mu^* + B^{-1}d^*\mu B), \qquad
  \hut T_B := -M_B^{-1}(d\mu + B^{-1}d^*\mu^* B),
$$
with domains
\begin{align*}
  \dom(\hat T_B) &:= \sett{f\in\mH}{\mu^* f\in\dom(d),\, \mu B f\in\dom(d^*)}, \\
  \dom(\hut T_B) &:= \sett{f\in\mH}{\mu f\in\dom(d),\, \mu^* B f\in\dom(d^*)}, 
\end{align*}
respectively, and define the closed subspaces 
\begin{align*}
  \hat\mH_B &:= \sett{f\in\mH}{d(\mu f)=0= d^*(\mu^*B f)}, \\
  \hut\mH_B &:= \sett{f\in\mH}{d(\mu^* f)=0= d^*(\mu Bf)}.
\end{align*}
\end{defn}

\begin{prop}  \label{prop:huthatsplitting}
We have a topological splitting of $\mH$ into closed subspaces
$$
  \mH = \hat\mH_B \oplus \hut\mH_B.
$$
Furthermore we have $T_B = \hat T_B+ \hut T_B$ with 
$\dom(T_B)= \dom(\hat T_B)\cap \dom(\hut T_B)$,
and
$$
  \clos{\ran(\hat T_B)} =\hat\mH_B = \nul(\hut T_B), \qquad
  \clos{\ran(\hut T_B)} =\hut\mH_B = \nul(\hat T_B).
$$
Thus, if we identify $\hat T_B$ with its restriction to $\hat\mH_B$
and $\hut T_B$ with its restriction to $\hut\mH_B$, then
these are bisectorial operators with spectral and resolvent estimates
as in Proposition~\ref{prop:spectrest}.
\end{prop}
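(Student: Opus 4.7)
My plan is to prove the four assertions in order: the algebraic identity $T_B = \hat T_B + \hut T_B$, the null space characterizations, the topological splitting $\mH = \hat\mH_B \oplus \hut\mH_B$, and finally the range identities with bisectoriality of the restrictions.

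For the algebraic identity, the anticommutations $\{d,\m\} = 0 = \{d^*,\m\}$ from Lemma~\ref{lem:anticom} combined with $\m = \mu + \mu^*$ give $\m d = -d\mu - d\mu^*$ and $\m d^* = -d^*\mu - d^*\mu^*$; substituting into the definition $T_B = M_B^{-1}(\m d + B^{-1}\m d^* B)$ immediately yields the decomposition on $\dom(T_B) = \dom(\hat T_B) \cap \dom(\hut T_B)$. For the null space identifications, since $M_B$ is invertible by Lemma~\ref{lem:magic}, $\hut T_B f = 0$ is equivalent to $d\mu f + B^{-1}d^*\mu^* Bf = 0$. The first summand lies in $N^-\mH$ (because $\mu f$ is normal and $d$ preserves $N^-\mH$), while the second lies in $B^{-1}N^+\mH$ (because $\mu^*Bf$ is tangential); these being complementary in the Hodge splitting $\mH = N^-\mH \oplus B^{-1}N^+\mH$ (Lemma~\ref{lem:hodge} applied to $\Gamma = \mu$, $\wt\Gamma = \mu^*$), each summand must vanish separately, giving exactly the defining conditions for $\hat\mH_B$. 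The identity $\nul(\hat T_B) = \hut\mH_B$ follows by the symmetric argument using the $\hut N_B$ splitting.

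For the topological splitting I would first rewrite the conditions using $[d, N^\pm] = [d^*, N^\pm] = 0$ (a consequence of $\{d,\mu\} = \{d,\mu^*\} = 0$) as $\hat\mH_B = \{f : df \in N^-\mH,\ d^*Bf \in N^+\mH\}$ and $\hut\mH_B = \{f : df \in N^+\mH,\ d^*Bf \in N^-\mH\}$. Triviality of $\hat\mH_B \cap \hut\mH_B$ is immediate: any element satisfies $df = 0 = d^*Bf$, hence lies in $\nul(d) \cap B^{-1}\nul(d^*) = \{0\}$ by Lemma~\ref{lem:hodge} with $(\Gamma,\wt\Gamma) = (d,d^*)$. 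The same Hodge splitting yields bounded bijections $d : B^{-1}\nul(d^*) \to \nul(d)$ and $d^*B : \nul(d) \to \nul(d^*)$. Given $f \in \mH$, I would solve independently $df_1^{(2)} = N^-df$ with $f_1^{(2)} \in B^{-1}\nul(d^*)$, and $d^*Bf_1^{(1)} = N^+d^*Bf$ with $f_1^{(1)} \in \nul(d)$, set $f_1 := f_1^{(1)} + f_1^{(2)} \in \hat\mH_B$, and check directly that $f_2 := f - f_1 \in \hut\mH_B$. Boundedness of the resulting projections follows from the closed graph theorem applied to the inverse bijections.

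Finally, for the range identities the key algebraic claim is $\hut T_B \hat T_B = 0$. Decomposing $\hat T_B f = u_1 + u_2$ with $u_1 := -M_B^{-1}d\mu^*f \in B^{-1}N^+\mH$ and $u_2 := -M_B^{-1}B^{-1}d^*\mu Bf \in N^-\mH$, the annihilations $\mu u_2 = 0$ and $\mu^*Bu_1 = 0$ reduce the problem to showing $d\mu u_1 = 0$ and $d^*\mu^* Bu_2 = 0$. The block-diagonal structure of $BM_B = BN^+ - N^-B$ on $\mH = N^-\mH \oplus N^+\mH$ (Lemma~\ref{lem:magic}) yields, after direct computation, the structural identities $N^+u_1 = -d\mu^*f$ and $N^-Bu_2 = d^*\mu Bf = -\mu d^*Bf$. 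Using $d\mu = -\mu d$, $[d,N^\pm] = 0$, the fact that $\mu$ annihilates $N^-\mH$, and $d^2 = 0$, we compute $d\mu u_1 = -\mu d(N^+u_1) = \mu d^2\mu^*f = 0$; analogously, $\mu^*Bu_2 = \mu^*N^-Bu_2 = -N^+d^*Bf$ gives $d^*\mu^*Bu_2 = -N^+(d^*)^2Bf = 0$. The symmetric argument gives $\hat T_B\hut T_B = 0$, so $\ran(\hat T_B) \subset \hat\mH_B$ and $\ran(\hut T_B) \subset \hut\mH_B$. The density $\clos\ran(\hat T_B) = \hat\mH_B$ then follows from density of $\ran(T_B)$ in $\mH$ (Proposition~\ref{prop:spectrest}) together with the topological splitting, and similarly for $\hut T_B$. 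Since $\hut T_B$ vanishes on $\hat\mH_B$ we have $T_B|_{\hat\mH_B} = \hat T_B|_{\hat\mH_B}$, so bisectoriality and the resolvent bounds transfer directly from those of $T_B$. The main obstacle in the plan is the verification of the two structural identities $N^+u_1 = -d\mu^*f$ and $N^-Bu_2 = d^*\mu Bf$, which requires carefully exploiting the block structure of $BM_B$ on the normal/tangential splitting.
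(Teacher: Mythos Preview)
Your plan is correct in its broad outline and in most details, but it diverges from the paper's argument in how the topological splitting $\mH = \hat\mH_B \oplus \hut\mH_B$ is obtained, and there is one point that needs repair.

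\textbf{Comparison with the paper.} The paper does not construct the splitting directly. Instead it proves the norm equivalence $\|T_B f\| \approx \|\hat T_B f\| + \|\hut T_B f\|$ for $f\in\dom(T_B)$ (using the $(d,d^*)$ Hodge splitting from Lemma~\ref{lem:hodge} and the isometry $\|\mu g\|^2 + \|\mu^* g\|^2 = \|g\|^2$), and then combines this with the density of $\ran(T_B)$ to obtain $\mH = \clos{\ran(\hat T_B)} \oplus \clos{\ran(\hut T_B)}$. The range inclusions $\ran(\hat T_B)\subset \hat\mH_B$ and $\ran(\hut T_B)\subset \hut\mH_B$ (established by a computation equivalent to your structural identities, via Lemma~\ref{lem:magic} and the key fact $B^{-1}N^+B - N^- = M_B$) then force the range closures to equal $\hat\mH_B$ and $\hut\mH_B$. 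Your route instead builds the projection onto $\hat\mH_B$ explicitly and deduces the range identities afterwards; this is a perfectly good alternative and arguably more constructive, though the paper's norm equivalence is also of independent interest.

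\textbf{The point to fix.} The phrase ``bounded bijections $d : B^{-1}\nul(d^*) \to \nul(d)$'' is not correct: $d$ is a first-order differential operator and is certainly unbounded there, so a closed graph argument on its inverse will not give what you want. What actually makes your construction bounded is that the solutions are given by the \emph{bounded} Hodge projections of Lemma~\ref{lem:hodge}(i): explicitly $f_1^{(2)} = \PP^2_B(N^- f)$ and $f_1^{(1)} = \PP^1_B(B^{-1}N^+ B f)$, so that $\|f_1\| \lesssim \|f\|$ directly. (One checks $d\PP^2_B(N^-f) = d(N^-f) = N^- df$ since $\PP^1_B(N^-f)\in\nul(d)$, and similarly for $f_1^{(1)}$.) This defines the projection on the dense subspace $\dom(T_B)$, boundedly, and then extends by continuity to all of $\mH$ with range still in the closed subspaces $\hat\mH_B$, $\hut\mH_B$. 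Once you replace the closed graph sentence with this observation, your argument goes through.
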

\begin{proof}
Clearly 
$\dom(\hat T_B)\cap \dom(\hut T_B)=\dom(d)\cap B^{-1}\dom(d^*)=\dom(T_B)$
and $\hat T_B+ \hut T_B=T_B$.
Furthermore Lemma~\ref{lem:hodge}(i) shows that 
$\nul(\hut T_B)=\hat\mH_B$ and $\nul(\hat T_B)=\hut\mH_B$.

(1)
To show $\ran(\hat T_B) \subset\hat\mH_B$, let $f\in\dom(\hat T_B)$ and use 
Lemma~\ref{lem:magic} to get
\begin{multline*}
  d\mu(\hat T_Bf) = -d\mu M_B^{-1}(d\mu^* + B^{-1} d^*\mu B)f 
  = -d \m N^+ M_B^{-1}(d\mu^* + B^{-1} d^*\mu B)f \\
  = -d \m M_B^{-1} B^{-1}N^+B(d\mu^* + B^{-1} d^*\mu B)f 
  = -d \m M_B^{-1} (B^{-1}N^+B -N^-)d\mu^* f \\
  = -d \m d\mu^* f=  \m d^2 \mu^* f=0,
\end{multline*}
and similarly
\begin{multline*}
  d^*\mu^*B(\hat T_Bf) = -d^*\m B(B^{-1}N^-B) M_B^{-1}(d\mu^* + B^{-1} d^*\mu B)f \\
  = -d^* \m B  M_B^{-1} N^-(d\mu^* + B^{-1} d^*\mu B)f 
  = -d^* \m B M_B^{-1} (N^- - B^{-1}N^+B) B^{-1}d^*\mu B f \\
  = d^* \m d^*\mu B f= - \m (d^*)^2 \mu B f=0.
\end{multline*}
A similar calculation shows that $\ran(\hut T_B) \subset\hut\mH_B$.

(2)
Next we show that for all $f\in\dom(T_B)$, we have
$$
  \|T_B f\| \approx \|\hat T_B f\| + \| \hut T_B f\|.
$$
Using that $M_B$ and $B$ are isomorphisms, Lemma~\ref{lem:hodge}(i) 
with $\Gamma= d$ and $\wt\Gamma= d^*$
and orthogonality $\|\mu g\|^2+ \|\mu^* g\|^2= \|g\|^2$, we obtain
\begin{multline*}
  \|\hat T_B f\| + \| \hut T_B f\|
  \approx ( \|\mu^*d f \|+ \|\mu d^* Bf\| ) +
    ( \|\mu df\|+ \|\mu^* d^* Bf\| ) \\
    \approx \|df\|+ \|d^* Bf\| \approx \|T_B f\|.
\end{multline*}

(3)
Clearly $\hat\mH_B\cap\hut\mH_B=\{0\}$ and $\ran(T_B)\subset\ran(\hat T_B)+\ran(\hut T_B)$.
Taking closures, using that $T_B$ has dense range in $\mH$ and
using (2) yields
$$
  \mH = \clos{\ran(\hat T_B)}\oplus
    \clos{\ran(\hut T_B)}.
$$    
Thus from (1) it follows that 
$\clos{\ran(\hat T_B)}=\hat\mH_B$ and $\clos{\ran(\hut T_B)}=\hut\mH_B$
and that
$\mH = \hat\mH_B \oplus \hut\mH_B$.
\end{proof}
It follows that $T_B$ is diagonal in the splitting $\mH=\hat\mH_B\oplus\hut\mH_B$.
We shall now further decompose the subspace $\hat\mH_B$ into the subspaces
of homogeneous $k$-vector field $\wedge^k$, which also are preserved by $T_B$.
The same decomposition can be made for $\hut\mH_B$, but this is not useful
since $T_B$ does not preserve these subspaces.
\begin{defn}
Let $\mH^k := L_2(\R^n;\mL(\wedge^k))$ and 
$\hat\mH_B^k := \mH^k \cap \hat\mH_B$.
\end{defn}
\begin{lem}  \label{lem:preservehats}
  The operator $T_B$ preserve all subspaces $\hat\mH_B^0$, $\hat\mH_B^1$,
  $\hat\mH_B^2$, $\ldots$, $\hat\mH_B^{n+1}$ and $\hut\mH_B$
  (but not the subspaces $\mH^k$) and we have a splitting 
$$
  \hat\mH_B= \hat\mH_B^0\oplus\hat\mH_B^1\oplus\hat\mH_B^2\oplus
  \ldots\oplus\hat\mH_B^{n+1}.
$$ 
  Furthermore, for the operators $\hat N_B$, $\hut N_B$ and $N$, we have 
mapping properties
$$
  \hat N_B : \hat\mH_B^k \longrightarrow \hat\mH_B^k,\quad
  \hut N_B:\hut\mH_B\longrightarrow\hut\mH_B\quad \text{and}\quad
  N: \hat\mH^1\longrightarrow \hat\mH^1.
$$
\end{lem}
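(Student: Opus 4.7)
The plan is to exploit systematically the degree-grading on $\wedge=\bigoplus_k \wedge^k$ together with the explicit formulas for the relevant projections.

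First I would establish the direct-sum decomposition of $\hat\mH_B$. The key point is that the four operators $\mu$, $\mu^*$, $d$ and $d^*$ are all homogeneous of known degree ($\mu, d$ raise degree by one; $\mu^*, d^*$ lower it by one), and $B=\bigoplus_k B^k$ is block-diagonal in the grading. Writing $f=\sum_k f^k$ with $f^k\in\mH^k$, the defining conditions $d(\mu f)=0$ and $d^*(\mu^* B f)=0$ split as $d(\mu f^k)=0$ (which lies in $\wedge^{k+2}$) and $d^*(\mu^* B^k f^k)=0$ (which lies in $\wedge^{k-2}$), so $f\in\hat\mH_B$ if and only if $f^k\in\hat\mH_B^k$ for every $k$.

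Next I would handle the $T_B$-invariance. By Proposition~\ref{prop:huthatsplitting}, $\hat\mH_B=\nul(\hut T_B)$ and $T_B=\hat T_B+\hut T_B$, so on $\hat\mH_B$ we have $T_B=\hat T_B=-M_B^{-1}(d\mu^*+B^{-1}d^*\mu B)$, and on $\hut\mH_B=\nul(\hat T_B)$ we have $T_B=\hut T_B$ with $\ran(\hut T_B)\subset\hut\mH_B$. The operators $d\mu^*$ and $d^*\mu$ are now compositions of a degree-raising and a degree-lowering first-order operator, hence preserve each $\wedge^k$; since $M_B^{-1}$ and $B$ are block-diagonal, $\hat T_B$ preserves each $\wedge^k$, and combined with the already-established invariance of $\hat\mH_B$ this gives $T_B:\hat\mH_B^k\to\hat\mH_B^k$. (By contrast, $\m d$ mixes degrees by $\pm 1$ since $\m=\mu+\mu^*$, which is why $T_B$ fails to preserve the unconstrained $\mH^k$.)

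For the projection mapping properties, the clean observation is that $\hat N_B^-\mH=N^-\mH$ (so $\mu$ annihilates $\hat N_B^- f$) while $\hat N_B^+\mH=B^{-1}N^+\mH$ (so $\mu^* B$ annihilates $\hat N_B^+ f$). Writing $\hat N_B f=\hat N_B^+ f-\hat N_B^- f$ and using $f=\hat N_B^+ f+\hat N_B^- f$, one finds
\[
\mu(\hat N_B f)=\mu(\hat N_B^+ f)=\mu f,\qquad \mu^*B(\hat N_B f)=-\mu^*B(\hat N_B^- f)=-\mu^*Bf,
\]
so $f\in\hat\mH_B$ forces $\hat N_B f\in\hat\mH_B$. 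Block-diagonality of the explicit formula $\hat N_B=(\mu_B^*-\mu)(\mu+\mu_B^*)^{-1}$ in the grading then upgrades this to $\hat N_B:\hat\mH_B^k\to\hat\mH_B^k$. The analogous argument with the roles of $\mu$ and $\mu^*$ swapped gives $\hut N_B:\hut\mH_B\to\hut\mH_B$. Finally, for $f\in\hat\mH^1$ the normal-part condition is vacuous and $(Nf)_\ta=f_\ta$, so $d(Nf)_\ta=df_\ta=0$ and $N$ sends $\hat\mH^1$ into itself. I do not anticipate a real obstacle here; the only subtle point is keeping straight which of $\mu$ or $\mu^*$ kills which oblique-projection summand when verifying the $\hat\mH_B$-invariance of $\hat N_B$.
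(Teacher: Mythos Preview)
Your proposal is correct and follows essentially the same approach as the paper: both arguments use that $d\mu^*$, $d^*\mu$, $B$, and $M_B^{-1}$ preserve each $\mH^k$ to obtain $T_B$-invariance of $\hat\mH_B^k$, both split the defining conditions of $\hat\mH_B$ degree-by-degree, and both verify the $\hat N_B$-invariance by observing that the two components $f_1\in B^{-1}N^+\mH$ and $f_2\in N^-\mH$ enter the $\hat\mH_B$ conditions separately (your computation $\mu(\hat N_B f)=\mu f$, $\mu^*B(\hat N_B f)=-\mu^*Bf$ makes this explicit). One small remark: your justification that $\hat N_B$ preserves $\mH^k$ via ``block-diagonality of the explicit formula'' is a bit imprecise since $\mu+\mu_B^*$ itself mixes degrees; the cleaner reason (implicit in the paper) is that both summands $B^{-1}N^+\mH$ and $N^-\mH$ are themselves graded, so the associated projections respect the grading.
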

\begin{proof}
As $d\mu^*$, $d^*\mu$, $B$ and $N^\pm$ all preserve $\mH^k$ it follows
that $T_B(\hat\mH_B^k)=\hat T_B(\hat\mH_B^k)\subset\hat\mH_B^k$.
To show the splitting of $\hat\mH_B$ it suffices to prove
$$
  \hat\mH_B\subset \hat\mH_B^0\oplus\hat\mH_B^1\oplus\hat\mH_B^2\oplus
  \ldots\oplus\hat\mH_B^{n+1}.
$$ 
To this end, let $f\in\hat\mH_B$ and write $f_k$ for the $\wedge^k$
part of $f$.
Since $d\mu f=0=d^*\mu^*Bf$ we get
\begin{align*}
  0 &= (d\mu f)_{k+2}= d((\mu f)_{k+1}) = d\mu(f_k), \\
  0 &= (d^*\mu^*B f)_{k-2}= d^*((\mu^* Bf)_{k-1}) = d^*\mu^*((Bf)_k)
  = d^*\mu^*B(f_k),
\end{align*}
for all $k$. Thus $f_k\in \hat\mH_B^k$ and $f\in\bigoplus_k \hat\mH_B^k$.

To prove the mapping properties for $\hat N_B$, note that if 
$f=f_1+f_2$ in the splitting $\mH= B^{-1}N^+\mH\oplus N^-\mH$, then
$f\in\hat\mH_B$ if and only if $\mu f_2\in\dom(d)$ and 
$\mu Bf_1\in\dom(d^*)$, according to Definition~\ref{defn:Upsilon}.
Clearly $f_1-f_2=\hat N_B(f)\in \hat\mH_B$ if $f\in\hat\mH_B$.
Since $\hat N_B$ preserves $\mH^k$, the desired mapping property follows.
The proofs for $\hut N_B$ and $N$ are similar.
\end{proof}
\begin{lem}  \label{lem:huthatdualities}
  With $\dual\cdot\cdot_B$ denoting the duality from 
  Definition~\ref{defn:duality}, we have dual operators
$$
  (\hat T_B)' = -\hat T_{B^*}, \qquad
  (\hut T_B)' = -\hut T_{B^*},
$$
and restricted dualities
$\dual{\hat\mH_B}{\hat\mH_{B^*}}_B$, $\dual{\hut\mH_B}{\hut\mH_{B^*}}_B$
and $\dual{\hat\mH_B^k}{\hat\mH_{B^*}^k}_B$ for all $k$.
In the case $k=1$, we shall write the duality as $\dual{\hat\mH^1}{\hat\mH^1}_A$.
\end{lem}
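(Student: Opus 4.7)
The plan is to mirror the proof of Proposition~\ref{prop:duality} to establish the operator adjoint identities $(\hat T_B)' = -\hat T_{B^*}$ and $(\hut T_B)' = -\hut T_{B^*}$, and then to promote these to the subspace dualities by taking $B$-adjoints of the Hodge type projections afforded by Proposition~\ref{prop:huthatsplitting}.

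For $(\hat T_B)' = -\hat T_{B^*}$, I start from
$$
\dual{\hat T_B f}g_B = -(B(d\mu^* + B^{-1}d^*\mu B)f, g),
$$
which follows from $(BN^+-N^-B) = BM_B$. Moving $B, d, d^*, \mu, \mu^*$ to the right factor via their $L_2$ adjoints and reordering using the spatial anticommutation relations $\{d,\mu\} = \{d,\mu^*\} = 0$ from Lemma~\ref{lem:anticom}, I reach $(f, B^*(d\mu^* + (B^*)^{-1}d^*\mu B^*)g) = -(f, B^*M_{B^*}\hat T_{B^*}g) = -(f, (B^*N^+ - N^-B^*)\hat T_{B^*}g)$. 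The elementary identity $B^*N^+ - N^-B^* = N^+B^* - B^*N^-$, which is simply $[B^*, N^+ + N^-] = [B^*, I] = 0$, lets me recognise the last factor as $(BN^+-N^-B)^*$ and conclude $\dual{\hat T_Bf}g_B = \dual f{-\hat T_{B^*}g}_B$. The computation for $\hut T_B$ is identical after swapping $\mu$ and $\mu^*$.

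For the restricted duality on $\hat\mH_B$, let $P$ be the bounded projection of $\mH$ onto $\hat\mH_B$ along $\hut\mH_B$ furnished by Proposition~\ref{prop:huthatsplitting}. Its $B$-adjoint $P'$ is then also a bounded projection, with $\ran(P')$ equal to the $B$-annihilator of $\nul(P) = \hut\mH_B$ and $\nul(P')$ equal to the $B$-annihilator of $\ran(P) = \hat\mH_B$. A direct reuse of the anticommutation of $d, d^*$ with $\mu, \mu^*$ rewrites the pairing as
$$
\dual{\hat T_B u}g_B = (u, B^*d(\mu^*g) + d^*(\mu B^*g)),
$$
which vanishes whenever $g\in\hut\mH_{B^*}$ since by definition $d(\mu^*g) = 0 = d^*(\mu B^*g)$; passing to the closure $\hat\mH_B = \clos{\ran(\hat T_B)}$ from Proposition~\ref{prop:huthatsplitting} gives $\hut\mH_{B^*} \subset \nul(P')$. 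The symmetric argument with $\hat$ and $\hut$ interchanged yields $\hat\mH_{B^*} \subset \ran(P')$. Since both $\nul(P') \oplus \ran(P')$ and $\hut\mH_{B^*} \oplus \hat\mH_{B^*}$ exhaust $\mH$, the two inclusions must in fact be equalities, so $P'$ is the projection of $\mH$ onto $\hat\mH_{B^*}$ along $\hut\mH_{B^*}$. By Remark~\ref{rem:restrdualities} this gives the duality $\dual{\hat\mH_B}{\hat\mH_{B^*}}_B$, and symmetrically $\dual{\hut\mH_B}{\hut\mH_{B^*}}_B$.

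For the $k$-graded refinement, observe that $(BN^+-N^-B)$ preserves $\mH^k$ since both $B$ and $N^\pm$ do, so $\dual fg_B = 0$ whenever $f\in\mH^k$ and $g\in\mH^\ell$ with $k\neq\ell$ by orthogonality of the grading in $L_2(\R^n;\wedge)$. Combined with the decompositions $\hat\mH_B = \bigoplus_k \hat\mH_B^k$ and $\hat\mH_{B^*} = \bigoplus_k \hat\mH_{B^*}^k$ from Lemma~\ref{lem:preservehats}, this splits $\dual{\hat\mH_B}{\hat\mH_{B^*}}_B$ as a direct sum of the $k$-graded pairings, whence nondegeneracy on the total transfers to each summand. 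The main obstacle is the algebraic rearrangement in the first paragraph; once the adjoint identities are available, the duality arguments are routine bookkeeping based on Proposition~\ref{prop:huthatsplitting} and Remark~\ref{rem:restrdualities}.
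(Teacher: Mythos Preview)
Your proof is correct and follows essentially the same route as the paper: the adjoint identities are obtained by the same integration-by-parts computation as in Proposition~\ref{prop:duality}, and the restricted dualities then follow from Proposition~\ref{prop:huthatsplitting} together with Remark~\ref{rem:restrdualities}. Your phrasing via the projection $P$ and its $B$-adjoint $P'$ is equivalent to the paper's direct use of the fact that the $B$-annihilator of $\hat\mH_B=\clos{\ran(\hat T_B)}$ equals $\nul(\hat T_{B^*})=\hut\mH_{B^*}$; the $k$-graded refinement via $(BN^+-N^-B)$ preserving $\mH^k$ is exactly the paper's argument.
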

\begin{proof}
The proofs of $\hat T_B' = -\hat T_{B^*}$ and
$\hut T_B' = -\hut T_{B^*}$ are similar to that of
$T_B'=-T_{B^*}$ in Proposition~\ref{prop:duality}.
From this we get that the annihilator of 
$\hat\mH_B= \clos{\ran(\hat T_B)}$ is
$\nul(\hat T_{B^*})= \hut\mH_{B^*}$ which is a complement
of $\hat\mH_{B^*}$. Thus we see from Remark~\ref{rem:restrdualities} that 
$\dual{\hat\mH_B}{\hat\mH_{B^*}}_B$ is a duality. The proof of the duality 
$\dual{\hut\mH_B}{\hut\mH_{B^*}}_B$ is similar.
Also, since $BN^+-N^-B$ in the definition of $\dual\cdot\cdot_B$
preserves $\mH^k$ we also have dualities
$\dual{\hat\mH_B^k}{\hat\mH_{B^*}^k}_B$ for all $k$.
\end{proof}
\begin{rem}   \label{rem:huthatV_B}
\begin{itemize}
\item
The subspace of vector fields with a curl free tangential part 
$$
  \hat\mH_B^1 = \sett{f\in L_2(\R^n;\wedge^1)}{d(e_0\wedg f)=0}
$$
is independent of $B$ and coincides with the space $\hat\mH^1$ from section~\ref{section1.1}. 
Furthermore, the operator $T_A$ there coincides with
$T_B|_{\hat\mH^1}= \hat T_B|_{\hat\mH^1}$.
\item
The dense subspace $\V_B\subset \mH$ splits
$\V_B= \hat\V_B\oplus \hut\V_B$ with 
Proposition~\ref{prop:huthatsplitting}, where
$\hat\V_B:= \V_B\cap\hat\mH_B\subset\hat\mH_B$ and
$\hut\V_B:= \V_B\cap\hut\mH_B\subset\hut\mH_B$ are dense subspaces,
and we can further decompose $\hat\V_B$ into homogeneous
$k$-vector fields
$$
\hat\V_B= \hat\V_B^0\oplus\hat\V_B^1\oplus\ldots\oplus\hat\V_B^{n+1},
$$
where $\hat\V_B^k := \V_B\cap \hat\mH^k_B$.
\item
Furthermore, all these dense subspaces 
$\hat\V_B^k$, $\hat\V_B$, $\hut\V_B$ of 
$\hat\mH_B^k$, $\hat\mH_B$, $\hut\mH_B$
splits algebraically
into Hardy spaces similar to Lemma~\ref{lem:algsplitting}, e.g.
$\hat\V_B^k= E_B^+\hat\V_B^k + E_B^-\hat\V_B^k$.
Here $f\in E_B^\pm\hat\V_B$
if and only if $F(t,x)$ satisfies (\ref{eq:sep1}), and $f\in E_B^\pm\hut\V_B$
if and only if $F(t,x)$ satisfies (\ref{eq:sep2}).
\end{itemize}
\end{rem}

%
%
%
\subsection{Operator equations and estimates for solutions}    \label{section2.6}

Our objective in this section is to set up our boundary operator method 
for solving the boundary value problems (Neu-$A$), (Reg-$A$), (Neu$^\perp$-$A$) 
and (Dir-$A$) as well as the transmission problem (Tr-$B^k\alpha^\pm$).
Under the assumption that $T_B$ has quadratic estimates, which is made
throughout this section, we first show that 
solutions $F(t,x)$ are determined by their traces $f$
through the reproducing Cauchy type formula $F_t = e^{-t|T_B|}f$.
Recall from Lemma~\ref{lem:preservehats} that both operators $E_B$ and $N_B$
preserve all subspaces $\hat\mH^k_B$.
We shall write $E_{B^k}= E_B|_{\hat\mH^k_B}= \sgn(T_B|_{\hat\mH^k_B})$
and $N_{B^k}= N_B|_{\hat\mH^k_B}$ for the restrictions.
In particular, we write $A= B^1$ when $k=1$.
\begin{lem}  \label{lem:characthardyfcns}
Assume that $T_{B}$ satisfies quadratic estimates, 
let $f\in\mH^k$ and let
$(0,\infty)\ni t\mapsto F_t(x)=F(t,x)\in\mH^k$ be a family of functions.
Then the following are equivalent.
\begin{itemize}
\item[{\rm (i)}]
  $f\in E^+_{B}\hat\mH^k_B$ and $F_t= e^{-t|T_{B}|}f$.
\item[{\rm (ii)}]
  $F_t\in C^1(\R_+; \mH^k)$ and satisfies the equations
\begin{equation*} 
\begin{cases}   
  d^*_{t,x} (B(x) F(t,x))  =0, \\
  d_{t,x} F(t,x) =0,
\end{cases}
\end{equation*}
and have $L_2$ limits
$\lim_{t\rightarrow 0^+} F_t= f$ and
$\lim_{t\rightarrow \infty} F_t= 0$.
\end{itemize}
In fact, such $F_t$ belong to $C^j(\R_+; \mH^k)$ for all $j\ge 1$ and 
are in one-to-one correspondence with the trace 
$f\in E^+_{B}\hat\mH^k_{B}$, and we have equivalences of norms
$$
  \|f\|\approx \sup_{t>0} \|F_t\|\approx \tb{t\pd_t F_t}.
$$
The corresponding reproducing formula $F_t= e^{t|T_{B}|}f$, $f= F|_{\R^n}\in E^-_{B}\hat\mH^k_{B}$ 
is also valid for $F$ solving the equations in $\R^{n+1}_-$, and the corresponding estimates holds.
\end{lem}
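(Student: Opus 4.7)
The plan is to prove the equivalence (i)$\Leftrightarrow$(ii) together with the norm equivalences, reserving the bulk of the work for the uniqueness direction (ii)$\Rightarrow$(i). Throughout I use that the quadratic estimate hypothesis yields, via Proposition~\ref{prop:QtoFCalc}, a bounded $H_\infty(S_\nu^o)$ functional calculus for $T_B$; in particular the Hardy projections $E^\pm_B$ are bounded and the semigroup $e^{-t|T_B|}$ is uniformly bounded for $t\ge 0$ on $E^+_B\mH$.

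For (i)$\Rightarrow$(ii), set $F_t:=e^{-t|T_B|}f$. Since $f\in E^+_B\mH$, where $|T_B|$ coincides with $T_B$, differentiation under the functional calculus yields $\pd_tF_t=-T_BF_t$; iterating and using that $z^je^{-t|z|}\chi^+(z)\in H_\infty(S_\nu^o)$ for $t>0$ produces $C^j$-regularity. The $L_2$ limits $F_t\to f$ as $t\to 0^+$ and $F_t\to 0$ as $t\to\infty$ are first established for $f$ in the dense subspace $E^+_B\hat\V_B^k$ of Remark~\ref{rem:huthatV_B} via Lemma~\ref{lem:algsplitting}, and then extended to all $f\in E^+_B\hat\mH_B^k$ by approximation using uniform boundedness of the semigroup. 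Finally, to see that $d_{t,x}F=0$ and $d_{t,x}^*(BF)=0$ hold \emph{separately} (not merely their sum as in (\ref{diraceqn})), I invoke the first calculation of Section~\ref{section2.5}: the hypothesis $f\in\hat\mH_B^k$ is equivalent by (\ref{eq:sep1}) to the separate vanishing at $t=0$, and Lemma~\ref{lem:preservehats} together with the ODE $\pd_tF_t=-T_BF_t$ propagates this condition in $t$.

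For (ii)$\Rightarrow$(i), the end calculation of Section~\ref{section2.5} applied pointwise in $t$ shows $F_t\in\hat\mH_B^k$ and $\pd_tF_t+T_BF_t=0$. Applying the bounded projections $E^\pm_B$ and using $T_B|_{E^\pm_B\mH}=\pm|T_B|$ gives
\begin{equation*}
  E^+_BF_t=e^{-(t-s)|T_B|}E^+_BF_s,\qquad E^-_BF_s=e^{-(t-s)|T_B|}E^-_BF_t,\qquad 0<s<t.
\end{equation*}
The main obstacle is controlling the $E^-_B$ component: the right identity together with uniform boundedness of $e^{-\tau|T_B|}$ for $\tau\ge 0$ gives $\|E^-_BF_s\|\lesssim\|E^-_BF_t\|$, and letting $t\to\infty$ with $\|F_t\|\to 0$ forces $E^-_BF_s=0$ for every $s>0$. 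Hence $F_s\in E^+_B\hat\mH_B^k$, and passing $s\to 0^+$ in the left identity with the $L_2$ trace $F_s\to f$ and strong convergence $e^{-(t-s)|T_B|}\to e^{-t|T_B|}$ identifies $f\in E^+_B\hat\mH_B^k$ and $F_t=e^{-t|T_B|}f$.

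The norm equivalences are read off from the functional calculus: $\sup_{t>0}\|F_t\|\lesssim\|f\|$ is uniform boundedness of the semigroup, and the reverse bound comes from the $L_2$ trace at $t=0$. Writing $t\pd_tF_t=-\psi_t(T_B)f$ with $\psi(z):=ze^{-|z|}\chi^+(z)$ and symmetrising $\psi$ on the negative sector so as to obtain an element of $\Psi(S_\nu^o)$ non-vanishing on both sectors (which does not affect the action on $E^+_B\mH$), Proposition~\ref{prop:differentpsi} gives $\tb{t\pd_tF_t}\approx\tb{Q_t^Bf}$, which is $\approx\|f\|$ by the quadratic estimate. Solutions on $\R^{n+1}_-$ are handled symmetrically with $E^+_B$ replaced by $E^-_B$ and $e^{-t|T_B|}$ by $e^{t|T_B|}$.
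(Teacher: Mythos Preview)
Your proof is correct and follows essentially the same route as the paper's: in both, (ii)$\Rightarrow$(i) proceeds by first checking $F_t\in\hat\mH_B^k$, then splitting $F_t=E_B^+F_t+E_B^-F_t$, using the ODEs $\pd_tF_t^\pm\pm|T_B|F_t^\pm=0$ to obtain the semigroup identities, and killing $E_B^-F_t$ via the decay at infinity. A couple of minor remarks: your density argument for the $L_2$ limits in (i)$\Rightarrow$(ii) is fine but not needed, since Lemma~\ref{lem:convergence} applies directly to all $f\in\mH$; and the extra factor $\chi^+(z)$ in your $\psi$ is superfluous, since $\psi(z)=ze^{-|z|}$ (with $|z|=(z^2)^{1/2}$ as in Definition~\ref{defn:fcalcops}) already lies in $\Psi(S_\nu^o)$ and is nonvanishing on both sectors, so Proposition~\ref{prop:differentpsi} applies without any symmetrisation.
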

\begin{proof}
(i) implies (ii):
As in the proof of Lemma~\ref{lem:algsplitting}, from Lemma~\ref{lem:convergence} it follows that 
$\lim_{t\rightarrow 0}F_t =f$ and $\lim_{t\rightarrow\infty}F_t=0$,
and also that $\pd_t^j F_t= (-|T_A|)^j e^{-t|T_A|}f$. Therefore $F_t\in C^j(\R_+;\mH^k)$
for all $j$.
For $j=1$, we get
$0= (\partial_t+ |T_{B}|)F_t = (\partial_t+ T_{B})F_t$,
since $F_t\in E^+_{B}\hat\mH^k_B$.
As explained in Section~\ref{section2.5} this is equivalent
with the two equations $d_{t,x} F =0$ and
$d^*_{t,x} (B F)  =0$.

(ii) implies (i):
The two equations can be written
$(-\mu^*\partial_t+d^*)BF=0$ and $(\mu\partial_t+d)F=0$.
Applying $\mu^*$ and $\mu$ respectively to these equations and
using nilpotence, we obtain
$\mu^*d^* BF=0$ and $\mu dF=0$.
Therefore $F_t\in\hat\mH^k_{B}$, and since $\hat\mH^k_{B}$ is closed
we also have $f\in\hat\mH^k_{B}$.

Next we write $F_t= F_t^+ + F_t^-$, where 
$F_t^\pm:= E^\pm_{B} F_t \in E_{B}^\pm\hat\mH_B^k$ and similarly $f= f^+ +f^-$.
We rewrite the equations satisfied by $F_t$ as
$\partial_t F_t+ T_{B} F_t=0$. Applying the projections $E^\pm_{B}$ 
to this equation yields
\begin{align*}
  \partial_t F_t^++ |T_{B}| F_t^+ & = 0, \\
  \partial_t F_t^-- |T_{B}| F_t^- & = 0, 
\end{align*}
since $T_{B} F_t^\pm= \pm |T_{B}| F_t^\pm$.
Fix $t>0$. Then it follows that
$e^{(t-s)|T_{B}|}F_s^-$ is constant for $s\in (t,\infty)$ and that
$e^{(s-t)|T_{B}|}F_s^+$ is constant for $s\in (0,t)$.
Taking limits, this shows that 
$F_t^-=\lim_{s\rightarrow t^+}e^{(t-s)|T_{B}|}F_s^-
=\lim_{s\rightarrow \infty}e^{(t-s)|T_{B}|}F_s^-= 0$ and 
$F_t^+= \lim_{s\rightarrow t^-} e^{(s-t)|T_{B}|}F_s^+
= \lim_{s\rightarrow 0} e^{(s-t)|T_{B}|}F_s^+ = e^{-t|T_{B}|}f^+$.
Therefore $f=f^+\in E^+_{B}\hat\mH^k_B$ and $F_t= e^{-t|T_{B}|}f$.

To prove the norm estimates we note that $\|f\|=\lim_{t\rightarrow 0}\|F_t\|$.
By Proposition~\ref{prop:QtoFCalc}, $e^{- t|T_{B}|}$ are
uniformly bounded and thus $\sup_{t>0}\|F_t\| \lesssim \|f\|$.
Furthermore, using Proposition~\ref{prop:differentpsi} with
$\psi(z)= z e^{-|z|}$ shows that 
$\tb{t\partial_t F_t}\approx \|f\|$.
\end{proof}
\begin{rem}
In proving that (ii) implies (i), it suffices to assume that 
$\|F_t\|$ grows at most polynomially when $t\rightarrow\infty$.
Indeed, from the equation $F_t^-= e^{-(s-t)|T_{B}|}F_s^-$ for $s>t$, it
then follows that 
$$
  \|T_{B}^kF_t^-\|= (s-t)^{-k}\|((s-t)T_{B})^k e^{-(s-t)|T_{B}|}F_s^-\|
  \le C  (s-t)^{-k}\|F_s\| \longrightarrow 0,
$$
when $s\rightarrow \infty$.
Since $T_{B}$ is injective, this shows that $F_t^-=0$ and therefore that
$F_t= e^{-t|T_{B}|}f \in E_{B}^+\hat\mH_{B}^k$ as before.
\end{rem}
We now proceed by showing how, given data $g$ in (Tr-$B^k\alpha^\pm$),
we can solve for the trace $f= F|_{\R^n}$ by using the boundary operators
$E_B$ and $N_B$.
\begin{lem}  \label{lem:optransmain}
Assume that $T_{B}$ satisfies quadratic estimates, so that the Hardy projections
$E^\pm_{B}$ are bounded by Proposition~\ref{prop:QtoFCalc}, 
let $\alpha^\pm\in\C$ be given jump parameters and define the associated
spectral point $\lambda := (\alpha^+ + \alpha^-)/(\alpha^+ - \alpha^-)$.
Then 
$$
  \lambda- E_{B^k}N_{B^k}: \hat\mH^k_B \longrightarrow \hat\mH^k_B
$$
is an isomorphism if and only if the transmission problem 
(Tr-$B^k\alpha^\pm$) is well posed.
\end{lem}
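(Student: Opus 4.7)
The plan is to reduce (Tr-$B^k\alpha^\pm$) to a single operator equation on $\hat\mH^k_B$ and identify that equation with $(\lambda I-E_{B^k}N_{B^k})f=(\text{data})$. My first step is to apply Lemma~\ref{lem:characthardyfcns} in each half space, which gives a bijection between solutions $F^\pm$ in $\R^{n+1}_\pm$ of the equation system \eqref{eq:diracwedgek} (with the prescribed decay) and their boundary traces $f^\pm\in E_B^\pm\hat\mH^k_B$. Consequently, (Tr-$B^k\alpha^\pm$) is well posed exactly when, for each $g\in\hat\mH^k_B$, there is a unique pair $(f^+,f^-)\in E_B^+\hat\mH^k_B\times E_B^-\hat\mH^k_B$ satisfying the two jump conditions; equivalently, there is a unique $f\in\hat\mH^k_B$, with $f^\pm=E_B^\pm f$.

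Next I would translate the jump conditions into projection identities via Definition~\ref{defn:tangnormops}. Note that $e_0\wedg h=0$ iff $h\in N^-\mH$, and $e_0\lctr(B^k h)=0$ iff $h\in (B^k)^{-1}N^+\mH$; these are respectively the ranges of $\hat N^-_B$ and $\hat N^+_B$ in the splitting $\mH=B^{-1}N^+\mH\oplus N^-\mH$. Hence the two jump conditions are
\begin{align*}
\hat N^+_B(\alpha^- f^+-\alpha^+ f^-) &=\hat N^+_B g, \\
\hat N^-_B(\alpha^+ f^+-\alpha^- f^-) &=\hat N^-_B g.
\end{align*}
Since $\hat N^\pm_B$ preserve $\hat\mH^k_B$ (Lemma~\ref{lem:preservehats}), these equations live in the complementary summands $\hat N^+_B\hat\mH^k_B$ and $\hat N^-_B\hat\mH^k_B$ of $\hat\mH^k_B$, so they are jointly equivalent to their sum, a single equation in $\hat\mH^k_B$.

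Finally I would simplify this summed equation using $\hat N^\pm_B=\tfrac12(I\pm N_B)$ together with $f^+-f^-=E_Bf$ and $f^++f^-=f$. A short bookkeeping gives
\[
\tfrac12(\alpha^++\alpha^-)(f^+-f^-)-\tfrac12(\alpha^+-\alpha^-)N_B(f^++f^-)=g,
\]
i.e.\ $\tfrac12(\alpha^+-\alpha^-)(\lambda E_B-N_B)f=g$. Multiplying on the left by the bounded involution $E_B$, which restricts to $\hat\mH^k_B$ by Lemma~\ref{lem:preservehats} and the functional calculus, converts this to
\[
\tfrac12(\alpha^+-\alpha^-)(\lambda I-E_{B^k}N_{B^k})f=E_{B^k}g.
\]
Since $g\mapsto E_{B^k}g$ is a bijection of $\hat\mH^k_B$ (with inverse itself, up to the factor arising from $E_{B^k}^2=I$), unique solvability of the transmission problem for every $g\in\hat\mH^k_B$ is equivalent to bijectivity of $\lambda I-E_{B^k}N_{B^k}$ on $\hat\mH^k_B$; the open mapping theorem then upgrades the bijection to an isomorphism, which in turn delivers the continuous dependence required for full well posedness. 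I do not expect a serious obstacle here: the nontrivial input, boundedness of $E_B^\pm$, is already hypothesised, and once one sees that the two jump conditions are precisely the $\hat N^\pm_B$-components of a single equation in $\hat\mH^k_B$, the rest is straightforward algebra.
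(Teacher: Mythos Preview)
Your proposal is correct and follows essentially the same approach as the paper: reduce to boundary traces via Lemma~\ref{lem:characthardyfcns}, rewrite the jump conditions as the $\hat N^\pm_B$-components of a single equation, add them, and simplify to $(\lambda-E_{B^k}N_{B^k})f=\tfrac{2}{\alpha^+-\alpha^-}E_{B^k}g$. The paper's proof is terser but identical in substance; your explicit remark that the two projected equations are jointly equivalent to their sum (since $\hat N^+_B$ and $\hat N^-_B$ are complementary projections on $\hat\mH^k_B$) is a welcome clarification that the paper leaves implicit.
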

\begin{proof}
If we identify the $k$-vector fields $F^\pm(t,x)$ in the transmission problem
(Tr-$B^k\alpha^\pm$) with the boundary traces $f^\pm$ and
write $f= f^+ + f^-$ using Lemma~\ref{lem:characthardyfcns}, then we see that 
the transmission problem is equivalent with the system of equations
\begin{equation*}
\begin{cases}   
  N^+_{B^k}( \alpha^- E_{B^k}^+ - \alpha^+ E_{B^k}^-) f = N_{B^k}^+ g, \\
  N^-_{B^k}( \alpha^+ E_{B^k}^+ - \alpha^- E_{B^k}^-) f = N_{B^k}^- g.
\end{cases}
\end{equation*}
Using $E_{B^k}^\pm= \tfrac 12(I\pm E_{B^k})$ and adding up the equations, we 
see that the system is equivalent with the equation
$$
  (\lambda- E_{B^k}N_{B^k})f = \frac 2{\alpha^+-\alpha^-} E_{B^k}g.
$$
This proves the lemma.
\end{proof}
Next we consider $k=1$ and the boundary value problems (Neu-$A$), (Reg-$A$) and 
(Neu$^\perp$-$A$). 
By Lemma~\ref{lem:characthardyfcns}, we have the following.
\begin{enumerate}
\item (Neu-$A$) is well posed if and only if the restricted projection
      $N^-_A: E_A^+\hat\mH^1\rightarrow  N_A^-\hat\mH^1$ is an isomorphism.
\item (Reg-$A$) is well posed if and only if the restricted projection
      $N^+_A: E_A^+\hat\mH^1\rightarrow  N_A^+\hat\mH^1$ is an isomorphism,
      or equivalently if and only if 
      $N^+: E_A^+\hat\mH^1\rightarrow  N^+\hat\mH^1$ is an isomorphism.
      Note that both $N_A^+$ and $N^+$ project along $N^-\mH^1$.
\item (Neu$^\perp$-$A$) is well posed if and only if the restricted projection
      $N^-: E_A^+\hat\mH^1\rightarrow  N^-\hat\mH^1$ is an isomorphism.
\end{enumerate}
\begin{prop}  \label{prop:regneueq}
  Assume that $T_A$ satisfies quadratic estimates.
Then (Reg-$A$) is well posed if and only if (Neu$^\perp$-$A^*$) is well posed.
\end{prop}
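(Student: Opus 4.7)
The plan is to interpret both problems as topological splittings of $\hat\mH^1$ and then transport one splitting to the other using the duality $\dual{\cdot}{\cdot}_A$ from Lemma~\ref{lem:huthatdualities}. The first step is to rephrase the two well-posedness conditions as splittings. Using characterization (2) listed just above, (Reg-$A$) being well posed is equivalent to $\hat\mH^1 = E_A^+\hat\mH^1 \oplus N^-\mH^1$, since $N^-\mH^1$ is precisely the kernel of $N^+$ on $\hat\mH^1$. Analogously, by characterization (3), (Neu$^\perp$-$A^*$) well-posedness is equivalent to $\hat\mH^1 = E_{A^*}^+\hat\mH^1 \oplus N^+\hat\mH^1$.

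The second step is the general duality principle: if $H = X \oplus Y$ is a topological splitting and $\dual{\cdot}{\cdot}$ is a duality on $H \times H$, then the adjoint $P_X'$ of the projection onto $X$ along $Y$ is itself a bounded projection whose range is the annihilator $Y^\perp$ of $Y$ and whose kernel is $X^\perp$, giving the dual splitting $H = Y^\perp \oplus X^\perp$. Applied to $\dual{\cdot}{\cdot}_A$ on $\hat\mH^1$, it therefore suffices to identify the annihilators of $E_A^+\hat\mH^1$ and $N^-\mH^1$ in $\hat\mH^1$ as $E_{A^*}^+\hat\mH^1$ and $N^+\hat\mH^1$ respectively.

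The third step carries out these two annihilator computations. For $E_A^+\hat\mH^1$, I would combine $T_A' = -T_{A^*}$ from Proposition~\ref{prop:duality} with the homomorphism property of the functional calculus to obtain $(E_A^+)' = \chi^+(-T_{A^*}) = \chi^-(T_{A^*}) = E_{A^*}^-$, so the annihilator equals $\ker((E_A^+)') = E_{A^*}^+\hat\mH^1$. For $N^-\mH^1$, a direct expansion for $f = f_0 e_0 \in N^-\mH^1$ yields
$$
\dual{f}{g}_A = ((AN^+ - N^-A)f,g) = -\int_{\R^n} f_0(x)\, a_{00}(x)\, \overline{g_0(x)}\, dx,
$$
and accretivity of $A$ forces $\re a_{00}(x) \ge \kappa_A > 0$ almost everywhere, so vanishing of this integral for every $f_0 \in L_2$ requires $g_0 = 0$, i.e.\ $g \in N^+\hat\mH^1$.

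Putting these three ingredients together, the splitting $\hat\mH^1 = E_A^+\hat\mH^1 \oplus N^-\mH^1$ is equivalent to its dual splitting $\hat\mH^1 = N^+\hat\mH^1 \oplus E_{A^*}^+\hat\mH^1$, yielding the stated equivalence. The only delicate point is the identification $(E_A^+)' = E_{A^*}^-$: since $\dual{\cdot}{\cdot}_A$ is sesquilinear and the characteristic functions $\chi^\pm$ are real valued, transferring through the Dunford integral introduces no spurious conjugations, and $T_A' = -T_{A^*}$ converts cleanly into the desired spectral projection identity. Everything else is algebraic once the duality framework of Lemma~\ref{lem:huthatdualities} is in place.
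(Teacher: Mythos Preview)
Your proof is correct and uses the same underlying duality ingredients as the paper --- namely $(E_A)' = -E_{A^*}$ (hence $(E_A^+)' = E_{A^*}^-$) and $N' = N$ on $\hat\mH^1$ with respect to $\dual{\cdot}{\cdot}_A$ --- but your packaging is cleaner. The paper argues via adjoints of the \emph{restricted} projections $N^+:E_A^+\hat\mH^1\to N^+\hat\mH^1$ and $E_{A^*}^-:N^+\hat\mH^1\to E_{A^*}^-\hat\mH^1$, together with a separate transversality lemma (``$R_1^-:R_2^+\mH\to R_1^-\mH$ has a priori estimates iff $R_2^-:R_1^+\mH\to R_2^-\mH$ does''), applied twice to chain from (Reg-$A$) to (Neu$^\perp$-$A^*$). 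Your route --- recasting each well-posedness statement as a topological splitting of $\hat\mH^1$ and invoking the general principle that $H=X\oplus Y$ iff $H=Y^\perp\oplus X^\perp$ under a duality --- collapses that chain into a single step. The annihilator computations you give are exactly what the paper's Remark~\ref{rem:restrdualities} encodes, and your direct verification that $(N^-\mH^1)^\perp = N^+\hat\mH^1$ via the explicit form of $\dual{\cdot}{\cdot}_A$ is equivalent to the paper's use of $N'=N$. The only point worth tightening is your remark on the sesquilinearity: the identity $(E_A^+)'=E_{A^*}^-$ does hold, but the cleanest justification is simply $(E_A)'=-E_{A^*}$ (which the paper cites from Proposition~\ref{prop:duality}) together with $E_A^\pm=\tfrac12(I\pm E_A)$, rather than tracking conjugations through the Dunford integral.
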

\begin{proof}
We need to show that if $N^+: E_A^+\hat\mH^1\rightarrow  N^+\hat\mH^1$ is an isomorphism,
then so is $N^-: E_{A^*}^+\hat\mH^1\rightarrow  N^-\hat\mH^1$.
The proof uses two facts. First that we have adjoint operators $(E_A)'= -E_{A^*}$
and $N'=N$ according to Proposition~\ref{prop:duality} and Lemma~\ref{lem:huthatdualities}.
As in Remark~\ref{rem:restrdualities}, this shows that we have dual spaces
$\dual{E_A^+\hat\mH^1}{E_{A^*}^-\hat\mH^1}_A$ and $\dual{N^+\hat\mH^1}{N^+\hat\mH^1}_A$,
and we see that
$$
  \dual{N^+f}{g}_A=\dual{f}{g}_A= \dual{f}{E_{A^*}^-g}_A,
$$
for all $f\in E_A^+\hat\mH^1$ and $g\in N^+\hat\mH^1$.
Therefore, the restricted projections 
$N^+: E_A^+\hat\mH^1\rightarrow  N^+\hat\mH^1$ and
$E_{A^*}^- : N^+\hat\mH^1\rightarrow E_{A^*}^-\hat\mH^1$ are adjoint.

Secondly, if $R_1^\pm$ and $R_2^\pm$ are two pairs of complementary projections
in a Hilbert space $\mH$, as in Definition~\ref{defn:splittings}, then 
$R_1^-: R_2^+\mH\rightarrow R_1^-\mH$ has a priori estimates, as in Remark~\ref{rem:aprioriduality}, 
if and only
if $R_2^-: R_1^+\mH\rightarrow R_2^-\mH$ has a priori estimates.
Indeed, both statements are seen to be equivalent with that 
the subspaces $R_1^+\mH$ and $R_2^+\mH$ are transversal, i.e. that the 
estimate $\|f_1+f_2\|\approx \|f_1\|+\|f_2\|$ holds for all $f_1\in R_1^+\mH$
and $f_2\in R_2^+\mH$.
To see this, assume that $\|R_1^-f_2\|\gtrsim \|f_2\|$ holds for all $f_2\in R_2^+\mH$.
Then $\|f_2\|\lesssim \|R_1^-(f_1+f_2)\|\lesssim\|f_1+f_2\|$ for all $f_i\in R^+_i\mH$,
which proves transversality.
Conversely, assume that $R_1^+\mH$ and $R^+_2\mH$ are transversal. 
Then $f_2-R_1^-f_2= R^+_1 f_2 =:f_1\in R^+_1\mH$ for all $f_2\in R_2^+\mH$.
Therefore $\|R^-_1f_2\|=\|f_2-f_1\|\approx \|f_2\|+\|f_1\|\ge\|f_2\|$.
The same argument can be used to show that transversality also holds if and only if
$\|R_2^-f_1\|\gtrsim \|f_1\|$ holds for all $f_1\in R_1^+\mH$.

To prove the proposition, assume that $N^+: E_A^+\hat\mH^1\rightarrow  N^+\hat\mH^1$ is an isomorphism.
It follows that the adjoint operator 
$E_{A^*}^- : N^+\hat\mH^1\rightarrow E_{A^*}^-\hat\mH^1$ also is an
isomorphism.
Using the second fact above twice, shows that
$$
  E_{A}^- : N^-\hat\mH^1\longrightarrow E_{A}^-\hat\mH^1 \qquad\text{and}\qquad
  N^-: E_{A^*}^+\hat\mH^1\longrightarrow  N^-\hat\mH^1
$$
have a priori estimates.
As these are adjoint operators as well, both must in fact be isomorphisms.
In particular we have shown that (Neu$^\perp$-$A^*$) is well posed.
The proof of the converse implication is similar.
\end{proof}
When perturbing $A$, it is preferable not to have operators defined on spaces
like $E^+_A\hat\mH$, which varies with $A$. 
We have the following.
\begin{lem}  \label{lem:opmain}
Assume that $T_A$ satisfies quadratic estimates.
\begin{enumerate}
\item
If $I-E_AN_A : \hat\mH^1\rightarrow \hat\mH^1$ is an isomorphism, then the Neumann
problem (Neu-$A$) is well posed.
\item
If $I+E_AN_A : \hat\mH^1\rightarrow \hat\mH^1$ is an isomorphism, or if
$I+E_AN : \hat\mH^1\rightarrow \hat\mH^1$ is an isomorphism, then the regularity
problem (Reg-$A$) is well posed.
\item
If $I-E_AN : \hat\mH^1\rightarrow \hat\mH^1$ is an isomorphism, then the Neumann
problem (Neu$^\perp$-$A$) is well posed.
\end{enumerate}
\end{lem}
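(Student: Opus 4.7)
The plan is to reduce each of the three implications to a topological splitting of $\hat\mH^1$, and then to read such a splitting off the stated invertibility hypothesis by a short algebraic manipulation of the involutions $E_A$, $N_A$, $N$.

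First, I invoke the three reformulations of well-posedness recorded in the paragraph just before the lemma, together with the general observation that for two pairs of complementary projections $R_1^\pm$, $R_2^\pm$ the restricted projection $R_1^-\colon R_2^+\mH\to R_1^-\mH$ is an isomorphism iff $\mH=R_2^+\mH\oplus R_1^+\mH$. This translates the three well-posedness statements into the splittings: (Neu-$A$) well posed iff $\hat\mH^1=E_A^+\hat\mH^1\oplus N_A^+\hat\mH^1$; (Reg-$A$) well posed iff $\hat\mH^1=E_A^+\hat\mH^1\oplus N^-\hat\mH^1$ (equivalently with $N_A^-\hat\mH^1$ in place of $N^-\hat\mH^1$, as these two subspaces of $\hat\mH^1$ coincide); (Neu$^\perp$-$A$) well posed iff $\hat\mH^1=E_A^+\hat\mH^1\oplus N^+\hat\mH^1$.

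Second, exploiting $E_A^2=I$, I use the identity $I\mp E_AM=E_A(E_A\mp M)$ to see that $I\mp E_AM$ is an isomorphism of $\hat\mH^1$ iff $E_A\mp M$ is. So the three hypotheses become (1) $E_A-N_A$ invertible, (2) $E_A+N_A$ (or $E_A+N$) invertible, and (3) $E_A-N$ invertible.

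The main step is then the general splitting principle for involutions: if $R_1,R_2$ are reflection operators on a Hilbert space $\mH$, then $R_1-R_2$ an isomorphism implies $\mH=R_1^+\mH\oplus R_2^+\mH$, and $R_1+R_2$ an isomorphism implies $\mH=R_1^+\mH\oplus R_2^-\mH$. This rests on the elementary identities $R_1^+-R_2^\pm=\tfrac12(R_1\mp R_2)$. Indeed, any $f\in R_1^+\mH\cap R_2^\pm\mH$ is annihilated by $R_1\mp R_2$ and hence vanishes; and for given $h\in\mH$, the vector $k=(R_1\mp R_2)^{-1}h$ yields the decomposition $h=2R_1^+k\mp 2R_2^\pm k\in R_1^+\mH+R_2^\pm\mH$. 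The associated complementary projections are bounded by the open mapping theorem (or directly from the formula). Combining this principle with the previous two steps, taking $R_1=E_A$ and $R_2\in\{N_A,N\}$, yields the three implications of the lemma.

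I foresee no genuine obstacle; the only ``trick'' is spotting the two elementary algebraic identities above. Note also that the hypothesis that $T_A$ satisfies quadratic estimates enters only implicitly, guaranteeing via Proposition~\ref{prop:QtoFCalc} that $E_A^\pm$ are bounded on $\hat\mH^1$ and hence that $E_A^+\hat\mH^1$ is a well-defined closed subspace.
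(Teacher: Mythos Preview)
Your proposal is correct and follows essentially the same approach as the paper. The paper's proof works directly with the restricted projection (for instance, in the case $I+E_AN_A$ invertible it sets $f:=2(E_A+N_A)^{-1}g$ and checks $f\in E_A^+\hat\mH^1$ via the intertwining identity $N_A^-(E_A+N_A)=(E_A+N_A)E_A^-$), whereas you first pass through the equivalent splitting formulation and then invoke the general reflection identity $R_1^+-R_2^\pm=\tfrac12(R_1\mp R_2)$; these are the same algebraic manipulations organized slightly differently.
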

\begin{proof}
Assume for example that $I+E_AN_A$ is an isomorphism.
We need to prove that
$N^+_A : E^+_A \hat\mH^1 \rightarrow N^+_A \hat\mH^1$
is an isomorphism.
Note that if $N^+_A f=g$ where $f\in E^+_A\hat\mH^1$, then
$$
  g=N_A^+f= \tfrac 12(I+N_A)f= \tfrac 12(E_A+N_A)f= \tfrac 12 E_A(I+E_AN_A)f.
$$
If $g\in N_A^+\hat\mH^1$, let $f:= 2(E_A+N_A)^{-1}g$. Then it follows
that $0= N^-_A g= \tfrac 12(E_A+N_A)(E_A^-f)$, since 
$N_A^-(E_A+N_A)=\tfrac 12(E_A+N_A-I-N_AE_A)=(E_A+N_A)E_A^-$, so 
$E_A^- f=0$ and therefore $f\in E_A^+\hat\mH^1$.

A similar calculation proves well posedness of the other boundary value problems.
\end{proof}
\begin{rem}  \label{rem:rotasprojs}
More generally, letting $k=1$ and $(\alpha^+, \alpha^-)=(1,0)$, i.e.
$\lambda=1$, in Lemma~\ref{lem:optransmain}, we see that 
$I-E_AN_A$ is an isomorphism if and only if
the restricted projections
\begin{align*}
  N_A^+ &: E_A^-\hat\mH^1 \rightarrow N_A^+\hat\mH^1\quad \text{and} \\
  N_A^- &: E_A^+\hat\mH^1 \rightarrow N_A^-\hat\mH^1
\end{align*}
are isomorphisms.
Similarly, if $(\alpha^+, \alpha^-)=(0,1)$, i.e.
$\lambda=-1$ in Lemma~\ref{lem:optransmain}, we see that 
$I+E_AN_A$ is an isomorphism if and only if
the restricted projections
$N_A^+ : E_A^+\hat\mH^1 \rightarrow N_A^+\hat\mH^1$ and
$N_A^- : E_A^-\hat\mH^1 \rightarrow N_A^-\hat\mH^1$
are isomorphisms.
\end{rem}

We next turn to the Dirichlet problem (Dir-$A$), where we aim to prove an analogue
of Lemma~\ref{lem:characthardyfcns} which characterises the solution $U_t$ as a 
Poisson integral of the boundary trace $u$. 
As discussed in the introduction, we shall use (Neu$^\perp$-$A$) to construct 
the solution $U_t$.
Given Dirichlet data $u\in L_2(\R^n;\C)$, we form $ue_0\in N^-\hat\mH^1$.
It then follows from Lemma~\ref{lem:LaplaceF_0^1} that the vector field $F_t$ 
solving (Neu$^\perp$-$A$) with data $ue_0$, has a normal component $U:= F_0$
which satisfies the second order equation (\ref{eq:divform}).
We now define the {\em Poisson integral} of $u$ to be
$$
  \mP_t(u) := (F_t,e_0) ,\qquad \text{when } F_t= e^{-t|T_A|}f \, \text{ and }\,
  (f,e_0)=u.
$$
\begin{lem}  \label{lem:characterisePoisson}
Assume that $T_A$ satisfies quadratic estimates and that the 
Neumann problem (Neu$^\perp$-$A$) is well posed.
Let $u\in L_2(\R^n;\C)$ and let
$(0,\infty)\ni t\mapsto U_t(x)=U(t,x)\in L_2(\R^n;\C)$ be a family of functions.
Then the following are equivalent.
\begin{itemize}
\item[{\rm (i)}]
  $U_t= \mP_t u$ for all $t>0$.
\item[{\rm (ii)}]
  $U_t\in C^2(\R_+; L_2(\R^n;\C))$, $\nabla_{t,x} U_t\in C^1(\R_+; L_2(\R^n;\C^{n+1}))$ and
  $U$ satisfies the equation 
$$
  \divv_{t,x} A(x) \nabla_{t,x} U(t,x) =0,
$$
and we have $L_2$ limits
$\lim_{t\rightarrow 0^+} U_t= u$,
$\lim_{t\rightarrow \infty} U_t= 0$ and
$\lim_{t\rightarrow \infty}\nabla_{t,x} U_t=0$.
\end{itemize}
If this holds, then $U_t, \nabla_{t,x}U_t \in C^j(\R_+; L_2(\R^n))$ for all $j\ge 1$.
Furthermore, $U_t$ is in one-to-one correspondence with the trace 
$u$, and we have equivalences of norms
$$
  \|u\|\approx \sup_{t>0} \|U_t\|
  \approx \tb{t\pd_t U_t}.
$$
\end{lem}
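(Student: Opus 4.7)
The plan is to use well-posedness of (Neu$^\perp$-$A$) to set up a bijection between Dirichlet data $u\in L_2(\R^n;\C)$ and the spectral element $\tilde f\in E_A^+\hat\mH^1$ satisfying $(\tilde f,e_0)=u$, and to reduce everything to Lemma~\ref{lem:characthardyfcns} applied to the auxiliary vector field $\tilde F_t := e^{-t|T_A|}\tilde f$.

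For (i)$\Rightarrow$(ii), given $u$ I would produce the associated $\tilde f$ as above; Lemma~\ref{lem:characthardyfcns} then shows that $\tilde F_t\in C^1(\R_+;\hat\mH^1)$ satisfies (\ref{eq:Laplacein1order}) with the correct $L_2$ limits, and Lemma~\ref{lem:LaplaceF_0^1} gives that $U_t := (\tilde F_t,e_0) = \mP_t u$ satisfies (\ref{eq:divform}). Since $d_{t,x}\tilde F_t=0$ one can write $\tilde F_t = \nabla_{t,x}W_t$ for a scalar potential $W_t$, which forces $U_t = \partial_t W_t$ and hence $\nabla_{t,x}U_t = \partial_t \tilde F_t = -T_A\tilde F_t$; the holomorphic semigroup structure of $e^{-t|T_A|}$ then furnishes the required smoothness of $U_t$ and $\nabla_{t,x}U_t$ in $t$ as well as decay at $t=\infty$ (using $\partial_t\tilde F_t = -|T_A|e^{-(t/2)|T_A|}\cdot e^{-(t/2)|T_A|}\tilde f$ together with $\|e^{-(t/2)|T_A|}\tilde f\|\to 0$).

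For (ii)$\Rightarrow$(i), I set $u := \lim_{t\to 0}U_t$ and $F_t := \nabla_{t,x}U_t$, so that $F$ satisfies (\ref{eq:Laplacein1order}) and $\|F_t\|\to 0$ as $t\to\infty$. The first key step is to show $F_t \in E_A^+\hat\mH^1$ for each $t>0$: the backward spectral component obeys $E_A^-F_t = e^{(t-r)|T_A|}E_A^-F_r$ for $r>t$, and since $\|F_r\|\to 0$, letting $r\to\infty$ forces $E_A^-F_t=0$, after which the differential equation $\partial_tF_t = -|T_A|F_t$ on $E_A^+\hat\mH^1$ yields $F_t = e^{-(t-\epsilon)|T_A|}F_\epsilon$ for $0<\epsilon<t$. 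The substitution $s=r+\epsilon$ then gives $\int_\epsilon^t F_s\,ds = \bigl(\int_0^{t-\epsilon}e^{-r|T_A|}\,dr\bigr)F_\epsilon$, whose $e_0$-component equals $U_t-U_\epsilon$ and converges in $L_2$ to $-U_\epsilon$ as $t\to\infty$. Because each integrand lies in the closed subspace $E_A^+\hat\mH^1$ and (Neu$^\perp$-$A$) asserts that $N^-\colon E_A^+\hat\mH^1\to N^-\hat\mH^1$ is an isomorphism, this scalar convergence upgrades to $\hat\mH^1$-convergence of the vector integrals to some $V_\epsilon\in E_A^+\hat\mH^1$ with $(V_\epsilon,e_0)=-U_\epsilon$. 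A semigroup change of variables gives $V_t = e^{-(t-\epsilon)|T_A|}V_\epsilon$, and letting $\epsilon\to 0$ and applying the same isomorphism to $U_\epsilon\to u$ produces $\tilde f\in E_A^+\hat\mH^1$ with $(\tilde f,e_0)=u$ and $V_t = -e^{-t|T_A|}\tilde f$; taking the $e_0$-component yields $U_t = \mP_t u$. The norm equivalences then follow by combining the estimates in Lemma~\ref{lem:characthardyfcns} for $\tilde F_t$ with the uniform comparison $\|G\|_{\hat\mH^1}\approx \|(G,e_0)\|_{L_2}$ on $G\in E_A^+\hat\mH^1$ furnished by (Neu$^\perp$-$A$), applied to $G=\tilde F_t$ and $G=tT_A\tilde F_t$. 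The main obstacle is the uniqueness direction: one has no direct access to $\nabla_{t,x}U$ at $t=0$ and $|T_A|^{-1}$ is unbounded, forcing the indirect antiderivative argument via the bounded operators $\int_0^{t-\epsilon}e^{-r|T_A|}\,dr$ combined crucially with the $N^-$ isomorphism from (Neu$^\perp$-$A$) to lift scalar $L_2$ convergence to $\hat\mH^1$ convergence.
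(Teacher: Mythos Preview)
Your proof is correct, and for (i)$\Rightarrow$(ii) it is essentially the paper's argument (the observation $\nabla_{t,x}U_t=\partial_t\tilde F_t=-T_A\tilde F_t$ is exactly how the paper reads off the regularity and decay of $\nabla_{t,x}U_t$).

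For (ii)$\Rightarrow$(i), the paper's argument is shorter and worth comparing. Both start by setting $G_t:=\nabla_{t,x}U_t$ (your $F_t$) and invoking Lemma~\ref{lem:characthardyfcns} on $[s,\infty)$ to get $G_s\in E_A^+\hat\mH^1$ and $G_{s+t}=e^{-t|T_A|}G_s$. At this point the paper observes that taking normal components gives, \emph{by definition of $\mP_t$}, the identity $\partial_0 U_{s+t}=\mP_t(\partial_0 U_s)$; since $\mP_t$ is a bounded linear operator, this is $\partial_s\big(U_{s+t}-\mP_t(U_s)\big)=0$, and evaluating the constant at $s=\infty$ (where both terms vanish) yields $U_{s+t}=\mP_t(U_s)$; letting $s\to 0$ finishes. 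You instead construct the vector antiderivative $V_\epsilon=\lim_{t\to\infty}\int_\epsilon^t G_s\,ds$ inside $E_A^+\hat\mH^1$, using the (Neu$^\perp$-$A$) isomorphism to lift scalar convergence of normal components to full $\hat\mH^1$-convergence, and then recover $\tilde f$ by a second limit $\epsilon\to 0$. This is a valid workaround for the unboundedness of $|T_A|^{-1}$, but the paper sidesteps the issue entirely by integrating at the level of the already-bounded operator $\mP_t$ rather than at the level of $G_s$; in effect, the paper exploits that $(N^-)^{-1}:N^-\hat\mH^1\to E_A^+\hat\mH^1$ composed with $e^{-t|T_A|}$ and $N^-$ is precisely $\mP_t$, so there is no need to take improper vector-valued integrals or pass through two limits.
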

\begin{proof}
(i) implies (ii):
Assume that $U_t=( F_t, e_0)$, where $F_t= e^{-t|T_A|}f$ and $(f,e_0)=u$.
As in the proof of Lemma~\ref{lem:algsplitting}, from Lemma~\ref{lem:convergence} it follows that 
$F_t\rightarrow f$, and therefore $U_t\rightarrow u$,
and also that $\pd_t^j F_t= (-|T_A|)^j e^{-t|T_A|}f$. Therefore $F_t\in C^j(\R_+; L_2(\R^n))$
for all $j$, and so does $U_t$.
For $j=1$, we see that $F$ satisfies the Dirac type equation since $|T_A|F_t=T_AF_t$, and
Lemma~\ref{lem:LaplaceF_0^1} thus shows that $U_t$ satisfies the second order equation.
Furthermore, we note from the expression (\ref{eq:blockTA}) for $T_A$, that 
$\nabla_x U_t= (\partial_t F_t)_\ta$.
Thus $\nabla_x U_t\in C^j(\R_+; L_2(\R^n;\C))$ for all $j$, and yet another application of
Lemma~\ref{lem:convergence} shows that $U_t=o(1)$ and $\nabla_{t,x} U_t= o(1/t)$ when $t\rightarrow\infty$.

(ii) implies (i):
Assume $U_t$ has the stated properties and boundary trace $u$.
Consider the family of vector fields $G_t:= \nabla_{t,x} U_t$.
Since these satisfy Lemma~\ref{lem:characthardyfcns}(ii) for
$t\ge s>0$ with boundary trace $G_s$, we obtain that
$G_{s+t}= e^{-t|T_A|}G_s$ for all $s,t>0$.
For the normal components, this means that 
$$
  \pd_0 U_{s+t}= \mP_t(\pd_0 U_s),
$$
or equivalently that $\pd_s(U_{s+t}-\mP_t(U_s))=0$.
Since $\lim_{s\rightarrow\infty} U_s=0$, we must have $U_{s+t}= \mP_t(U_s)$ for all $s,t>0$.
Letting $s\rightarrow 0$, we conclude that $U_t=\mP_t(u)$.

The equivalence of norms $\|u\|\approx \sup_{t>0} \|U_t\|$ follows from the
uniform boundedness of the operators $\mP_t$.
For the equivalence $\|u\|\approx \tb{t\pd_t U_t}$ we use that (Neu$^\perp$-$A$) is
well posed and the corresponding square function estimate for $F_t$ from 
Lemma~\ref{lem:characthardyfcns} to get
$\|u\|\approx \|f\| \approx \tb{t\pd_t F_t} \approx \tb{t\pd_t U_t}$,
since for all $t>0$ we have $\|\pd_t U_t\| = \|N^-(\pd_t F_t)\| \approx \|\pd_t F_t\|$.
\end{proof}

We end this section with the proof of the non-tangential estimate 
$\|\widetilde N_*(F)\|\approx\|f\|$ in Theorem~\ref{thm:main}.
\begin{prop}   \label{prop:modnontang}
Assume that $T_A$ satisfies quadratic estimates.
Let $F_t= e^{-t |T_A|}f$, where $f\in E^+_A \hat\mH^1$.
Then $\|f\|\approx \|\widetilde N_*(F)\|$,
where the non-tangential maximal function is 
$$
  \widetilde N_*(F)(x):= \sup_{t>0} \left(\barint_{}
  \barint_{\hspace{-6pt} D(t,x)} |F(s,y)|^2 ds\,dy \right)^{1/2},
$$
and $D(t,x):= \sett{(s,y)\in \R^{n+1}_+}{|s-t|<c_0t,\, |y-x|<c_1t}$,
for given constants $c_0\in(0,1)$ and $c_1>0$.
\end{prop}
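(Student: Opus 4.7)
The equivalence $\|f\|_2 \approx \|\widetilde{N}_*(F)\|_2$ splits into two inequalities which I treat separately.

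For the lower bound $\|f\|_2 \lesssim \|\widetilde{N}_*(F)\|_2$, I would use Lebesgue differentiation together with the $L^2$-continuity $F_s \to f$ as $s \to 0^+$ granted by Lemma~\ref{lem:characthardyfcns}. At a Lebesgue point $x$ of $|f|^2$, writing $F(s,y) = f(y) + (F(s,y) - f(y))$, expanding the square, and using the bound $\sup_{s\in((1-c_0)t,(1+c_0)t)}\|F_s - f\|_2 \to 0$ as $t\to 0^+$, one gets
$$\frac{1}{|D(t,x)|}\iint_{D(t,x)} |F(s,y)|^2\, ds\, dy = \frac{1}{|B(x,c_1 t)|}\int_{|y-x|<c_1 t} |f(y)|^2\, dy + o(1)_{t\to 0^+}.$$
The main term tends to $|f(x)|^2$ by Lebesgue differentiation, so $|f(x)|^2 \le \widetilde{N}_*(F)(x)^2$ a.e.~$x$, and integration in $x$ yields the bound.

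For the upper bound $\|\widetilde{N}_*(F)\|_2 \lesssim \|f\|_2$, I would begin by writing $F(s,y) = F(t,y) + \int_t^s \partial_r F(r,y)\, dr$, which via Cauchy--Schwarz gives, for $(s,y) \in D(t,x)$,
$$|F(s,y)|^2 \le 2|F(t,y)|^2 + 2 c_0 t \int_{(1-c_0)t}^{(1+c_0)t} |\partial_r F(r,y)|^2\, dr.$$
Averaging over $D(t,x)$ and taking $\sup_{t>0}$ produces the pointwise bound $\widetilde{N}_*(F)(x)^2 \lesssim \mathcal{V}(x)^2 + \mathcal{A}(x)^2$, where
$$\mathcal{V}(x)^2 := \sup_{t>0} \tfrac{1}{|B(x,c_1 t)|}\int_{|y-x|<c_1 t} |F(t,y)|^2\, dy, \qquad \mathcal{A}(x)^2 := \sup_{t>0} \tfrac{1}{t^n}\iint_{D(t,x)} r |\partial_r F(r,y)|^2\, dr\, dy.$$
The area term $\mathcal{A}$ is the easy part: bounding $\sup_{t>0}$ by $\int_0^\infty \cdots\, dt/t$ (the integrand being non-negative) and applying Fubini yields $\|\mathcal{A}\|_2^2 \lesssim \tb{r \partial_r F_r}^2$, which is in turn $\lesssim \|f\|_2^2$ by the quadratic estimate of Lemma~\ref{lem:characthardyfcns}.

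The main obstacle is establishing $\|\mathcal{V}\|_2 \lesssim \|f\|_2$. The $L^2$ off-diagonal bounds for $e^{-t|T_A|}$, which follow from Proposition~\ref{pseudoloc} via the Dunford integral, combined with Schur-type estimates only give $\mathcal{V}(x)^2 \lesssim M_{HL}(|f|^2)(x)$, and this is insufficient since $M_{HL}$ is unbounded on $L^1$. The sharp bound requires exploiting the decay $\|F_t\|_2 \to 0$ as $t \to \infty$ via the identity
$$\mathcal{V}_{t_0}(x)^2 = -\int_{t_0}^\infty \partial_t \mathcal{V}_t(x)^2\, dt, \qquad \mathcal{V}_t(x)^2 := \tfrac{1}{|B(x,c_1 t)|}\int_{|y-x|<c_1 t} |F(t,y)|^2\, dy.$$
Expanding $\partial_t \mathcal{V}_t^2$ gives a bulk term $\approx 2 \,\mathrm{Re}\,\mathrm{avg}_{B(x,c_1 t)}(\overline{F_t}\, \partial_t F_t)$ together with a boundary term from the moving ball $\partial B(x,c_1 t)$; applying Cauchy--Schwarz in $t$ to the supremum, then $L^1$-integration in $x$ and Fubini, reduces each piece to a product of $\sup_{t>0}\|F_t\|_2 \approx \|f\|_2$ and $\tb{t\partial_t F_t} \approx \|f\|_2$, both controlled by Lemma~\ref{lem:characthardyfcns}. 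The most delicate step is handling the boundary contribution, which requires a further local $L^2$-averaging to reabsorb the trace of $|F_t|^2$ on $\partial B(x,c_1 t)$ into an enlarged off-diagonal-bound estimate controlled once more by the same two ingredients.
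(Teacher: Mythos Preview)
Your lower bound has a small gap: the claimed $o(1)$ term contains the local average $\barint_{B(x,c_1 t)}|F_s-f|^2$, and the global convergence $\|F_s-f\|_2\to 0$ does not control this as $|B(x,c_1t)|\to 0$. The paper sidesteps this by integrating in $x$ first: for each fixed $t$,
\[
\|\widetilde N_*(F)\|_2^2\ge\int_{\R^n}\barint_{}\barint_{\hspace{-6pt} D(t,x)}|F|^2\,dx=\barint_{\hspace{-6pt}|s-t|<c_0t}\|F_s\|_2^2\,ds,
\]
and the right side is $\gtrsim\|F_{(1+c_0)t}\|_2^2\to\|f\|_2^2$ as $t\to 0$, using only the semigroup bound $\|F_{(1+c_0)t}\|\lesssim\|F_s\|$ for $s<(1+c_0)t$.

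The upper bound has a genuine gap. Your vertical term $\mathcal V(x)^2=\sup_{t>0}\barint_{B(x,c_1t)}|F_t|^2$ cannot be handled by the integration-by-parts scheme you describe. Differentiating $\mathcal V_t(x)^2$ in $t$ produces, besides the bulk and boundary pieces you list, a term $-\tfrac n t\mathcal V_t(x)^2$ from the volume normalisation; inserting this into $\mathcal V_{t_0}^2=-\int_{t_0}^\infty\partial_t\mathcal V_t^2\,dt$ and taking the supremum leaves $\int_0^\infty\tfrac n t\mathcal V_t(x)^2\,dt$, whose $x$-integral is $n\int_0^\infty\|F_t\|_2^2\,\tfrac{dt}t=\infty$. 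Even ignoring that term, the bulk piece after Fubini in $x$ is $\int_0^\infty\int_{\R^n}|F_t|\,|\partial_tF_t|\,dy\,dt$, and no Cauchy--Schwarz turns this into $\sup_t\|F_t\|_2\cdot\tb{t\partial_tF_t}$: any splitting forces either $\int_0^\infty\|F_t\|_2^2\,\tfrac{dt}t$ (infinite) or $\int_0^\infty\|\partial_tF_t\|_2\,dt$ (not bounded by $\|f\|_2$; test on $f$ with $N$ well-separated spectral scales, where it behaves like $N$ while $\|f\|\approx\sqrt N$) to appear.

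The paper's route is entirely different and uses the PDE structure. Since $F=\nabla_{t,x}U$ with $\divv_{t,x} A\nabla_{t,x} U=0$, Caccioppoli followed by Poincar\'e gives
\[
\Big(\barint_{}\barint_{\hspace{-6pt} D(t,x)}|F|^2\Big)^{1/2}\lesssim\Big(\barint_{}\barint_{\hspace{-6pt}\wt D(t,x)}|F|^p\Big)^{1/p}
\]
for some $p<2$ on an enlarged region $\wt D(t,x)$. One then splits $F_s=(I+isT_A)^{-1}f+\psi_s(T_A)f$ with $\psi(z)=e^{-|z|}-(1+iz)^{-1}\in\Psi(S^o_\nu)$. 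The $\psi$-piece is controlled by replacing $\sup_t$ with $\int\,dt/t$ and invoking the quadratic estimate. For the resolvent piece $H_s=(I+isT_A)^{-1}f$, Lemma~\ref{lem:H_t} shows that its normal component solves a second-order divergence-form equation in $\R^n$, and \v{S}ne\u\i berg extrapolation then yields $L_p\to L_q$ off-diagonal bounds with $p<2<q$; combined with the exponent $p<2$ from Caccioppoli--Poincar\'e this gives the pointwise bound $M(|f|^p)^{1/p}(x)$, and the Hardy--Littlewood maximal theorem on $L_{2/p}$ closes the estimate. The ingredient you are missing is precisely this $L^p$-improvement below $2$: as you yourself note, $L^2$ off-diagonal bounds alone only produce $M(|f|^2)$, which is not enough.
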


The proof uses the following lemma.

\begin{lem}  \label{lem:H_t}
Let $f\in \hat\mH^1$ and define $H_t= (1+ it T_B)^{-1} f\in \hat\mH^1$.
Write $H_t= H_t^{1,0}e_0 + H_t^{1,\ta}$
and $f= f^{1,0}e_0 + f^{1,\ta}$.
\begin{itemize}
\item[{\rm (i)}]
The normal component $H_t^{1,0}$ satisfies the second order
divergence form equation
$$
     \begin{bmatrix}
       1 & it \divv
     \end{bmatrix}
     \begin{bmatrix}
       a_{\no\no} & a_{\no\ta} \\ a_{\ta\no} & a_{\ta\ta}
     \end{bmatrix}
     \begin{bmatrix}
       1 \\ it \nabla
     \end{bmatrix}
H^{1,0}_t =
     \begin{bmatrix}
       1 & it \divv
     \end{bmatrix}
     \begin{bmatrix}
       a_{\no\no} f^{1,0} \\ -a_{\ta\ta} f^{1,\ta}
     \end{bmatrix},
$$
where we identify normal vectors $ue_0$ with scalars $u$,
and the tangential component $H_t^{1,\ta}$ satisfies
$$
  H_t^{1,\ta} = f^{1,\ta}+ it \nabla H_t^{1,0}.
$$
\item[{\rm (ii)}]
There exists $p<2$ and $q>2$ such that for any fixed $r_0<\infty$ we have
$$
  \left( \barint_{\hspace{-6pt} B(x,r_0 t)} |H_t^{1,0}|^q \right)^{1/q} +
  \left( \barint_{\hspace{-6pt} B(x,r_0 t)} |H_t^{1,\ta}|^p \right)^{1/p} \lesssim M(|f|^p)^{1/p}(x), 
$$
for all $x\in \R^n$ and $t>0$.
Here $M(f)(x):= \sup_{r>0} \barint_{\hspace{-3pt} B(x,r)} |f(y)| dy$ 
denotes the Hardy--Littlewood maximal function.
\end{itemize}
\end{lem}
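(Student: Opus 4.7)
For part (i), this is a direct unpacking of the resolvent equation using the block form of $T_A$. By Remark~\ref{rem:huthatV_B}, $T_B|_{\hat\mH^1}=T_A$, so $(I+itT_A)H_t=f$. Reading the tangential component of this equation off from \eqref{eq:blockTA} gives immediately $H_t^{1,\ta} - it\nabla_x H_t^{1,0} = f^{1,\ta}$, which is the second assertion. The normal component of $T_A H_t$ is $a_{\no\no}^{-1}[a_{\no\ta}\cdot\nabla H_t^{1,0}+\divv(a_{\ta\no}H_t^{1,0})+\divv(a_{\ta\ta}H_t^{1,\ta})]$; substituting the tangential identity into the normal equation and multiplying through by $a_{\no\no}$ produces exactly the displayed second-order divergence-form equation, with $(it)^2\divv(a_{\ta\ta}\nabla H_t^{1,0})$ appearing as leading term and $-it\divv(a_{\ta\ta}f^{1,\ta})$ on the right-hand side.

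For part (ii), the key observation is that the equation from (i) is a complex second-order divergence-form equation \emph{at scale} $t$, with accretive leading coefficient $a_{\ta\ta}$ (paired with $(it)^2$) and accretive zeroth-order coefficient $a_{\no\no}$. I would first establish a Caccioppoli-type inequality at this scale by testing against $\eta^2\overline{H_t^{1,0}}$ with $\eta$ a smooth cutoff supported in $B(x,2rt)$, equal to $1$ on $B(x,rt)$, and satisfying $|\nabla\eta|\lesssim (rt)^{-1}$. Taking real parts and absorbing the gradient terms using accretivity of $a_{\ta\ta}$ and $a_{\no\no}$ yields
\begin{equation*}
  \Bigl(\barint_{B(x,rt)}|t\nabla H_t^{1,0}|^2\Bigr)^{1/2} + \Bigl(\barint_{B(x,rt)}|H_t^{1,0}|^2\Bigr)^{1/2}\lesssim \Bigl(\barint_{B(x,2rt)}|H_t^{1,0}|^2\Bigr)^{1/2} + \text{data in }f.
\end{equation*}
Coupled with a Poincar\'e--Sobolev inequality for $H_t^{1,0}$ this upgrades to a reverse H\"older inequality, and Gehring's self-improving lemma then yields exponents $p<2<q$, depending only on $\|A\|_\infty$, $\kappa_A$ and $n$, such that the inequality persists with $L^q$ averages of $H_t^{1,0}$ (and $L^p$ averages of $t\nabla H_t^{1,0}$) on the left, and $L^p$ averages of $H_t^{1,0}$ on the right.

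To replace the right-hand side by the maximal function of $f$, I would decompose $f = \sum_{k\ge 0} f\chi_{A_k}$ with $A_0=B(x,Ct)$ and $A_k= B(x,2^kCt)\setminus B(x,2^{k-1}Ct)$, apply $(I+itT_B)^{-1}$ piece by piece, and exploit the $L^2$ off-diagonal bounds of Proposition~\ref{pseudoloc} with exponent $M$ larger than $n$ to sum a geometric series, obtaining $(\barint_{B(x,2r_0 t)}|H_t^{1,0}|^2)^{1/2} \lesssim M(|f|^2)^{1/2}(x)$; the sub-$L^2$ self-improvement then brings this down to $M(|f|^p)^{1/p}(x)$. Feeding this into the local $L^q$ Gehring estimate yields the first summand of the claim; the second summand follows from $H_t^{1,\ta}=f^{1,\ta}+it\nabla H_t^{1,0}$, the $L^p$ gradient bound just obtained, and the trivial majorization $(\barint_{B(x,r_0 t)}|f^{1,\ta}|^p)^{1/p}\le M(|f|^p)^{1/p}(x)$. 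The main technical obstacle is the sub-$L^2$ improvement (the fact that $p<2$ can be taken strictly), which is not automatic from Caccioppoli alone but requires coupling it with Poincar\'e--Sobolev to produce a reverse H\"older inequality below the natural exponent, together with careful bookkeeping to ensure Gehring's constants remain uniform in the scale $t$ and the inhomogeneous data terms in $f^{1,0}$ and $f^{1,\ta}$ are controlled at each step.
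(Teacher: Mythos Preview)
Your argument for part (i) is correct and in fact more transparent than the paper's: the paper derives the second-order equation by abstract manipulation (multiplying by $M_B$, using the anticommutation relations of Lemma~\ref{lem:anticom}, and then applying $(-\mu^*+itd^*)B$), whereas you read both identities directly from the block form \eqref{eq:blockTA} of $T_A$. The two routes are equivalent.

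Part (ii), however, has a genuine gap at the step you yourself flag as ``the main technical obstacle''. Caccioppoli together with Poincar\'e--Sobolev does produce a reverse H\"older inequality of the form
\[
  \Bigl(\barint_{B}|t\nabla H_t^{1,0}|^2\Bigr)^{1/2}\lesssim
  \Bigl(\barint_{2B}|t\nabla H_t^{1,0}|^{2_*}\Bigr)^{1/2_*}+
  \Bigl(\barint_{2B}|f|^2\Bigr)^{1/2},
\]
and Gehring then self-improves the \emph{left} side to an exponent $q>2$. But the data term on the right stays at $L^2$ (indeed Gehring pushes it \emph{up}, to $L^q$). Consequently your dyadic annuli argument with the $L^2$ off-diagonal bounds of Proposition~\ref{pseudoloc} only yields $M(|f|^2)^{1/2}(x)$, and there is no mechanism in the purely local Caccioppoli/Gehring machinery to replace this by $M(|f|^p)^{1/p}(x)$ with $p<2$. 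That strict inequality $p<2$ is essential for the application (Proposition~\ref{prop:modnontang} and Corollary~\ref{cor:harmanalestforsymm} both need $M$ bounded on $L^{2/p}$, i.e.\ $2/p>1$).

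The paper obtains the sub-$L^2$ exponent by a global argument: after rescaling to $t=1$, the operator $L=[1\ \ i\divv]\,A'[1\ \ i\nabla]^t$ is an isomorphism $W^1_2\to W^{-1}_2$ by Lax--Milgram, and {\v S}ne{\u\i}berg's stability theorem then gives that $L:W^1_p\to W^{-1}_p$ remains an isomorphism for $|p-2|<\epsilon$. This yields global $L_{p_0}\to L_{p_0}$ bounds for $g\mapsto\nabla u$ and $L_{p_0}\to L_{q_0}$ bounds for $g\mapsto u$ (via Sobolev embedding) with $p_0<2<q_0$; interpolating these with the $L^2$ off-diagonal bounds produces $L^p\to L^p$ and $L^p\to L^q$ off-diagonal bounds with $p<2$, after which the dyadic annuli summation gives $M(|f|^p)^{1/p}$ directly. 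An alternative to {\v S}ne{\u\i}berg that would rescue your approach is to run Meyers/Gehring for the adjoint operator $L^*$ (same accretivity constants) to get $q>2$, and then dualise to obtain $L^p$ bounds with $p=q'<2$; but this duality step is precisely the global ingredient missing from your outline.
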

\begin{proof}
(i)
Multiplying the equation $(1+ it T_B)H_t=f$ with $M_B$
we get
$$
  \Big( \m(\mu +it d) + B^{-1} ( \m(-\mu^*+it d^*) ) B \Big)
  H_t = M_B f.
$$
Similar to the proof of Lemma~\ref{lem:LaplaceF_0^1} we can now use 
the anticommutation relations 
from Lemma~\ref{lem:anticom} to rewrite this equation as
$$
  \Big( -(\mu +it d)\m + B^{-1} ( -(-\mu^*+it d^*)\m ) B \Big)
  H_t = M_B f.
$$
since $\{\m,\mu+itd\}=I$, $\{\m,-\mu^*+itd^*\}=-I$
and $I-B^{-1}IB=0$.
Then apply $(-\mu^*+it d^*)B$ to obtain
$$
  (\mu^*-it d^*)B(\mu +it d)(\m H_t)= (-\mu^*+it d^*)(B N^+-N^-B)f,
$$
since $-\mu^*+it d^*$ is nilpotent.
Evaluating the scalar part of this equation, we get the
desired identity.

To find the identity for $H^{1,\ta}_t$, we use the expression for 
$T_A= \hat T_B|_{\hat\mH^1}$ from Definition~\ref{defn:Upsilon}.
Multiplying the equation $(I+it \hat T_A)H_t =f$ by $AN^+-N^-A$ yields
$$
  a_{\ta\ta}H^{1,\ta}_t- a_{00} H^{1,0}_t e_0-it (A\nabla H^{1,0}_t+ d^*\mu A H_t)
= a_{\ta\ta}f^{1,\ta}- a_{00} f^{1,0}_t e_0.
$$
Evaluating the tangential part of this equation gives the desired identity.

(ii)
By rescaling, we see from (i) that it suffices to show that
\begin{equation}  \label{eq:claimH_t}
  \left( \int_{B(x, r_0)} |u(y)|^q dy \right)^{1/q} +
  \left( \int_{B(x,r_0)} |\nabla u(y)|^p dy \right)^{1/p} \lesssim M(|g|^p)^{1/p}(x), 
\end{equation}
for all $x\in \R^n$ and all $u$ and $g$ satisfying an equation
$$
     \begin{bmatrix}
       1 & i \divv
     \end{bmatrix}
     \begin{bmatrix}
       a'_{\no\no} & a'_{\no\ta} \\ a'_{\ta\no} & a'_{\ta\ta}
     \end{bmatrix}    
     \begin{bmatrix}
       1 \\ i \nabla
     \end{bmatrix}
      u =
     \begin{bmatrix}
       1 & i \divv
     \end{bmatrix}
     \begin{bmatrix}
       a'_{\no\no} g^{1,0} \\ -a'_{\ta\ta} g^{1,\ta}
     \end{bmatrix},
$$
where $A'$ is a matrix with same norm and accretivity constant
as for $A$.
Indeed, by rescaling we see that $u(x)= H_t^{1,0}(tx)$,
$g(x)= f(tx)$ and $A'(x)= A(tx)$ satisfies this hypothesis.
To prove (\ref{eq:claimH_t}), we use that the maps $g\mapsto u$ and $g\mapsto \nabla u$
have $L_p(\R^n)\rightarrow L_q(\R^n)$ and $L_p(\R^n)\rightarrow L_p(\R^n)$ 
off-diagonal bounds respectively, with exponent $M$
for any $M>0$, i.e. there exists $C_M<\infty$ such that
\begin{equation}  \label{eq:LpLqoffdiag}
  \|u\|_{L_q(E)}+ \|\nabla u\|_{L_p(E)} \le C_M \brac{\dist (E,F)}^{-M}\|g\|_p
\end{equation}
whenever $E,F \subset \R^n$ and $\supp g\subset F$.
To see this, let 
$
  L:= \begin{bmatrix}
       1 & i \divv
     \end{bmatrix}
  A'
     \begin{bmatrix}
       1 & i \nabla
     \end{bmatrix}^t
$.
Note that $L: W^1_2(\R^n)\rightarrow W^{-1}_2(\R^n)$ is an isomorphism
and that $L: W^1_p(\R^n)\rightarrow W^{-1}_p(\R^n)$
is bounded. 
Then by the stability result of {\v{S}}ne{\u\i}berg~\cite{sneiberg},
it follows that there exists $\epsilon>0$ such that 
$L: W^1_p(\R^n)\rightarrow W^{-1}_p(\R^n)$ is an isomorphism when
$|p-2|<\epsilon$.
We then fix $p_0\in (2-\epsilon, 2)$ and use Sobolev's embedding theorem
to see that
$$
  \|u\|_{q_0} +\|\nabla u\|_{p_0}\lesssim \| u \|_{W^1_{p_0}} \lesssim \| Lu \|_{W^{-1}_{p_0}}
  \lesssim  \| g\|_{p_0}.
$$
By choosing $p_0$ close to $2$, we may assume that $q_0>2$.
Thus we have bounded maps 
$g\mapsto u: L_{p_0}(F)\rightarrow L_{q_0}(E)$ and
$g\mapsto \nabla u: L_{p_0}(F)\rightarrow L_{p_0}(E)$,
with norms $\le C$.
Also, by Proposition~\ref{pseudoloc}, the norms
of $g\mapsto u: L_2(F)\rightarrow L_2(E)$ and 
$g\mapsto \nabla u: L_2(F)\rightarrow L_2(E)$
are bounded by $\brac{\dist (E,F)}^{-M_0}$.
Interpolation now proves (\ref{eq:LpLqoffdiag}) for some
$p_0<p<2$, $2<q<q_0$ and $M= M_0(1- p_0/p)/(1-p_0/2)$.

Finally we show how (\ref{eq:LpLqoffdiag}) implies (\ref{eq:claimH_t}).
Let $E= F_0:= B(x, r_0)$ and for $k\ge 1$ let 
$F_k:= B(x, 2^k r_0)\setminus B(x, 2^{k-1}r_0)$.
This gives
\begin{multline*}
  \left( \int_{B(x,r_0)} |u(y)|^q dy \right)^{1/q}+
  \left( \int_{B(x,r_0)} |\nabla u(y)|^p dy \right)^{1/p}
  \lesssim \sum_{k=0}^\infty 2^{-Mk} 
  \left( \int_{ F_k} |g(y)|^p dy \right)^{1/p} \\
  \lesssim \sum_{k=0}^\infty 2^{(n/p-M)k} 
  \left( \barint_{\hspace{-6pt} B(x, 2^k r_0)} |g(y)|^p dy \right)^{1/p}
  \lesssim ( M(|g|^p)(x) )^{1/p},
\end{multline*}
provided we chose $M> n/p$.
\end{proof}

\begin{proof}[Proof of Proposition~\ref{prop:modnontang}]
  To prove that $\|\widetilde N_*(F)\|\gtrsim \|f\|$, we calculate
\begin{multline*}
  \|\widetilde N_*(F)\|^2\gtrsim \sup_{t>0} \int_{\R^n}
\barint_{\hspace{-6pt} |y-x|<c_1t}\barint_{\hspace{-6pt} |s-t|<c_0t} |F(s,y)|^2 \, dsdydx \\
= \sup_{t>0} \barint_{\hspace{-6pt} |s-t|<c_0t} \|F_s\|^2 ds \gtrsim 
\sup_{t>0} \|F_{(1+c_0)t}\|^2\approx \|f\|^2.
\end{multline*}
We have here used that $F_{(1+c_0)t}= e^{-((1+c_0)t-s)|T_A|}F_s$, which by
Proposition~\ref{prop:QtoFCalc} shows that $\|F_{(1+c_0)t}\|\lesssim \|F_s\|$
when $|s-t|<c_0t$.

  To prove $\|\widetilde N_*(F)\|\lesssim \|f\|$, we note that since $\curl_{t,x}F=0$, 
we can write $F= \nabla_{t,x} U$ for some scalar potential $U$, and we see that $U$ solves 
the second order equation (\ref{eq:divform}).
With notation $\wt D(t,x)=\sett{(s,y)}{|s-t|<\tilde c_0 t, \, |y-x|<\tilde c_1 t}$,
for constants $c_0<\tilde c_0<1$ and $c_1<\tilde c_1<\infty$, and
$\clos U:= \barint_{}\barint_{\hspace{-3pt} \wt D(t,x)} U(s,y) ds\, dy$,
we have
\begin{multline*}
  \left( \barint_{}\barint_{\hspace{-6pt} D(t,x)} |t\nabla U(s,y)|^2 ds\,dy \right)^{1/2}
\lesssim 
  \left( \barint_{}\barint_{\hspace{-6pt} \wt D(t,x)} |U(s,y)-\clos U|^2 ds\,dy \right)^{1/2}\\
\lesssim  
\left( \barint_{}\barint_{\hspace{-6pt} 
  \wt D(t,x)} |t\nabla U(s,y)|^p ds\,dy \right)^{1/p},
\end{multline*}
with $2(n+1)/(n+3)<p<2$.
The first estimate uses Caccioppoli's inequality and the second estimate uses Poincar\'e's
inequality.
Thus it suffices to bound the $L_2$ norm of 
$\sup_{t>0}\big( \barint_{}\barint_{\hspace{-3pt} 
  \wt D(t,x)} |F(s,y)|^p\, ds\,dy \big)^{1/p}$.
To this end, we write $F_s= H_s + \psi_s(T_A)f$, where $H_s:= (I+is T_A)^{-1}f$ and
$\psi(z) := e^{-|z|}-(1+iz)^{-1}$.
Using the quadratic estimates, the second term has estimates
\begin{multline*}
  \int_{\R^n} \sup_{t>0} \left( \barint_{}\barint_{\hspace{-6pt} 
  \wt D(t,x)} |\psi_s(T_A) f(y)|^p \,ds\,dy \right)^{2/p} dx \\
\lesssim   \int_{\R^n} \sup_{t>0}\left( \int_{|s-t|<\tilde c_0t} \int_{|y-x|<\tilde c_1t} 
   |\psi_s(T_A) f(y)|^2 \,\frac {dyds}{t^{n+1}}\right)dx \\
\lesssim \int_0^\infty  \int_{\R^n} \int_{|y-x|<\tilde c_1s/(1-\tilde c_0)} 
   |\psi_s(T_A) f(y)|^2 s^{-(n+1)} dydx \, ds \\
\lesssim \int_0^\infty \|\psi_s(T_A) f\|^2\frac {ds}s\lesssim \|f\|^2.
\end{multline*}
For the first term, we use Lemma~\ref{lem:H_t}(ii) and obtain
\begin{multline*}
\int_{\R^n} \sup_{t>0} \left( \barint_{}\barint_{\hspace{-6pt} 
  \wt D(t,x)} |H_s(y)|^p \,ds\,dy \right)^{2/p} dx \\
\lesssim 
\int_{\R^n} \sup_{t>0} \sup_{|s-t|<\tilde c_0t} 
\left( \barint_{\hspace{-6pt} B(x,\tilde c_1s/(1-\tilde c_0))} |H_s(y)|^p \,dy \right)^{2/p} dx \\
\lesssim 
  \| M(|f|^p) \|_{2/p}^{2/p} \lesssim \| |f|^p \|_{2/p}^{2/p}
  = \| f\|_2^2,
\end{multline*}
using the boundedness of the Hardy--Littlewood maximal function
on $L_{2/p}(\R^n)$.
This completes the proof.
\end{proof}

%
%
%
\section{Invertibility of unperturbed operators}  \label{section7}

In this section we prove Theorem~\ref{thm:transmain} and Theorem~\ref{thm:main} 
for the unperturbed problem, i.e. for $B_0^k$ and $A_0$ respectively.
We do this by verifying the hypothesis in Lemma~\ref{lem:optransmain} and
Lemma~\ref{lem:opmain},
i.e. we prove that $T_{B^k_0}$ satisfies quadratic estimates and 
$\lambda- E_{B^k_0}N_{B^k_0}$ is an isomorphism,
and that $T_{A_0}$ satisfies quadratic estimates and 
$I\pm E_{A_0}N_{A_0}$ are isomorphisms respectively.

\subsection{Block coefficients}  \label{section7.1}

In this section we assume that $B= B_0\in L_\infty(\R^n;\mL(\wedge))$ 
have properties as in Definition~\ref{defn:B}
with the extra property that it is a block matrix, i.e. 
$$
  B=
     \begin{bmatrix}
       B_{\no\no} &0 \\ 0 & B_{\ta\ta}
     \end{bmatrix}
$$
in the splitting $\mH= N^-\mH \oplus N^+\mH$.
Note that $B$ being of this form is equivalent with the commutation
relations $N^\pm B=B N^\pm$.
\begin{lem}  \label{lem:blockasswapping}
  Let $B$ be a block matrix as above.
Then 
$$
  T_B= \Gamma + B^{-1}\Gamma^*B,
$$ 
where $\Gamma= N\m d= -i N\ud$ is a nilpotent first order, homogeneous
partial differential operator with constant coefficients.
\end{lem}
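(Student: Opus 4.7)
The plan is to verify the identity directly by exploiting the simplifications that occur in $T_B$ when $B$ is block, and then to check nilpotence of $\Gamma$ by a short algebraic manipulation.

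First I would observe that the block condition $N^\pm B = B N^\pm$ is equivalent to $NB = BN$, and hence $B^{-1}N^-B = N^-$. Therefore
\[
  M_B = N^+ - B^{-1}N^-B = N^+ - N^- = N,
\]
so (since $N^2=I$) we have $M_B^{-1}=N$. Plugging this into Definition~\ref{defn:tb} and using once more that $N$ commutes with $B^{-1}$,
\[
  T_B = N(\m d + B^{-1}\m d^* B) = N\m d + B^{-1}(N\m d^*)B.
\]
Thus it remains to identify $N\m d$ with $\Gamma$ and $N\m d^*$ with $\Gamma^*$.

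The first identity is just the definition $\Gamma := N\m d = -iN\ud$, using $\ud = i\m d$. For the second, I would collect three elementary algebraic facts: (a) $\m$ and $N$ are self-adjoint, since $\m^*=(\mu+\mu^*)^*=\m$ and $N^*=(\mu^*\mu-\mu\mu^*)^*=N$; (b) $\m$ and $N$ anticommute, as a direct expansion using $\mu^2=(\mu^*)^2=0$ gives $N\m = \mu^*\mu\mu^*-\mu\mu^*\mu = -\m N$; (c) $N$ commutes with $d$ and $d^*$, because each exterior multiplier $e_j\wedg$ and interior multiplier $e_j\lctr$ for $j\geq 1$ preserves both the normal and tangential subspaces (the indices touching $e_0$ are untouched), and the coefficients are $t$-independent. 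Combined with Lemma~\ref{lem:anticom} and its analogue $\{d^*,\mu\}=\{d^*,\mu^*\}=0$ (which follows from the derivation identity in Lemma~\ref{lem:algebra} exactly as in the proof of Lemma~\ref{lem:anticom}), we get $\m d^* = -d^*\m$, and therefore
\[
  N\m d^* = -\m N d^* = -\m d^* N = d^*\m N = (N\m d)^* = \Gamma^*,
\]
using (a) in the last step. Substituting yields $T_B = \Gamma + B^{-1}\Gamma^* B$.

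Finally, to see that $\Gamma$ is nilpotent I would compute
\[
  \Gamma^2 = N\m d\, N\m d = N\m N d\,\m d = -N^2 \m d\,\m d = -\m d\,\m d,
\]
using that $N$ commutes with $d$ and anticommutes with $\m$. Then $\m d = -d\m$ gives $\m d\,\m d = -\m^2 d^2 = 0$, so $\Gamma^2=0$. The operator $\Gamma$ is manifestly first order, homogeneous, and with constant coefficients, since $N$ and $\m$ are constant multivector multipliers and $d$ is. There is no substantive obstacle here; the whole proof is a sequence of commutation identities, the only real point being the collapse $M_B=N$ that makes the block case algebraically clean.
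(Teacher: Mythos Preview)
Your proof is correct and follows the same route as the paper: both collapse $M_B$ to $N$ via the block condition, distribute $N$ through $B^{-1}$, and verify nilpotence by commuting $N$ past $d$ and anticommuting it past $\m$ so that $d^2=0$ kills $\Gamma^2$. The only difference is that you spell out explicitly why $N\m d^*=\Gamma^*$, which the paper leaves implicit.
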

\begin{proof}
Since $N^\pm B=B N^\pm$, it follows that 
$M_B=N^+-B^{-1}N^-B= N^+-N^-=N$ and
$$
  T_B= N(\m d +B^{-1}\m d^*B)= \Gamma + B^{-1}\Gamma^*B,
$$
since $N^2=I$ and $NB^{-1}= B^{-1}N$. 
The operator $\Gamma$ is nilpotent since
$\Gamma^2= N\m dN\m d= N\m Nd\m d=-N\m N\m d^2=0$.
\end{proof}
\begin{rem}  \label{rem:gammaintertwining}
Note that if $\Pi_B= \Gamma+ \Gamma^*_B$, where $\Gamma^*_B=B^{-1}\Gamma^*B$
and $\Gamma$ is nilpotent, is an operator of the form considered
in \cite{AKMc}, then $\Pi_B$
intertwines $\Gamma$ and
$\Gamma^*_B$ in the sense that $\Pi_B\Gamma u=\Gamma^*_B\Pi_B u$
for all $u\in \dom(\Gamma^*_B\Pi_B)$ and $\Pi_B\Gamma^*_B
u=\Gamma\Pi_B u$ for all $u\in \dom(\Gamma\Pi_B)$. Thus
$\Pi^2_B$ commutes with both $\Gamma$ and $\Gamma^*_B$ on
appropriate domains. 
In particular, if $P_t^B= (1+t^2\Pi_B^2)^{-1}$ and 
$Q_t^B= t\Pi_B(1+t^2\Pi_B^2)^{-1}$, then
we find that $\Gamma P^B_tu= P^B_t\Gamma u$, 
$\Gamma^*_B Q^B_tu= Q^B_t\Gamma u$ for all $u \in\dom(\Gamma)$ 
and  
$\Gamma^*_B P^B_tu= P^B_t\Gamma^*_B u$, $\Gamma Q^B_tu= Q^B_t\Gamma^*_B u$
for all $u \in\dom(\Gamma^*_B)$.
\end{rem}

\begin{thm}  \label{thm:unpertblockmain}
  Let $B$ be a block matrix as above. Then
\begin{itemize}
\item[{\rm (i)}]
$T_B$ satisfies quadratic estimates, and
\item[{\rm (ii)}]
$E_B N_B+ N_B E_B=0$ and $N_B= N$.
In particular $\lambda-E_BN_B$ is an isomorphism whenever
$\lambda\notin \{i,-i\}$ with
$$
  (\lambda-E_BN_B)^{-1}= \frac 1{\lambda^2+1}(\lambda- N_BE_B).
$$
\end{itemize}
\end{thm}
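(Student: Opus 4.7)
The plan has two essentially independent parts corresponding to (i) and (ii). Part (i) is a direct citation of the main result of \cite{AKMc}, while part (ii) is a purely algebraic consequence of the block structure together with the functional calculus now available from (i).

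For part (i), the key observation is Lemma~\ref{lem:blockasswapping}: when $B$ is a block matrix, $T_B = \Gamma + B^{-1}\Gamma^* B$ with $\Gamma = N\m d$ a nilpotent, homogeneous, constant-coefficient first order differential operator (with $\Gamma^2 = 0$ following from $\m d \m d = -\m^2 d^2 = 0$). Thus $T_B$ is exactly of the form $\Pi_B$ treated in \cite{AKMc}, and verifying the (standard, readily checked) hypotheses there on $\Gamma$ — nilpotence, $L_2$ off-diagonal bounds for the resolvents (already established in Proposition~\ref{pseudoloc}), and the Hodge-type coercivity — we invoke the quadratic estimate theorem from \cite{AKMc} directly. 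By Proposition~\ref{prop:QtoFCalc} this gives $T_B$ a bounded $H_\infty(S^o_\nu)$ functional calculus, so that in particular $E_B = \sgn(T_B)$ is a bounded operator.

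For part (ii), first I observe that when $B$ is block diagonal, $B^{-1}N^\pm B = N^\pm$, so the splitting $\mH = B^{-1}N^+\mH \oplus N^-\mH$ of Definition~\ref{defn:tangnormops} reduces to $\mH = N^+\mH \oplus N^-\mH$; hence $\hat N_B^\pm = N^\pm$ and $N_B = N$. Next I show that $N$ anticommutes with $T_B$. From the identities $\mu^2 = (\mu^*)^2 = 0$ and $\{\mu,\mu^*\}=I$ one computes $N\m + \m N = 0$; on the other hand $d$ and $d^*$ commute with $N$ (they involve only indices $j\ge 1$, hence preserve both $N^\pm \mH$), and $B$ commutes with $N$ by the block assumption. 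Writing $T_B = N\,\Pi_B$ with $\Pi_B = \m d + B^{-1}\m d^* B$, both summands of $\Pi_B$ anticommute with $N$, so $\Pi_B N = -N\Pi_B$, which yields
\begin{equation*}
  N T_B = N^2\,\Pi_B = \Pi_B, \qquad T_B N = N \Pi_B N = -\Pi_B,
\end{equation*}
and therefore $N T_B + T_B N = 0$, i.e.\ $N T_B N^{-1} = -T_B$.

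With the bounded functional calculus in hand from (i), the similarity relation $N T_B N^{-1} = -T_B$ passes to $H_\infty$ functions via the Dunford integral (or the convergence Lemma~\ref{lem:convergence} applied to a regularised sequence approximating $\sgn$): for every $b \in H_\infty(S^o_\nu)$, $N\,b(T_B)\,N^{-1} = b(-T_B)$. Applied to $b = \sgn$, which is odd, this gives $N E_B N^{-1} = -E_B$, i.e.\ $E_B N_B + N_B E_B = 0$. Finally, using $E_B^2 = N_B^2 = I$ and the anticommutation,
\begin{equation*}
  (E_B N_B)^2 = E_B N_B E_B N_B = -E_B^2 N_B^2 = -I,
\end{equation*}
so that $(\lambda - E_B N_B)(\lambda + E_B N_B) = \lambda^2 + 1$. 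For $\lambda^2 + 1 \ne 0$ this exhibits a two-sided inverse
\begin{equation*}
  (\lambda - E_B N_B)^{-1} = \tfrac{1}{\lambda^2+1}(\lambda + E_B N_B) = \tfrac{1}{\lambda^2+1}(\lambda - N_B E_B),
\end{equation*}
completing (ii). The only substantive step is the invocation of \cite{AKMc} in part (i); everything else is algebraic manipulation combined with the functional calculus it delivers.
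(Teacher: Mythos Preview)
Your proposal is correct and follows essentially the same route as the paper: part (i) invokes \cite{AKMc} via Lemma~\ref{lem:blockasswapping}, and part (ii) first reduces $N_B$ to $N$, then derives the anticommutation $NT_B=-T_BN$ from the block structure (using that $N$ anticommutes with $\m$ but commutes with $d$, $d^*$, $B$), transfers it to $E_B=\sgn(T_B)$ via the functional calculus, and reads off the inverse formula algebraically. The only cosmetic difference is that the paper verifies the inverse by expanding $(\lambda-N_BE_B)(\lambda-E_BN_B)$ directly, whereas you first note $(E_BN_B)^2=-I$; these are equivalent one-line computations.
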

\begin{proof}
For operators of the form $\Gamma+B^{-1}\Gamma^*B$, quadratic estimates
were proved in \cite{AKMc}, with essentially the same methods as we use here
in section~\ref{section8.1}.

To prove (ii), note that
since $B$ is a block matrix, it follows that $B^{-1} N^\pm\mH= N^\pm\mH$.
Thus the projections $N_B^\pm$ associated with the 
splitting $\mH= N^-\mH\oplus B^{-1}N^+\mH$ are $N^\pm$ and
the associated reflection operator is $N_B= N$.

To prove invertibility of $\lambda-E_BN_B$, we note that
$$
  T_BN_B= (\Gamma + B^{-1}\Gamma^*B)N= - N(\Gamma + B^{-1}\Gamma^*B)= -N_BT_B,
$$
since $N$ commutes with $d$, $d^*$ and $B$, and anticommutes with $\m$.
Thus
$$
  E_BN_B =\sgn(T_B)N_B= N_B\sgn(N_BT_BN_B)= N_B\sgn(-T_B)= -N_BE_B,
$$
since $\sgn(z)$ is odd.
Using this anticommutation formula we obtain
$$
  (\lambda-N_BE_B)(\lambda-E_BN_B)= \lambda^2+1-\lambda(E_BN_B+N_BE_B)= \lambda^2+1,
$$
and similarly 
$(\lambda-E_BN_B)(\lambda-N_BE_B)= \lambda^2+1$,
from which the stated formula for the inverse follows.
\end{proof}

\subsection{Constant coefficients}

We here collect results in the case when $B(x)=B\in\mL(\wedge)$ is
a constant accretive matrix.
In this case we make use of the Fourier transform
$$
  \mF u(\xi) = \hat u(\xi):= \frac 1{2\pi i}\int_{\R^n} 
  u(x) e^{-i (x, \xi)} dx,
$$
acting componentwise.
If we let 
\begin{align*}
  \mu_\xi f(\xi) & := \xi \wedg f(\xi), \\
  \mu^*_\xi f(\xi) & := \xi \lctr f(\xi),
\end{align*}
then $T_B$, conjugated with $\mF$, is the multiplication
operator
$$
  M_\xi f(\xi) := M_B^{-1}(i\m\mu_\xi -iB^{-1}\m\mu_\xi^*B)f(\xi),
  \qquad \xi\in\R^n. 
$$
\begin{lem}
For all $t\in\R$ and $\xi\in\R^n$ we have
$$
  |(it+M_\xi)^{-1}| \approx (t^2+|\xi|^2)^{-1/2}.
$$
\end{lem}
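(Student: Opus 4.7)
The lemma is a pointwise matrix estimate on the finite-dimensional exterior algebra $\wedge$, which I would prove by establishing the upper and lower bounds separately. The lower bound is easy: since $\mu_\xi$ and $\mu_\xi^*$ depend linearly on $\xi$ and $M_B^{-1}$ is bounded by a constant depending only on $\|B\|_\infty$ and $\kappa_B$, one has $|M_\xi| \leq C|\xi|$; the triangle inequality then yields $|it+M_\xi| \leq |t|+C|\xi| \leq C'(t^2+|\xi|^2)^{1/2}$, and the elementary inequality $|A^{-1}| \geq |A|^{-1}$ (valid in any normed algebra) produces the lower bound $|(it+M_\xi)^{-1}| \geq c(t^2+|\xi|^2)^{-1/2}$.

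For the upper bound, I would establish two separate estimates, namely $|(it+M_\xi)^{-1}| \leq C_1/|t|$ for $t \neq 0$ and $|M_\xi^{-1}| \leq C_2/|\xi|$ for $\xi \neq 0$, and combine them via a case split on whether $|t| \geq |\xi|/(2C_2)$: in the complementary regime the Neumann series $(it+M_\xi)^{-1} = \sum_{k\geq 0}(-it)^k M_\xi^{-k-1}$ converges and is bounded by $2|M_\xi^{-1}| \leq 2C_2/|\xi|$, so in each regime the stronger of the two bounds dominates $(t^2+|\xi|^2)^{-1/2}$ up to a multiplicative constant. The first estimate is immediate from Proposition~\ref{prop:spectrest}: since $T_B$ acts as the Fourier multiplier with symbol $M_\xi$, the operator-norm resolvent bound $\|(\lambda-T_B)^{-1}\|_{\mH \to \mH} \leq C/|\lambda|$ for $\lambda \notin S_\nu$ transfers into the pointwise bound $|(\lambda-M_\xi)^{-1}| \leq C/|\lambda|$ uniformly in $\xi$, specialised to $\lambda = -it$.

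The second estimate, $|M_\xi^{-1}| \leq C/|\xi|$, is the heart of the argument, and I would obtain it by applying Lemma~\ref{lem:hodge}(ii) pointwise (with the finite-dimensional Hilbert space $\wedge$ in place of $\mH$) to the nilpotent operators $\Gamma := \m\mu_\xi$ and $\widetilde\Gamma := -\m\mu_\xi^*$. Nilpotency uses $\mu_\xi^2 = (\mu_\xi^*)^2 = 0$ together with the anticommutation $\{\m,\mu_\xi\} = \{\m,\mu_\xi^*\} = 0$, which holds for tangential $\xi$ by the derivation rule of Lemma~\ref{lem:algebra} (since $e_0 \cdot \xi = 0$). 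Exactness and the transversality of both $(\Gamma,\widetilde\Gamma)$ and $(\Gamma^*,\widetilde\Gamma^*)$ follow from the identifications $\ran(\Gamma) = \nul(\Gamma) = \xi \wedg \wedge$ and $\ran(\widetilde\Gamma) = \nul(\widetilde\Gamma) = \xi \lctr \wedge$, combined with the orthogonality $(\xi \wedg f, \xi \lctr g) = (f,(\xi \wedg \xi) \lctr g) = 0$; in particular both transversality constants vanish. The self-adjoint operator $\Pi := \Gamma + \widetilde\Gamma = \m(\mu_\xi - \mu_\xi^*)$ satisfies $\Pi^2 = \{\mu_\xi,\mu_\xi^*\} I = |\xi|^2 I$, so $|\Pi f| = |\xi||f|$, and Lemma~\ref{lem:hodge}(ii) then delivers $|(\Gamma + B^{-1}\widetilde\Gamma B)f| \gtrsim |\xi||f|$; via the identity $M_\xi = iM_B^{-1}(\Gamma + B^{-1}\widetilde\Gamma B)$ and the boundedness of $M_B$, this produces $|M_\xi f| \gtrsim |\xi||f|$, as required.

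The main obstacle is the Hodge step above: one must recognise the correct pair $(\Gamma,\widetilde\Gamma)$ and verify its hypotheses in the pointwise, finite-dimensional setting. The algebraic miracle that makes everything go through is the orthogonality of $\xi \wedg \wedge$ and $\xi \lctr \wedge$, which forces both transversality constants to zero and makes the smallness hypothesis $\max(c(\Gamma,\widetilde\Gamma),c(\Gamma^*,\widetilde\Gamma^*)) < \kappa_B/\|B\|_\infty$ of Lemma~\ref{lem:hodge} automatic, irrespective of how accretive $B$ actually is.
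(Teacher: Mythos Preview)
Your proof is correct but takes a different route from the paper. The paper packages $t$ and $\xi$ together into a single nilpotent $\Gamma := i(t\mu+\mu_\xi)$ on $\wedge$, verifies that $\|(\Gamma+\Gamma^*)g\|^2=(t^2+|\xi|^2)\|g\|^2$ via the anticommutation relations, and then obtains the full bound $|(it+M_\xi)f|\gtrsim (t^2+|\xi|^2)^{1/2}|f|$ from one application of Lemma~\ref{lem:hodge}(ii) (with $B_1=\wt B=\m B\m$ and $B_2=B$). You instead decouple: the $1/|t|$ bound is borrowed from the already-proved Proposition~\ref{prop:spectrest}, the $1/|\xi|$ bound comes from Lemma~\ref{lem:hodge}(ii) applied to the purely spatial pair $\Gamma=\m\mu_\xi$, $\widetilde\Gamma=-\m\mu_\xi^*$, and a Neumann-series case split glues them. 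Both arguments rest on the same Hodge lemma; the paper's is shorter and treats $(t,\xi)$ symmetrically, while yours has the virtue of recycling the resolvent bound and keeping the Hodge step simpler (you can take $B_1=B_2=B$ and your transversality constants are exactly zero, so no accretivity margin is needed there).
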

\begin{proof}
Let $u= (it+M_\xi) f$. 
It suffices to prove that $\|u\|^2\gtrsim (t^2+|\xi|^2)\|f\|^2$.
With $\wt B:= \m B \m$, it follows from the definition of $M_\xi$
that
$$
  \m M_B u= (\Gamma + \wt B^{-1} \Gamma^* B) f,
$$
where $\Gamma= i(t\mu+\mu_\xi)$.
Using Lemma~\ref{lem:anticom} we get
$$
 \|(\Gamma+\Gamma^*)g\|^2= \|\Gamma g\|^2+\|\Gamma^* g\|^2=
 ((\Gamma^*\Gamma+\Gamma^*\Gamma)g,g)= (t^2+|\xi|^2)\|g\|^2.
$$
Therefore our estimate follows from 
Lemma~\ref{lem:hodge}(ii).
\end{proof}
\begin{prop}  \label{prop:constantquadraticests}
If $B(x)=B\in\mL(\wedge)$ is a constant, accretive matrix,
then $T_B$ satisfies quadratic estimates.
\end{prop}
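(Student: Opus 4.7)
The plan is to use the Fourier transform to reduce the quadratic estimate to a uniform matrix estimate on the finite-dimensional fiber $\wedge$, and then to exploit scaling to pass to a compactness argument on the unit sphere. Since $B$ is constant, $T_B$ and every operator in its functional calculus is a Fourier multiplier; in particular $Q_t^B$ corresponds to multiplication by $\phi_t(\xi):=tM_\xi(I+t^2M_\xi^2)^{-1}$, and by Plancherel
\[
  \int_0^\infty \|Q_t^B f\|^2\, \frac{dt}{t}
    = \int_{\R^n} \bigl(K(\xi)\hat f(\xi),\hat f(\xi)\bigr)\, d\xi,
  \qquad K(\xi) := \int_0^\infty \phi_t(\xi)^* \phi_t(\xi)\, \frac{dt}{t}.
\]
The quadratic estimate for $T_B$ is thus equivalent to the uniform matrix inequality $cI \le K(\xi) \le CI$ on $\wedge$ for $\xi \ne 0$.

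Next I would exploit the homogeneity $\mu_{s\xi}=s\mu_\xi$ and $\mu^*_{s\xi}=s\mu^*_\xi$, which gives $M_{s\xi}=sM_\xi$ for $s>0$; the substitution $u=t|\xi|$ in the definition of $K$ shows that $K$ is $0$-homogeneous, so $K(\xi)=K(\omega)$ with $\omega:=\xi/|\xi|\in S^{n-1}$. It therefore suffices to bound $K(\omega)$ uniformly as $\omega$ ranges over the compact sphere.

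For each fixed $\omega$, the preceding lemma applied at $t=0$ gives $\|M_\omega^{-1}\|\le C$, and combined with the trivial bound $\|M_\omega\|\le C$ this confines $\sigma(M_\omega)$ to a compact subset of the open double sector $S^o_\nu$ that is bounded away from both $0$ and $\infty$, uniformly in $\omega$. Hence $|\phi_t(\omega)v|\lesssim t\,\|M_\omega v\|$ as $t\to 0$ and $|\phi_t(\omega)v|\lesssim t^{-1}\|M_\omega^{-1}v\|$ as $t\to\infty$, so $K(\omega)$ is a well-defined positive semi-definite matrix depending continuously on $\omega$ (by dominated convergence applied to the Dunford integral representation of $\phi_t(\omega)$, with integrable majorant produced by the uniform resolvent bound of the preceding lemma). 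It is in fact strictly positive definite: if $K(\omega)v=0$ then $\phi_t(\omega)v\equiv 0$, and the expansion $\phi_t(\omega)=t^{-1}M_\omega^{-1}+O(t^{-3})$ as $t\to\infty$ then forces $v=0$ by invertibility of $M_\omega$. A continuous, strictly positive-definite matrix-valued function on a compact set admits uniform two-sided bounds, so $cI \le K(\omega) \le CI$, completing the proof.

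The main technical point is the uniform positive-definiteness of $K(\omega)$ across the sphere; the scaling reduction to $S^{n-1}$ is essential, because it is only there that the crucial invertibility $\|M_\omega^{-1}\|\lesssim 1$ can be extracted from the preceding lemma, and this uniform invertibility is precisely what guarantees the uniform lower bound on $K$.
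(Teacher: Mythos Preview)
Your argument is correct in outline but considerably more roundabout than the paper's. The paper bypasses the compactness argument entirely: from the preceding lemma one reads off the uniform pointwise bound
\[
  |\phi_t(\xi)| = |tM_\xi(I+t^2M_\xi^2)^{-1}|
  \le t|M_\xi|\,|(i-M_{t\xi})^{-1}|\,|(i+M_{t\xi})^{-1}|
  \lesssim \frac{t|\xi|}{1+t^2|\xi|^2},
\]
after which Plancherel, Fubini and the substitution $s=t|\xi|$ give the upper quadratic estimate in two lines. Your scaling reduction to $S^{n-1}$ followed by a continuity--compactness argument recovers the same upper bound with substantially more machinery. On the other hand, your positive-definiteness argument for $K(\omega)$ does yield the \emph{lower} quadratic bound directly, whereas the paper only proves the upper bound explicitly and leaves the lower one to duality (apply the upper bound to the constant accretive matrix $B^*$ and invoke Lemma~\ref{lem:dualityforquadratic}).

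One imprecision worth flagging: the bounds $\|M_\omega\|,\|M_\omega^{-1}\|\le C$ by themselves do \emph{not} confine $\sigma(M_\omega)$ to any open double sector --- a rotation matrix with spectrum $\{\pm i\}$ satisfies both bounds. To get the sector containment you must also invoke the bisectoriality of $T_B$ from Proposition~\ref{prop:spectrest} (or equivalently the full imaginary-axis resolvent bound from the preceding lemma, not just its value at $t=0$). With that addition the claim is correct. In fact you do not need the sector or the Dunford contour at all: the lemma gives $|(I\pm itM_\omega)^{-1}|\lesssim(1+t^2)^{-1/2}$ directly, hence $|\phi_t(\omega)|\lesssim t/(1+t^2)$ uniformly in $\omega$, and the continuity of $\omega\mapsto K(\omega)$ then follows by dominated convergence from the manifest continuity of $\omega\mapsto M_\omega$.
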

\begin{proof}
Using the lemma, we obtain the estimate
$$
  \left| \frac{tM_\xi}{1+t^2M_\xi^2} \right|
  \le t|M_\xi|\, |(i-M_{t\xi})^{-1}|\, |(i+M_{t\xi})^{-1}|
  \lesssim\frac{t|\xi|}{1+t^2|\xi|^2}.
$$
Thus using Plancherel's formula we obtain
\begin{multline*}
\int_0^\infty  \left\|\frac{tT_B}{1+t^2T_B^2}u\right\|^2 \frac{dt}t
\approx \int_0^\infty  \left\|\frac{tM_\xi}{1+t^2M_\xi^2} \hat u\right\|^2 
\frac{dt}t \\
\lesssim \int_{\R^n}\left( \int_0^\infty \left( \frac{t|\xi|}{1+t^2|\xi|^2} \right)^2 
\frac{dt}t \right) |\hat u(\xi)|^2 d\xi
\approx \|u\|^2,
\end{multline*}
where the last step follows from a change of variables $s=t|\xi|$.
\end{proof}
Next we prove that the Neumann and regularity problems are well posed
in the case of a complex, constant, accretive matrix
$$
  A = \begin{bmatrix}
       a_{00} & a_{0\ta} \\ a_{\ta 0} & a_{\ta\ta}
     \end{bmatrix} .
$$
For this it suffices to consider 
$B= I\oplus A \oplus I\oplus I\oplus \ldots\oplus I$
and the action of $T_A= T_B|_{\hat\mH^1}$ on the invariant subspace
$\hat\mH^1$.
Recall that $f\in\hat\mH^1$ means that $f$ is a vector field 
$f:\R^n\rightarrow \wedge^1= \C^{n+1}$ such that $d f_\ta=0$.
On the Fourier transform side $f\in\hat\mH^1$ 
is seen to correspond to a vector field 
$\hat f: \R^n\rightarrow\wedge^1$ such that
$\xi\wedg \hat f_\ta=0$, i.e.
$\hat f$ is such that its tangential part $f_\ta$ is a radial vector field.
Thus the space
$$
  \mF(\hat\mH^1) = \sett{\hat f\in L_2(\R^n;\C^{n+1})}
  {\xi\wedg \hat f_\ta=0}
$$
can be identified with $L_2(\R^n;\C^2)$ if we use 
$\{e_0, \xi/|\xi|\}$ as basis for the two dimensional
space $\hat\mH^1_\xi$ to which $\hat f(\xi)$ belongs. 
Furthermore, the operator $T_A$ 
is conjugated to the multiplication operator
$$
  M_\xi= M_A^{-1}(i\mu^*\mu_\xi -i A^{-1}\mu\mu^*_\xi A)
  :\mF(\hat\mH^1)\longrightarrow\mF(\hat\mH^1),
$$
under the Fourier transform, where $M_A:= N^+-A^{-1}N^-A$.
We see from the expression (\ref{eq:blockTA}) for $T_A$ that,
at a fixed point $\xi\in S^{n-1}$, the matrix for $M_\xi$ 
in the basis $\{e_0,\xi\}$ is
$$  
  \begin{bmatrix}
    \tfrac i{a_{00}} (a_{0\ta}+ a_{\ta 0}, \xi)
    & \tfrac i{a_{00}} (a_{\ta\ta}\xi, \xi) \\
    -i & 0
  \end{bmatrix}.
$$
\begin{thm}  \label{thm:unpertconstrellich}
  Let $A(x)=A\in\mL(\wedge)$ be a constant, complex, accretive matrix.
Then 
$$
  I\pm E_A N_A :\hat\mH^1 \longrightarrow \hat\mH^1
$$
are isomorphisms. 
In particular, the Neumann and regularity problems 
(Neu-$A$) and (Reg-$A$) are well posed.
\end{thm}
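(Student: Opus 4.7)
The plan is to use the Fourier transform to turn the invertibility of $I \pm E_A N_A$ on $\hat\mH^1$ into a uniform-in-$\xi$ $2\times 2$ matrix problem, then close with an explicit computation combined with accretivity.

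First, I would execute the reduction already set up just before the theorem. Under the Fourier transform $\hat\mH^1 \cong L_2(\R^n;\C^2)$ via the pointwise basis $\{e_0, \xi/|\xi|\}$, the operator $T_A$ becomes multiplication by the $2\times 2$ symbol $M_\xi$. The same diagonalisation carries the splitting $\hat\mH^1 = N^+_A\hat\mH^1\oplus N^-_A\hat\mH^1 = A^{-1}N^+\hat\mH^1\oplus N^-\hat\mH^1$ fibrewise to the splitting of $\C^2$ along the line $\{a_{00}f_0 + (a_{0\ta},\xi/|\xi|)g = 0\}$ and the $e_0$-line, so $N_A$ becomes multiplication by
$$N_\xi = \begin{pmatrix} -1 & -2(a_{0\ta},\xi/|\xi|)/a_{00} \\ 0 & 1 \end{pmatrix},$$
and $E_A$ becomes multiplication by $E_\xi := \sgn(M_\xi)$. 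Both $E_\xi$ and $N_\xi$ are reflections on $\C^2$, hence $\det(E_\xi N_\xi) = 1$ and $\det(I\mp E_\xi N_\xi) = 2 \mp \tr(E_\xi N_\xi)$. So it suffices to establish uniform upper bounds on $\|E_\xi N_\xi\|$ and uniform lower bounds on $|2\mp\tr(E_\xi N_\xi)|$.

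Next, I would compute $E_\xi$ explicitly from its eigenvectors $v_\pm(\xi) = \lambda_\pm e_0 - i\xi/|\xi|$ at the eigenvalues $\lambda_\pm$ of $M_\xi$, obtaining
$$E_\xi = \frac{1}{\lambda_+-\lambda_-}\begin{pmatrix} \lambda_++\lambda_- & -2i\lambda_+\lambda_- \\ -2i & -(\lambda_++\lambda_-) \end{pmatrix}, \qquad \tr(E_\xi N_\xi) = \frac{2i\bigl((a_{0\ta}-a_{\ta 0}),\xi/|\xi|\bigr)}{a_{00}(\lambda_+-\lambda_-)},$$
and, using the Vi\`ete relations $\lambda_++\lambda_- = i((a_{0\ta}+a_{\ta 0}),\xi/|\xi|)/a_{00}$ and $\lambda_+\lambda_- = -(a_{\ta\ta}\xi,\xi)/(a_{00}|\xi|^2)$, the key algebraic identity
$$(2-\tr(E_\xi N_\xi))(2+\tr(E_\xi N_\xi)) = \frac{16\,\det \tilde Q(\xi/|\xi|)}{a_{00}^2(\lambda_+-\lambda_-)^2}, \qquad \tilde Q(\eta) := \begin{pmatrix} a_{00} & (a_{0\ta},\eta) \\ (a_{\ta 0},\eta) & (a_{\ta\ta}\eta,\eta) \end{pmatrix}.$$
This identity is the main computational step; it replaces the anticommutation shortcut $EN+NE=0$ available for $A = I$ in Example~\ref{ex:r2plus}, which fails in general.

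The final step is to use accretivity. Testing the form $u\mapsto (Au, u)$ on $u = u_0 e_0 + \alpha\eta$, $\eta \in S^{n-1}$, shows that $\tilde Q(\eta)$ is a uniformly accretive $2\times 2$ matrix with constant $\kappa_A$, so $|\det \tilde Q(\eta)| \ge \kappa_A^2$. Bisectoriality of $T_A$ (Proposition~\ref{prop:spectrest}) with $\omega<\pi/2$ keeps $\lambda_\pm(\xi) \in S_\omega\setminus\{0\}$; since the eigenvalue set depends continuously on $(\xi, A)$ along the accretive convex path $A_s = (1-s)I + sA$ without crossing the imaginary axis, and since $\lambda_\pm=\pm 1$ at $A_0 = I$, one eigenvalue remains in each open half-plane throughout, from which $|\lambda_+-\lambda_-| \ge 2\cos(\omega)\sqrt{|\lambda_+\lambda_-|} \ge 2\cos(\omega)\sqrt{\kappa_A/\|A\|_\infty}$ uniformly, while the upper bound $|\lambda_+ -\lambda_-| \le 2\|M_\xi\| \lesssim 1$ is trivial. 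Consequently the right hand side of the key identity is bounded above and below in modulus, so $|(2-\tr)(2+\tr)| \gtrsim 1$; combined with the uniform upper bounds on $|2\pm\tr(E_\xi N_\xi)|$ coming from uniform boundedness of $E_\xi$ and $N_\xi$ (the former via the quadratic estimates of Proposition~\ref{prop:constantquadraticests} and bounded $H_\infty$ calculus), both factors $|2\mp\tr(E_\xi N_\xi)|$ are uniformly bounded below. Hence $I\mp E_\xi N_\xi$ is uniformly invertible in $\xi$, so $I\mp E_A N_A$ is an isomorphism on $\hat\mH^1$, and Lemma~\ref{lem:opmain} yields well-posedness of (Neu-$A$) and (Reg-$A$).
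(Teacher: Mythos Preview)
Your proof is correct and shares the paper's overall strategy: pass to the Fourier side, reduce to a $2\times 2$ problem on each fibre $\hat\mH^1_\xi$, and close using accretivity of the restricted form $\tilde Q(\eta)$. The executions differ, however. The paper invokes Remark~\ref{rem:rotasprojs} and argues geometrically: it shows that neither the normal direction $[1,0]^t$ nor the $A$-tangential direction $[(a_{0\ta},\xi),-a_{00}]^t$ is an eigenvector of $M_\xi$, the key computation being the cross product $(a_{\ta 0},\xi)(a_{0\ta},\xi)-a_{00}(a_{\ta\ta}\xi,\xi)=-\det\tilde Q(\xi)\ne 0$. You instead compute $E_\xi$ and $N_\xi$ explicitly and reduce to the determinant identity $\det(I\mp E_\xi N_\xi)=2\mp\tr(E_\xi N_\xi)$, arriving at the same quantity $\det\tilde Q$ through your product formula. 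Your route is longer but more quantitative: you make the uniform-in-$\xi$ bounds explicit, and your continuity argument along $A_s=(1-s)I+sA$ cleanly justifies that one eigenvalue of $M_\xi$ lies in each half-plane (a point the paper leaves implicit when it asserts the spaces $\chi_\pm(M_\xi)\hat\mH^1_\xi$ are one-dimensional). One small remark: the exclusion of $0$ from $\sigma(M_\xi)$ that you need for the continuity argument is not quite a consequence of bisectoriality alone, but follows directly from $\det M_\xi=-(a_{\ta\ta}\xi,\xi)/a_{00}\ne 0$, which is the same accretivity input you use elsewhere.
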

\begin{proof}
From Remark~\ref{rem:rotasprojs} we see that it suffices to 
prove that all four restricted projections
$$
  N^\pm_A : E_A^\pm\hat\mH^1 \longrightarrow N_A^\pm\hat\mH^1
$$
are isomorphisms,
or equivalently that the constant multiplication operators
$N^\pm_A$ are isomorphisms on $\mF(\hat\mH^1)$.
For the projections $\chi_\pm(M_\xi)$ conjugated to the
Hardy projection operators $E_A^\pm$,
we observe that $M_{t\xi} =tM_\xi$ and hence 
$\chi_\pm(tM_\xi)= \chi_\pm(M_\xi)$ for all $t>0$.
Thus it suffices to verify that
$$
 N^\pm_A : \chi_\pm(M_\xi) \hat\mH^1_\xi \longrightarrow N^\pm_A\hat\mH^1_\xi
$$
are isomorphisms for each $\xi\in S^{n-1}$.
For such fixed $\xi$, using the basis $\{e_0,\xi\}$ from above,
$N_A^-\hat\mH^1_\xi=\sett{\hat f\in\hat\mH^1_\xi}{e_0\wedg \hat f=0}$ is spanned
by $\begin{bmatrix} 1 & 0\end{bmatrix}^t$
and $N_A^+\hat\mH^1_\xi=\sett{\hat f\in\hat\mH^1_\xi}{( A \hat f, e_0)=0}$ is spanned
by $\begin{bmatrix} (a_{0\ta},\xi) & -a_{00} \end{bmatrix}^t$.
Indeed $(A(ze_0+w\xi),e_0)= za_{00}+w(a_{0\ta},\xi)$.

If we call $e_\xi^+$ and $e_\xi^-$ the two eigenvectors
of $M_\xi$, it follows that $\chi_\pm(M_\xi) \hat\mH^1_\xi$ are spanned by these, as these subspaces are one dimensional.
It suffices to show that $\begin{bmatrix} 1 & 0\end{bmatrix}^t$ and
$\begin{bmatrix} (a_{0\ta},\xi) & -a_{00} \end{bmatrix}^t$
are not eigenvectors of $M_\xi$.
We have
\begin{align*}
  M_\xi
  \begin{bmatrix} 1 \\ 0\end{bmatrix} 
  &=i \begin{bmatrix} \tfrac 1{a_{00}} (a_{0\ta}+ a_{\ta 0}, \xi) \\
        -1   \end{bmatrix}, \\
  M_\xi
  \begin{bmatrix} (a_{0\ta}, \xi) \\ -a_{00} \end{bmatrix} 
  &=i
  \begin{bmatrix}
   \tfrac 1{a_{00}} (a_{0\ta}+ a_{\ta 0}, \xi)(a_{0\ta}, \xi)
   -(a_{\ta\ta}\xi,\xi) \\
   -(a_{0\ta},\xi)
   \end{bmatrix}.
\end{align*}
Clearly the normal vector is not an eigenvector.
To prove that the second is not an eigenvector, note that the cross product
is
$$
  -(a_{0\ta}, \xi)^2 +(a_{0\ta}+ a_{\ta 0},\xi)(a_{0\ta}, \xi)
  -a_{00} (a_{\ta\ta}\xi,\xi) 
   = (a_{\ta 0}, \xi)(a_{0\ta}, \xi)- a_{00}(a_{\ta\ta}\xi,\xi).
$$
The right hand side is non-zero since 
$$
\left( 
\begin{bmatrix}
  a_{00} & a_{0\ta} \\
  a_{\ta 0} & a_{\ta\ta}
\end{bmatrix}
\begin{bmatrix}
   z \\ w\xi
\end{bmatrix}
,
\begin{bmatrix}
   z \\ w\xi
\end{bmatrix}
\right)
=
\left( 
\begin{bmatrix}
  a_{00} & (a_{0\ta},\xi) \\
  (a_{\ta 0},\xi) & (a_{\ta\ta}\xi,\xi)
\end{bmatrix}
\begin{bmatrix}
   z \\ w
\end{bmatrix}
,
\begin{bmatrix}
   z \\ w
\end{bmatrix}
\right)
$$
is a non-degenerate quadratic form as $A$ is accretive.
\end{proof}
\begin{rem}  \label{rem:Nforconst}
  We note that the method above also can be used to show that
$I\pm E_A N :\hat\mH^1 \rightarrow \hat\mH^1$, with the unperturbed 
operator $N$, are isomorphisms when $A$ is constant.
Here we also need to observe that the tangential vector 
$\begin{bmatrix} 0 & 1\end{bmatrix}^t$ is not an eigenvector to 
$M_\xi$.
\end{rem}
\subsection{Real symmetric coefficients}  \label{section7.3}

In this section, we assume that $B^*=B$.
We first prove a Rellich type estimate.
\begin{prop}  \label{prop:rellich}
Assume that $B^*=B$ and that $f\in E_B^+\V_B$ or $f\in E_B^-\V_B$.
Then
$$
  (Bf, f)= 2\re\big(e_0\lctr (Bf),e_0\lctr f\big)
         = 2\re\big(e_0\wedg (Bf),e_0\wedg f\big).
$$
In particular
$\|f\| \approx \| \hat N^-_B f \| \approx \| \hut N^-_B f \| \approx
\| \hut N^+_B f \| \approx \| \hat N^+_B f \|$.
\end{prop}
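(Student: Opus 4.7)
The plan is to derive both identities from an integration by parts in $\R^{n+1}_+$ applied to the Cauchy extension $F(t,x)=e^{\mp t|T_B|}f$ supplied by Lemma~\ref{lem:algsplitting}, and then obtain the norm bounds from accretivity. The starting point is the elementary formula
\begin{equation*}
\int_{\R^{n+1}_+}\bigl[(d^*_{t,x}u,v)-(u,d_{t,x}v)\bigr]\,dt\,dx = \int_{\R^n}(e_0\lctr u,v)\big|_{t=0}\,dx,
\end{equation*}
which follows from $(e_j\wedg)^*=e_j\lctr$ combined with a one-variable IBP in $t$.

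Because $F$ solves only the coupled Dirac-type equation, not the separated pair, I would decompose $f=f_1+f_2$ through the splitting $\V_B=\hat\V_B\oplus\hut\V_B$ of Remark~\ref{rem:huthatV_B}, with corresponding $F=F^{(1)}+F^{(2)}$. By that remark, $F^{(1)}$ satisfies $d_{t,x}F^{(1)}=0=d^*_{t,x}(BF^{(1)})$ and $F^{(2)}$ satisfies $d_{t,x}(mF^{(2)})=0=d^*_{t,x}(mBF^{(2)})$. Combined with the anticommutations $\{m,d_{t,x}\}=\partial_t$ and $\{m,d^*_{t,x}\}=-\partial_t$ from Lemma~\ref{lem:anticom}, this yields $d_{t,x}(mF)=\partial_t F^{(1)}$ and $d^*_{t,x}(BF)=-mB\partial_t F^{(2)}$. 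Taking $u=BF$ and $v=mF$ in the IBP formula, the integrand becomes $-(B\partial_t F^{(2)},F)-(BF,\partial_t F^{(1)})$ after using $m^2=I$ and $m^*=m$; taking real parts and invoking $B^*=B$, this telescopes to $-\tfrac12\partial_t(BF,F)$, whose $t$-integral equals $\tfrac12(Bf,f)$ since $F_t\to 0$ at infinity. On the boundary side, $(e_0\lctr Bf,mf)=(e_0\lctr Bf,e_0\wedg f)+(e_0\lctr Bf,e_0\lctr f)$, and the first summand vanishes by $e_0\wedg e_0=0$. Since $(Bf,f)\in\R$ by $B^*=B$, this gives the first claim $(Bf,f)=2\re(e_0\lctr Bf,e_0\lctr f)$. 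A parallel computation with $u=mBF$ and $v=F$ (where now $d^*_{t,x}(mBF)=-B\partial_t F^{(1)}$ and $d_{t,x}F=m\partial_t F^{(2)}$) produces the companion identity $(Bf,f)=2\re(e_0\wedg Bf,e_0\wedg f)$.

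For the norm equivalences, I would start from $\kappa_B\|f\|^2\le(Bf,f)=2\re(N^-Bf,f)$, using $(e_0\lctr Bf,e_0\lctr f)=(N^-Bf,f)$. The key algebraic observation is that $f-\hat N_B^-f=\hat N_B^+f\in B^{-1}N^+\mH$ implies $B(f-\hat N_B^-f)\in N^+\mH$, and hence $N^-Bf=N^-B\hat N_B^-f$. Cauchy--Schwarz then gives $\kappa_B\|f\|^2\le 2\|B\|_\infty\|\hat N_B^-f\|\|f\|$, so $\|f\|\lesssim\|\hat N_B^-f\|$, with the reverse bound following from boundedness of the projection (Section~\ref{section2.3}). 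The same argument applied to the second identity together with $N^+Bf=N^+B\hut N_B^+f$ yields $\|f\|\approx\|\hut N_B^+f\|$. For the remaining two cases, observe that $f-\hut N_B^-f\in N^+\mH$ forces $N^-f=N^-\hut N_B^-f$, hence $\|N^-f\|\le\|\hut N_B^-f\|$; combined with the already-proven $\|f\|\approx\|N^-f\|$, this finishes $\|f\|\approx\|\hut N_B^-f\|$, and the case of $\hat N_B^+$ is entirely symmetric.

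The only substantive point is the $\hat/\hut$ decomposition of $f$: without it, $d_{t,x}(mF)$ and $d^*_{t,x}(BF)$ cannot simultaneously be rewritten as explicit $\partial_t$-derivatives, and the bulk of the IBP fails to telescope to the desired boundary term $(Bf,f)$. The formal justification of the IBP is standard given $F_t\in\dom(T_B)=\dom(d)\cap B^{-1}\dom(d^*)$ (from $f\in\V_B$) and the semigroup decay supplied by the quadratic estimates.
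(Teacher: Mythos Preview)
Your proof is correct, but it takes a more elaborate route than the paper's. The key difference is your use of the $\hat\V_B\oplus\hut\V_B$ decomposition of $f$. The paper avoids this entirely: it starts from $(Bf,f)=-2\re\int_0^\infty(\partial_t BF_t,F_t)\,dt$, applies the anticommutation $\{\m,d^*_{t,x}\}=-\partial_t$ to $G=BF$, and then substitutes the full Dirac equation in the form $B\m d_{t,x}F=-\m d^*_{t,x}BF$. After using $B^*=B$, $\m^*=\m$, and the spatial adjointness $(d^*_x u,v)=(u,d_x v)$, the bulk integrand collapses to the pure $t$-derivative $-\partial_t(\m BF_t,\mu F_t)$ with no splitting of $F$ whatsoever. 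So your claim that ``without [the decomposition] \dots\ the bulk of the IBP fails to telescope'' is true only for your particular choice $u=BF$, $v=\m F$; the paper's algebraic rearrangement sidesteps the issue. Your decomposition buys a conceptually clean separation into the two equation types (\ref{eq:sep1}) and (\ref{eq:sep2}), at the cost of invoking Proposition~\ref{prop:huthatsplitting} and Remark~\ref{rem:huthatV_B}; the paper's approach is shorter and self-contained.

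Two minor points. First, your appeal to ``semigroup decay supplied by the quadratic estimates'' is circular: Proposition~\ref{prop:rellich} is an ingredient in the \emph{proof} of quadratic estimates for Hermitean $B$ (Theorem~\ref{thm:mainforrealsymm}). The needed bounds $\|F_t\|\lesssim\min(1,t^{-s})$ and $\|T_BF_t\|\lesssim\min(t^{s-1},t^{-1})$ follow directly from $f\in\dom(|T_B|^s)\cap\ran(|T_B|^s)$ via the Dunford integral, exactly as in Lemma~\ref{lem:algsplitting}; the paper spells this out explicitly. Second, in your norm-equivalence paragraph you invoke ``the already-proven $\|f\|\approx\|N^-f\|$'', which you have not written out. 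It does follow at once from the first Rellich identity via $\kappa_B\|f\|^2\le 2\re(\mu^*Bf,\mu^*f)\le 2\|B\|_\infty\|f\|\,\|N^-f\|$, but this should be made explicit (the paper records it separately as Corollary~\ref{cor:rellich}).
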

\begin{proof}
It suffices to consider $f\in E_B^+\V_B$ as the case $f\in E_B^-\V_B$ is treated
similarly.
We use Lemma~\ref{lem:algsplitting} and write $F_t:= e^{-t|T_B|}f$.
Hence $(0,\infty)\ni t\mapsto F_t\in \mH$ is differentiable,
$\lim_{t\rightarrow 0}F_t=f$ and $\lim_{t\rightarrow\infty}F_t=0$.
Furthermore $F_t\in \dom(d_x)$ and $B F_t\in\dom(d_x^*)$ for all 
$t\in (0,\infty)$.
We note the formulae 
\begin{align*}
  d^*_{t,x}\m G_t + \m d^*_{t,x} G_t &= -\pd_t G_t, \\ 
  B \m d_{t,x} F_t & = -\m d^*_{t,x}BF_t.
\end{align*}
The first identity, which we apply with $G_t= BF_t$, follows from 
Lemma~\ref{lem:anticom}, whereas the second is equivalent to 
$\pd_t F_t+ T_B F_t=0$, and follows from Lemma~\ref{lem:algsplitting}.
We get
\begin{multline*}
  (Bf,f) = -\int_0^\infty (\pd_t BF_t,F_t)+ (BF_t,\pd_t F_t) 
  = -2\re\int_0^\infty (\pd_t BF_t,F_t) \\
  = 2\re\int_0^\infty (d^*_{t,x} \m BF_t + 
        \m d^*_{t,x} BF_t, F_t) 
  = 2\re\int_0^\infty (d^*_{t,x}\m BF_t -
        B\m d_{t,x} F_t, F_t) \\
  = 2\re\int_0^\infty (d^*_{t,x}\m BF_t, F_t) - 
        (\m BF_t, d_{t,x} F_t) \\
  = 2\re\int_0^\infty ((d^*-\mu^*\pd_t)\m BF_t, F_t) - 
        (\m BF_t, (d+\mu\pd_t) F_t) \\
  = -2\re\int_0^\infty (\pd_t \m BF_t, \mu F_t) +
        (\m BF_t, \pd_t\mu F_t) \\
  = 2\re (\m B f, \mu f)= 2\re(e_0 \wedg (B f), e_0\wedg f).
\end{multline*}
Note that all integrals are convergent since 
$\|F_t\|\lesssim\min(1,t^{-s})$
and since $\pd_t F_t= -T_B F_t$ with 
$\|T_B F_t\|\lesssim\min(t^{s-1}, t^{-1})$ if 
$f\in\dom(|T_B|^s)\cap\ran(|T_B|^{-s})$.
This follows as in the proof of Lemma~\ref{lem:algsplitting}.
Furthermore, we note that
\begin{multline*}
  (e_0 \wedg B f, e_0\wedg f) =(B f, e_0\lctr(e_0\wedg f)) \\
  =(Bf, f- e_0\wedg(e_0\lctr f))= (Bf,f)-(e_0\lctr (Bf),e_0\lctr f).
\end{multline*}
Together with the calculation above this proves that 
$(Bf, f)= 2\re\big(e_0\lctr (Bf),e_0\lctr f\big)$.

To prove that $\|f\| \approx \| \hat N^+_B f \| \approx \| \hat N^-_B f \|$,
it suffices to show that $\|f\|\lesssim\| \hat N^\pm_B f \|$ since
$\hat N_B^\pm$ are bounded.
From the Rellich type identities above we have
$\|f\|^2\lesssim\|e_0\lctr (Bf)\|\|e_0\lctr f\|\lesssim \|\hat N_B^-f\|\|f\|$
which proves $\|f\|\lesssim\| \hat N^-_B f \|$, and using the other identity
we obtain
$\|f\|^2\lesssim\|e_0 \wedg (B f)\|\| e_0\wedg f\|\lesssim 
\|f\| \|\hat N_B^+f\|$.
The proof of $\|f\| \approx \| \hut N^+_B f \| \approx \| \hut N^-_B f \|$
is similar.
\end{proof}
We note that Proposition~\ref{prop:rellich} also proves that the 
norms of the two components of $f\in E_B^\pm\V_B$ in the unperturbed splitting
$\mH= N^-\mH\oplus N^+\mH$ are comparable.
\begin{cor}   \label{cor:rellich}
  Assume that $B^*=B$ and that $f\in E_B^+\V_B$ or $f\in E_B^-\V_B$,
  and decompose $B$ in the splitting $\mH= N^-\mH\oplus N^+\mH$ as
$$
  B=
     \begin{bmatrix}
       B_{\no\no} & B_{\no_\ta} \\ B_{\ta\no} & B_{\ta\ta}
     \end{bmatrix}.
$$
If $f_\ta= N^+f$ and $f_\no= N^-f$, then
$$
  (B_{\no\no} f_\no, f_\no)= (B_{\ta\ta} f_\ta, f_\ta).
$$
In particular $\|f\| \approx \|N^+f\| \approx \|N^-f\|$.
\end{cor}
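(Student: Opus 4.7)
The plan is to derive the corollary directly from the two Rellich identities provided by Proposition~\ref{prop:rellich}, by translating the interior and exterior products with $e_0$ into the normal/tangential block decomposition of $B$, and then using that $B$ is Hermitian to separate real parts.

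First I would rewrite each side of the Rellich identities. Since $e_0\lctr$ annihilates tangential multivectors and $e_0\wedg$ annihilates normal multivectors, and since $e_0$ is a real vector so that $e_0\lctr$ and $e_0\wedg$ are mutual adjoints, one gets $(e_0\lctr(Bf),e_0\lctr f)=(Bf,e_0\wedg(e_0\lctr f))=(Bf,\mu\mu^* f)=(Bf,N^-f)=(Bf,f_\no)$, and symmetrically $(e_0\wedg(Bf),e_0\wedg f)=(Bf,f_\ta)$. So Proposition~\ref{prop:rellich} reads $(Bf,f)=2\re(Bf,f_\no)=2\re(Bf,f_\ta)$.

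Next I would expand using the block decomposition. Writing $Bf=(B_{\no\no}f_\no+B_{\no\ta}f_\ta)+(B_{\ta\no}f_\no+B_{\ta\ta}f_\ta)$ with the first parenthesis normal and the second tangential, orthogonality of $N^-\mH$ and $N^+\mH$ gives
\begin{align*}
  (Bf,f_\no)&=(B_{\no\no}f_\no,f_\no)+(B_{\no\ta}f_\ta,f_\no),\\
  (Bf,f_\ta)&=(B_{\ta\ta}f_\ta,f_\ta)+(B_{\ta\no}f_\no,f_\ta),\\
  (Bf,f)&=(B_{\no\no}f_\no,f_\no)+(B_{\ta\ta}f_\ta,f_\ta)+(B_{\no\ta}f_\ta,f_\no)+(B_{\ta\no}f_\no,f_\ta).
\end{align*}
Set $\alpha=(B_{\no\no}f_\no,f_\no)$, $\delta=(B_{\ta\ta}f_\ta,f_\ta)$, $\beta=(B_{\no\ta}f_\ta,f_\no)$. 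Since $B^*=B$ preserves the splitting $\mH=N^-\mH\oplus N^+\mH$ block-adjointly, we have $B_{\no\no}^*=B_{\no\no}$, $B_{\ta\ta}^*=B_{\ta\ta}$ and $B_{\ta\no}^*=B_{\no\ta}$; in particular $\alpha,\delta\in\R$ and $(B_{\ta\no}f_\no,f_\ta)=\overline{\beta}$. Plugging into the two Rellich equalities yields $\alpha+\delta+2\re\beta=2\alpha+2\re\beta=2\delta+2\re\beta$, from which $\alpha=\delta$, i.e.\ $(B_{\no\no}f_\no,f_\no)=(B_{\ta\ta}f_\ta,f_\ta)$.

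For the norm equivalence, I would note that the diagonal blocks of an accretive matrix are themselves accretive with accretivity constant at least $\kappa_B$ (test the accretivity inequality on purely normal, respectively purely tangential, vectors). Combined with the identity just proved, this gives $\kappa_B\|f_\no\|^2\le\re\alpha=\alpha=\delta\le\|B\|_\infty\|f_\ta\|^2$ and the symmetric estimate, so $\|f_\no\|\approx\|f_\ta\|$, and together with $\|f\|^2=\|f_\no\|^2+\|f_\ta\|^2$ this gives $\|f\|\approx\|N^+f\|\approx\|N^-f\|$. There is no substantial obstacle here; the only subtlety is the correct bookkeeping of the interior/exterior product identities in the first step, which is immediate from Lemma~\ref{lem:algebra} and Definition~\ref{defn:normtang}.
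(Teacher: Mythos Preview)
Your proof is correct and follows essentially the same route as the paper: translate the two Rellich identities of Proposition~\ref{prop:rellich} into $\re(Bf,f_\no)=\re(Bf,f_\ta)$, use $B_{\no\ta}^*=B_{\ta\no}$ to cancel the cross terms, and then use the accretivity of the diagonal blocks for the norm equivalence. Your write-up is just more explicit about the bookkeeping than the paper's terse version.
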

\begin{proof}
We obtain from Proposition~\ref{prop:rellich} that
$\re(B_{\no\no} f_\no+ B_{\no\ta}f_\ta,f_\no)
= \re(B_{\ta\no} f_\no+ B_{\ta\ta}f_\ta,f_\ta)$.
The corollary now follows 
since $B_{\no\no}$, $B_{ \ta\ta}\ge \kappa_B$ and
$B_{\no\ta}^*= B_{\ta\no}$.
\end{proof}

Next we turn to quadratic estimates for the operator $T_B$.
As before we assume that $B$ is as in Definition~\ref{defn:B}
and that $B=B^*$.
For the rest of this section we shall also assume that 
\begin{equation}  \label{assumption:diagonalonforms}
  B_{\ta\no}^j = B_{\no\ta}^j= 0, \qquad\text{for } j\ge 2.
\end{equation}
Note that in particular this is true if
$B= I\oplus A\oplus I\oplus\ldots\oplus I$,
where $A\in L_\infty(\R^n;\mL(\wedge^1))$.

To prove quadratic estimates, we shall use 
Proposition~\ref{prop:Hardyreduction} where we verify the
hypothesis separately on the subspaces $\hat\mH^k_B$ and 
$\hut\mH_B$, which is possible by 
Lemma~\ref{lem:preservehats}.
\begin{lem}  \label{lem:realsymhut}
  Assume that $B^*=B$ and (\ref{assumption:diagonalonforms})
  holds.
Then
$$
  \|f\| \lesssim \tb{t\pd_t F_t}_\pm,\qquad \text{for all }
  f\in E_B^\pm\hut\V_B,
$$
where $F_t= e^{\mp t|T_B|} f$.
\end{lem}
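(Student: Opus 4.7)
The plan is to combine the Rellich-type identity of Proposition~\ref{prop:rellich} with a weighted integration by parts in $t$, using the first-order structural equations (\ref{eq:sep2}) satisfied by $F_t = e^{-t|T_B|}f$ on $\hut\mH_B$. I describe the $+$ case; the $-$ case is identical by symmetry.

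First, by Proposition~\ref{prop:rellich} together with the assumption $B^*=B$, one has $\|f\|^2 \approx (Bf,f) = 2\re(\mu Bf,\mu f)$, and via the fundamental theorem of calculus this equals $-2\re\int_0^\infty \partial_t(\mu BF_t,\mu F_t)\,dt$; the boundary terms vanish by the $L_2$-limits of Lemma~\ref{lem:algsplitting}. A one-step IBP here merely reproduces the Rellich identity, so genuine information must come from the $\hut$-structure: I would substitute $\mu B\partial_t F_t = -d^*(BF_t)$ and $\mu^*\partial_t F_t = dF_t$ from (\ref{eq:sep2}) into the integrand, then integrate by parts in $x$ (using the anticommutations $\{d,\mu\} = 0 = \{d^*,\mu^*\}$ derivable from Lemma~\ref{lem:anticom} and the adjointness $(df,g) = (f,d^*g)$) to transfer spatial derivatives back onto $\partial_t F_t$. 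To introduce the crucial factor of $t$, I would apply the weighted IBP identity $\phi(0) = \int_0^\infty t\phi''(t)\,dt$ to $\phi(t) = \re(\mu BF_t, \mu F_t)$, valid because $t\phi'(t)\to 0$ at both endpoints for $f$ in the dense subspace $\hut\V_B$ (where $\|t\partial_t F_t\| = O(t^s)$ near $0$, by the functional calculus as in the proof of Lemma~\ref{lem:algsplitting}).

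Expanding $\phi''$ and using (\ref{eq:sep2}) twice yields a main term $2\int_0^\infty t(B\partial_t F_t,\partial_t F_t)\,dt \geq 2\kappa_B\tb{t\partial_t F_t}^2$ by accretivity, together with cross terms of the form $\int_0^\infty t(BF_t, N^\pm\partial_t F_t)\,dt$ produced by the $x$-IBPs. These cross terms are bounded by $\sup_t\|F_t\|\cdot\tb{t\partial_t F_t}$ and hence by $\|f\|\cdot\tb{t\partial_t F_t}$, using that $F_t \to f$ in $L_2$ and decays at infinity. Combining gives $(Bf,f) \lesssim \tb{t\partial_t F_t}^2 + \|f\|\tb{t\partial_t F_t}$; absorbing via Young's inequality yields $\|f\|^2 \lesssim \tb{t\partial_t F_t}^2$, which is the desired estimate. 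Corollary~\ref{cor:rellich} can be invoked where necessary to replace the full norm by $\|N^\pm f\|$.

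The main obstacle I anticipate is making the cross-term calculation close rather than reproduce a tautology; this requires an algebraic compatibility between $B$ and the projections $N^\pm$ on the $\wedge^k$ components that appear in $\hut\mH_B$. This is where assumption (\ref{assumption:diagonalonforms}) plays its role: on $\wedge^k$ for $k\geq 2$ the matrix $B^k$ is block-diagonal, so $[B,N^\pm] = 0$ on those degrees; on $\wedge^0$ and $\wedge^{n+1}$ this is automatic (these degrees are purely tangential and purely normal respectively); and on the vector degree $\wedge^1$ the $\hut$-constraint $d(\mu^*f) = 0$ reads $\nabla_x f_0 = 0$, forcing $f_0 = 0$ in $L_2(\R^n)$, so $\hut\mH_B^1$ consists entirely of tangential vectors, on which the non-block structure of $B^1 = A$ causes no harm because inner products with tangential fields do not see the $B_{\no\ta}$ block. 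With $[B,N^\pm]$ effectively trivial against $\hut\mH_B$-valued fields, the IBPs close cleanly and yield the required reverse quadratic estimate.
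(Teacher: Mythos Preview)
Your strategy --- weighted integration by parts $\phi(0)=\int_0^\infty t\phi''(t)\,dt$ with $\phi(t)=\re(\mu BF_t,\mu F_t)$, combined with the $\hut$-equations~(\ref{eq:sep2}) and the observation that assumption~(\ref{assumption:diagonalonforms}) plus $F^1_\no=0$ neutralise the off-diagonal of $B$ --- is the paper's. But your treatment of the ``cross terms'' has a genuine gap. You propose to bound them by $\sup_t\|F_t\|\cdot\tb{t\partial_t F_t}$ and ``hence by $\|f\|\cdot\tb{t\partial_t F_t}$, using that $F_t\to f$ in $L_2$ and decays at infinity.'' This fails twice over: the endpoint limits do \emph{not} give $\sup_t\|F_t\|\lesssim\|f\|$ --- that requires uniform boundedness of $e^{-t|T_B|}$, which is exactly what this lemma is being proved in order to establish, via Proposition~\ref{prop:Hardyreduction} --- and in any case $\int_0^\infty t\|F_t\|\|\partial_t F_t\|\,dt$ is not controlled by $\sup_t\|F_t\|\cdot\tb{t\partial_t F_t}$ (wrong measure). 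If such cross terms were genuinely present, your absorption step would be circular.

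The resolution is that the cross terms are a phantom: done carefully, the computation closes \emph{exactly}. The paper starts from $(B_{\ta\ta}f_\ta,f_\ta)$ (equal to your $\phi(0)$ once one notes $B_{\ta\no}f_\no=0$), substitutes the reduced tangential equation $B_{\ta\ta}\partial_t F_\ta=-\m\, d^*\big((BF)_\no\big)$ --- valid precisely because $B_{\ta\no}F_\no=0$ --- integrates once by parts in $t$ to insert the factor $t$, then integrates by parts in $x$ using $d(\m F_\ta)=-\partial_t F_\no$ and the same reduced equation again. The two resulting terms combine to give the identity
\[
(B_{\ta\ta}f_\ta,f_\ta)=2\int_0^\infty(B\,\partial_t F_t,\partial_t F_t)\,t\,dt,
\]
with no remainder to absorb. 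Your $\phi''$ route yields the same identity if pushed through. One related imprecision: ``$[B,N^\pm]$ effectively trivial on $\hut\mH_B$'' overstates what holds. Only $B_{\ta\no}G_\no=0$ is true for $G\in\hut\mH_B$; the other block $B_{\no\ta}G_\ta$ is nonzero in general (the $a_{\no\ta}$-entry of $A$ acts nontrivially on the tangential vector part). Fortunately only the one-sided vanishing is needed.
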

\begin{proof}
As the two estimates are similar, we only show the estimate for
$f\in E_B^+\hut\V_B$.
Note that $F(t,x)$ satisfies (\ref{eq:sep2}), i.e.
$$
\begin{cases}
  d F = e_0\lctr \pd_t F, \\
  d^*(BF) = -e_0\wedg( B\pd_t F ).
\end{cases}
$$
Splitting both sides of both equations into normal and tangential parts,
with notation as in Corollary~\ref{cor:rellich}, we obtain
\begin{align}
  dF_\ta &= \m \pd_t F_\no, \label{huteq1} \\
  dF_\no &= 0, \label{huteq2} \\
  d^*(B_{\no\no}F_\no + B_{\no\ta} F_\ta) &= 
    -\m (B_{\ta\no} \pd_t F_\no +B_{\ta\ta}\pd_t F_\ta ), \label{huteq3} \\
  d^*(B_{\ta\no}F_\no + B_{\ta\ta} F_\ta) &= 0. \label{huteq4}
\end{align}
We have here used that $e_0\lctr f = \m f$ if $f$ is normal, and
$e_0\wedg f = \m f$ if $f$ is tangential.
A key observation is that the first term on the right hand side in
(\ref{huteq3}) vanishes since $B_{\ta\no} F_\no=0$. To see this, note that
$$
  B_{\ta\no} F_\no=
  B_{\ta\no}^1 F_\no^1+ B_{\ta\no}^2 F_\no^2+\ldots+
  B_{\ta\no}^{n+1} F_\no^{n+1}.
$$
By hypothesis (\ref{assumption:diagonalonforms}), 
$B_{\ta\no}^2=\ldots= B_{\ta\no}^{n+1}=0$.
Furthermore, writing $F_\no^1= F_0 e_0$ with the function $F_0$ being
scalar, we get from (\ref{huteq2}) that $0= d(F_0e_0)= (\nabla F_0)\wedg e_0$,
so $F_0$ is constant and therefore vanishes, and thus so does $F_\no^1$.
Equation (\ref{huteq3}) reduces to
\begin{equation}  \label{huteq5} 
  B_{\ta\ta}\pd_t F_\ta= -\m d^*(B_{\no\no}F_\no + B_{\no\ta} F_\ta).  
\end{equation}
We calculate
\begin{multline*}
  \|f\|^2 \lesssim (B_{\ta\ta} f_\ta,f_\ta) 
  = -2\re\int_0^\infty (B_{\ta\ta}\pd_t F_\ta, F_\ta) dt \\
  = 2\re\int_0^\infty \big(d^*(B_{\no\no}F_\no + B_{\no\ta} F_\ta) ,\m F_\ta \big) dt \\
  = -2\re\int_0^\infty \Big(
     \big(d^*(B_{\no\no}\pd_t F_\no + B_{\no\ta} \pd_t F_\ta ) ,\m F_\ta\big)
      +\big(d^*(B_{\no\no}F_\no + B_{\no\ta} F_\ta) ,\m \pd_t F_\ta \big) 
      \Big) t\,dt      \\
  = 2\re\int_0^\infty \Big(
     \big( B_{\no\no}\pd_t F_\no + B_{\no\ta} \pd_t F_\ta  , \pd_t F_\no \big)
      +\big( \m B_{\ta\ta}\pd_t F_\ta , \m \pd_t F_\ta \big) 
      \Big) t\,dt      \\
  = 2\re\int_0^\infty 
     ( B \pd_t F , \pd_t F) t\,dt \approx \tb{t\pd_t F}^2.
\end{multline*}
Here in the third step we use (\ref{huteq5}), in the fourth step 
we integrate by parts, in the fifth step we use duality and that
$d\m F_\ta= -\m dF_\ta= -\pd_t F_\no$ by (\ref{huteq1}) for the first term
and again (\ref{huteq5}) for the second term.
Finally in step $6$ we use that $\m$ is an isometry and that 
$B_{\ta\no} F_\no=0$.
\end{proof}
Next we turn to the subspace $\hat\mH^1$. 
\begin{lem}  \label{lem:realsymhat1}
  Let $A\in L_\infty(\R^n;\mL(\wedge^1))$ be real symmetric and
  $A\ge\kappa>0$, and let
  $B= I\oplus A\oplus I\oplus\ldots\oplus I$.
Then
$$
  \|f\| \lesssim \tb{t\pd_t F}_\pm,\qquad \text{for all }
  f\in E_B^\pm\hat\V_B^1,
$$
where $F= e^{\mp t|T_B|} f$.
\end{lem}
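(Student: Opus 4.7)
The plan is to adapt the integration-by-parts argument of Lemma~\ref{lem:realsymhut} to $E_B^\pm\hat\V_B^1$. I focus on $f\in E_B^+\hat\V_B^1$; the case $E_B^-\hat\V_B^1$ is handled symmetrically. Setting $F_t=e^{-t|T_B|}f = u(t)e_0 + v(t)$ with $u$ scalar and $v$ tangential, the decoupled system \eqref{eq:sep1} restricted to $\hat\V_B^1$ reads
\begin{equation*}
  dv = 0, \qquad \partial_t v = \nabla u, \qquad \partial_t(a_{\no\no}u+a_{\no\ta}v) + \divv_x(a_{\ta\no}u+a_{\ta\ta}v)=0,
\end{equation*}
identifying $F=\nabla_{t,x}U$ with $U$ a scalar potential satisfying $\divv_{t,x}(A\nabla_{t,x}U)=0$. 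A crucial structural point unavailable in the setting of Lemma~\ref{lem:realsymhut} is that, by Lemma~\ref{lem:LaplaceF_0^1}, the scalar $u=F_\no$ itself solves the same second-order equation.

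The first step is to reduce via Proposition~\ref{prop:rellich}: $\|f\|^2\approx(Af,f)=-2\re\int_0^\infty(A\partial_tF,F)\,dt$. To produce a factor of $t$ I apply the elementary identity $\int_0^\infty g'(t)\,dt=-\int_0^\infty t g''(t)\,dt$ (valid since the decay of $F_t$ forces $g, tg'\to0$ at the endpoints) to $g(t):=-((AF)_\ta,F_\ta)$. Using $A=A^*$ together with the three equations above, a direct computation yields $g'(t)=-(A\partial_tF,F)$, and a further use of the third equation (in the form $\partial_t^2(AF)_\no=-\divv_x(A\partial_tF)_\ta$ after differentiating in $t$), combined with integration by parts in $x$ and $\partial_tF_\ta=\nabla u$, reduces the $\int tg''\,dt$ term. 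After the cancellations forced by the pointwise Rellich identity, one arrives at the compact identity
\begin{equation*}
(a_{\ta\ta}f_\ta,f_\ta) = 2\int_0^\infty t(A\partial_tF,\partial_tF)\,dt + 2\re\int_0^\infty(a_{\no\ta}\nabla u, u)\,dt.
\end{equation*}
The first integral is bounded by $C\tb{t\partial_tF}_+^2$ by accretivity of $A$, and by Corollary~\ref{cor:rellich} the left-hand side is $\approx(Af,f)\approx\|f\|^2$.

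The main obstacle is to control the cross term $\re\int_0^\infty(a_{\no\ta}\nabla u,u)\,dt$: a direct Cauchy--Schwarz fails since $\int_0^\infty t^{-1}\|u\|^2\,dt$ diverges at $t=0$, and a plain integration by parts in $x$ would require the coefficient $a_{\no\ta}$ to be differentiable. To handle this term I would exploit Lemma~\ref{lem:LaplaceF_0^1} by applying the scalar Rellich identity to $u$ itself: multiplying $\divv_{t,x}(A\nabla_{t,x}u)=0$ by $t\bar u$ and integrating by parts in $\R^{n+1}_+$ (the boundary term at $t=0$ vanishes thanks to the weight $t$) gives
\begin{equation*}
  \int_0^\infty t(A\nabla_{t,x}u,\nabla_{t,x}u)\,dt = -\re\int_0^\infty ((A\nabla_{t,x}u)_\no, u)\,dt.
\end{equation*}
Expanding $(A\nabla_{t,x}u)_\no = a_{\no\no}\partial_tu + a_{\no\ta}\nabla u$ and integrating the first summand in $t$ produces a boundary contribution of the form $-\tfrac12(a_{\no\no}f_\no,f_\no)$, while the second summand is exactly the cross term we need to control. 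Combining this scalar Rellich identity with the vector identity above allows us to eliminate the cross term and bound it by a multiple of $\tb{t\partial_tF}_+^2$ together with boundary quantities that are absorbable into $(Af,f)$ via the Corollary~\ref{cor:rellich} equivalence and accretivity. This closes $\|f\|^2\lesssim\tb{t\partial_tF}_+^2$.
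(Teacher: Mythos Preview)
Your approach has a genuine gap: the two Rellich-type identities you combine collapse to a tautology.  The crucial observation you are missing is that, since $\curl_{t,x}F=0$, one has $\partial_t F = \nabla_{t,x}u$ (with $u=F_0$), so the two square-function terms
\[
  \int_0^\infty t\,(A\partial_t F,\partial_t F)\,dt
  \quad\text{and}\quad
  \int_0^\infty t\,(A\nabla_{t,x}u,\nabla_{t,x}u)\,dt
\]
are literally the \emph{same} integral.  When you substitute your scalar identity
$\re\int_0^\infty(a_{\no\ta}\nabla u,u)\,dt = \tfrac12(a_{\no\no}f_\no,f_\no) - \int_0^\infty t(A\nabla_{t,x}u,\nabla_{t,x}u)\,dt$
into your vector identity
$(a_{\ta\ta}f_\ta,f_\ta) = 2\int_0^\infty t(A\partial_t F,\partial_t F)\,dt + 2\re\int_0^\infty(a_{\no\ta}\nabla u,u)\,dt$,
the square-function terms cancel exactly and you recover $(a_{\ta\ta}f_\ta,f_\ta)=(a_{\no\no}f_\no,f_\no)$, which is just Corollary~\ref{cor:rellich}.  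No bound of $\|f\|$ by $\tb{t\partial_t F}$ emerges; the ``absorbable boundary quantities'' turn out to be all of $\|f\|^2$, and there is nothing to absorb them into.

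The paper's proof proceeds quite differently.  It does not attempt a self-contained integration-by-parts argument on $\hat\mH^1$; instead it invokes the deep square-function estimate of Dahlberg--Jerison--Kenig \cite{DJK} (which in turn rests on the $A_\infty$ property of elliptic-harmonic measure from Jerison--Kenig \cite{JK1}) applied to the scalar solution $F_0$ of $\divv_{t,x}A\nabla_{t,x}F_0=0$ furnished by Lemma~\ref{lem:LaplaceF_0^1}.  This gives $\|f_0\|\lesssim\tb{t\nabla_{t,x}F_0}$ directly, and then the Rellich estimate and $\partial_i F_0=\partial_0 F_i$ finish the job.  The point is that for non-block real symmetric $A$ the reverse square-function estimate on $\hat\mH^1$ is genuinely equivalent to the DJK theorem and cannot be obtained by elementary integration by parts; this is exactly why the argument of Lemma~\ref{lem:realsymhut} does not transfer.
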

\begin{proof}
Recall that if $f\in E_B^\pm\hat \V_B^1$ and 
$F= F_0e_0+F_\ta= e^{\mp t|T_B|}f$, then 
Lemma~\ref{lem:LaplaceF_0^1} shows that $F_0$ satisfies
the equation 
$$
  \divv_{t,x} A(x) \nabla_{t,x} F_0(t,x)=0.
$$
Therefore, by the square function estimate of Dahlberg, Jerison and Kenig \cite{DJK}
and the estimates of harmonic measure of Jerison and Kenig~\cite{JK1},
we have estimates
$\|f_0\|\lesssim \tb{t\nabla_{t,x} F_0}_\pm$.
Hence applying the Rellich estimates in Proposition~\ref{prop:rellich},
we obtain
$$
  \|f\| \approx \|f_0\| \lesssim \tb{t\nabla_{t,x} F_0}_\pm
  \lesssim \tb{t\pd_t F}_\pm,
$$
since $\pd_i F_0= \pd_0 F_i$, as $d_{t,x} F=0$.
\end{proof}

We are now in position to prove the main result of this section.
\begin{thm}  \label{thm:mainforrealsymm}
  Let $A\in L_\infty(\R^n;\mL(\wedge^1))$ be real symmetric and
  $A\ge\kappa>0$, and let
  $B= I\oplus A\oplus I\oplus\ldots\oplus I$.
  Then $T_B$ has quadratic estimates, so that in particular 
  $E_B= \sgn(T_B):\mH\rightarrow\mH$ is bounded.
  Furthermore we have isomorphisms
$$
  I\pm E_BN_B: \mH \longrightarrow \mH.
$$
\end{thm}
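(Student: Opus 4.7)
The plan has two parts, one for each conclusion.

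For the quadratic estimates, the hypothesis $B^*=B$ together with Proposition~\ref{prop:Hardyreduction} reduces the task to the reverse estimate $\|g\|\lesssim\tb{t\pd_tG_t}_\pm$ for all $g\in E_B^\pm\V_B$, where $G_t=e^{\mp t|T_B|}g$. By Lemma~\ref{lem:preservehats} the decomposition
\begin{equation*}
\mH=\hut\mH_B\oplus\hat\mH_B^0\oplus\hat\mH_B^1\oplus\ldots\oplus\hat\mH_B^{n+1}
\end{equation*}
is $T_B$-invariant and so preserved by $e^{\mp t|T_B|}$ and $E_B^\pm$; the reverse estimate need therefore only be verified on each summand. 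Lemmas~\ref{lem:realsymhut} and~\ref{lem:realsymhat1} supply $\hut\mH_B$ and $\hat\mH_B^1$, while $\hat\mH_B^0=\{0\}$ since for a scalar $f$ one has $\mu f=fe_0$ and $d(fe_0)=0$ forces $\nabla f=0$ on $\R^n$. For $k\ge 2$, the hypothesis $B=I\oplus A\oplus I\oplus\ldots\oplus I$ yields $B^k=I$, and using the anticommutation relations $\{d,\mu\}=\{d^*,\mu^*\}=0$ one checks that the constraints defining $\hat\mH_B^k$ and the action of $T_B$ on this space both become independent of $A$; hence $\hat\mH_B^k=\hat\mH_I^k$ and $T_B|_{\hat\mH_B^k}=T_I|_{\hat\mH_I^k}$. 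Proposition~\ref{prop:constantquadraticests} gives the quadratic estimate for $T_I$, and another application of Proposition~\ref{prop:Hardyreduction} to $T_I$ then delivers the reverse estimate on each $\hat\mH_B^k$ with $k\ge 2$. Boundedness of $E_B$ on $\mH$ follows from Proposition~\ref{prop:QtoFCalc}.

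For the invertibility of $I\pm E_BN_B$, the key input is the Rellich identity of Proposition~\ref{prop:rellich}, which under $B^*=B$ yields
\begin{equation*}
\|f\|\approx\|N_B^\pm f\|\approx\|\hut N_B^\pm f\|,\qquad f\in E_B^\pm\V_B,
\end{equation*}
and extends to all of $E_B^\pm\mH$ by density. Expanding $(E_B^++E_B^-)(N_B^++N_B^-)=I$ one gets
\begin{equation*}
I\pm E_BN_B=2\bigl(E_B^+N_B^\pm+E_B^-N_B^\mp\bigr),
\end{equation*}
so, using the splitting $\mH=N_B^+\mH\oplus N_B^-\mH$, invertibility of $I\pm E_BN_B$ is equivalent to each of the four restricted projections $E_B^\pm:N_B^\sigma\mH\to E_B^\pm\mH$, $\sigma\in\{+,-\}$, being an isomorphism. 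For $E_B^+:N_B^+\mH\to E_B^+\mH$ (the others are analogous), the Rellich bound $\|N_B^-f\|\gtrsim\|f\|$ on $E_B^+\mH$ is equivalent, via the transversality principle in the proof of Proposition~\ref{prop:regneueq}, to the a priori estimate $\|E_B^+g\|\gtrsim\|g\|$ for $g\in N_B^+\mH$. Proposition~\ref{prop:duality} (using $B^*=B$) gives the adjoint relations $(E_B^\pm)'=E_B^\mp$ and $(N_B^\pm)'=\hut N_B^\pm$ under $\dual{\cdot}{\cdot}_B$, so the adjoint of $E_B^+:N_B^+\mH\to E_B^+\mH$ is realised by $\hut N_B^+:E_B^-\mH\to\hut N_B^+\mH$, whose a priori bound is the companion Rellich estimate $\|\hut N_B^+g\|\gtrsim\|g\|$ for $g\in E_B^-\mH$. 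A priori estimates on both the map and its adjoint then yield the required isomorphism by Remark~\ref{rem:aprioriduality}.

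The step I expect to be the main obstacle is the degree-by-degree reduction in the first part. The subspaces $\hat\mH_B^k$ are defined intrinsically through $B$, and it requires a careful unpacking of Definition~\ref{defn:tb} and the anticommutation algebra of Lemma~\ref{lem:anticom} to verify that, for $k\ne 1$, $B^k=I$ together with the constraints defining $\hat\mH_B^k$ force the $\mH^{k\pm 2}$-components of $\m df$ and $B^{-1}\m d^*Bf$ to vanish, so that the restriction of $T_B$ to $\hat\mH_B^k$ coincides with $T_I$. Once this is in hand, the only genuinely hard analytic inputs are concentrated in Lemma~\ref{lem:realsymhut} (where the coercivity of the $(\ta,\ta)$ block of $B$ is exploited) and Lemma~\ref{lem:realsymhat1} (which imports the square function bound of Dahlberg--Jerison--Kenig for real symmetric second-order equations).
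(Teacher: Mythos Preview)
Your proposal is correct and follows essentially the same route as the paper: reduce to reverse square-function estimates via Proposition~\ref{prop:Hardyreduction}, split along $\hut\mH_B\oplus\bigoplus_k\hat\mH_B^k$, invoke Lemmas~\ref{lem:realsymhut} and~\ref{lem:realsymhat1}, and handle $k\ne 1$ by identifying $T_B|_{\hat\mH_B^k}$ with $T_I$; then for invertibility, feed the Rellich a~priori bounds of Proposition~\ref{prop:rellich} into the duality scheme of Remark~\ref{rem:aprioriduality}. Two small slips to fix: (i) the reverse estimate on $\hat\mH_B^k$, $k\ge 2$, follows from the quadratic estimate for $T_I$ via Proposition~\ref{prop:differentpsi} with $\psi(z)=ze^{-|z|}$, not by a second use of Proposition~\ref{prop:Hardyreduction} (that proposition goes in the opposite direction); (ii) in your transversality step, the bound $\|N_B^- f\|\gtrsim\|f\|$ on $E_B^+\mH$ is equivalent to $\|E_B^- g\|\gtrsim\|g\|$ on $N_B^+\mH$, not to the $E_B^+$ estimate you wrote --- to obtain $\|E_B^+ g\|\gtrsim\|g\|$ on $N_B^+\mH$ you should instead invoke the Rellich bound $\|N_B^- f\|\gtrsim\|f\|$ on $E_B^-\mH$, which Proposition~\ref{prop:rellich} equally provides.
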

\begin{proof}
To prove that $T_B$ has quadratic estimates it suffices
by Proposition~\ref{prop:Hardyreduction} to prove
$$
  \|f\| \lesssim \tb{t\pd_t F}_\pm,\qquad \text{for all }
  f\in E_B^\pm\V_B.
$$
To this end, we split 
$$
  f= f_0 + f_1+ \ldots+f_{n+1}+ \hut f
$$
where $f_j\in \hat\V_B^j$ and $\hut f\in\hut\V_B$.
Lemma~\ref{lem:preservehats} shows that $e^{\mp t|T_B|}$
preserves these subspaces so that similarly
$$
  F= F_0 + F_1+ \ldots+F_{n+1} + \hut F,
$$
where $(F_j)_t\in\hat\V_B^j$ and $\hut F_t\in\hut\V_B$.
It thus suffices to prove that 
$\|f_j\| \lesssim \tb{t\pd_t F_j}_\pm$, 
$j=0,1,\ldots, n+1$, and $\|\hut f\| \lesssim \tb{t\pd_t\hut F}_\pm$.
Lemma~\ref{lem:realsymhut} shows that 
$\|\hut f\| \lesssim \tb{t\pd_t\hut F}_\pm$
and Lemma~\ref{lem:realsymhat1} shows that 
$\|f_1\| \lesssim \tb{t\pd_t F_1}_\pm$.
Furthermore, for $j\ne 1$ we observe that 
$T_B= \hat T_B= T_I$ on the subspace $\hat\V_B^j$,
where $I$ denotes the identity matrix.
Thus $F_j= e^{\mp t|T_I|} f_j$ and it follows from 
Proposition~\ref{prop:constantquadraticests}
that $\|f_j\| \lesssim \tb{t\pd_t F_j}_\pm$.

Applying Proposition~\ref{prop:Hardyreduction} and Proposition~\ref{prop:QtoFCalc} now shows that $E_B= \sgn(T_B):\mH\rightarrow\mH$ is a bounded operator.
To show that $I\pm E_B\hat N_B: \mH \rightarrow \mH$
is invertible, note that the adjoint with respect to the duality $\dual\cdot\cdot_B$ from Definition~\ref{defn:duality} is 
$$
  I\mp \hut N_B E_B= \hut N_B(I\mp E_B\hut N_B)\hut N_B, 
$$
according to Proposition~\ref{prop:duality}.
Having established the boundedness of $E_B$, the Rellich estimates 
clearly extends to a priori estimates for all eight restricted projections
$$
  \hat N_B^\pm : E_B^\pm\mH \longrightarrow \hat N^\pm_B\mH, \qquad
  \hut N_B^\pm : E_B^\pm\mH \longrightarrow \hut N^\pm_B\mH.
$$
As in Remark~\ref{rem:rotasprojs} this translates to a priori 
estimates for $I\pm E_B\hat N_B$ and their adjoints, which
proves that they are isomorphisms as in Remark~\ref{rem:aprioriduality}.
\end{proof}
\begin{rem}  \label{rem:Nforrealsym}
Using instead Corollary~\ref{cor:rellich}, we also prove as in
Theorem~\ref{thm:mainforrealsymm} that 
$I\pm E_BN: \mH \longrightarrow \mH$, where $N$ is the unperturbed operator, are isomorphisms.
\end{rem}

%
%
%
\section{Quadratic estimates for perturbed operators} \label{section8}

In Section~\ref{section7} we proved that $T_{B_0}$ satisfies quadratic
estimates
\begin{equation}  \label{eq:unpertubedquadratic}
  \tb{ Q_t^{B_0} f} \approx \|f\|,\qquad f\in \mH,
\end{equation}
for certain unperturbed coefficients $B_0$:
\begin{itemize}
\item[{\rm (b)}]
Block coefficients 
$$
  B_0=
     \begin{bmatrix}
       (B_0)_{\no\no} &0 \\ 0 & (B_0)_{\ta\ta}
     \end{bmatrix}.
$$
\item[{\rm (c)}]
Constant coefficients $B_0(x)= B_0$, $x\in\R^n$,
of the form
$B_0= I\oplus A_0\oplus I \oplus\ldots\oplus I$,
i.e. $B_0$ only acts nontrivially on the vector part.
\item[{\rm (s)}]
Real symmetric coefficients of the form
$B_0= I\oplus A_0\oplus I \oplus\ldots\oplus I$.
\end{itemize}
Note that for (c), we did prove quadratic estimates for 
general constant coefficients $B_0$ in 
Proposition~\ref{prop:constantquadraticests}.
However, since we only prove invertibility $I\pm E_{B_0}N_{B_0}$
on the subspace $\hat\mH^1$ in Theorem~\ref{thm:unpertconstrellich},
we shall only prove perturbation results for constant coefficients
of the form $B_0= I\oplus A_0\oplus I \oplus\ldots\oplus I$.

In this section we let $B_0\in L_\infty(\R^n;\mL(\wedge))$
be a fixed accretive coefficient matrix with properties (b), (c) or (s).
Constants $C$ in estimates or implicit in notation $\lesssim$ and $\approx$ 
in this section will be allowed to depend only
on $\|B_0\|_\infty$, $\kappa_{B_0}$ and dimension $n$.
Note that this is indeed the case for the constants implicit in
(\ref{eq:unpertubedquadratic}).

We now consider a small perturbation $B\in L_\infty(\R^n;\mL(\wedge))$ of $B_0$:
$$
  \|B-B_0\|_{L_\infty(\R^n;\mL(\wedge))} \le\epsilon_0.
$$
Throughout this section, we assume in particular that $\epsilon_0$
is chosen small enough so
that $B$ has properties as in Definition~\ref{defn:B} with
$\|B\|_\infty \le 2\|B_0\|_\infty$ and $\kappa_B\ge \tfrac 12 \kappa_{B_0}$.
The goal is to show that
\begin{equation}  \label{eq:pertestgoal}
  \tb{Q_t^B f}\le C\|f\|, \qquad\text{whenever } \|B-B_0\|_\infty\le\epsilon,
  \,\, f\in\mH,
\end{equation}
where $C= C(\|B_0\|_\infty, \kappa_{B_0}, n)$ and 
$\epsilon= \epsilon(\|B_0\|_\infty, \kappa_{B_0}, n) \le\epsilon_0$.
Since the properties of $B_0$ are stable under taking adjoints, 
we see that the quadratic estimate (\ref{eq:quadrests}) follows from (\ref{eq:pertestgoal}) and
Lemma~\ref{lem:dualityforquadratic}.
In order to prove (\ref{eq:pertestgoal}) we make use of the following
identity, where we note that each of the the six terms consists of three factors,
the first being one of the operators from Corollary~\ref{cor:opfamilies}, the second being a multiplication operator 
$\ep$ with norm $\|\ep\|_\infty \lesssim \epsilon_0$ and
the last factor being one of the operators from Corollary~\ref{cor:opfamilies}
but with $B$ replaced by $B_0$.
\begin{align}
  Q_t^B - Q_t^{B_0} 
  &= P_t^B \Big( (M_B^{-1}-M_{B_0}^{-1})M_{B_0} \Big) Q_t^{B_0} 
  \label{eq:good1}  \\
  &-iP_t^B \Big( M_B^{-1}(B^{-1}-B_0^{-1})B_0  \Big)\Big( t \ud^*_{B_0} P_t^{B_0} \Big) 
  \label{eq:good2}  \\
  &-i\Big(P_t^B M_B^{-1}t\ud^*_B\Big) \Big(B^{-1}(B-B_0)\Big) P_t^{B_0} 
  \label{eq:bad3}   \\
  &- Q_t^B \Big( (M_B^{-1}-M_{B_0}^{-1})M_{B_0} \Big) \Big(tT_{B_0}Q_t^{B_0}\Big) 
  \label{eq:bad4}  \\
  &+iQ_t^B \Big( M_B^{-1}(B^{-1}-B_0^{-1})B_0  \Big)\Big( t \ud^*_{B_0} Q_t^{B_0} \Big) 
  \label{eq:bad5}  \\
  &+i\Big(Q_t^B M_B^{-1}t\ud^*_B\Big) \Big( B^{-1}(B-B_0) \Big) Q_t^{B_0} 
  \label{eq:good6}
\end{align}
Recall that
$$
  T_B= -iM_B^{-1}(\ud+B^{-1}\ud^*B), \qquad\text{where } \ud= i\m d,\quad
  M_B=N^+-B^{-1}N^-B,
$$
$P^B_t= (1+t^2 T_B^2)^{-1}$ and $Q^B_t= tT_B(1+t^2 T_B^2)^{-1}$ and
similarly for $B_0$.
The identity (\ref{eq:good1}-\ref{eq:good6}) is established by using
$$
  Y(1+Y^2)^{-1}- X(1+X^2)^{-1} = (1+Y^2)^{-1}( (Y-X)-Y(Y-X)X )(1+X^2)^{-1}
$$
with $Y= tT_B$ and $X= tT_{B_0}$, and then inserting
\begin{multline*}
  Y-X= T_B-T_{B_0}= (M_B^{-1}-M_{B_0}^{-1})M_{B_0}tT_{B_0} \\
  -i (M_B^{-1}(B^{-1}-B_0^{-1})B_0) (t\ud^*_{B_0})
    -i (M_B^{-1}t\ud^*_B)( B^{-1}(B-B_0)).
\end{multline*}
\begin{lem}   \label{lem:pertests}
Assume that for all $\ep\in L_\infty(\R^n;\mL(\wedge))$ and all
operator families $(\wt Q_t)_{t>0}$ with $L_2$ off-diagonal bounds $\|\wt Q_t\|_\off \le C$ as in Definition~\ref{defn:offdiag},
we have the two estimates
\begin{align}
  \tb{ \wt Q_t\ep (t\ud Q_t^{B_0}) }_\op &\le C( \tb{\wt Q_t}_\op+ 1 )\|\ep\|_\infty, \label{eq:perttildeest1}  \\
  \tb{ \wt Q_t\ep (t\ud^*_{B_0} Q_t^{B_0}) }_\op &\le C( \tb{\wt Q_t}_\op+ 1 )\|\ep\|_\infty.   \label{eq:perttildeest2}
\end{align}
Then there exists $\epsilon>0$ such that
(\ref{eq:pertestgoal}) holds.
\end{lem}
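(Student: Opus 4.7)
The plan is to use the six-term identity $(\ref{eq:good1})$--$(\ref{eq:good6})$ to compare $Q_t^B$ with $Q_t^{B_0}$, bounding each of the six pieces by $C\|B-B_0\|_\infty(\tb{Q_t^B}_\op + 1)$, so that for $\epsilon$ small enough I can absorb and obtain $(\ref{eq:pertestgoal})$.

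First I would dispose of the three ``good'' terms $(\ref{eq:good1})$, $(\ref{eq:good2})$ and $(\ref{eq:good6})$. In each, the first factor is uniformly bounded in $t$ (it appears on the list in Corollary~\ref{cor:opfamilies}), the middle factor is a multiplication by a matrix-valued function of $L_\infty$ norm $\lesssim \|B-B_0\|_\infty$, and the last factor, after using the identities of Corollary~\ref{cor:opfamilies} (e.g.\ $t\ud^*_{B_0}P_t^{B_0} = i\PP_{B_0}^2 M_{B_0} Q_t^{B_0}$), becomes a bounded operator composed with $Q_t^{B_0}$. The unperturbed quadratic estimate $(\ref{eq:unpertubedquadratic})$ then gives at once $\tb{(\text{good term})\,f} \lesssim \|B-B_0\|_\infty\,\|f\|$.

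The three ``bad'' terms are matched to the template of the hypotheses $(\ref{eq:perttildeest1})$--$(\ref{eq:perttildeest2})$. For $(\ref{eq:bad4})$ I would write $tT_{B_0}Q_t^{B_0} = -iM_{B_0}^{-1}\bigl(t\ud\,Q_t^{B_0} + t\ud^*_{B_0}\,Q_t^{B_0}\bigr)$ and absorb $M_{B_0}^{-1}$ into the middle multiplication slot, so that both $(\ref{eq:bad4})$ and $(\ref{eq:bad5})$ fit the template with $\wt Q_t = Q_t^B$, which has $L_2$ off-diagonal bounds by Proposition~\ref{pseudoloc}. The hypotheses then give $\tb{(\ref{eq:bad4})f} + \tb{(\ref{eq:bad5})f} \lesssim \|B-B_0\|_\infty(\tb{Q_t^B}_\op + 1)\|f\|$. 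Term $(\ref{eq:bad3})$ is structurally harder: its last factor $P_t^{B_0}$ carries no quadratic decay, and rewriting its first factor as $P_t^B M_B^{-1}t\ud^*_B = iQ_t^B\PP_B^1$ via Corollary~\ref{cor:opfamilies} introduces the Hodge projection $\PP_B^1$, which is not a multiplication operator and so cannot be inserted in the $\ep$-slot of the hypothesis. My plan is to pair $(\ref{eq:bad3})$ against a test element $g_t \in L^2(\R^{n+1}_+,\tfrac{dtdx}t)$ and migrate the first factor to the right using the adjoint relations of Proposition~\ref{prop:duality}; the resulting bilinear expression is of the same shape with $B$ replaced by $B^*$, and since $B_0^*$ has the same block, constant, or real-symmetric structure as $B_0$, the hypothesis is available there and can be applied with $\wt Q_t = Q_t^{B^*}$. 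One must then close the estimate for $B$ and $B^*$ simultaneously.

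Summing the six estimates gives $\tb{(Q_t^B - Q_t^{B_0})f} \le C\|B-B_0\|_\infty(\tb{Q_t^B}_\op + 1)\|f\|$, whence together with $(\ref{eq:unpertubedquadratic})$,
\[
\tb{Q_t^B}_\op \le C + C\|B-B_0\|_\infty\bigl(\tb{Q_t^B}_\op + 1\bigr).
\]
For $\epsilon \le \min\bigl(\epsilon_0,(2C)^{-1}\bigr)$ the term $C\|B-B_0\|_\infty\tb{Q_t^B}_\op$ absorbs on the left, provided $\tb{Q_t^B}_\op$ is a priori finite. That finiteness is the last step: by Lemma~\ref{lem:qgivesanal} the map $s\mapsto Q_t^{B_s}$ associated with the holomorphic family $B_s = B_0 + s(B-B_0)$, $s \in [0,1]$, depends holomorphically on $s$, so the set of $s$ for which $\tb{Q_t^{B_s}}_\op < \infty$ is relatively open (by the local absorption applied around each such $s$) and closed in $[0,1]$; since $s=0$ is in it by $(\ref{eq:unpertubedquadratic})$, it coincides with $[0,1]$. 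The main obstacle is the treatment of term $(\ref{eq:bad3})$ together with the coupling it forces between the $B$ and $B^*$ estimates; the remaining bookkeeping is a careful but routine use of Corollary~\ref{cor:opfamilies} and Proposition~\ref{pseudoloc}.
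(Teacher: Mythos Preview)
Your overall strategy---six-term identity, direct bounds on $(\ref{eq:good1})$, $(\ref{eq:good2})$, $(\ref{eq:good6})$, template matching for the remaining terms, then absorption---matches the paper. Two points diverge, one substantively.

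For term $(\ref{eq:bad3})$ you identify an obstacle that is not there. The paper does not try to push $\PP_B^1$ into the $\ep$-slot, nor does it pass to $B^*$. Instead it takes the first factor $P_t^B M_B^{-1} t\ud_B^*$ itself as the family $\wt Q_t$ in the hypothesis: this family has $L_2$ off-diagonal bounds by Proposition~\ref{pseudoloc}, and its triple-bar norm is bounded by $\tb{Q_t^B}_\op$ via the identity $P_t^B M_B^{-1} t\ud_B^* = i Q_t^B \PP_B^1$ (so the Hodge projection appears only when estimating $\tb{\wt Q_t}_\op$, not inside the template). The last factor is then rewritten as $P_t^{B_0} = I + i M_{B_0}^{-1}(t\ud Q_t^{B_0}) + i M_{B_0}^{-1}(t\ud_{B_0}^* Q_t^{B_0})$; the $I$ piece is controlled by $\tb{\wt Q_t}_\op\|\ep\|_\infty$ directly, and the other two pieces fit $(\ref{eq:perttildeest1})$--$(\ref{eq:perttildeest2})$ after absorbing $M_{B_0}^{-1}$ into $\ep$. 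Your duality detour, with the attendant coupling of the $B$ and $B^*$ estimates, is unnecessary and its correctness is not clear from what you wrote.

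For the a~priori finiteness needed to absorb, the paper simply truncates: it multiplies the identity by $\chi_{[\tau^{-1},\tau]}(t)$ so that $\tb{\chi Q_t^B}_\op < \infty$ trivially, runs the absorption to get a bound independent of $\tau$, then lets $\tau\to\infty$. Your continuation argument along $s\mapsto B_s$ would need the set $\{s:\tb{Q_t^{B_s}}_\op<\infty\}$ to be closed, which is not obvious (finiteness alone does not pass to limits); the truncation avoids this entirely.
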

\begin{proof}
First consider the terms (\ref{eq:good1}), (\ref{eq:good2}) and
(\ref{eq:good6}).
Using the uniform boundedness of $P_t^B$ and 
$Q_t^B M_B^{-1}t\ud^*_B$ from Corollary~\ref{cor:opfamilies},
we deduce that
\begin{align*}
  \tb{(\ref{eq:good1})}_\op &\lesssim \|B-B_0\|_\infty\tb{Q_t^{B_0}}_\op, \\
  \tb{(\ref{eq:good2})}_\op &\lesssim \|B-B_0\|_\infty\tb{ t\ud^*_{B_0} P_t^{B_0}}_\op, \\
  \tb{(\ref{eq:good6})}_\op &\lesssim \|B-B_0\|_\infty\tb{Q_t^{B_0}}_\op.
\end{align*}
Observe that $t\ud^*_{B_0} P_t^{B_0}  = i\PP_{B_0}^2 M_{B_0} Q_t^{B_0}$ as in Corollary~\ref{cor:opfamilies}, where the Hodge projection $\PP_{B_0}^2$
and $M_{B_0}$ are bounded. 
Thus we see from (\ref{eq:unpertubedquadratic}) that
$$
  \tb{(\ref{eq:good1})}_\op+\tb{(\ref{eq:good2})}_\op+
  \tb{(\ref{eq:good6})}_\op \lesssim \|B-B_0\|_\infty.
$$
To handle the terms (\ref{eq:bad3}-\ref{eq:bad5})
we introduce the truncated operator families
$\wt Q_t^1:= \chi(t) Q_t^B$ and 
$\wt Q_t^2:= \chi(t) P_t^B M_B^{-1}t\ud^*_B$,
where $\chi(t)$ denotes the characteristic function of the interval
$[\tau^{-1}, \tau]$ for some large $\tau$.
Note that 
$P_t^B M_B^{-1}t\ud^*_B = i Q_t^B \PP_B^1$ as in
Corollary~\ref{cor:opfamilies}, and therefore
$\tb{\wt Q_t^2}_\op=\tb{\wt Q_t^1 \PP_B^1 }_\op\lesssim\tb{\wt Q_t^1 }_\op$.
To use the hypothesis on the terms (\ref{eq:bad3}-\ref{eq:bad5}) we
note that the last factors are
\begin{align*}
  P_t^{B_0} &= I +i M_{B_0}^{-1}(t\ud Q_t^{B_0})+ iM_{B_0}^{-1}
  (t\ud^*_{B_0} Q_t^{B_0}), \\
  t T_{B_0}Q_t^{B_0} &= -i M_{B_0}^{-1}(t\ud Q_t^{B_0})- iM_{B_0}^{-1}
  (t\ud^*_{B_0} Q_t^{B_0}), \\
  -it\ud^*_{B_0} Q_t^{B_0}&=-i (t\ud^*_{B_0} Q_t^{B_0})
\end{align*}
respectively. 
Thus we get from (\ref{eq:good1}-\ref{eq:good6}), after multiplication
with $\chi(t)$, that
$$
  \tb{\chi(t)(Q_t^B-Q_t^{B_0})}_\op \le C(\tb{\chi(t) Q_t^B}_\op+1)\|B-B_0\|_\infty,
$$
and thus if $\|B-B_0\|_\infty\le \epsilon:= 1/(2C)$ that
$$
  \tb{\chi Q_t^B}_\op \le 
  \frac{C\|B-B_0\|_\infty+ \tb{Q_t^{B_0}}_\op}{1-C\|B-B_0\|_\infty}\le C',
$$
since $\tb{\chi(t) Q_t^B}_\op < \infty$.
Since this estimate is independent of $\tau$, it follows that 
$\tb{Q_t^Bf}\lesssim \|f\|$.
\end{proof}
Before turning to the proofs of (\ref{eq:perttildeest1})
and (\ref{eq:perttildeest2}), we summarise fundamental techniques
from harmonic analysis that we shall need.
We use the following dyadic decomposition of $\R^n$. Let
$\dyadic= \bigcup_{j=-\infty}^\infty\dyadic_{2^j}$ where
$\dyadic_t:=\{ 2^j(k+(0,1]^n) :k\in\Z^n \}$ if $2^{j-1}<t\le
2^j$. For a dyadic cube $Q\in\dyadic_{2^j}$, denote by $l(Q)=2^j$
its \emph{sidelength}, by $|Q|= 2^{nj}$ its {\em volume} and 
by $R_Q:=Q\times(0,2^j]\subset \R^{n+1}_+$ the associated
\emph{Carleson box}. Let the {\em dyadic averaging operator} $A_t:
\mH \rightarrow \mH $ be given by
$$
  A_t u(x) := u_Q:= \barint_{\hspace{-6pt}Q} u(y)\,  dy = \frac{1}{|Q|} \int_Q
  u(y)\,  dy
$$
for every $x \in \R^n$ and $t>0$, where  $Q \in \dyadic_t$ is
the unique dyadic cube that contains $x$.

We now survey known results for a family of operators
$\Theta_t:\mH\rightarrow\mH$, $t>0$.
For the proofs we refer to \cite{AHLMcT} and \cite{AKMc}.

\begin{defn}  \label{defn:princpart}
By the {\em principal part} of $(\Theta_t)_{t>0}$
we mean the multiplication operators $\gamma_t$ defined by
$$
   \gamma_t(x)w:= (\Theta_t w)(x)
$$
for every $w\in \wedge$. We view $w$ on the right-hand side
of the above equation as the constant function defined on $\R^n$ 
by $w(x):=w$. 
We identify $\gamma_t(x)$ with the (possibly unbounded) multiplication
operator $\gamma_t: f(x)\mapsto \gamma_t(x)f(x)$.
\end{defn}
\begin{lem}
Assume that $\Theta_t$ has $L_2$ off-diagonal bounds 
with exponent $M>n$.
Then $\Theta_t$ extends to a bounded operator 
$L_\infty\rightarrow L_2^{\text{loc}}$.
In particular we have well defined functions 
$\gamma_t\in L_2^{\text{loc}}(\R^n; \mL(\wedge))$ with bounds
$$
   \barint_{\hspace{-6pt}Q} |\gamma_t(y)|^2 \, d y \lesssim  
   \|\Theta_t\|_\off^2
$$
for all $Q\in\dyadic_t$.  
Moreover $\|\gamma_t A_t\|\lesssim \|\Theta_t\|_\off$ uniformly for all $t>0$.
\end{lem}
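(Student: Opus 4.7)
The plan is to prove the three claims of the lemma by a single annular decomposition argument: apply $\Theta_t$ to a constant $w$ by splitting $w$ into local and far pieces, use the hypothesis of off-diagonal decay on the far pieces, and then deduce the $\gamma_t A_t$ bound by summing over dyadic cubes.

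First I would fix $Q\in \dyadic_t$ with center $x_Q$ and sidelength $\ell(Q)\approx t$, and a constant multivector $w\in\wedge$. Although $w$ viewed as a constant function does not lie in $\mH$, I would define $(\Theta_t w)|_Q$ by the decomposition $w = w\chi_{B(x_Q,2t)} + \sum_{k\ge 1} w\chi_{A_k}$, where $A_k := B(x_Q,2^{k+1}t)\setminus B(x_Q,2^kt)$. Each summand is compactly supported, hence in $\mH$, so $\Theta_t$ acts on it. For the local piece, the $L_2$ boundedness gives $\|\Theta_t(w\chi_{B(x_Q,2t)})\|_{L_2(Q)}\lesssim \|\Theta_t\|_{\off}\, t^{n/2}|w|$. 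For the $k$-th annular piece, off-diagonal bounds with exponent $M$ together with $\dist(Q,A_k)\approx 2^k t$ and $|A_k|\lesssim (2^k t)^n$ yield
\[
   \|\Theta_t(w\chi_{A_k})\|_{L_2(Q)}\lesssim \|\Theta_t\|_\off\,\brac{2^k}^{-M}(2^kt)^{n/2}|w|
   = \|\Theta_t\|_\off\, 2^{k(n/2-M)}\, t^{n/2}|w|.
\]
Since $M>n>n/2$, summing in $k$ gives a convergent geometric series. This simultaneously shows that the decomposition is independent of the cutoff (by comparing any two such decompositions on a common refinement), defines the extension $\Theta_t:L_\infty\to L_2^{\mathrm{loc}}$, and yields
\[
  \int_Q |(\Theta_t w)(y)|^2\,dy \lesssim \|\Theta_t\|_\off^2\,t^n|w|^2.
\]
Taking $w$ to be each of the basis multivectors $e_s$ in turn and using equivalence of norms on the finite dimensional space $\mL(\wedge)$, this proves the desired bound
$\barint_Q|\gamma_t(y)|^2\,dy\lesssim \|\Theta_t\|_\off^2$.

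The last statement follows directly. For $f\in\mH$ and $x\in Q\in\dyadic_t$ we have $(\gamma_t A_tf)(x)=\gamma_t(x)f_Q$ with $|\gamma_t(x)f_Q|\le |\gamma_t(x)|\,|f_Q|$. Summing over the disjoint cubes $Q\in\dyadic_t$ and using the $\gamma_t$-bound just established together with $|f_Q|^2\le \barint_Q|f|^2$ gives
\[
  \|\gamma_t A_t f\|^2 = \sum_{Q\in\dyadic_t}|f_Q|^2\int_Q|\gamma_t(y)|^2\,dy
  \lesssim \|\Theta_t\|_\off^2 \sum_{Q\in\dyadic_t}|f_Q|^2\,|Q|
  \le \|\Theta_t\|_\off^2 \|f\|^2,
\]
which is the uniform bound on $\gamma_t A_t$.

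The only genuinely delicate point is the annular estimate and its convergence. Everything else is bookkeeping. The condition $M>n$ in the hypothesis is actually more than is strictly needed for these three bounds (any $M>n/2$ suffices for the geometric series), but it is the exponent that is available uniformly from Proposition~\ref{pseudoloc}, and it keeps the proof robust when later we need to compose such estimates with operators like $A_t$ and square functions.
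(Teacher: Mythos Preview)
Your proof is correct; the paper does not actually give its own proof of this lemma but refers to \cite{AHLMcT} and \cite{AKMc}, and your annular decomposition argument is precisely the standard proof found there. One small inaccuracy in your closing commentary: Proposition~\ref{pseudoloc} in fact yields off-diagonal bounds for \emph{every} exponent $M\ge 0$, so the hypothesis $M>n$ here is not a constraint inherited from that proposition but is simply a convenient uniform threshold for this and the surrounding lemmas.
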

We have the following principal part approximation $\Theta_t\approx\gamma_t$.
\begin{lem}  \label{lem:ppa}
Assume that $\Theta_t$ has $L_2$ off-diagonal bounds 
with exponent $M>3n$ and let $F_t:\R^n\rightarrow\wedge$ be a 
family of functions. Then
$$
  \tb{(\Theta_t-\gamma_t A_t)F_t}
  \lesssim \|\Theta_t\|_\off \tb{t\nabla F_t},
$$
where $\nabla f=\nabla\otimes f= \sum_{j=1}^n e_i\otimes (\partial_j f)$ denotes the full differential of $f$.
Moreover, if $P_t$ is a standard Fourier mollifier
(we shall use $P_t= (1+ t^2\Pi^2)^{-1}$ where $\Pi= \Gamma+ \Gamma^*$
and $\Gamma$ is an exact nilpotent, homogeneous first order partial
differential operator with constant coefficients as in 
\cite{AKMc})
and $F_t= P_t f$ for some $f\in\mH$, then
$\tb{\gamma_t A_t(P_t-I)f}\lesssim \|\Theta_t\|_\off \|f\|$
and $\tb{t\nabla P_t}_\op\le C$.
Thus
$$
  \tb{(\Theta_t P_t -\gamma_t A_t)f} \lesssim \|\Theta_t\|_\off \|f\|.
$$
\end{lem}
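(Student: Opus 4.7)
The proof is a standard principal-part-approximation argument of the type developed for the Kato problem in \cite{AHLMcT} and adapted to Dirac-type operators in \cite{AKMc}; I sketch the plan without carrying out the routine computations.

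\textbf{First estimate.} Fix $t>0$. For $Q \in \dyadic_t$ and $x \in Q$, the identity $\gamma_t(x)w = (\Theta_t w)(x)$ from Definition~\ref{defn:princpart}, applied to the constant function $w = F_Q := \barint_{\hspace{-6pt}Q} F_t$, yields the pointwise identity
\begin{equation*}
  \big((\Theta_t - \gamma_t A_t) F_t\big)(x) = \Theta_t(F_t - F_Q)(x), \qquad x \in Q.
\end{equation*}
Decompose $\R^n = \bigcup_{j \ge 0} C_j(Q)$ with $C_0(Q) := 4Q$ and $C_j(Q) := 2^{j+1}Q \setminus 2^j Q$ for $j \ge 1$, and apply the $L_2$ off-diagonal hypothesis on each annulus. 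For each $j$, bound $\|F_t - F_Q\|_{L_2(C_j(Q))}$ by combining Poincar\'e's inequality on $2^{j+1}Q$ with the telescoping chain $F_{2^{j+1}Q} - F_Q = \sum_{k=0}^{j}(F_{2^{k+1}Q} - F_{2^k Q})$, each increment controlled by $t\cdot 2^k \cdot \big(\barint_{\hspace{-6pt}2^{k+1}Q} |\nabla F_t|^2\big)^{1/2}$. Summing the resulting geometric series, squaring, summing over $Q \in \dyadic_t$, and using bounded overlap of the dilates $2^{j+1}Q$ together with the hypothesis $M > 3n$ yields, for each $t$,
\begin{equation*}
  \|(\Theta_t - \gamma_t A_t) F_t\|_2 \lesssim \|\Theta_t\|_\off \, \|t\nabla F_t\|_2,
\end{equation*}
which integrated against $dt/t$ gives the first asserted bound.

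\textbf{Moreover clauses.} The bound $\tb{t\nabla P_t}_\op \le C$ follows by Plancherel from the symbol estimate $|t\xi|/(1 + t^2 |\xi|^2) \lesssim \min(t|\xi|, (t|\xi|)^{-1})$, since $P_t = (1 + t^2 \Pi^2)^{-1}$ is a constant-coefficient Fourier multiplier. For $\tb{\gamma_t A_t(P_t - I)f} \lesssim \|\Theta_t\|_\off \|f\|$, recall from the preceding lemma that $\|\gamma_t A_t\| \lesssim \|\Theta_t\|_\off$ uniformly in $t$, so it suffices to establish the quadratic estimate $\tb{A_t(P_t - I)f} \lesssim \|f\|$; this is the classical Littlewood--Paley bound for the difference of dyadic averaging and a Fourier mollifier at scale $t$, proved by a Schur/$TT^*$ argument as in \cite{AHLMcT, AKMc}. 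Combining the first estimate applied to $F_t = P_t f$ with these two bounds gives
\begin{equation*}
  \tb{(\Theta_t P_t - \gamma_t A_t)f} \le \tb{(\Theta_t - \gamma_t A_t) P_t f} + \tb{\gamma_t A_t (P_t - I) f} \lesssim \|\Theta_t\|_\off \|f\|.
\end{equation*}

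\textbf{Main obstacle.} The delicate bookkeeping step is the telescoping Poincar\'e estimate: the $k$-th increment $F_{2^{k+1}Q} - F_{2^k Q}$ brings a factor $2^k$ (the scale of the larger cube), the annulus $C_j(Q)$ has volume $\sim 2^{jn}|Q|$, and the off-diagonal hypothesis contributes the decay $2^{-jM}$. A weighted Cauchy--Schwarz in $j$ is needed to see that the resulting double geometric series converges precisely when $M > 3n$, which explains the sharp exponent hypothesis of the lemma.
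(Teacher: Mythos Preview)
Your sketch is correct and follows the standard principal-part-approximation argument from \cite{AHLMcT,AKMc}; this is precisely what the paper does, since it gives no proof of its own and simply refers to those works. One small remark: your symbol computation for $\tb{t\nabla P_t}_\op$ tacitly assumes the symbol of $\Pi^2$ is $|\xi|^2 I$, which holds in the concrete cases used here (where $\Pi^2=-\Delta$ componentwise) but for a general exact nilpotent $\Gamma$ one should invoke the ellipticity hypothesis on $\Pi$ from \cite{AKMc}; also, the exponent $M>3n$ is sufficient for the bookkeeping but the lemma makes no claim of sharpness.
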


\begin{defn}
A function $\gamma(t,x):\R^{n+1}_+\rightarrow\wedge$ is called
a {\em Carleson function} if there exists $C<\infty$
such that
$$
  \iint_{R_Q} |\gamma(t,x)|^2 \frac{dxdt}t \le C^2 |Q|
$$
for all cubes $Q\subset\R^n$.
Here $R_Q:= Q\times (0,l(Q)]$ is the Carleson box over $Q$.
We define the Carleson norm $\|\gamma_t\|_C$ to be the smallest
constant $C$.
\end{defn}
We use Carleson's lemma in the following form.
\begin{lem}  \label{lem:Carleson}
Let $\gamma_t(x)=\gamma(t,x):\R^{n+1}_+\rightarrow\wedge$ be a 
Carleson function and let
$F_t(x)= F(t,x):\R^{n+1}_+\rightarrow\wedge$ be a family of functions.
Then
$$
  \tb{\gamma_t F_t} \lesssim \|\gamma_t\|_C \| N_*(F_t)\|,
$$
where $N_*(F_t)(x):= \sup_{|y-x|<t}|F_t(y)|$ denotes the non-tangential maximal function of $F_t$.
In particular, if $F_t= A_t f$ for some $f\in \mH$, then
$\|N_*(A_t f)\|\lesssim \|M(f)\|\lesssim \|f\|$, where 
$M(f)(x):= \sup_{r>0} \barint_{\hspace{-3pt} B(x,r)} |f(y)| dy$ 
denotes the Hardy--Littlewood maximal function, and thus
$$
  \tb{\gamma_t A_t f} \lesssim \|\gamma_t\|_C \|f\|.
$$
\end{lem}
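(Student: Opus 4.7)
The plan is a textbook tent-space/Carleson embedding argument. Rewriting the squared triple-bar norm as integration against the measure $d\nu(t,x) := |\gamma(t,x)|^2\,\tfrac{dx\,dt}{t}$, which by hypothesis is a Carleson measure on $\R^{n+1}_+$ with mass $\|\gamma_t\|_C^2$, we have
\[
\tb{\gamma_t F_t}^2 = \iint_{\R^{n+1}_+} |F(t,x)|^2\, d\nu(t,x).
\]
First I would apply the layer-cake formula in the integrand to express this as $2\int_0^\infty \lambda\, \nu\bigl(\{(t,x) : |F_t(x)|>\lambda\}\bigr)\, d\lambda$. The key pointwise observation is that whenever $|F_t(x)|>\lambda$, every $y$ with $|y-x|<t$ satisfies $N_*(F)(y) \ge |F_t(x)| > \lambda$, so $B(x,t) \subset E_\lambda := \{y\in\R^n : N_*(F)(y) > \lambda\}$. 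Consequently the superlevel set in $\R^{n+1}_+$ sits inside the tent region $T(E_\lambda) := \{(t,x) : B(x,t)\subset E_\lambda\}$.

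Second, I invoke the classical equivalence (Carleson's original lemma; see e.g.\ Stein's \emph{Harmonic Analysis}, Chapter II) that a measure $\nu$ satisfying $\nu(R_Q) \le C^2 |Q|$ for every cube $Q$ automatically satisfies $\nu(T(\mathcal{O})) \lesssim C^2 |\mathcal{O}|$ for every open set $\mathcal{O}\subset\R^n$ of finite measure. The standard proof covers $\mathcal{O}$ by a Whitney decomposition into essentially disjoint cubes $\{Q_j\}$ with $\sum |Q_j| \lesssim |\mathcal{O}|$ and notes that $T(\mathcal{O}) \subset \bigcup_j R_{Q_j^*}$ for a controlled dilate $Q_j^*$. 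Applying this with $\mathcal{O} = E_\lambda$ and reassembling,
\[
\tb{\gamma_t F_t}^2 \lesssim \|\gamma_t\|_C^2 \int_0^\infty 2\lambda\, |E_\lambda|\, d\lambda = \|\gamma_t\|_C^2\, \|N_*(F)\|_{L_2(\R^n)}^2,
\]
which is the first asserted inequality.

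For the second assertion, it suffices to show the pointwise bound $N_*(A_t f)(x) \lesssim M(f)(x)$, after which the Hardy--Littlewood maximal inequality gives $\|M(f)\|_2 \lesssim \|f\|_2$. If $|y-x|<t$ and $y$ lies in the dyadic cube $Q\in\dyadic_t$ containing it, then $l(Q)\le t$ and $Q\subset B(x, Ct)$ for a dimensional constant $C$, so
\[
|A_t f(y)| = \Big|\barint_{\hspace{-6pt}Q} f\Big| \le \frac{|B(x,Ct)|}{|Q|}\,\barint_{\hspace{-6pt}B(x,Ct)} |f| \lesssim M(f)(x).
\]
Taking the supremum over $(t,y)$ gives the claim.

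There is no genuine obstacle here: both the tent/Carleson-box equivalence and the domination $N_*(A_t f)\lesssim Mf$ are standard. The only point requiring any attention is the correct interpretation of $N_*$ (taken over the full cone $\{(s,y):|y-x|<s,\ s>0\}$ rather than at a fixed height), which is what makes the inclusion $\{|F_t(x)|>\lambda\}\subset T(E_\lambda)$ work cleanly.
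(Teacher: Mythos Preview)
Your argument is correct and is the standard tent-space proof of Carleson's embedding lemma. The paper itself does not prove this statement; it lists the lemma among results whose proofs are deferred to \cite{AHLMcT} and \cite{AKMc}, so there is no in-paper argument to compare against beyond noting that your approach is precisely the classical one those references supply.

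One minor slip in the second part: you write $l(Q)\le t$, but with the paper's convention (the cubes in $\dyadic_t$ have sidelength $2^j$ where $2^{j-1}<t\le 2^j$) one actually has $t\le l(Q)<2t$. This does not affect your conclusion, since what you need is $l(Q)\approx t$, which gives both $Q\subset B(x,Ct)$ and $|B(x,Ct)|/|Q|\lesssim 1$.
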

\begin{lem}  \label{lem:sqfcntoCarleson}
Assume that $\Theta_t$ has $L_2$ off-diagonal bounds 
with exponent $M>n$.
Then 
$$
  \|\Theta_t f\|_C \lesssim (\tb{\Theta_t}_\op + \|\Theta_t\|_\off)\|f\|_\infty,
$$
for every $f\in L_\infty(\R^n;\wedge)$.
In particular, choosing $f=w=\text{constant}$ we obtain
$$
  \|\gamma_t\|_C \lesssim \tb{\Theta_t}_\op + \|\Theta_t\|_\off.
$$
\end{lem}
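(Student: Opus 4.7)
The plan is a standard Littlewood--Paley decomposition localised to the Carleson box. Fix a cube $Q\subset\R^n$ and decompose
$$
  f = f\chi_{2Q} + \sum_{k\ge 1} f\chi_{C_k(Q)}, \qquad C_k(Q) := 2^{k+1}Q\setminus 2^k Q,
$$
so as to separate a local piece supported in $2Q$ from annular far pieces. The two quantities $\tb{\Theta_t}_\op$ and $\|\Theta_t\|_\off$ should control these two contributions respectively, and adding them will give the Carleson estimate on $R_Q$.

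For the local piece, I would simply extend the time integration to all of $(0,\infty)$ and use the definition of the triple norm, together with the $L_\infty\to L_2$ estimate provided by the finite measure of $2Q$:
$$
  \iint_{R_Q} |\Theta_t(f\chi_{2Q})|^2 \frac{dx\,dt}{t} \le \tb{\Theta_t(f\chi_{2Q})}^2 \le \tb{\Theta_t}_\op^2 \|f\chi_{2Q}\|_2^2 \lesssim \tb{\Theta_t}_\op^2 |Q| \|f\|_\infty^2.
$$

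For the far pieces, the key input is the off-diagonal decay applied with $E=Q$ and $F=C_k(Q)$. Since $\dist(Q, C_k(Q)) \gtrsim 2^k l(Q)$ and $2^k l(Q)/t \ge 1$ in the range $t\le l(Q)$, $k\ge 1$, the bound reduces to
$$
  \|\Theta_t(f\chi_{C_k(Q)})\|_{L_2(Q)} \lesssim \|\Theta_t\|_\off \big(t/(2^k l(Q))\big)^M (2^k l(Q))^{n/2} \|f\|_\infty,
$$
after using $\|f\chi_{C_k(Q)}\|_2 \le |2^{k+1}Q|^{1/2}\|f\|_\infty$. Summing the geometric series $\sum_k 2^{k(n/2-M)}$ converges because $M > n/2$ (which is weaker than the hypothesis $M>n$), yielding a bound of order $\|\Theta_t\|_\off (t/l(Q))^M l(Q)^{n/2}\|f\|_\infty$ for $\|\Theta_t(f\chi_{\R^n\setminus 2Q})\|_{L_2(Q)}$. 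Squaring and integrating $dt/t$ over $(0, l(Q)]$ absorbs the factor $(t/l(Q))^{2M}$ and produces the desired $\|\Theta_t\|_\off^2 |Q|\|f\|_\infty^2$ bound.

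Adding the two contributions gives $\|\Theta_t f\|_C \lesssim (\tb{\Theta_t}_\op + \|\Theta_t\|_\off)\|f\|_\infty$, and the specialisation $\|\gamma_t\|_C \lesssim \tb{\Theta_t}_\op + \|\Theta_t\|_\off$ follows by taking $f=w$ to range over unit constants in $\wedge$ and observing that $\Theta_t w = \gamma_t w$ pointwise by Definition~\ref{defn:princpart}. There is no substantial obstacle here; the argument is purely a bookkeeping of the two hypotheses against the annular decomposition, and the exponent condition $M>n$ is used with plenty of room to spare.
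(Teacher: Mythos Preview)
Your argument is correct and is precisely the standard proof; the paper itself does not spell out a proof of this lemma but refers to \cite{AHLMcT} and \cite{AKMc}, where this same local-plus-annuli decomposition is used. Your observation that only $M>n/2$ is actually needed for the Carleson bound is also accurate---the stronger hypothesis $M>n$ in the statement is there for consistency with the neighbouring lemmas on the principal part.
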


\subsection{Perturbation of block coefficients}  \label{section8.1}

In this section we assume that $B_0$ is a block matrix, i.e.
we assume that
$$
  B_0=
     \begin{bmatrix}
       (B_0)_{\no\no} &0 \\ 0 & (B_0)_{\ta\ta}
     \end{bmatrix},
$$
in the splitting $\mH= N^-\mH\oplus N^+\mH$.
Our goal is to prove Theorem~\ref{thm:transmain} by 
verifying the hypothesis of Lemma~\ref{lem:pertests}.
We recall from Lemma~\ref{lem:blockasswapping} that 
$$
  T_{B_0}= \Pi_{B_0}= \Gamma+ B_0^{-1}\Gamma^* B_0, 
  \qquad \text{where } \Gamma:= -i N \ud, 
$$
is an operator of the form treated in \cite{AKMc}.
Thus with a slight change of notation for $\ep$, we need to prove the
following.
\begin{thm}  \label{thm:pertestsforblock}
If $B_0$ is a block matrix and if $(\wt Q_t)_{t>0}$
is an operator family with $L_2$ off-diagonal bounds 
$\|\wt Q_t\|_\off \le C$ as in Definition~\ref{defn:offdiag},
then with notation as above
\begin{align}
  \tb{ \wt Q_t\ep (t^2\Gamma\Gamma^*_{B_0} P_t^{B_0}) }_\op &\le C( \tb{\wt Q_t}_\op+ 1)\|\ep\|_\infty, \label{eq:perttildeest1block}  \\
  \tb{ \wt Q_t\ep (t^2\Gamma^*_{B_0}\Gamma P_t^{B_0}) }_\op &\le C( \tb{\wt Q_t}_\op+ 1 )\|\ep\|_\infty,   \label{eq:perttildeest2block}
\end{align}
where $P_t^{B_0}= (1+t^2\Pi_{B_0}^2)^{-1}$.
\end{thm}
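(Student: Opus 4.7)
The plan is to follow the harmonic analysis strategy of \cite{AHLMcT,AKMc}: reduce the square function estimate to a Carleson measure bound for the principal part of a relevant operator, and then verify that Carleson bound by a $T(b)$ argument using carefully tailored test functions. I focus on (\ref{eq:perttildeest1block}); the estimate (\ref{eq:perttildeest2block}) is entirely symmetric, with $\Gamma$ and $\Gamma^*_{B_0}$ swapped and $\PP^2_{B_0}$ in place of $\PP^1_{B_0}$.

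First, by Corollary~\ref{cor:opfamilies} together with the intertwining relations of Remark~\ref{rem:gammaintertwining}, the identity $t^2\Pi_{B_0}^2 P_t^{B_0}=I-P_t^{B_0}$ splits over the Hodge subspaces $\clos{\ran(\Gamma)}$ and $\nul(\Gamma^*_{B_0})$, yielding
$$
\Theta_t\ :=\ t^2\Gamma\Gamma^*_{B_0}P_t^{B_0}\ =\ (I-P_t^{B_0})\PP^1_{B_0}.
$$
Hence $\Theta_t$ is uniformly bounded on $\mH$, has $L_2$ off-diagonal bounds of all orders by Proposition~\ref{pseudoloc}, and annihilates $\nul(\Gamma^*_{B_0})$. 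Setting $\Lambda_t:=\wt Q_t\ep$, Lemma~\ref{lem:offdiagcomposition} together with the hypothesis $\|\wt Q_t\|_\off\le C$ gives $\|\Lambda_t\|_\off\lesssim\|\ep\|_\infty$ and $\tb{\Lambda_t}_\op\le\|\ep\|_\infty\tb{\wt Q_t}_\op$. The target estimate becomes $\tb{\Lambda_t\Theta_t}_\op\lesssim(\tb{\wt Q_t}_\op+1)\|\ep\|_\infty$, and since $\PP^1_{B_0}$ is bounded we may restrict attention to inputs $f\in\clos{\ran(\Gamma)}$.

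Next comes the principal part reduction. Let $\lambda_t(x)w:=(\Lambda_t w)(x)$ denote the principal part of $\Lambda_t$, so that by Lemma~\ref{lem:sqfcntoCarleson},
$$
\|\lambda_t\|_C\ \lesssim\ \tb{\Lambda_t}_\op+\|\Lambda_t\|_\off\ \lesssim\ (\tb{\wt Q_t}_\op+1)\|\ep\|_\infty.
$$
Split $\Lambda_t\Theta_t=(\Lambda_t-\lambda_t A_t)\Theta_t+\lambda_t A_t\Theta_t$ and apply Lemma~\ref{lem:ppa} to the first piece, after inserting and removing the unperturbed Fourier mollifier $P_t=(1+t^2\Pi^2)^{-1}$ with $\Pi=\Gamma+\Gamma^*$; the resulting error terms are handled by the uniform bound on $\Theta_t$ combined with the quadratic estimate (\ref{eq:unpertubedquadratic}) for $T_{B_0}=\Pi_{B_0}$ and the off-diagonal bounds. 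For the remaining middle term $\lambda_t A_t\Theta_t$, Carleson's lemma (Lemma~\ref{lem:Carleson}) reduces matters, after one further principal-part step, to proving the Carleson measure estimate
$$
\|\eta_t\|_C\ \lesssim\ 1,
$$
where $\eta_t(x)w:=(\Theta_t w)(x)$ is the principal part of $\Theta_t$ evaluated on constants $w\in\wedge$, with $w$ ranging over vectors adapted to $\clos{\ran(\Gamma)}$.

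The main obstacle is precisely this Carleson bound on $\eta_t$; this is where the block structure and accretivity of $B_0$ enter decisively. Following the scheme of \cite{AHLMcT,AKMc}, it is proved by a $T(b)$ argument: for each dyadic cube $Q\in\dyadic$ and each unit constant vector $w\in\wedge$, one builds a test function $f^w_Q\in\clos{\ran(\Gamma)}$ such that $A_{l(Q)}f^w_Q$ approximates $w$ on $Q$ in an averaged $L_2$ sense, and which enjoys quantitative control both in $L_2$ on a fixed dilate of $Q$ and in the tent norm on the Carleson box $R_Q$ after applying $\Pi_{B_0}$. This is the content of Lemma~\ref{lem:newtestfcns}: since $B_0$ is block diagonal, $f^w_Q$ can be constructed by solving a $\Pi_{B_0}$-equation with right-hand side a smooth bump adapted to $Q$, and the accretivity of $B_0$ gives the required lower bound on $A_{l(Q)}f^w_Q$. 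A standard stopping-time decomposition into sawtooth regions, combined with a John--Nirenberg argument, then converts the pointwise approximation $A_{l(Q)}f^w_Q\approx w$ into the Carleson estimate $\|\eta_t\|_C\lesssim 1$, completing the proof of (\ref{eq:perttildeest1block}).
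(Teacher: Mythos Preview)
Your overall strategy diverges from the paper's in a way that creates a genuine gap. The paper takes $\Theta_t:=\wt Q_t\ep(t^2\Gamma\Gamma^*_{B_0}P_t^{B_0})$ as a \emph{single} operator, writes $\Theta_t f=\Theta_t(I-P_t)f+(\Theta_t P_t-\gamma_t A_t)f+\gamma_t A_t f$ for $f\in\nul(\Gamma)$ with $\gamma_t$ the principal part of the \emph{full} composition, and then runs the $T(b)$ argument to bound $\|\gamma_t\|_C$. The key point is that in the $T(b)$ step one tests the full $\Theta_t$ against $f^w_Q\in\ran(\Gamma)$ and exploits the factorisation $\Theta_t f^w_Q=\wt Q_t\ep(t\Gamma P_t^{B_0})\,t(\Gamma^*_{B_0}f^w_Q)$, where the bound $\|\Gamma^*_{B_0}f^w_Q\|\lesssim |Q|^{1/2}/l(Q)$ from Lemma~\ref{lem:newtestfcns} yields the crucial decay $\|\Theta_t f^w_Q\|\lesssim\|\ep\|_\infty(t/l(Q))|Q|^{1/2}$.

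Your splitting $\Lambda_t\Theta_t=(\Lambda_t-\lambda_t A_t)\Theta_t+\lambda_t A_t\Theta_t$, with $\Lambda_t=\wt Q_t\ep$ and your $\Theta_t=(I-P_t^{B_0})\PP^1_{B_0}$, breaks down at both pieces. For the first piece, Lemma~\ref{lem:ppa} requires control of $\tb{t\nabla\Theta_t f}$; for $f\in\nul(\Gamma)$ this is $\tb{t\nabla(I-P_t^{B_0})f}$, and since $P_t^{B_0}$ only smooths along $\Pi_{B_0}$ while $\|\nabla u\|\lesssim\|\Pi_{B_0}u\|$ fails for rough $B_0$ (it would amount to $\|\nabla u\|\lesssim\|du\|+\|d^*B_0u\|$, i.e.\ $L_2$ regularity for divergence-form operators with $L_\infty$ coefficients), this quantity is not bounded by $\|f\|$. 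Inserting the unperturbed $P_t$ does not help: one is still left with $\tb{t\nabla P_t^{B_0}P_tf}$, and the commutator $[t\nabla,P_t^{B_0}]$ involves derivatives of $B_0$. For the second piece, Carleson's lemma gives $\tb{\lambda_tA_t\Theta_tf}\lesssim\|\lambda_t\|_C\|N_*(A_t\Theta_tf)\|$, but there is no mechanism to bound $\|N_*(A_t(I-P_t^{B_0})f)\|$ by $\|f\|$, nor does a further principal-part step on $\Theta_t$ produce a clean reduction to $\|\eta_t\|_C$: the product $\lambda_t(A_t\eta_t)A_tf$ does not fit Carleson's lemma since $\eta_t$ is only a Carleson function, not pointwise bounded. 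The fix is to follow the paper and take the principal part of the whole operator $\wt Q_t\ep(t^2\Gamma\Gamma^*_{B_0}P_t^{B_0})$; then the $T(b)$ argument with the test functions of Lemma~\ref{lem:newtestfcns} closes precisely because the $t^2\Gamma\Gamma^*_{B_0}P_t^{B_0}$ factor, applied to $f^w_Q$, supplies the needed smallness.
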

We note that if (\ref{eq:perttildeest1block}) holds with $\Gamma$ 
replaced by $\Gamma^*$ and
with $B_0$ replaced by $B_0^{-1}$, then also (\ref{eq:perttildeest2block}) holds.
This follows from the conjugation formula
\begin{multline*}
  \wt Q_t\ep (t^2\Gamma^*_{B_0}\Gamma P_t^{B_0}) \\
  = 
  B_0^{-1} ( B_0\wt Q_tB_0^{-1} ) ( B_0\ep B_0^{-1} )
  ( t^2\Gamma^*\Gamma_{B_0^{-1}} 
      (1+t^2(\Gamma^*+ B_0\Gamma B_0^{-1} )^2)^{-1} ) B_0.
\end{multline*}
Thus it suffices to prove (\ref{eq:perttildeest1block}), as long as we
only use properties of $(\Gamma, B_0)$ shared with $(\Gamma^*, B_0^{-1})$.
To this end, we let $\Theta_t$ be the operator
$$
  \Theta_t:= \wt Q_t\ep (t^2\Gamma\Gamma^*_{B_0} P_t^{B_0}),
$$
and denote by $\gamma_t(x)$ its principal part as in 
Definition~\ref{defn:princpart}.
We note that we have a Hodge type splitting
$\mH = \nul(\Gamma)\oplus \nul(\Gamma^*_{B_0})$ by 
Lemma~\ref{lem:hodge}(i),
and since $\Theta_t|_{\nul(\Gamma^*_{B_0})}=0$
it suffices to bound $\tb{\Theta_t f}$ for $f\in \nul(\Gamma)$. 
We do this by writing
\begin{equation}  \label{eq:blockmaindecomposition}
  \Theta_t f= \Theta_t(I-P_t)f+ (\Theta_t P_t-\gamma_t A_t )f+ 
  \gamma_t A_t f,
\end{equation}
where $\Pi:= \Gamma +\Gamma^*$ is
the corresponding unperturbed operator and 
$P_t:= (1+t^2 \Pi^2)^{-1}$ and $Q_t:= t\Pi(1+t^2 \Pi^2)^{-1}$.
\begin{lem}  \label{lem:firstterminblockest}
We have, for all $f\in\nul(\Gamma)$, the estimate
$$
  \tb{\Theta_t(I-P_t)f} \le 
    C( \tb{\wt Q_t}_\op+1 )\|\ep\|_\infty\|f\|.
$$
\end{lem}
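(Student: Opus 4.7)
\emph{Plan.} The plan is to exploit two structural features of the operator $S_t := \Theta_t(I-P_t)$. First, $S_t$ inherits $L_2$ off-diagonal bounds with $\|S_t\|_\off \lesssim \|\ep\|_\infty$ by composition (Lemma~\ref{lem:offdiagcomposition}), using the hypothesis $\|\wt Q_t\|_\off \le C$ on $\wt Q_t$, the trivial off-diagonal bounds of the multiplication operator $\ep$ with constant $\|\ep\|_\infty$, Proposition~\ref{pseudoloc} applied to $t^2\Gamma\Gamma^*_{B_0}P_t^{B_0}$, and the uniform operator bound on $I-P_t$. Second, $S_t$ annihilates every constant multivector $w\in\wedge$: one has $\Gamma w = N\m d w = 0$ and $\Gamma^* w = d^*\m N w = 0$ (the derivatives of constants vanish), so $\Pi w = 0$, $P_t w = w$, and $(I-P_t)w = 0$. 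Hence the principal part $s_t$ of $S_t$ in the sense of Definition~\ref{defn:princpart} is identically zero.

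First I would split $S_t f = S_t P_t f + S_t(I-P_t)f$, where $P_t = (1+t^2\Pi^2)^{-1}$ serves as the Fourier mollifier of Lemma~\ref{lem:ppa}. For the first piece, Lemma~\ref{lem:ppa} applied with $S_t$ in place of $\Theta_t$ and $F_t = P_t f$, together with $s_t = 0$ and the uniform bound $\tb{t\nabla P_t}_\op\le C$, yields directly
\[
\tb{S_t P_t f} = \tb{(S_t P_t - s_t A_t) f} \lesssim \|S_t\|_\off\,\|f\| \lesssim \|\ep\|_\infty\|f\|.
\]

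For the residual piece $\tb{S_t(I-P_t)f} = \tb{\Theta_t(I-P_t)^2 f}$, the plan is to represent the inner $(I-P_t)$ via the Calder\'on reproducing identity $(I-P_t)g = 2\int_0^t Q_s^2\,g\,ds/s$ (valid on $\clos{\ran(\Pi)}$, which contains the residual since $(I-P_t)$ vanishes on $\nul(\Pi)$), interchange the order of integration, and apply a Schur estimate. The decay of the resulting kernel comes from the simplification on $\nul(\Gamma)$ that $t^2\Gamma\Gamma^*_{B_0}P_t^{B_0} = I-P_t^{B_0}$ (because $\Gamma P_t^{B_0}u = P_t^{B_0}\Gamma u$ by Remark~\ref{rem:gammaintertwining} and $\Pi_{B_0}^2 g = \Gamma\Gamma^*_{B_0}g$ for $g\in\nul(\Gamma)$, and $P_t^{B_0}$ preserves $\nul(\Gamma)$), combined with functional-calculus bounds on the composite $(I-P_t^{B_0})Q_s^2$; the finite outer integral is then controlled by the quadratic estimate $\int_0^\infty\|Q_s f\|^2/s\,ds \lesssim \|f\|^2$ for the self-adjoint $\Pi$, and by the quadratic estimates for $\Pi_{B_0}$ available in the block case via Theorem~\ref{thm:unpertblockmain}. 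The uniform operator bound $\|\wt Q_t\ep\|_\op \lesssim \|\ep\|_\infty$ suffices here, contributing the $C\|\ep\|_\infty\|f\|$ term, which is $\le C(\tb{\wt Q_t}_\op+1)\|\ep\|_\infty\|f\|$ as required.

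The main obstacle will be closing the Schur estimate for the residual piece: neither $(I-P_t^{B_0})$ nor $(I-P_t)$ satisfies quadratic estimates on its own (their symbols $z^2/(1+z^2)$ do not lie in $\Psi$), so the needed decay must be extracted from the interplay of the two functional calculi. The delicate point is verifying the correct decay rate $\eta(s/t)$ for $\|\wt Q_t\ep(I-P_t^{B_0})Q_s^2\|_\op$ (with $\eta$ integrable against both $ds/s$ and $dt/t$), which requires carefully chaining the intertwining relations of Remark~\ref{rem:gammaintertwining} with the spectral bounds of Proposition~\ref{prop:spectrest}.
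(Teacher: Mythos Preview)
Your decomposition $S_tf=S_tP_tf+S_t(I-P_t)f$ is essentially vacuous: since $(I-P_t)P_t=Q_t^2$, the first piece is just $\wt Q_t\ep(I-P_t^{B_0})Q_t^2f$, which is immediately bounded by $\|\ep\|_\infty\tb{Q_t^2f}\lesssim\|\ep\|_\infty\|f\|$ without any principal part argument. All of the content of the lemma therefore sits in your residual $\Theta_t(I-P_t)^2f$, and here your Schur plan does not close. Writing the inner $(I-P_t)$ as $2\int_0^t Q_s^2\,ds/s$ you need decay in $s/t$ of $\|\wt Q_t\ep(I-P_t^{B_0})(I-P_t)Q_s^2\|$, but in the $\Pi$-calculus alone the symbol of $(I-P_t)Q_s$ is $\approx 1$ near $|z|\sim 1/s$ when $s\le t$, and the factor $(I-P_t^{B_0})$ lives in the non-commuting $\Pi_{B_0}$-calculus so cannot be combined with $Q_s$ by functional calculus. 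Your own caveat about the ``delicate point'' is the actual gap, not a technicality. Note also that your residual bound, as claimed, would contain no $\tb{\wt Q_t}_\op$ term; this is a warning sign, since on $\nul(\Gamma)$ one has $\Theta_t(I-P_t)=\wt Q_t\ep(I-P_t^{B_0})(I-P_t)$ and the high-frequency part $(|z|\to\infty)$ of $(I-P_t^{B_0})(I-P_t)$ does not vanish, so square-function control of this piece genuinely needs $\tb{\wt Q_t}_\op$.

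The paper bypasses the Schur difficulty by a single algebraic move: the intertwining of Remark~\ref{rem:gammaintertwining} gives $P_t^{B_0}t\Gamma=Q_t^{B_0}(\Pi_{B_0}^{-1}\Gamma)$ with $\Pi_{B_0}^{-1}\Gamma$ the bounded Hodge projection of Lemma~\ref{lem:hodge}(i). Since $(I-P_t)f=t\Gamma Q_tf$ for $f\in\nul(\Gamma)$, this yields directly
\[
\Theta_t(I-P_t)f=\wt Q_t\ep\,f-\wt Q_t\ep\,P_tf-\wt Q_t\ep\,Q_t^{B_0}(\Pi_{B_0}^{-1}\Gamma)Q_tf.
\]
The three terms are then controlled by $\tb{\wt Q_t}_\op\|\ep\|_\infty\|f\|$, by a principal part and Carleson argument for $\wt\Theta_t=\wt Q_t\ep$ (this is where $(\tb{\wt Q_t}_\op+1)$ reappears via Lemma~\ref{lem:sqfcntoCarleson}), and by $\|\ep\|_\infty\tb{Q_tf}\lesssim\|\ep\|_\infty\|f\|$, respectively. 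The identity $P_t^{B_0}t\Gamma=Q_t^{B_0}(\Pi_{B_0}^{-1}\Gamma)$ is exactly the missing ingredient that replaces your intractable mixed-calculus decay with a bounded Hodge projection times a genuine square function.
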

\begin{proof}
If $f\in\nul(\Gamma)$, then
$(I-P_t)f= t\Gamma Q_tf \in\nul(\Gamma)$, which shows that
$t^2\Gamma\Gamma^*_{B_0}P_t^{B_0}(I-P_t)f= (I-P_t^{B_0})(I-P_t)f$.
To prove the estimate, we write
$$
  \Theta_t(I-P_t)f = \wt Q_t\ep (I- P_t^{B_0}) t\Gamma Q_t f 
  = \wt Q_t\ep f- \wt Q_t\ep P_t f- 
    \wt Q_t\ep Q_t^{B_0} ( \Pi_{B_0}^{-1}\Gamma ) Q_t f,
$$
where we recall that $Q_t^{B_0}= t \Pi_{B_0} P_t^{B_0}$.
Clearly $\tb{\wt Q_t\ep f} \lesssim \tb{\wt Q_t} \|\ep\|_\infty \|f\|$.
For the second term, we write $\wt\Theta_t:= \wt Q_t \ep$ with 
principal part $\wt\gamma_t$ as in Definition~\ref{defn:princpart}
and estimate
\begin{multline*}
  \tb{\wt \Theta_t P_t f} \le 
  \tb{(\wt \Theta_t  P_t - \wt\gamma_t A_t) f }+ \tb{ \wt\gamma_t A_t f } \\
  \lesssim \|\ep\|_\infty \|f\| + \|\wt\gamma_t \|_C \|f\| 
  \lesssim \|\ep\|_\infty\|f\| +  (\tb{\wt Q_t}_\op +1 ) \|\ep\|_\infty\|f\|.
\end{multline*}
In the second step, we used for the first term that 
$\|\wt\Theta_t\|_\off \lesssim \|\ep\|_\infty$ in Lemma~\ref{lem:ppa}
and for the second term we used Lemma~\ref{lem:Carleson}.
In the last step, we used Lemma~\ref{lem:sqfcntoCarleson} 
on the last term and that
$\tb{\wt\Theta_t}_\op\lesssim \tb{\wt Q_t}_\op\|\ep\|_\infty$.
Finally we note that
$$
  \tb{\wt Q_t\ep Q_t^{B_0} ( \Pi_{B_0}^{-1}\Gamma ) Q_t f}
  \lesssim \|\ep\|_\infty\tb{Q_t f}
  \lesssim \|\ep\|_\infty\|f\|,
$$
since the Hodge projection $\Pi_{B_0}^{-1}\Gamma$ is 
bounded by Lemma~\ref{lem:hodge}(i).
\end{proof}
To estimate the last term in (\ref{eq:blockmaindecomposition})
we shall apply a local $T(b)$ theorem as in \cite{AHLMcT,AKMc}.
We here give an alternative construction of test functions to those
used in \cite{AKMc}, more in the spirit of the original proof of the
Kato square root problem \cite{AHLMcT}.
\begin{lem}  \label{lem:newtestfcns}
  Let $\Gamma$ be a nilpotent operator in $\mH$, which
  is a homogeneous, first order partial differential operator
  with constant coefficients.
  Denote by $\wedge_\Gamma\subset \wedge$ the image of the linear functions
  $u:\R^n\rightarrow \wedge$ under $\Gamma$, where we identify $\wedge$ with the
  constant functions $\R^n\rightarrow \wedge$.
  
  Then for each $w\in \wedge_\Gamma$ with $|w|=1$, each cube $Q\subset\R^n$ and each $\epsilon>0$, 
  there exists a test function
  $f^w_{Q,\epsilon}\in\mH$ such that
  $f^w_{Q,\epsilon}\in\ran(\Gamma)$,
  $\|f^w _{Q,\epsilon} \| \lesssim |Q|^{1/2}$,
$$  
  \|\Gamma^*_{B_0} f^w _{Q,\epsilon}\| \lesssim \tfrac 1{\epsilon l(Q)} |Q|^{1/2} 
  \qquad\text{and}\qquad 
  \left| \barint_{\hspace{-6pt}Q} f^w_{Q,\epsilon} - w \right| \lesssim
     \epsilon^{1/2}.
$$
\end{lem}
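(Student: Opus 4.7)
The plan is to build the test function in two stages: first produce an unregularised precursor $f_0\in\ran(\Gamma)$ that equals $w$ on $Q$ and is bounded in $L_2$, and then smooth it by the resolvent $P^{B_0}_{\epsilon l(Q)}$ so that the result lies in $\dom(\Gamma^*_{B_0})$ with quantitative bounds. Since $w\in\wedge_\Gamma$, I may fix a linear function $u\colon\R^n\to\wedge$ with $\Gamma u=w$, $u(x_Q)=0$ (where $x_Q$ is the centre of $Q$), and $|\nabla u|\lesssim 1$, and pick a smooth cutoff $\eta_Q$ equal to $1$ on $Q$, supported in $2Q$, with $\|\nabla\eta_Q\|_\infty\lesssim 1/l(Q)$. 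Setting
\[
f_0:=\Gamma(\eta_Q u)=\eta_Q w+(\Gamma\eta_Q)u
\]
gives $f_0\in\ran(\Gamma)$, $f_0\equiv w$ on $Q$, and, since both summands are bounded by a universal constant on $2Q$, also $\|f_0\|\lesssim|Q|^{1/2}$.

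The test function is then $f^w_{Q,\epsilon}:=P^{B_0}_{\epsilon l(Q)}f_0$. The key algebraic point is that $\Pi_{B_0}^2$ commutes with $\Gamma$: using $\Gamma^2=(\Gamma^*_{B_0})^2=0$, both $\Pi_{B_0}^2\Gamma$ and $\Gamma\Pi_{B_0}^2$ collapse to $\Gamma\Gamma^*_{B_0}\Gamma$, so $P^{B_0}_t$ commutes with $\Gamma$ on $\dom(\Gamma)$ and hence $f^w_{Q,\epsilon}=\Gamma P^{B_0}_{\epsilon l(Q)}(\eta_Q u)\in\ran(\Gamma)$. Uniform boundedness of $P^{B_0}_t$ from Proposition~\ref{prop:spectrest} yields $\|f^w_{Q,\epsilon}\|\lesssim\|f_0\|\lesssim|Q|^{1/2}$, and the uniform bound $\|t\Gamma^*_{B_0}P^{B_0}_t\|_\op\lesssim 1$ coming from Corollary~\ref{cor:opfamilies} (recall $M_{B_0}=N$ in the block case by Lemma~\ref{lem:blockasswapping}) delivers $\|\Gamma^*_{B_0}f^w_{Q,\epsilon}\|\lesssim|Q|^{1/2}/(\epsilon l(Q))$.

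The remaining and more delicate ingredient is the average estimate. Writing $t:=\epsilon l(Q)$ and using $f_0\equiv w$ on $Q$,
\[
\barint_Q f^w_{Q,\epsilon}-w=-\barint_Q (I-P^{B_0}_t)f_0=-\frac{1}{|Q|}\int_Q \Gamma g_t,\qquad g_t:=t^2\Gamma^*_{B_0}P^{B_0}_t f_0,
\]
where I exploit the identity $(I-P^{B_0}_t)f_0=t^2\Pi_{B_0}^2 P^{B_0}_t f_0=t^2\Gamma\Gamma^*_{B_0}P^{B_0}_t f_0$, the last equality coming from $\Gamma f_0=0$. Here $\|g_t\|\lesssim t|Q|^{1/2}$ and $\|\Gamma g_t\|=\|(I-P^{B_0}_t)f_0\|\lesssim|Q|^{1/2}$. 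Introducing a parameter $\alpha\in(0,1)$ and a smooth scalar cutoff $\psi$ equal to $1$ on $Q$, supported in $(1+\alpha)Q$, with $\|\nabla\psi\|_\infty\lesssim 1/(\alpha l(Q))$, I split $\int_Q\Gamma g_t=\int\psi\,\Gamma g_t+\int(\chi_Q-\psi)\Gamma g_t$ and integrate by parts in the first term, using that $\Gamma$ has constant coefficients. The two contributions are bounded by $\|\nabla\psi\|_2\|g_t\|\lesssim(|Q|^{1/2}/(\sqrt{\alpha}\,l(Q)))\cdot t|Q|^{1/2}\lesssim\epsilon|Q|/\sqrt{\alpha}$ and by $\|\chi_Q-\psi\|_2\|\Gamma g_t\|\lesssim\sqrt{\alpha}\,|Q|$, respectively; choosing $\alpha=\epsilon$ yields $|\barint_Q f^w_{Q,\epsilon}-w|\lesssim\sqrt{\epsilon}$ (and the bound is trivial for $\epsilon\ge 1$ since then $\|f^w_{Q,\epsilon}\|\lesssim|Q|^{1/2}$ and $|w|=1$).

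The main obstacle is precisely this last average estimate: $f_0$ carries no $L_2$ smoothness beyond being bounded, so there is no \emph{a priori} quantitative rate for $\|(I-P^{B_0}_t)f_0\|$ as $t\to 0$. The crucial observation is that $\Gamma f_0=0$ allows one to factor $(I-P^{B_0}_t)f_0$ as $\Gamma g_t$, after which integration by parts against a boundary-layer cutoff converts the problem into an optimisation exchanging layer thickness against derivative size, producing the final $\epsilon^{1/2}$ gain.
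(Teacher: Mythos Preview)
Your proof is correct and follows essentially the same construction as the paper: define $f_0=\Gamma(\eta_Q u)$ and then set $f^w_{Q,\epsilon}=P^{B_0}_{\epsilon l(Q)}f_0$, with the range property, the $L_2$ bound, and the $\Gamma^*_{B_0}$ bound deduced exactly as you do. The only difference is in the average estimate: the paper writes $(I-P^{B_0}_t)f_0=t\Gamma Q^{B_0}_t f_0$ (which coincides with your $\Gamma g_t$, since $g_t=tQ^{B_0}_t f_0$) and then invokes \cite[Lemma~5.6]{AKMc}, an interpolation inequality of the form $\bigl|\barint_Q \Gamma h\bigr|\lesssim l(Q)^{-1/2}(\barint_Q|h|^2)^{1/4}(\barint_Q|\Gamma h|^2)^{1/4}$, whereas you give a self-contained proof of precisely this bound via the boundary-layer cutoff $\psi$ and optimisation in $\alpha$.
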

\begin{proof}
Let $u(x)$ be a linear function such that 
$w= \Gamma u$ and $\sup_{3Q}|u(x)|\lesssim l(Q)$, and
define $w_Q := \Gamma(\eta_Q u)$, where $\eta_Q$
is a smooth cutoff such that $\eta_Q|_{2Q}=1$, $\supp(\eta_Q)\subset 3Q$
and $\|\nabla \eta_Q\|_\infty\lesssim 1/l(Q)$.
It follows that 
$$
  w_Q\in\ran(\Gamma),\quad
  w_Q|_{2Q}= w,\quad
  \supp w_Q\subset 3Q\quad\text{and}\quad 
  \|w_Q\|_\infty\le C.
$$
Next we define the test function
$f^w_{Q,\epsilon}:= P_{\epsilon l}^{B_0}w_Q$,
where we write $l=l(Q)$.
Using Corollary~\ref{cor:opfamilies}, it follows that
$\|f^w _{Q,\epsilon} \| \lesssim |Q|^{1/2}$ and
$\|\Gamma^*_{B_0} f^w _{Q,\epsilon}\|\lesssim \tfrac 1{\epsilon l(Q)} |Q|^{1/2}$
and since $\Gamma$ commutes with $P_{\epsilon l}^{B_0}$, it follows that
$f^w_{Q,\epsilon}\in\ran(\Gamma)$.
To verify the accretivity property, we make use of \cite[Lemma 5.6]{AKMc}
which shows that
\begin{multline*}
\left| \barint_{\hspace{-6pt}Q} f^w_{Q,\epsilon} - w \right| =
\left| \barint_{\hspace{-6pt}Q} (I-P_{\epsilon l}^{B_0})w_Q \right| 
=\left| \barint_{\hspace{-6pt}Q} \epsilon l\Gamma (Q_{\epsilon l}^{B_0}w_Q) \right| \\ 
\lesssim\epsilon^{1/2}\left( \barint_{\hspace{-6pt}Q} |Q_{\epsilon l}^{B_0}w_Q|^2 \right)^{1/4} 
\left( \barint_{\hspace{-6pt}Q} |\epsilon l\Gamma Q_{\epsilon l}^{B_0}w_Q|^2 \right)^{1/4} \lesssim \epsilon^{1/2},
\end{multline*}
where we used that $w_Q\in\ran(\Gamma)$ in the second step.
\end{proof}

\begin{proof}[Proof of Theorem~\ref{thm:pertestsforblock}]
We have seen that it suffices to prove (\ref{eq:perttildeest1block}),
and to bound each term in (\ref{eq:blockmaindecomposition}) for
$f\in \nul(\Gamma)$.
The first term is estimated by Lemma~\ref{lem:firstterminblockest}
and the second by Lemma~\ref{lem:ppa}.
To prove that the last term has estimate
$\tb{\gamma_t A_t f} \lesssim (\tb{\wt Q_t}_\op +1)\|\ep\|_\infty \|f\|$,
it suffices by Lemma~\ref{lem:Carleson} to prove that
$$
   \|\gamma_t \|_C \lesssim (\tb{\wt Q_t}_\op +1)\|\ep\|_\infty.
$$
To this end, we apply the local $T(b)$ argument and stopping time argument
in \cite[Section 5.3]{AKMc}.
Note that $\ran(\Gamma)\subset L_2(\R^n;\wedge_\Gamma)$ and
thus, since $\Gamma$ is an exact nilpotent operator,
it follows that $\nul(\Gamma)\subset L_2(\R^n;\wedge_\Gamma)$.
Furthermore $A_t f\in L_2(\R^n;\wedge_\Gamma)$ if 
$f\in L_2(\R^n;\wedge_\Gamma)$.
Thus it suffices to bound the Carleson norm of $\gamma_t(x)$ 
seen as a linear operator
$\gamma_t(x): \wedge_\Gamma \rightarrow \wedge$.
The conical decomposition $\bigcup_{\nu\in \V} K_\nu$ of the space of matrices
performed in \cite[Section 5.3]{AKMc}, here decomposes
the space $\mL(\wedge_\Gamma;\wedge)$ and for the fixed unit matrix 
$\nu\in\mL(\wedge_\Gamma;\wedge)$ we choose $w\in \wedge_\Gamma$ and 
$\hat w\in \wedge$ such that $|\hat w|= |w|=1$ and $\nu^*(\hat w)= w$.
With the stopping time argument in \cite[Section 5.3]{AKMc}, using the
new test functions $f^w_Q$ from Lemma~\ref{lem:newtestfcns}, we obtain
$$
  \|\gamma_t \|_C^2 \lesssim 
  \sup_{Q, w\in\wedge_\Gamma, |w|=1}\frac 1{|Q|} \iint_{R_Q} |\gamma_t(x) A_t f^w_Q(x)|^2\frac{dxdt}t,
$$
where $f^w_Q= f^w_{Q,\epsilon}$ for a small enough but fixed $\epsilon$.
To estimate the right hand side we use 
(\ref{eq:blockmaindecomposition}) with $f$ replaced with the
test function $f^w_Q$, estimate the first two terms with
Lemma~\ref{lem:firstterminblockest} (which works since 
$f^w_Q\in \ran(\Gamma)$) and Lemma~\ref{lem:ppa},
and obtain
\begin{multline*}
  \iint_{R_Q} |\gamma_t A_t f^w_Q|^2\frac{dxdt}t \\
  \lesssim 
  \iint_{R_Q} |\gamma_t A_t f^w_Q- \Theta_t f^w_Q|^2\frac{dxdt}t +
  \iint_{R_Q} |\Theta_t f^w_Q|^2\frac{dxdt}t \\
  \lesssim (\tb{\wt Q_t}_\op +1)^2\|\ep\|^2_\infty |Q| +
  \iint_{R_Q} |\Theta_t f^w_Q|^2\frac{dxdt}t.
\end{multline*}
Using that 
$\|\Gamma^*_{B_0} f^w _{Q}\| \lesssim \tfrac 1{ l(Q)} |Q|^{1/2}$
we then get
$$
  \|\Theta_t f^w _{Q}\|=
  \|\wt Q_t\ep (t\Gamma P_t^{B_0}) t
  (\Gamma^*_{B_0} f^w_{Q} )\| 
  \lesssim \|\ep\|_\infty \tfrac t{l(Q)}|Q|^{1/2}.
$$
This yields
$$
  \iint_{R_Q} |\Theta_t f^w_{Q}|^2 \, \frac{dxdt}{t}
\lesssim 
|Q|\|\ep\|_\infty^2
\int_0^{l(Q)} \left( \frac t{l(Q)}\right)^2\frac {dt}t
\lesssim
|Q|\|\ep\|_\infty^2,
$$
which proves that 
$\iint_{R_Q} |\gamma_t A_t f^w_Q|^2\frac{dxdt}t 
  \lesssim 
 (\tb{\wt Q_t}_\op +1)^2\|\ep\|^2_\infty |Q|$.
\end{proof}
\begin{rem}

\begin{itemize}
\item[{\rm (i)}]
Note that using the new test function from Lemma~\ref{lem:newtestfcns}
simplifies the estimate of the term $(\gamma_t A_t- \Theta_t) f^w_Q$ in the above 
proof as compared with the proof of \cite[Proposition 5.9]{AKMc}.
The useful new property of the test functions
from Lemma~\ref{lem:newtestfcns} is that they
belong to $\ran(\Gamma)$.
\item[{\rm (ii)}]
In the proof of Theorem~\ref{thm:pertestsforblock}, we only 
estimate the Carleson norm of the restriction of the 
matrix $\gamma_t$ to the subspace $\wedge_\Gamma$ as this
suffices since we want to bound
the quadratic norm of $\gamma_t A_t f$, and $A_t f$ is always
$\wedge_\Gamma$ valued.
However, 
to prove (\ref{eq:perttildeest1block}) and (\ref{eq:perttildeest2block}),
we use $\Gamma$ being either $N\m d$ or $N \m d^*$.
In these two cases, the space $\wedge_\Gamma$ is either the orthogonal complement
of $\text{span}\{1, e_0\}$ or $\text{span}\{e_{0,1,\ldots,n}, e_{1,\ldots,n}\}$ respectively.
Note also that block matrices preserve these spaces $\wedge_\Gamma^\perp$. It is seen that in these two cases 
$\gamma_t=0$ on $\wedge_\Gamma^\perp$, so actually we do get an estimate
of the Carleson norm of the whole matrix $\gamma_t$.
\end{itemize}
\end{rem}

\subsection{Perturbation of vector coefficients}  \label{section8.2}

In this section we assume that the unperturbed coefficients $B_0$
are of the form 
$$
  B_0= I\oplus A_0\oplus I \oplus\ldots\oplus I,
$$
i.e. $B_0$ only acts nontrivially on the vector part,
and that $A_0$ is a matrix such that $T_{B_0}$ has quadratic estimates.
Note that this hypothesis is true if $A_0$ is either real symmetric, constant
or of block form, by 
Theorem~\ref{thm:mainforrealsymm}, Proposition~\ref{prop:constantquadraticests} 
and Theorem~\ref{thm:unpertblockmain} respectively.
Our goal is to prove Theorem~\ref{thm:main}, by verifying 
the hypothesis of Lemma~\ref{lem:pertests}, as well as proving Theorem~\ref{thm:mainpert}.
We start by reformulating Lemma~\ref{lem:pertests} in 
terms of $e^{-t|T_{B_0}|}$, acting only on functions $f$ in one 
of the two Hardy spaces $E^\pm_{B_0}\mH$, instead of $P_t^{B_0}$.
\begin{thm}  \label{thm:pertestsforrealsymm}
If $B_0$ is as above and if $\|\wt Q_t\|_\off \le C$, then
for all $f\in E^+_{B_0}\mH$ we have estimates
\begin{align}
  \tb{ \wt Q_t\ep (F_t-f) }
  &\le C( \tb{\wt Q_t}_\op+ 1 )\|\ep\|_\infty\|f\|,   \label{eq:pertestsymm2} \\
  \tbb{ \wt Q_t\ep \ud \int_0^t F_s ds }
    &\le C( \tb{\wt Q_t}_\op+ 1 )\|\ep\|_\infty\|f\|, \label{eq:pertestsymm1}  
\end{align}
where $F_t:= e^{-t|T_{B_0}|}f$ is the extension of $f$ as in 
Lemma~\ref{lem:characthardyfcns}.
The corresponding estimates for $f\in E^-_{B_0}\mH$
also holds.
\end{thm}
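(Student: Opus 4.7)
The plan is to reduce both estimates to the block-matrix estimates of Theorem~\ref{thm:pertestsforblock}, exploiting the invariant subspace structure of $T_{B_0}$ and the extra regularity of the normal vector component of solutions to the Dirac equation. First, I will show that the second estimate is a consequence of the first. Using the splitting $\mH = \hat\mH_{B_0} \oplus \hut\mH_{B_0}$ of Proposition~\ref{prop:huthatsplitting}, which is preserved by $T_{B_0}$ and hence by $e^{-t|T_{B_0}|}$, I write $f = \hat f + \hut f$ with extensions $F_t = \hat F_t + \hut F_t$. On $\hat\mH_{B_0}$, $\hat F_t$ solves (\ref{eq:sep1}), so $d_x \hat F_t = -\mu \partial_t \hat F_t$ and $\ud \hat F_t = i\m d_x \hat F_t = -iN^+ \partial_t \hat F_t$ via $\m\mu = \mu^*\mu = N^+$; integrating gives $\ud \int_0^t \hat F_s\,ds = -iN^+(\hat F_t - \hat f)$. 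An analogous computation on $\hut\mH_{B_0}$ using (\ref{eq:sep2}) and the anticommutator $\{\m, d_{t,x}\} = \partial_t$ from Lemma~\ref{lem:anticom} yields $\ud \int_0^t \hut F_s\,ds = iN^-(\hut F_t - \hut f)$. Absorbing the factors $\mp iN^\pm$ into modified multiplication operators of the same $L_\infty$-norm, the second estimate reduces to the first, applied to $\hat f$ and $\hut f$ separately.

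For the first estimate, I further decompose $\hat\mH_{B_0} = \bigoplus_{k=0}^{n+1} \hat\mH_{B_0}^k$ via Lemma~\ref{lem:preservehats}, giving $f = \hut f + \sum_k \hat f_k$, and prove the estimate on each invariant piece. On $\hat\mH_{B_0}^k$ for $k \neq 1$, we have $B_0|_{\wedge^k} = I$, so $T_{B_0}$ agrees with the unperturbed operator $T_I$ there; the block-case estimates of Theorem~\ref{thm:pertestsforblock} with $B_0$ replaced by the trivial block matrix $I$ therefore apply, after using the identity $F_t - f = -\int_0^t T_{B_0} F_s\,ds$ and the integral representation $(1+s^2)^{-1} = \int_0^\infty \sin(u) e^{-su}\,du$ on the Hardy space to relate the semigroup $e^{-t|T_{B_0}|}$ to the resolvent $P_t^{B_0}$. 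On $\hut\mH_{B_0}$, the argument in the proof of Lemma~\ref{lem:realsymhut} --- in which $dF_\no = 0$ forces the normal vector component of $F$ to vanish --- shows that the action of $T_{B_0}$ reduces to an effectively block form, so again the block-case estimates apply.

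The main case, and the hardest step, is the piece $\hat f_1 \in \hat\mH^1 = \hat\mH_{B_0}^1$, on which $B_0$ acts nontrivially through the vector-part matrix $A_0$. Here I will follow the block-approximation strategy outlined in Section~\ref{section:Diracintro}: approximate $B_0$ by $\hB_0 := I \oplus \hA_0 \oplus I \oplus \ldots \oplus I$, where $\hA_0$ is obtained from $A_0$ by zeroing the off-diagonal blocks $a_{0\ta}$ and $a_{\ta 0}$. Then $\hB_0$ is a genuine block matrix and Theorem~\ref{thm:pertestsforblock} applies to $T_{\hB_0}$. The error operator $T_{B_0} - T_{\hB_0}$ couples normal and tangential components of a vector field only through the scalar normal component $F_s^{1,0}$, which by Lemma~\ref{lem:LaplaceF_0^1} satisfies the second-order divergence-form equation $\divv_{t,x} A_0 \nabla_{t,x} F_s^{1,0} = 0$. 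I will use the local higher integrability of $F_s^{1,0}$ (a self-improvement of Caccioppoli, as in the proof of Lemma~\ref{lem:H_t}(ii)) to bound the Carleson norm of the principal part of $\wt Q_t \ep (T_{B_0} - T_{\hB_0})$ on $F_s^{1,0}$, after which the stopping-time argument of Section~\ref{section8.1} absorbs the error. The hardest step is precisely this: quantifying how the extra regularity of $F^{1,0}$ overcomes the fact that $\|B_0 - \hB_0\|_\infty$ need not be small. Once established on each piece and summed, the analogous arguments for $f \in E_{B_0}^-\mH$ (with the sign of the evolution reversed) complete the proof.
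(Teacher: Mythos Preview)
Your reduction of \eqref{eq:pertestsymm1} to \eqref{eq:pertestsymm2} via the splitting $\mH=\hat\mH_{B_0}\oplus\hut\mH_{B_0}$ and the identities $\ud\hat F_t=-iN^+\partial_t\hat F_t$, $\ud\hut F_t=iN^-\partial_t\hut F_t$ is correct and is a genuine simplification: the paper proves the two estimates by independent, parallel decompositions, each carrying a dozen or so intermediate terms. Likewise, your reduction of \eqref{eq:pertestsymm2} on $\hat\mH^k_{B_0}$ ($k\neq 1$) and on $\hut\mH_{B_0}$ to the block case is correct --- the operator identity in the proof of Lemma~\ref{lem:wedge1formulae} shows that $T_{B_0}-T_{\hB_0}$ annihilates any $g$ with $g^{1,\no}=0$, so $T_{B_0}=T_{\hB_0}$ on these subspaces and Theorem~\ref{thm:pertestsforblock} applies directly via $F_t-f=-(I-P_t^{\hB_0})f+\psi_t(T_{\hB_0})f$ with $\psi(z)=e^{-|z|}-(1+z^2)^{-1}\in\Psi(S^o_\nu)$. (The Laplace-transform detour is unnecessary.)

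The gap is on $\hat\mH^1$, which carries all the content. Your description there is a list of ingredients rather than a mechanism, and the phrasing is off: $\wt Q_t\ep(T_{B_0}-T_{\hB_0})$ is not a bounded operator family, so it has no principal part, and no stopping-time argument ``absorbs'' this error --- the stopping time in Section~\ref{section8.1} establishes a Carleson bound, it does not compare two operators. What the paper does, and what your sketch does not articulate, is Corollary~\ref{cor:harmanalestforsymm}: after inserting $P_t^{\hB_0}$ and repeated integrations by parts using Lemma~\ref{lem:wedge1formulae}, the problematic terms all take the form $\Theta_t(F_t^{1,0})$ with $\Theta_t=\wt Q_t\ep P_t^{\hB_0}M_v$ or $\wt Q_t\ep t^2\Gamma\Gamma^*_{\hB_0}P_t^{\hB_0}M_v$ for some fixed $v\in L_\infty$. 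These \emph{bounded} families have principal parts $\gamma_t$ whose Carleson norms are controlled by Theorem~\ref{thm:pertestsforblock}; the remaining factor $\|N_*(A_tG_t^{1,0})\|_2$ (with $G_t=P_t^{B_0}f$) is then bounded by the $L_p\to L_q$ off-diagonal estimates of Lemma~\ref{lem:H_t}(ii) and the Hardy--Littlewood maximal theorem. Without this step made precise, the $\hat\mH^1$ case --- and hence the theorem --- remains open in your plan.
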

\begin{proof}[Proof that Theorem~\ref{thm:pertestsforrealsymm} implies 
(\ref{eq:perttildeest1}) and (\ref{eq:perttildeest2})]
We first note that it suffices to prove 
(\ref{eq:perttildeest1}) and (\ref{eq:perttildeest2})
for all $f\in E^+_{B_0}\mH$ and all $f\in E^-_{B_0}\mH$
since we have a Hardy space splitting 
$\mH= E^+_{B_0}\mH\oplus E^-_{B_0}\mH$.
We only consider $f\in E^+_{B_0}\mH$ since the proof for
$f\in E^-_{B_0}\mH$ is similar.
 
Now let $f\in E^+_{B_0}\mH$ and use 
Proposition~\ref{prop:differentpsi}, which
shows that if $\psi\in \Psi(S^o_\nu)$, then
$\tb{\psi_t(T_{B_0})}_\op\lesssim \tb{Q_t^{B_0}}_\op\le C$.
For the estimate (\ref{eq:perttildeest1}), we write
$$
  \wt Q_t\ep t\ud Q_t^{B_0}=
    \wt Q_t\ep (\ud T_{B_0}^{-1})(I- e^{-t|T_{B_0}|})
  + \wt Q_t\ep (\ud T_{B_0}^{-1}) \psi_t(T_{B_0}),
$$
where $\psi(z)=e^{-|z|}-(1+z^2)^{-1}$. 
Note for the first term that 
$T_{B_0}^{-1}(I-e^{-t|T_{B_0}|})f= T_{B_0}^{-1}(f-F_t)
= -T_{B_0}^{-1} \int_0^t \partial_s F_s ds= \int_0^t F_s ds$.
Therefore Theorem~\ref{thm:pertestsforrealsymm},
the boundedness of $\ud T_{B_0}^{-1}$ and
Proposition~\ref{prop:differentpsi} give the estimate
(\ref{eq:perttildeest1}).

For the estimate (\ref{eq:perttildeest2}), note that
it suffices to estimate 
$$
  \wt Q_t\ep t T_{B_0} Q_t^{B_0}= \wt Q_t\ep (I-P_t^{B_0})
$$ 
since $iM_{B_0} T_{B_0}= \ud+ \ud^*_{B_0}$.
But this follows immediately from (\ref{eq:pertestsymm2})
since 
$$
  \wt Q_t\ep (I-P_t^{B_0})f = \wt Q_t\ep (f-F_t)
  +\wt Q_t\ep \psi_t(T_{B_0})f,
$$
with the same $\psi$ as above.
\end{proof}
\begin{rem}
In the case when $B_0$ is a constant matrix,
we can estimate $\wt Q_t\ep (I-P_t^{B_0})f$ directly.
We prove that
$$
  \tb{\wt Q_t\ep (I-P_t^{B_0})f} 
  \lesssim( \tb{\wt Q_t}_\op+ 1 )\|\ep\|_\infty\|f\|
$$
as follows.
Clearly $\tb{\wt Q_t\ep}_\op \lesssim \tb{\wt Q_t}_\op \|\ep\|_\infty$.
For the second term we write $\Theta_t:= \wt Q_t\ep P_t^{B_0}$.
Inserting a standard Fourier mollifier $P_t$, we write
$\Theta_t = \Theta_t(I-P_t) + \Theta_t P_t$.
Here $\tb{\Theta_t(I-P_t)f} \lesssim \tb{P_t^{B_0}(I-P_t)f} \lesssim \|f\|$
is easily verified using the Fourier transform.
On the other hand
$$
  \tb{ \Theta_t P_t f } \le 
  \tb{ (\Theta_t P_t- \gamma_tA_t) f } + \tb{\gamma_t A_t f} 
  \lesssim \|f\| + \|\gamma_t\|_C \|f\|,
$$
using Lemma~\ref{lem:ppa} and Lemma~\ref{lem:Carleson}.
However, since $B_0$ is constant, we have that $T_{B_0}w=0$ if
$w$ is a constant function, and therefore
$$
  \gamma_t w= \wt Q_t\ep (P_t^{B_0}w) = \wt Q_t \ep w.
$$
Therefore Lemma~\ref{lem:sqfcntoCarleson} shows that 
$\|\gamma_t\|_C\lesssim ( \tb{\wt Q_t}_\op+ 1 )\|\ep\|_\infty$.
\end{rem}
We now set up some notation for the proof of 
Theorem~\ref{thm:pertestsforrealsymm}.
We decompose the function $F_t:= e^{-t|T_{B_0}|}f$, where
$f\in E^+_{B_0}\mH$, as
\begin{equation}  \label{eq:splittingF_t}
  F_t= F^0_t+ (F^{1,\no}_t+F^{1,\ta}_t)+F^2_t+\ldots+ F^{n+1}_t,
\end{equation}
and similarly for $f= \lim_{t\rightarrow 0^+} F_t$.
It is important to note the special property that the normal
component of the vector part $F^{1,\no}_t= F^{1,0}_t e_0$ has by 
Lemma~\ref{lem:LaplaceF_0^1}:
it satisfies the divergence form second order equation
$\divv_{t,x} A_0(x) \nabla_{t,x} F^{1,0}=0$.
Furthermore, we decompose the matrix $A_0$ 
as
$$
  A_0=
     \begin{bmatrix}
       a_{\no\no} & a_{\no\ta} \\ a_{\ta\no} & a_{\ta\ta}
     \end{bmatrix},
$$
in the splitting $\mH= N^-\mH\oplus N^+\mH$.
We view the components 
$a_{\no\no}$, $a_{\no\ta}$, $a_{\ta\no}$ and $a_{\ta\ta}$
as operators, and write
$a_{\no\no}(f^{1,0}e_0)= (a_{00}f^{1,0})e_0$,
$a_{\no\ta} f^{1,\ta}= (a_{0\ta} \cdot f^{1,\ta})e_0$ and
$a_{\ta\no}(f^{1,0}e_0)= f^{1,0}a_{\ta 0}$, where
$a_{00}$ is a scalar and $a_{0\ta}$ and $a_{\ta 0}$ are
vectors.

We introduce an auxiliary block matrix
$$
  \hB_0= I\oplus \hA_0\oplus I \oplus\ldots\oplus I, \qquad
  \hA_0=
     \begin{bmatrix}
       a_{\no\no} & 0 \\ 0 & a_{\ta\ta}
     \end{bmatrix}.
$$
\begin{lem}   \label{lem:wedge1formulae}
Let $F_t:= e^{-t|T_{B_0}|}f$, where $f\in E^+_{B_0}\mH$,
so that $(\partial_t + T_{B_0}) F_t=0$.
Then
$$
  (\partial_t + T_{\hB_0})F_t 
  = -i (a_{00}^{-1} a_{\no\ta}) \ud F_t^{1,\no} 
    -i \ud^*_{\hB_0} (a_{\ta\ta}^{-1} a_{\ta\no}) F_t^{1,\no}
  = \partial_t F_t^{1,\no} + i \ud^*_{\hB_0} F_t^{1,\ta}
$$
\end{lem}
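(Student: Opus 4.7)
Since $F_t = e^{-t|T_{B_0}|}f$ with $f\in E^+_{B_0}\mH$, Lemma~\ref{lem:algsplitting} gives $\partial_t F_t = -T_{B_0}F_t$. Thus the left-hand side of the lemma equals $(T_{\hat B_0}-T_{B_0})F_t$, which reduces the first equality to an algebraic identity for the operator difference applied to $F_t$, and the second equality to a consequence of projecting the governing equation onto the $\wedge^1$ subspace.

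For the first equality I would use $T_B = M_B^{-1}(\m d + B^{-1}\m d^*B)$ from Definition~\ref{defn:tb}. Since $\hat B_0$ is block-diagonal, Lemma~\ref{lem:blockasswapping} gives $M_{\hat B_0}=N$, and on the invariant subspace $\hat\mH^1$ both $T_{B_0}$ and $T_{\hat B_0}$ reduce to the matrix operators $T_{A_0}$ and $T_{\hat A_0}$ from formula (\ref{eq:blockTA}). Direct subtraction yields
$$
(T_{\hat A_0}-T_{A_0})f = -a_{00}^{-1}\bigl[(a_{0\ta},\nabla f^{1,0}) + \divv_x(a_{\ta 0}f^{1,0})\bigr]e_0,
$$
a normal vector depending only on $f^{1,0}$. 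I then match this with the middle expression in the lemma by unpacking with the algebra of Section~\ref{section2}: the derivation identity of Lemma~\ref{lem:algebra} gives $\ud F_t^{1,\no} = -i\nabla F_t^{1,0}$, so that $-i(a_{00}^{-1}a_{\no\ta})\ud F_t^{1,\no} = -a_{00}^{-1}(a_{0\ta},\nabla F_t^{1,0})e_0$. A parallel computation, using $\ud^* = i\m d^*$, the anticommutation $\{\m,d^*\}=0$ from Lemma~\ref{lem:anticom}, and $\hat A_0^{-1}(\alpha e_0) = a_{00}^{-1}\alpha e_0$, gives $-i\ud^*_{\hat B_0}(a_{\ta\ta}^{-1}a_{\ta\no})F_t^{1,\no} = -a_{00}^{-1}\divv_x(a_{\ta 0}F_t^{1,0})e_0$. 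Their sum reproduces exactly the matrix difference above.

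For the second equality I would project the governing equation $(\partial_t+T_{B_0})F_t = 0$ onto $\wedge^1_\no$. Using (\ref{eq:blockTA}),
$$
\partial_t F_t^{1,\no} = -a_{00}^{-1}\bigl[(a_{0\ta},\nabla F_t^{1,0}) + \divv_x(a_{\ta 0}F_t^{1,0}) + \divv_x(a_{\ta\ta}F_t^{1,\ta})\bigr]e_0,
$$
while the same kind of unpacking as before gives $i\ud^*_{\hat B_0}F_t^{1,\ta} = a_{00}^{-1}\divv_x(a_{\ta\ta}F_t^{1,\ta})e_0$. Adding, the $\divv_x(a_{\ta\ta}F_t^{1,\ta})$ terms cancel and the result matches the middle expression. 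The main delicacy in this plan is justifying that grade-$k$ components of $F_t$ for $k\ne 1$, as well as the tangential part $F_t^{1,\ta}$, do not contribute additional terms to $(T_{\hat B_0}-T_{B_0})F_t$: although $B_0$ and $\hat B_0$ agree on each $\wedge^k$ with $k\ne 1$, the operator $\ud^*$ can shift a $\wedge^3_\no$-component into $\wedge^1_\ta$ where $B_0^{-1}$ and $\hat B_0^{-1}$ disagree. A grade-by-grade expansion using the resolvent identity $B_0^{-1}-\hat B_0^{-1} = -\hat B_0^{-1}\ep B_0^{-1}$ with $\ep := B_0-\hat B_0$ supported on $\wedge^1$ (and anticommuting with $N$ there), together with the parallel expansion of $M_{B_0}^{-1}-N$, shows that these apparent cross-grade contributions cancel via the $M_B^{-1}$ and $B^{-1}\cdots B$ factors in $T_B$. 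The verification is purely algebraic but requires careful bookkeeping of the mapping properties from Remark~\ref{rem:mappingud}.
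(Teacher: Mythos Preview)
Your plan is correct, and for the second equality your projection of $(\partial_t+T_{B_0})F_t=0$ onto $\wedge^1_\no$ is exactly what the paper does. For the first equality, however, the paper avoids your cross-grade bookkeeping entirely by a single algebraic rewriting. From Definition~\ref{defn:tb} one has $M_B^{-1}=(BM_B)^{-1}B$, so
\[
  T_B = -i(BN^+-N^-B)^{-1}(B\ud+\ud^*B).
\]
The crucial observation is that $B_0N^+-N^-B_0=\hB_0N^+-N^-\hB_0$: both equal the block-diagonal matrix $(-a_{\no\no})\oplus a_{\ta\ta}$ on $\wedge^1$ and act as $N$ on all other grades. (This is equivalent to your remark that $\ep=B_0-\hB_0$ anticommutes with $N$, i.e.\ $\ep N^+=N^-\ep$.) Subtracting then gives directly
\[
  (T_{\hB_0}-T_{B_0})F_t
  = i(\hB_0N^+-N^-\hB_0)^{-1}\big((B_0-\hB_0)\ud+\ud^*(B_0-\hB_0)\big)F_t,
\]
with $B_0-\hB_0$ itself (not its inverse) appearing. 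Since $B_0-\hB_0$ is supported on $\wedge^1$, Remark~\ref{rem:mappingud} immediately reduces $(B_0-\hB_0)\ud F_t$ to $a_{\no\ta}\ud F_t^{1,\no}$ and $\ud^*(B_0-\hB_0)F_t$ to $\ud^*a_{\ta\no}F_t^{1,\no}$, with no grade-$3$ contribution and no cancellation to track. Your approach via the explicit matrix (\ref{eq:blockTA}) on $\hat\mH^1$ followed by a resolvent-identity expansion for the remaining grades would also succeed --- indeed $M_{B_0}^{-1}B_0^{-1}=(B_0N^+-N^-B_0)^{-1}$ is the same diagonal operator for $B_0$ and $\hB_0$, which is exactly why your anticipated cancellations occur --- but the paper's rewriting makes this transparent in one line rather than through grade-by-grade verification.
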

\begin{proof}
To prove the first identity, note that
\begin{multline*}
  (\partial_t + T_{\hB_0})F_t=
   (T_{\hB_0}-T_{B_0})F_t \\
   =
   -i (\hB_0 N^+- N^-\hB_0)^{-1} (\hB_0\ud+\ud^*\hB_0)F_t+
   i( B_0 N^+- N^- B_0)^{-1} ( B_0\ud+\ud^* B_0)F_t \\
   = i (\hB_0 N^+- N^-\hB_0)^{-1}
   ((B_0-\hB_0)\ud + \ud^*(B_0- \hB_0))F_t
\end{multline*}
since
$\hB_0 N^+- N^-\hB_0= B_0 N^+- N^- B_0$.
The vector part of this matrix is
$
     \begin{bmatrix}
       -a_{\no\no} & 0 \\ 0 & a_{\ta\ta}
     \end{bmatrix}
$.
Furthermore
$$
  B_0- \hB_0= 0\oplus 
     \begin{bmatrix}
       0 & a_{\no\ta} \\ a_{\ta\no} & 0
     \end{bmatrix}
  \oplus 0 \oplus \ldots\oplus 0,
$$
which shows that
$$
  (B_0-\hB_0)\ud F_t= a_{\no\ta}\ud F_t^{1,\no}
  \qquad\text{and}\qquad
  \ud^*(B_0- \hB_0)F_t= \ud^* a_{\ta\no} F_t^{1,\no},
$$
using the mapping properties of $\ud$ and $\ud^*$ from 
Remark~\ref{rem:mappingud}.
Thus
\begin{multline*}
  (\partial_t + T_{\hB_0})F_t=
  -i a_{\no\no}^{-1}( a_{\no\ta}\ud F_t^{1,\no}+ \ud^* a_{\ta\no} F_t^{1,\no} ) \\
  = -i (a_{00}^{-1} a_{\no\ta}) \ud F_t^{1,\no} 
    -i \ud^*_{\hB_0} (a_{\ta\ta}^{-1}a_{\ta\no}) F_t^{1,\no},
\end{multline*}
since 
$\hB_0 N^+- N^-\hB_0= - a_{\no\no}$
on normal vector fields.

To prove the second identity, we multiply the Dirac equation
$(\partial_t + T_{B_0}) F_t=0$ by $B_0 N^+- N^- B_0$ to obtain
$$
  (B_0 N^+- N^- B_0)\partial_t F_t -i(B_0\ud+\ud^* B_0)F_t= 0.
$$
The normal component of the vector part on the left hand side is
$$
  - a_{\no\no} \partial_t F^{1,\no}_t 
  -i( a_{\no\ta} \ud F^{1,\no}_t + 
       \ud^*( a_{\ta\no}F^{1,\no}_t+a_{\ta\ta}F^{1,\ta}_t ) )=0.
$$
Here we have used the expression for the vector part of $B_0 N^+- N^- B_0$
above for the first term.
For the other terms we recall from Remark~\ref{rem:mappingud} 
that the vector part of 
$\ud F_t$ is $\ud F_t^{1,\no}$ which is tangential, and
that the normal vector part of $\ud^* B_0 F_t$ is 
$\ud^* (B_0 F_t)^{1,\ta}$.
Therefore, after multiplying the equation with $a_{00}^{-1}$, we
obtain
$$
  -i (a_{00}^{-1} a_{\no\ta}) \ud F_t^{1,\no} 
    -i \ud^*_{\hB_0} (a_{\ta\ta}^{-1} a_{\ta\no}) F_t^{1,\no}
  = \partial_t F_t^{1,\no} + i \ud^*_{\hB_0} F_t^{1,\ta},
$$
which proves the lemma.
\end{proof}
As in Lemma~\ref{lem:blockasswapping} we note that since $\hB_0$ 
is a block matrix
$$
  T_{\hB_0}= \Pi_{\hB_0}= \Gamma+ \hB_0^{-1}\Gamma^* \hB_0, 
  \qquad \text{where } \Gamma:= -i N \ud.
$$
To prove Theorem~\ref{thm:pertestsforrealsymm} we shall need the
following corollary to Theorem~\ref{thm:pertestsforblock}.
\begin{cor}   \label{cor:harmanalestforsymm}
  Let $\hB_0$ be the block matrix defined above, 
assume that $\|\wt Q_t\|_\off \le C$ and let 
$v\in L_\infty(\R^n;\wedge)$ be a function with norm
$\|v\|_\infty\le C$.
Then for all $f\in E^+_{B_0}\mH$ we have estimates
\begin{align}
  \tb{ \wt Q_t \ep t^2\Gamma \Gamma^*_{\hB_0} P_t^{\hB_0}( F^{1,0}_t v ) }
    &\lesssim ( \tb{\wt Q_t}_\op+ 1 )\|\ep\|_\infty\|f\|,   \\
  \tb{ \wt Q_t\ep P_t^{\hB_0}( F^{1,0}_t v ) }
  &\lesssim ( \tb{\wt Q_t}_\op+ 1 )\|\ep\|_\infty\|f\|,  
\end{align}
where $F_t:= e^{-t|T_{B_0}|}f$ and $F^{1,0}_t = (F_t, e_0)$.
The corresponding estimates for $f\in E^-_{B_0}\mH$
also holds.
\end{cor}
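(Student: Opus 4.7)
The plan is to adapt the harmonic analysis of Theorem~\ref{thm:pertestsforblock}, which yields the analogous estimates for fixed $L_2$-inputs, to the $t$-dependent input $F^{1,0}_t v$. The key structural input is Lemma~\ref{lem:LaplaceF_0^1}: $F^{1,0}_t$ solves the divergence-form elliptic equation $\divv_{t,x}(A_0 \nabla_{t,x} F^{1,0}) = 0$. This yields both the non-tangential maximal estimate $\|\widetilde N_*(F^{1,0})\| \lesssim \|f\|$ via Proposition~\ref{prop:modnontang} applied to the vector part $F^1_t$ (which belongs to $\hat\mH^1$ and is driven by $T_{B_0}|_{\hat\mH^1}=T_{A_0}$), and the quadratic estimate $\tb{t\nabla_x F^{1,0}_t} \lesssim \|f\|$, obtained from the identity $\nabla_x F^{1,0}_t = (\partial_t F_t)_\ta$ (a consequence of $d_{t,x}F=0$) together with the hypothesis that $T_{B_0}$ satisfies quadratic estimates.

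First I would verify the operator-norm bounds $\tb{\Theta_t}_\op \lesssim (\tb{\wt Q_t}_\op+1)\|\ep\|_\infty$ for both families. For $\Theta_t = \wt Q_t \ep \, t^2 \Gamma \Gamma^*_{\hB_0} P_t^{\hB_0}$ this is exactly (\ref{eq:perttildeest1block}) in Theorem~\ref{thm:pertestsforblock}. For $\Theta_t = \wt Q_t \ep P_t^{\hB_0}$, write
$$
P_t^{\hB_0} = I - t^2\Gamma\Gamma^*_{\hB_0}P_t^{\hB_0} - t^2\Gamma^*_{\hB_0}\Gamma P_t^{\hB_0}
$$
and combine both estimates of Theorem~\ref{thm:pertestsforblock} with the trivial bound $\tb{\wt Q_t \ep}_\op \le \tb{\wt Q_t}_\op\|\ep\|_\infty$. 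Simultaneously Proposition~\ref{pseudoloc} and Lemma~\ref{lem:offdiagcomposition} provide $\|\Theta_t\|_\off \lesssim \|\ep\|_\infty$, hence the uniform bound $\|\Theta_t\|_{L_2\to L_2} \lesssim \|\ep\|_\infty$.

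Next, to move from operator norms to the $t$-dependent input, I would decompose
$$
F^{1,0}_t v = (F^{1,0}_t - A_t F^{1,0}_t)\, v + (A_t F^{1,0}_t)\, v,
$$
where $A_t$ denotes dyadic averaging on scale $t$. The oscillation piece is bounded by applying the uniform $L_2$-bound on $\Theta_t$, the Poincar\'e inequality $\|u - A_t u\|_{L_2} \lesssim t\|\nabla u\|_{L_2}$ on each dyadic cube, and the quadratic estimate for $\nabla_x F^{1,0}_t$ recalled above. For the averaged piece I would further split as $(A_t F^{1,0}_t)\,\Theta_t v + [\Theta_t, A_t F^{1,0}_t]v$. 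The leading term is controlled by Lemma~\ref{lem:Carleson}: $\Theta_t v$ is a Carleson function of norm $\lesssim (\tb{\wt Q_t}_\op+1)\|\ep\|_\infty\|v\|_\infty$ by Lemma~\ref{lem:sqfcntoCarleson}, and $\|N_*(A_t F^{1,0}_t)\| \lesssim \|\widetilde N_*(F^{1,0})\| \lesssim \|f\|$.

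The main obstacle is the commutator error $[\Theta_t, A_t F^{1,0}_t]v$: the function $v$ is only $L_\infty$ and $A_t F^{1,0}_t$ is only piecewise constant, so Lemma~\ref{lem:ppa} does not apply directly. I would control it via a Schur-type argument organising the $L_2$ off-diagonal decay of $\Theta_t$ on the dyadic grid, and bounding the jumps of $A_t F^{1,0}_t$ between neighbouring dyadic cubes of scale $t$ by the Poincar\'e quantity $t\|\nabla_x F^{1,0}_t\|_{L_2(\widetilde Q)}$ on a suitable enlargement. Summing these jumps along a chain of cubes connecting any two dyadic cubes $Q, Q'$, then summing against the spatial off-diagonal decay and integrating in $t$ through $\tb{t\nabla_x F^{1,0}_t} \lesssim \|f\|$, closes the bound. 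The technical hard part is to arrange the chain-of-cubes Schur summation so that the spatial off-diagonal decay dominates the cumulative Poincar\'e errors, while preserving $t$-integrability.
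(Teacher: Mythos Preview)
Your overall architecture is right, but there are two issues where the paper's route is both simpler and avoids a genuine gap in your argument.

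\textbf{The commutator is avoidable.} The paper absorbs multiplication by $v$ into the operator family from the start, defining $\Theta^i_t := \wt Q_t\ep\,(t^2\Gamma\Gamma^*_{\hB_0}P_t^{\hB_0})M_v$ and $\Theta^2_t := \wt Q_t\ep\,P_t^{\hB_0}M_v$. These still have off-diagonal bounds $\lesssim\|\ep\|_\infty$ (Proposition~\ref{pseudoloc} and Lemma~\ref{lem:offdiagcomposition}), and the operator-norm bounds follow from Theorem~\ref{thm:pertestsforblock} exactly as you verify. One then applies the standard splitting $\Theta^i_t(\,\cdot\,)=(\Theta^i_t-\gamma^i_tA_t)(\,\cdot\,)+\gamma^i_tA_t(\,\cdot\,)$ directly to the \emph{scalar} input $G_t^{1,0}$. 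Lemma~\ref{lem:ppa} controls the first piece by $\|\Theta^i_t\|_\off\,\tb{t\nabla G_t^{1,0}}$, and Lemma~\ref{lem:Carleson} with Lemma~\ref{lem:sqfcntoCarleson} controls the second. Your chain-of-cubes Schur argument for $[\Theta_t,M_{A_tF^{1,0}_t}]v$ is then unnecessary.

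\textbf{A gap for general $f\in E^+_{B_0}\mH$.} You assume $d_{t,x}F=0$ (to get $\nabla_xF^{1,0}_t=(\partial_tF_t)_\ta$) and that $F^1_t\in\hat\mH^1$ is driven by $T_{A_0}$, so that Proposition~\ref{prop:modnontang} applies. Neither holds for general $f\in E^+_{B_0}\mH$: the equation $(\partial_t+T_{B_0})F=0$ does not decouple into $d_{t,x}F=0$ and $d^*_{t,x}(B_0F)=0$ unless $f\in\hat\mH_{B_0}$, and $T_{B_0}$ does not preserve $\mH^1$ (cf.\ Lemma~\ref{lem:preservehats}). What survives unconditionally is Lemma~\ref{lem:LaplaceF_0^1}, and the algebraic identity $\nabla F_t^{1,0}=i\,\ud F_t^{1,\no}$, whence $\tb{t\nabla F_t^{1,0}}\lesssim\tb{t\ud F_t}\lesssim\|f\|$ by (\ref{eq:Hodgeterm}); this repairs your gradient estimate. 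For the non-tangential bound the paper does \emph{not} invoke Proposition~\ref{prop:modnontang}. Instead it first replaces $F_t$ by $G_t:=P_t^{B_0}f$ (the difference $F_t-G_t=\psi_t(T_{B_0})f$ with $\psi(z)=e^{-|z|}-(1+z^2)^{-1}$ is harmless), writes $G_t=\tfrac12(H_t+H_{-t})$ with $H_t=(I+itT_{B_0})^{-1}f$, and observes that the divergence-form identity and $L_q$ estimate of Lemma~\ref{lem:H_t} for $H_t^{1,0}$ in fact hold for all $f\in\mH$, by inspection of that proof. This gives $\|N_*(A_tG_t^{1,0})\|\lesssim\|M(|f|^p)^{1/p}\|_2\lesssim\|f\|$ via the Hardy--Littlewood maximal function, with no recourse to $\hat\mH^1$.
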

We defer the proof until the end of this section, and turn to 
a lemma in preparation for the proof of
Theorem~\ref{thm:pertestsforrealsymm}.
\begin{lem}  \label{lem:SchurHodge}
If $f\in E^+_{B_0}\mH$ and $F_t:= e^{-t|T_{B_0}|}f$, then
\begin{align}
  \tbb{ \int_0^t \frac st(s\partial_s F_s)\frac {ds}s } 
  & \le \tb{t\partial_t F_t} \lesssim \|f\|, \label{eq:Schurterm} \\
  \tb{t \ud F_t} &\lesssim \tb{t\partial_t F_t} \lesssim \|f\|. 
  \label{eq:Hodgeterm}
\end{align}
The corresponding estimates for $f\in E^-_{B_0}\mH$ also holds.
\end{lem}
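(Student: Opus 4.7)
\textbf{Plan for proving Lemma~\ref{lem:SchurHodge}.}

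First I would establish the common right-hand side bound $\tb{t\partial_t F_t} \lesssim \|f\|$. Since $f \in E^+_{B_0}\mH$ and on this Hardy subspace $|T_{B_0}|$ agrees with $T_{B_0}$, one has $t\partial_t F_t = -tT_{B_0}e^{-t|T_{B_0}|}f = \psi_t(T_{B_0})f$ with $\psi(z) = -z e^{-|z|}$. This $\psi$ belongs to $\Psi(S^o_\nu)$ and is non-vanishing on both $S^o_{\nu+}$ and $S^o_{\nu-}$, so by the hypothesised quadratic estimate for $T_{B_0}$ together with Proposition~\ref{prop:differentpsi},
\begin{equation*}
  \tb{t\partial_t F_t} = \tb{\psi_t(T_{B_0})f} \approx \tb{Q_t^{B_0}f} \approx \|f\|.
\end{equation*}
The case $f \in E^-_{B_0}\mH$ is identical after changing sign.

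For the first inequality in \eqref{eq:Schurterm} I would apply a vector-valued Schur test to the integral operator with kernel $K(t,s) = (s/t)\chi_{\{0<s\le t\}}$ acting on $L_2((0,\infty), dt/t; \mH)$. Direct computation yields $\int_0^\infty K(t,s)\,\tfrac{ds}{s} = \int_0^t \tfrac{ds}{t} = 1$ for each $t$, and $\int_0^\infty K(t,s)\,\tfrac{dt}{t} = \int_s^\infty \tfrac{s\,dt}{t^2} = 1$ for each $s$. Hence the operator norm is at most $1$, which, applied to the $\mH$-valued function $s\mapsto s\partial_sF_s$, gives the pointwise triple-bar bound.

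For \eqref{eq:Hodgeterm} the key is the Dirac equation itself. By Lemma~\ref{lem:characthardyfcns}, $F_t \in \dom(T_{B_0}) \subset \dom(\ud)\cap \dom(\ud^*_{B_0})$ and $\partial_t F_t + T_{B_0}F_t = 0$; multiplying by $iM_{B_0}$ yields
\begin{equation*}
  \ud F_t + \ud^*_{B_0}F_t \;=\; -iM_{B_0}\partial_t F_t.
\end{equation*}
I would then invoke Lemma~\ref{lem:hodge}(i) with $\Gamma = \ud$, $\wt\Gamma = \ud^* = \Gamma^*$, $B = B_0$ (exactness of $\ud$ reduces to exactness of $d$ on $L_2(\R^n;\wedge)$, which already underlies the boundedness of the Hodge projections $\PP^1_{B_0},\PP^2_{B_0}$ in Corollary~\ref{cor:opfamilies}; transversality holds with constant $c=0$ since $\wt\Gamma=\Gamma^*$). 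This gives the topological splitting $\mH = \nul(\ud)\oplus\nul(\ud^*_{B_0})$. Since $\ud F_t \in \clos{\ran(\ud)}\subset \nul(\ud)$ and $\ud^*_{B_0}F_t \in \clos{\ran(\ud^*_{B_0})}\subset \nul(\ud^*_{B_0})$, transversality yields
\begin{equation*}
  \|\ud F_t\| \lesssim \|\ud F_t + \ud^*_{B_0}F_t\| \lesssim \|M_{B_0}\|_\infty \|\partial_t F_t\|.
\end{equation*}
Multiplying by $t$ and integrating against $dt/t$ produces $\tb{t\ud F_t}\lesssim \tb{t\partial_t F_t}$, completing the proof for $f\in E^+_{B_0}\mH$; the case $f\in E^-_{B_0}\mH$ is handled symmetrically by solving for $\pm t>0$. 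The argument is essentially bookkeeping around tools already assembled, with no substantial obstacle: the only mild point is confirming the applicability of the Hodge splitting to $(\ud,\ud^*,B_0)$, which as noted is already implicit in the functional-calculus machinery of Section~\ref{section2}.
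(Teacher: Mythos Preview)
Your proof is correct and follows essentially the same route as the paper: the Schur test you invoke is precisely the Cauchy--Schwarz/Fubini computation the paper carries out for \eqref{eq:Schurterm}, and your use of the splitting $\mH=\nul(\ud)\oplus\nul(\ud^*_{B_0})$ to separate $\ud F_t$ from $\ud^*_{B_0}F_t$ is exactly what the paper does via the bounded Hodge projection $\PP^1_{B_0}$, writing $t\ud F_t = i\PP^1_{B_0}M_{B_0}\psi_t(T_{B_0})f$.
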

\begin{proof}
The proof of (\ref{eq:Schurterm}) uses Schur estimates.
Applying Cauchy--Schwarz inequality, we estimate the square
of the left hand side by
\begin{multline*}
  \int_0^\infty  \left\| \int_0^t 
     \frac st (s\partial_s F_s)\frac {ds}s \right\|^2  \frac{dt}t
  \le 
  \int_0^\infty  \left( \int_0^t \frac st \frac {ds}s \right)  
  \left( \int_0^t 
     \frac st \| s\partial_s F_s \|^2 \frac {ds}s \right)^2  \frac{dt}t
    \\
  =\int_0^\infty  \left( \int_s^\infty \frac st \frac {dt}t \right)  
     \| s\partial_s F_s \|^2 \frac {ds}s 
  = \tb{ \psi_t(T_{B_0})f }^2 \lesssim \|f\|^2,
\end{multline*}
where $\psi(z):= z e^{-|z|}$.

To prove (\ref{eq:Hodgeterm}), we note that 
$iM_{B_0} T_{B_0}= \ud+\ud^*_{B_0}$
and write $\PP^1_{B_0}$, $\PP^2_{B_0}$ for the two Hodge projections
corresponding to the splitting $\mH= \nul(\ud)\oplus \nul(\ud^*_{B_0})$.
Note that these projections are bounded by Lemma~\ref{lem:hodge}(i).
We get
$$
  t\ud F_t = i \PP^1_{B_0} M_{B_0}(tT_{B_0} F_t)= i \PP^1_{B_0} M_{B_0} \psi_t(T_{B_0})f,
$$
from which (\ref{eq:Hodgeterm}) follows.
\end{proof}
\begin{proof}[Proof that Corollary~\ref{cor:harmanalestforsymm} implies 
the estimate (\ref{eq:pertestsymm2})]
By inserting $I= P_t^{\hB_0} + (I- P_t^{\hB_0} )$ we write the
left hand side in (\ref{eq:pertestsymm2}) as 
$$
  \wt Q_t \ep P_t^{\hB_0} (F_t -f)+
  \wt Q_t \ep (I-P_t^{\hB_0} )F_t -
  \wt Q_t \ep (I-P_t^{\hB_0} )f =: X_1+X_2-X_3.
$$
For $X_3$ we get from Theorem~\ref{thm:pertestsforblock} the estimate
$\tb{X_3}\lesssim  (\tb{\wt Q_t}_\op+1) \|\ep\|_\infty\|f\|$
since $I-P_t^{\hB_0}= t^2\Gamma\Gamma^*_{\hB_0}P_t^{\hB_0}+
t^2\Gamma^*_{\hB_0}\Gamma P_t^{\hB_0}$.
For the term $X_2$ we use the first identity in 
Lemma~\ref{lem:wedge1formulae} to obtain
\begin{multline*}
  X_2= \wt Q_t \ep Q_t^{\hB_0} tT_{\hB_0} F_t =   
  -\wt Q_t \ep Q_t^{\hB_0} (t\partial_t F_t)
  -i\wt Q_t \ep Q_t^{\hB_0} a_{00}^{-1} a_{\no\ta} ( t\ud F_t^{1,\no})\\
  -i\wt Q_t \ep ( Q_t^{\hB_0} t\ud^*_{\hB_0})
   (F_t^{1,0}(a_{\ta\ta}^{-1} a_{\ta\no}e_0) )
   =: -X_4-iX_5 -iX_6.
\end{multline*}
We have the estimate $\tb{X_4} \lesssim 
\|\ep\|_\infty \tb{t\partial_t F_t}\lesssim 
\|\ep\|_\infty\|f\|$.
For $X_5$, we see from Remark~\ref{rem:mappingud}
that $\ud F_t^{1,\no}$ is the vector part of $\ud F_t$. 
Thus $\tb{X_5} \lesssim \|\ep\|_\infty \tb{t\ud F_t}\lesssim \|\ep\|_\infty\|f\|$ by 
(\ref{eq:Hodgeterm}).
To handle the term $X_6$ we note that
$Q_t^{\hB_0}t \ud^*_{\hB_0}= -Q_t^{\hB_0}t \Gamma^*_{\hB_0}N=
-t^2\Gamma\Gamma^*_{\hB_0}P_t^{\hB_0} N$
using
Remark~\ref{rem:gammaintertwining}.
Thus we obtain from Corollary~\ref{cor:harmanalestforsymm}, with
$v= a_{\ta\ta}^{-1} a_{\ta\no}e_0$, the estimate
$\tb{X_6}\lesssim  (\tb{\wt Q_t}_\op+1) \|\ep\|_\infty\|f\|$.

It remains to estimate the term $X_1$.
To handle this, we separate the normal vector part as
$$
  X_1= \wt Q_t \ep P_t^{\hB_0} (F^{1,0}_t e_0)-
  \wt Q_t \ep P_t^{\hB_0} (f^{1,0}e_0)+
  \wt Q_t \ep P_t^{\hB_0} (G_t -g)=: X_7-X_8+X_9,
$$
where $G_t:= F_t - F_t^{1,\no}$ and $g= f-f^{1,\no}$.
From Corollary~\ref{cor:harmanalestforsymm}, with
$v= e_0$, we get the estimate
$\tb{X_7}\lesssim  (\tb{\wt Q_t}_\op+1) \|\ep\|_\infty\|f\|$.
For the term $X_8$, we write
$P_t^{\hB_0}= I- t^2\Gamma\Gamma^*_{\hB_0}P_t^{\hB_0}-
  t^2\Gamma^*_{\hB_0}\Gamma P_t^{\hB_0}$.
From Theorem~\ref{thm:pertestsforblock} we obtain the estimate
$\tb{X_8}\lesssim  (\tb{\wt Q_t}_\op+1) \|\ep\|_\infty\|f\|$.
For the term $X_9$, we integrate by parts to obtain
$$
  X_9 =\wt Q_t \ep P_t^{\hB_0}(t\partial_t G_t)-
    \wt Q_t \ep P_t^{\hB_0}\left( \int_0^t s\partial_s^2 G_s ds \right)
    =: X_{10}-X_{11}.
$$
We have the estimate $\tb{X_{10}} \lesssim 
\|\ep\|_\infty \tb{t\partial_t F_t}\lesssim 
\|\ep\|_\infty\|f\|$.
For the term $X_{11}$ we apply $\partial_t$ to the last expression for 
$(\partial_t + T_{\hB_0})F_t$ in Lemma~\ref{lem:wedge1formulae} and get
\begin{equation}  \label{eq:dofdoblack}
 \partial_s^2 G_s + \partial_s T_{\hB_0}F_s = 
   i \ud^*_{\hB_0} \partial_s F_s^{1,\ta}.
\end{equation}
Thus
\begin{multline*}
  X_{11}= 
  -\wt Q_t \ep (P_t^{\hB_0}tT_{\hB_0})\left( \int_0^t \frac st
    (s\partial_s F_s)\frac{ds}s \right) \\
   +
    i \wt Q_t \ep (P_t^{\hB_0} t\ud^*_{\hB_0})
    \left( \int_0^t \frac st (s \partial_s F_s^{1,\ta}) \frac{ds}s \right)
  =: -X_{12} + i X_{13}.
\end{multline*}
Both $\tb{X_{12}}$ and $\tb{X_{13}}$ can now be estimated with
$\|\ep\|_\infty\|f\|$ by (\ref{eq:Schurterm}) since
$P_t^{\hB_0}tT_{\hB_0}= Q_t^{\hB_0}$ and 
$P_t^{\hB_0} t\ud^*_{\hB_0}$ are uniformly bounded by 
Corollary~\ref{cor:opfamilies}.
This proves the estimate (\ref{eq:pertestsymm2}).
\end{proof}
\begin{proof}[Proof that Corollary~\ref{cor:harmanalestforsymm} implies the estimate (\ref{eq:pertestsymm1})]
We write the left hand side in (\ref{eq:pertestsymm1}),
using integration by parts, as
$$
  \wt Q_t \ep \ud \int_0^t F_s ds =
  \wt Q_t \ep t\ud  F_t - \wt Q_t \ep \ud \int_0^t s\partial_s F_s ds 
  =: X_1 - X_2.
$$
For $X_1$ we have 
$\tb{\wt Q_t \ep (t\ud  F_t)}\lesssim \|\ep\|_\infty\|f\|$
by (\ref{eq:Hodgeterm}).
For $X_2$, we write $I= P_t^{\hB_0} + (I- P_t^{\hB_0})$ and get
$$
  X_2= \wt Q_t \ep (t\ud P_t^{\hB_0})\int_0^t \frac st (s\partial_s F_s)\frac{ds}s
  +
  \wt Q_t \ep t\ud Q_t^{\hB_0} \int_0^t s\partial_s T_{\hB_0} F_s ds
  =: X_3+X_4.
$$
Using that $\|t\ud P_t^{\hB_0}\|\le C$ by Corollary~\ref{cor:opfamilies}, 
and (\ref{eq:Schurterm}) shows that 
$\tb{X_3}\lesssim \|\ep\|_\infty\|f\|$.
To handle $X_4$, we use the identity (\ref{eq:dofdoblack}), which gives
$$
  X_4= \wt Q_t \ep t\ud Q_t^{\hB_0}
  \left( - \int_0^t s\partial_s^2 G_s ds +
    i \int_0^t s \ud^*_{\hB_0} \partial_s F_s^{1,\ta} ds \right)
  =: -X_5 + i X_6.
$$
For $X_6$ we note that 
$\ud Q_t^{\hB_0}\ud^*_{\hB_0}= N\ud Q_t^{\hB_0}N\ud^*_{\hB_0}
= -\Gamma Q_t^{\hB_0}\Gamma^*_{\hB_0}= -\Gamma^2 Q_t^{\hB_0}=0$
using Remark~\ref{rem:gammaintertwining}, and thus $X_6=0$.
To handle $X_5$, we rewrite this with an integration by parts
as
$$
  X_5= \wt Q_t \ep (t\ud Q_t^{\hB_0})(t\partial_t G_t)
  - \wt Q_t \ep t\ud Q_t^{\hB_0} G_t
  + \wt Q_t \ep (t\ud Q_t^{\hB_0} g)=: X_6-X_7+X_8,
$$
where $g= f- f^{1,\no}$.
Using Lemma~\ref{lem:SchurHodge} and that $\|t\ud Q_t^{\hB_0}\|\le C$ by Corollary~\ref{cor:opfamilies}, 
we get
$\tb{X_6}\lesssim \|\ep\|_\infty\|f\|$.
For $X_8$, we note that 
$t\ud Q_t^{\hB_0}= iN t^2 \Gamma\Gamma^*_{\hB_0}P_t^{\hB_0}$.
Thus we can apply Theorem~\ref{thm:pertestsforblock} to obtain
$\tb{X_8}\lesssim  (\tb{\wt Q_t}_\op+1)\|\ep\|_\infty \|f\|$.

We now write $G_t= F_t- F^{1,0}_t e_0$ and get
$$
  X_7= \wt Q_t \ep t\ud P_t^{\hB_0} (tT_{\hB_0}F_t)-
  \wt Q_t \ep t\ud Q_t^{\hB_0} (F^{1,0}_te_0) =: X_9-X_{10}.
$$
Again noting that
$t\ud Q_t^{\hB_0}= iN t^2 \Gamma\Gamma^*_{\hB_0}P_t^{\hB_0}$
we obtain from Corollary~\ref{cor:harmanalestforsymm}, with
$v= e_0$, the estimate
$\tb{X_{10}}\lesssim  (\tb{\wt Q_t}_\op+1)\|\ep\|_\infty \|f\|$.
The term $X_9$ remains, on which we use
the first identity in Lemma~\ref{lem:wedge1formulae}
to obtain
\begin{multline*}
  X_9= -\wt Q_t \ep t\ud P_t^{\hB_0} (t\partial_t F_t)
  -i\wt Q_t \ep( t\ud P_t^{\hB_0})a_{00}^{-1} a_{\no\ta}( t\ud F_t^{1,\no})\\
  -i\wt Q_t \ep (t\ud P_t^{\hB_0} t\ud^*_{\hB_0})
   (F_t^{1,0}(a_{\ta\ta}^{-1} a_{\ta\no}e_0) )
   =: -X_{11}-iX_{12} -iX_{13}.
\end{multline*}
Using that $\|t\ud P_t^{\hB_0}\|\le C$ by Corollary~\ref{cor:opfamilies}, 
shows that 
$\tb{X_{11}}\lesssim \|\ep\|_\infty\|f\|$.
For $X_{12}$ we see from Remark~\ref{rem:mappingud} 
that $\ud F_t^{1,\no}$ is the vector part of $\ud F_t$. 
Thus $\tb{X_{12}} \lesssim \|\ep\|_\infty \tb{t\ud F_t}\lesssim \|\ep\|_\infty\|f\|$ by 
(\ref{eq:Hodgeterm}).
To handle the final term $X_{13}$ we note that
$\ud P_t^{\hB_0} \ud^*_{\hB_0}= \ud N P_t^{\hB_0} N \ud^*_{\hB_0}
= -\Gamma P_t^{\hB_0}\Gamma^*_{\hB_0}= -\Gamma \Gamma^*_{\hB_0}P_t^{\hB_0}$
using Remark~\ref{rem:gammaintertwining}.
Thus we obtain from Corollary~\ref{cor:harmanalestforsymm}, with
$v= a_{\ta\ta}^{-1} a_{\ta\no}e_0$, the estimate
$\tb{X_{13}}\lesssim  (\tb{\wt Q_t}_\op+1) \|\ep\|_\infty\|f\|$.
This proves the estimate (\ref{eq:pertestsymm1}).
\end{proof}
\begin{proof}[Proof of Corollary~\ref{cor:harmanalestforsymm}]
Let $f\in E^+_{B_0}\mH$ and consider the functions $F_t=e^{-t|T_{B_0}|}f$
and $G_t:= P_t^{B_0}f$.
We note that $F_t-G_t= \psi_t(T_{B_0})f$, where 
$\psi(z)= e^{-|z|}- (1+z^2)^{-1}\in \Psi(S^o_\nu)$.
Thus it suffices
to prove the estimate 
$\tb{\Theta^i_t(G_t^{1,0})}\lesssim (\tb{\wt Q_t}_\op+1) \|\ep\|_\infty\|f\|$, 
$i= 1,2$, for the two families of operators
\begin{align}
  \Theta_t^1 &:=   \wt Q_t \ep t^2\Gamma \Gamma^*_{\hB_0} P_t^{\hB_0} M_v,   \\
  \Theta_t^2 &:=  \wt Q_t\ep P_t^{\hB_0} M_v,
\end{align}
where $M_v$ denotes the multiplication operator $M_v (f) := v f$.
To this end, we write 
$$
  \Theta_t^i(G_t^{1,0})=( \Theta_t^i-\gamma_t^i A_t)G_t^{1,0} + \gamma_t^i A_t G_t^{1,0},
$$
where $\gamma_t^i(x)$ is the principal part of the operator family $\Theta_t^i$
as in Definition~~\ref{defn:princpart}.
Using the principal part approximation Lemma~\ref{lem:ppa} and Carleson's 
Lemma~\ref{lem:Carleson} we obtain the estimate
$$
  \tb{ \Theta_t^i(G_t^{1,0}) }= \| \Theta_t^i \|_\off \tb{t\nabla (G_t^{1,0}) } + 
    \| \gamma_t^i \|_C \|N_*( A_t G_t^{1,0} )\|.
$$
By Proposition~\ref{pseudoloc} and Lemma~\ref{lem:offdiagcomposition}, we have 
$\|\Theta_t^i \|_\off \lesssim \|\ep\|_\infty$.
Furthermore, by Lemma~\ref{lem:sqfcntoCarleson} and Theorem~\ref{thm:pertestsforblock}
we have
$$
  \| \gamma_t^i \|_C \lesssim \tb{\Theta_t^i}_\op + \|\Theta_t^i \|_\off \lesssim 
    (\tb{\wt Q_t}_\op+1)\|\ep\|_\infty.
$$
For $\Theta_t^2$ we have used that
$\Theta_t^2=   \wt Q_t\ep(I- t^2\Gamma \Gamma^*_{\hB_0} P_t^{\hB_0} -  
  t^2 \Gamma^*_{\hB_0}\Gamma P_t^{\hB_0} ) M_v $.

To bound $\tb{t\nabla (G_t^{1,0}) }$, we note that the vector part of $\ud G_t$ is
$$
  \ud G_t^{1,\no}= i\m d (e_0 G_t^{1,0})= -i d(G_t^{1,0})= -i \nabla(G_t^{1,0}).
$$
Thus, similar to the proof of (\ref{eq:Hodgeterm}), it follows that 
$$
  \tb{t\nabla (G_t^{1,0}) }\lesssim \tb{t\ud G_t }\lesssim 
\tb{t T_{B_0}P_t^{B_0}f}\lesssim \|f\|.
$$
Finally, to bound $\|N_*( A_t G_t^{1,0} )\|$ we write 
$G_t = \tfrac 12(H_t+ H_{-t})$, where $H_t:= (1+ it T_{B_0})^{-1} f$.
We now observe that the divergence form equation and estimate for $H_t^{1,0}$
in Lemma~\ref{lem:H_t} in fact holds for all $f\in \mH$, by inspection 
of the proof.
We get 
\begin{multline*}
  N_*(A_t H_t^{1,0})(x) = \sup_{|y-x|<t} 
  \left| \barint_{\hspace{-6pt} Q(y,t) } H_t^{1,0}(z) dz  \right| \\
  \lesssim \sup_{t>0}\left( \barint_{\hspace{-6pt} B(x, r_0 t) } 
     |H_t^{1,0}(z)|^q dz\right)^{1/q}
  \lesssim ( M(|f|^p)(x) )^{1/p},
\end{multline*}
where $Q(y,t)$ denotes the dyadic cube $Q\in \dyadic_t$ which contains $y$.
Using the boundedness of the Hardy--Littlewood maximal function
on $L_{2/p}(\R^n)$, we obtain
$$
  \| N_*(A_t H_t^{1,0}) \|_2 \lesssim \| ( M(|f|^p) )^{1/p} \|_2
  = \| M(|f|^p) \|_{2/p}^{1/p} \lesssim \| |f|^p \|_{2/p}^{1/p}
  = \| f\|_2.
$$
A similar argument shows that $\|N_*( A_t H_{-t}^{1,0} )\|\lesssim \|f\|$,
and thus $\|N_*( A_t G_t^{1,0} )\| \lesssim \|f\|$.
We have proved that 
$\tb{\Theta^i_t(G_t^{1,0})}\lesssim (\tb{\wt Q_t}_\op+1) \|\ep\|_\infty\|f\|$,
and therefore Corollary~\ref{cor:harmanalestforsymm}.
\end{proof}

\subsection{Proof of main theorems}   \label{section8.3}

We are now in position to prove Theorems~\ref{thm:main},
\ref{thm:mainpert} and \ref{thm:transmain} stated in the
introduction.
\begin{proof}[Proof of Theorem~\ref{thm:transmain}]
Given a perturbation $B^k\in L_\infty(\R^n; \mL(\wedge^k))$
of the unperturbed coefficients $B_0^k\in L_\infty(\R^n; \mL(\wedge^k))$,
we introduce
$B:= I\oplus \ldots \oplus B^k \oplus\ldots\oplus I
\in L_\infty(\R^n; \mL(\wedge))$ and 
$B_0:= I\oplus \ldots \oplus B_0^k \oplus\ldots\oplus I
\in L_\infty(\R^n; \mL(\wedge))$ acting in all $\mH$.
By Theorem~\ref{thm:unpertblockmain}(i), $T_{B_0}$ satisfies
quadratic estimates and by Lemma~\ref{lem:pertests}
and Theorem~\ref{thm:pertestsforblock} there exists
$\epsilon>0$ such that we have quadratic estimates
$$
  \tb{Q_t^B f}\approx \|f\|, \qquad\text{whenever } 
  \|B-B_0\|_\infty\le\epsilon, \,\, f\in\mH.
$$
Therefore, by Proposition~\ref{prop:QtoFCalc} and Proposition~\ref{prop:lipcont} 
we have when $\|B-B_0\|_\infty\le\epsilon/2$
well defined and bounded operators
$E_B= \sgn(T_B)$ which depend Lipschitz continuously on $B$, i.e.
$$
  \| E_{B_2} - E_{B_1} \| \le C \|B_2 - B_1\|_\infty, 
  \qquad \text{when } \|B_i- B_0\|<\epsilon/2,\,\, i=1,2.
$$
To prove that (Tr-$B^k\alpha^\pm$) is well posed, note that
by Lemma~\ref{lem:optransmain} it suffices to show that $\lambda- E_B N_B$
is invertible since then in particular
$\lambda- E_{B^k} N_{B^k} = (\lambda- E_B N_B)|_{\hat\mH_B^k}$
is invertible.
Here the spectral parameter is $\lambda= (\alpha+1)/(\alpha-1)$ and
$\alpha:= \alpha^+/\alpha^-$.
By Theorem~\ref{thm:unpertblockmain}(ii), the unperturbed 
operator $\lambda- E_{B_0}N_{B_0}$ is invertible when 
$\lambda^2+1 \ne 0$.
For the perturbed operator we write
$$
  \lambda- E_B N_B = (\lambda- E_{B_0}N_{B_0})
  \Big(I+ \tfrac 1{\lambda^2+1}(\lambda - N_{B_0}E_{B_0})
  ( E_{B_0}N_{B_0} - E_BN_B ) \Big).
$$
Here $\| E_{B_0}N_{B_0} - E_BN_B \| \lesssim \|B- B_0\|_\infty$
since we clearly have Lipschitz continuity 
$\|N_{B_2} - N_{B_1}\| \lesssim \|B_2- B_1\|_\infty$.
It follows that $\lambda- E_B N_B$ 
is invertible when $\|B- B_0\|_\infty \le C |\lambda^2+1|$.
Under the assumption $\|B- B_0\|_\infty < \epsilon/2$ with $\epsilon$
small we can replace 
$\lambda^2+1 = 2(\alpha^2+1)/(\alpha-1)^2$ with $\alpha^2+1$.

This proves that for each boundary function $g$ there exists
a unique solution $f=f^+ + f^-$ satisfying the jump conditions
in (Tr-$B^k\alpha^\pm$) and $\|f^+\|+\|f^-\|\approx\|f\|\approx \|g\|$.
The norm estimates for $F^\pm$ in Theorem~\ref{thm:transmain} 
now follows from Lemma~\ref{lem:characthardyfcns}.
Finally it follows from Proposition~\ref{prop:lipcont} that the solution operator
$$
  F^\pm(t,x)= 2 e^{\mp t|T_B|}E_B^\pm\big((\alpha^++\alpha^-)E_B - (\alpha^+-\alpha^-)N_B\big)^{-1} g(x)
$$
depends Lipschitz continuously on $B$.
Indeed, from above it suffices to prove Lipschitz continuity for $e^{\mp t|T_B|}$.
For the trippel bar norm, we here take
$\psi(z)= ze^{\mp|z|}$ in Proposition~\ref{prop:lipcont}, and for $\|F^\pm_t\|_2$ we take
$b(z)= e^{\mp t|z|}$.
This completes the proof of Theorem~\ref{thm:transmain}.
\end{proof}
Before turning to the proofs of Theorems~\ref{thm:main} and
\ref{thm:mainpert}, we note some corollaries of Theorem~\ref{thm:transmain}.
First, if we let $k=1$ in Theorem~\ref{thm:transmain}, then it
proves that the following Neumann--regularity transmission
problem is well posed for small $L_\infty$ perturbations $A$
of a block matrix $A_0$.

\vspace{2mm}
\noindent THE TRANSMISSION PROBLEM (Tr-$A\alpha^\pm$).

Let $\alpha^\pm\in\C$ be given jump parameters.
Given scalar functions $\psi, \phi:\R^n\rightarrow\C$ with 
$\nabla_x \psi\in L_2(\R^n;\C^n)$ and $\phi\in L_2(\R^n;\C)$,
find gradient vector fields $F^\pm(t,x)= \nabla_{t,x}U^\pm(t,x)$ 
in $\R^{n+1}_\pm$ such that 
$F_t^\pm\in C^1(\R_\pm; L_2(\R^n;\C^{n+1}))$ and
$F^\pm$ satisfies (\ref{eq:Laplacein1order}) for $\pm t>0$,
and furthermore $\lim_{t\rightarrow \pm\infty}F^\pm_t=0$ and 
$\lim_{t\rightarrow 0^\pm}F^\pm_t=f^\pm$
in $L_2$ norm, 
where the traces $f^\pm$ satisfy the jump conditions
\begin{equation*}  
  \left\{
  \begin{array}{rcl}
    \alpha^- \nabla_x U^+(0,x) - \alpha^+ \nabla_x U^-(0,x) &=& \nabla_x\psi(x), \\
    \alpha^+ \tdd{U^+}{\nu_A}(0,x) - \alpha^- \tdd{U^-}{\nu_A}(0,x) &=& \phi(x),
  \end{array}
  \right.
\end{equation*}
where $\nabla_x U^\pm (0,x)= f_\ta^\pm(x)$ and
$\tdd {U^\pm} {\nu_A}= (A f^\pm , e_0)$ denotes the conormal derivative.

\vspace{2mm}

Secondly, Theorem~\ref{thm:transmain} give
perturbation results for the following boundary value problems
for $k$-vector fields.

\vspace{2mm}
\noindent THE NORMAL BVP (Nor-$B^k$).

Given a $k$-vector field $g\in \hat\mH^k_B$,
find a $k$-vector field $F(t,x)$ in $\R^{n+1}_+$
such that $F_t\in C^1(\R_+; L_2(\R^n;\wedge^k))$ and
$F$ satisfies (\ref{eq:diracwedgek}) for $t>0$,
and furthermore $\lim_{t\rightarrow\infty}F_t=0$ and 
$\lim_{t\rightarrow 0}F_t=f$ in $L_2$ norm, 
where $f$ satisfies
$$
  e_0\lctr (B^k f) = e_0 \lctr (B^k g)
  \qquad \text{on }\R^n =\partial \R^{n+1}_+.
$$

\vspace{2mm}
\noindent THE TANGENTIAL BVP (Tan-$B^k$).

Given a $k$-vector field $g\in \hat\mH^k_B$,
find a $k$-vector field $F(t,x)$ in $\R^{n+1}_+$ 
such that $F_t\in C^1(\R_+; L_2(\R^n;\wedge^k))$ and
$F$ satisfies (\ref{eq:diracwedgek}) for $t>0$,
and furthermore $\lim_{t\rightarrow\infty}F_t=0$ and 
$\lim_{t\rightarrow 0}F_t=f$ in $L_2$ norm, 
where $f$ satisfies
$$
  e_0\wedg f = e_0\wedg g
  \qquad \text{on }\R^n =\partial \R^{n+1}_+.
$$

\begin{cor}  \label{cor:formBVP}
Let $B_0^k=B_0^k(x)\in L_\infty(\R^n;\mL(\wedge^k))$ be accretive
and assume that $B_0^k$ is a block matrix.
Then there exists $\epsilon>0$ depending only on the
constants $\|B_0^k\|_\infty$ and $\kappa_{B^k_0}$ and dimension $n$, such that if 
$B^k\in L_\infty(\R^n;\mL(\wedge^k))$ satisfies 
$\|B^k-B^k_0\|_\infty <\epsilon$, then
the normal and tangential boundary value problems (Nor-$B^k$) and (Tan-$B^k$) above
are well posed.
\end{cor}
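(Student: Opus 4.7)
The plan is to realize both boundary value problems as degenerate cases of the transmission problem (Tr-$B^k\alpha^\pm$) of Theorem~\ref{thm:transmain}. For (Nor-$B^k$) I would take $(\alpha^+,\alpha^-)=(1,0)$ and for (Tan-$B^k$) I would take $(\alpha^+,\alpha^-)=(0,1)$. Plugging these choices into the jump conditions of (Tr-$B^k\alpha^\pm$), one reads off that (Nor-$B^k$) requires exactly $e_0\lctr(B^k f^+)=e_0\lctr(B^k g)$ together with the redundant constraint $e_0\wedg f^-=-e_0\wedg g$, and similarly (Tan-$B^k$) requires $e_0\wedg f^+=e_0\wedg g$ together with $e_0\lctr(B^k f^-)=-e_0\lctr(B^k g)$.

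The key observation is that with these two choices of parameters the associated spectral point $\lambda=(\alpha^++\alpha^-)/(\alpha^+-\alpha^-)$ equals $+1$ and $-1$ respectively, so $\lambda^2+1=2$ in both cases, comfortably bounded away from zero. Thus the Neumann series/perturbation argument used in the proof of Theorem~\ref{thm:transmain}, which needs only $\|B^k-B_0^k\|_\infty\cdot \|E_{B_0}N_{B_0}-E_B N_B\|/|\lambda^2+1|$ to be small, applies with a threshold $\epsilon$ depending only on $\|B_0^k\|_\infty$, $\kappa_{B_0^k}$ and $n$. (The apparent singularity in the condition $(\ref{eq:specestB})$ when $\alpha^+\alpha^-=0$ is superficial: the factor $|(\alpha^+/\alpha^-)^2+1|$ there is a proxy for $|\lambda^2+1|$ which is equal to $2$ in both relevant cases, so no extra smallness is required beyond $\|B^k-B_0^k\|_\infty<\epsilon$.) Hence Lemma~\ref{lem:optransmain} gives invertibility of $\lambda-E_{B^k}N_{B^k}$ on $\hat\mH^k_B$ and (Tr-$B^k\alpha^\pm$) is well posed.

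Given boundary data $g\in\hat\mH^k_B$, I would let $(F^+,F^-)$ be the solution of (Tr-$B^k\alpha^\pm$) produced by Theorem~\ref{thm:transmain} for the chosen $\alpha^\pm$ and then take $F:=F^+$ as the candidate solution of (Nor-$B^k$) or (Tan-$B^k$); the norm equivalences $\|F^+\|_2\approx\|f^+\|_2\lesssim\|g\|_2$ and the quadratic/non-tangential estimates are inherited from Theorem~\ref{thm:transmain}. For uniqueness, suppose $F^+$ solves the homogeneous BVP (with $g=0$). I would extend by $F^-:=0$ and verify that $(F^+,0)$ satisfies both jump conditions of (Tr-$B^k\alpha^\pm$) with $g=0$: one condition is automatic because $\alpha^-=0$ or $\alpha^+=0$ kills the $F^-$ term, and the other reduces to the homogeneous BVP condition on $F^+$. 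Uniqueness in (Tr-$B^k\alpha^\pm$) then forces $F^+=0$.

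The only mildly delicate point is checking that the boundary-operator formalism really behaves well at the endpoint values $\alpha^\pm=0$, since those are not literally covered by the wording of Theorem~\ref{thm:transmain}. I would handle this by appealing directly to the underlying statement that $\lambda-E_{B^k}N_{B^k}$ is an isomorphism (Lemma~\ref{lem:optransmain} combined with the perturbation argument in the proof of Theorem~\ref{thm:transmain}), which is what actually matters and which depends only on $|\lambda^2+1|$ being bounded below---true here with $\lambda^2+1=2$. Everything else is a rewriting of the transmission problem.
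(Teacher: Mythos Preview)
Your proposal is correct and follows essentially the same approach as the paper: specialize (Tr-$B^k\alpha^\pm$) to $(\alpha^+,\alpha^-)=(1,0)$ for (Nor-$B^k$) and $(\alpha^+,\alpha^-)=(0,1)$ for (Tan-$B^k$), observe that the jump conditions decouple, and take $F:=F^+$. Your explicit uniqueness argument via the extension $F^-:=0$ and your observation that the apparent singularity in~(\ref{eq:specestB}) at $\alpha^\pm=0$ is harmless because $|\lambda^2+1|=2$ are both correct and make the argument slightly more self-contained than the paper's version.
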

\begin{proof}
(i)
For (Nor-$B^k$) in $\R^{n+1}_+$, we let 
$\alpha^-=0$ and $\alpha^+= 1$ in (Tr-$B^k\alpha^\pm$).
Then we obtain two decoupled jump conditions
\begin{equation*}  
  \left\{
  \begin{array}{rcl}
    -e_0\wedg f^-  &=& e_0\wedg g \\
    e_0\lctr(B^k f^+) &=& e_0\lctr(B^k g).
  \end{array}
  \right.
\end{equation*}
Discarding the solution $F^-$, we obtain a unique solution
$F= F^+$ to (Nor-$B^k$).

(ii) 
For (Tan-$B^k$) in $\R^{n+1}_+$, we let 
$\alpha^-=1$ and $\alpha^+= 0$ in (Tr-$B^k\alpha^\pm$).
Then we obtain two decoupled jump conditions
\begin{equation*}  
  \left\{
  \begin{array}{rcl}
    e_0\wedg f^+  &=& e_0\wedg g \\
    -e_0\lctr(B^k f^-) &=& e_0\lctr(B^k g).
  \end{array}
  \right.
\end{equation*}
Discarding the solution $F^-$, we obtain a unique solution
$F= F^+$ to (Tan-$B^k$).
\end{proof}
Note that well posedness of (Neu-$A$) and (Reg-$A$) for
Theorem~\ref{thm:main}(b) is 
the special case $k=1$ of Corollary~\ref{cor:formBVP},
as well as the special cases $(\alpha^+,\alpha^-)=(1,0)$
and $(0,1)$ respectively of (Tr-$A\alpha^\pm$).

\begin{proof}[Proof of Theorem~\ref{thm:mainpert}]
Let $A_0$ be such that $T_{B_0}$ has quadratic estimates in $\mH$, where
$B_0= I\oplus A_0\oplus I\oplus\ldots\oplus I$.
Thus by Lemma~\ref{lem:pertests} and Theorem~\ref{thm:pertestsforrealsymm}
there exists $\epsilon>0$ such that we have quadratic estimates
$$
  \tb{Q_t^B f}\approx \|f\|, \qquad\text{whenever } 
  \|B-B_0\|_\infty <\epsilon, \,\, f\in\mH.
$$
Therefore, by Proposition~\ref{prop:QtoFCalc}, we have when 
$\|B-B_0\|_\infty <\epsilon$ well defined and bounded operators
$E_B= \sgn(T_B)$. 
With Lemma~\ref{lem:preservehats}, these restricts to bounded 
operators $E_A$ in $\hat\mH^1$.
In particular $f\in\hat\mH^1$ can be decomposed as $f=f^++f^-$,
where $f^\pm:= E^\pm_A f$ and $\|f\|\approx \|f^+\|+ \|f^-\|$.
Moreover, Lemma~\ref{lem:characthardyfcns} and Proposition~\ref{prop:modnontang}
proves the stated norm equivalences for $F^\pm$.
\end{proof}

\begin{proof}[Proof of Theorem~\ref{thm:main}]
Given a perturbation $A\in L_\infty(\R^n; \mL(\wedge^1))$
of the unperturbed coefficients $A_0\in L_\infty(\R^n; \mL(\wedge^1))$,
which we assume are either of block form, real symmetric or constant,
we introduce
$B:= I\oplus A \oplus I\oplus\ldots\oplus I
\in L_\infty(\R^n; \mL(\wedge))$ and 
$B_0:= I\oplus A_0 \oplus I \oplus\ldots\oplus I
\in L_\infty(\R^n; \mL(\wedge))$ acting in all $\mH$.
That $T_{B_0}$ satisfies quadratic estimates follows from
Theorem~\ref{thm:unpertblockmain}(i),
Theorem~\ref{thm:mainforrealsymm} and Proposition~\ref{prop:constantquadraticests} 
respectively.
Theorem~\ref{thm:mainpert} now shows that we have quadratic estimates
for $T_B$ when $\|B-B_0\|_\infty <\epsilon$.
In case $A_0$ is a block matrix, we note that this result follows already from 
Theorem~\ref{thm:pertestsforblock}.
By Proposition~\ref{prop:QtoFCalc} and Proposition~\ref{prop:lipcont} we have when $\|B-B_0\|_\infty <\epsilon/2$
well defined and bounded operators $E_B= \sgn(T_B)$ which depend Lipschitz 
continuously on $B$, so that
$$
  \| E_{B_2} - E_{B_1} \| \le C \|B_2 - B_1\|_\infty, 
  \qquad \text{when } \|B_i- B_0\|<\epsilon/2,\,\, i=1,2.
$$
To prove that (Neu-$A$) and (Reg-$A$) are well posed, note that
by Lemma~\ref{lem:opmain} it suffices to show that 
$I\pm E_A N_A:\hat\mH^1\rightarrow\hat\mH^1$
are invertible, where $E_A= E_B|_{\hat\mH^1}$ and $N_A= N_B|_{\hat\mH^1}$.
By Theorem~\ref{thm:unpertblockmain}(ii), Theorem~\ref{thm:mainforrealsymm} and 
Theorem~\ref{thm:unpertconstrellich} respectively, the unperturbed 
operators $I\pm E_{A_0}N_{A_0}:\hat\mH^1\rightarrow\hat\mH^1$ 
are invertible.
For the perturbed operator we write
$$
  I\pm E_A N_A = (I\pm E_{A_0}N_{A_0})
  \Big(I\pm (I\pm E_{A_0}N_{A_0})^{-1}
  ( E_AN_A - E_{A_0}N_{A_0} ) \Big).
$$
Here $\| E_AN_A - E_{A_0}N_{A_0} \| \lesssim \|A- A_0\|_\infty$
since we clearly have Lipschitz continuity 
$\|N_{A_2} - N_{A_1}\| \lesssim \|A_2- A_1\|_\infty$.
It follows that $I\pm E_A N_A$ are invertible when $\|A- A_0\|_\infty < \epsilon'$.

The well posedness of (Neu$^\perp$-$A$) is a consequence of Proposition~\ref{prop:regneueq},
since our hypothesis is stable when taking adjoints $A\mapsto A^*$.
Alternatively, we can replace $N_{A_0}$ with $N$ above, proving that $I\pm E_AN$ is an isomorphism,
using Theorem~\ref{thm:unpertblockmain}(ii), Remark~\ref{rem:Nforrealsym} and \ref{rem:Nforconst}.
This proves that for (Neu-$A$), (Reg-$A$) and (Neu$^\perp$-$A$) and each boundary function $g$ 
(being $\phi$ and $\nabla_x\psi$ respectively), there exists
a unique solution $f= E^+_A \hat\mH^1$ satisfying the boundary condition and $\|f\|\approx \|g\|$.
Lemma~\ref{lem:characthardyfcns} and Proposition~\ref{prop:modnontang} proves that the stated norms 
of $f$ and $F(t,x) := (e^{- t|T_A|}f)(x)$ are equivalent.

The well posedness of (Dir-$A$), as well as the first three norm estimates, follows from 
Lemma~\ref{lem:characterisePoisson} and (Neu$^\perp$-$A$).
To show that $\tb{t\nabla_x U_t}\approx\tb{t\pd_t U_t}$, we consider the gradient vector field
$G_t= \nabla_{t,x}U_t$ as in the proof of the lemma.
From (Reg-$A$) and (Neu$^\perp$-$A$), it follows that for all $t>0$, we have
$\|\pd_t U_t\| = \|N^-G_t\|\approx \|N^+G_t\|= \|\nabla_x U_t\|$, from which
the square function estimate for $\nabla_x U_t$ follows.
To show that $\|u\|\approx\|\widetilde N_* (U)\|$, we consider the vector field $F_t$
of conjugate functions from the proof of the lemma.
Proposition~\ref{prop:modnontang} shows that 
$\|u\|\approx\|f\| \approx \|\widetilde N_* (F)\|\gtrsim \|\widetilde N_* (U)\|$.
Moreover, the proof of the reverse estimate $\|u\| \lesssim \|\widetilde N_* (U)\|$
is similar to the proof of $\|f\| \lesssim \|\widetilde N_* (F)\|$ in
Proposition~\ref{prop:modnontang}, using the uniform boundedness of $\mP_t$.

Finally we note that the solution operators for 
(Neu-$A$), (Neu$^\perp$-$A$), (Reg-$A$) and (Dir-$A$) are
\begin{alignat*}{2}
  F_t &= 2e^{-t|T_A|}(E_A-N_A)^{-1}(a_{00}^{-1}\phi e_0), &\qquad
  F_t &= 2e^{-t|T_A|}(E_A-N)^{-1}(\phi e_0), \\
  F_t &= 2e^{-t|T_A|}(E_A+N)^{-1}(\nabla_x\psi), &\qquad
  U_t &= 2\big( e^{-t|T_A|}(E_A-N)^{-1}(ue_0), e_0 \big).
\end{alignat*}
The Lipschitz continuity of $u\mapsto U$ is a consequence of
the corresponding result for (Neu$^\perp$-$A$).
For the norms $\sup_{t>0}\|F_t\|$ and $\tb{t\partial_t F_t}$, Lipschitz continuity
for the solution operators follows from Proposition~\ref{prop:lipcont} as in the proof of
Theorem~\ref{thm:transmain}.
It remains to show Lipschitz continuity for the norm $\|F\|_\mX= \|\wt N_*(F)\|_2$.
To this end, we consider
$$
  f(x)\longmapsto F_{A_z}= (e^{-t|T_{A_z}|}E_{A_z}^+f)(x): \hat\mH^1\longrightarrow \mX
$$
and the truncations $f(x)\mapsto F^k_{A_z}(t,x)=\chi_k(t)F_{A_z}(t,x)$, where
$\chi_k$ denotes the characteristic function for $(1/k,k)$ as in the proof of Lemma~\ref{lem:qgivesanal}(iii).
We claim that it suffices to show that, for each fixed $k$, the operator
$f(x)\mapsto F^k_{A_z}(t,x): \hat\mH^1\rightarrow \mX$ depends holomorphically on $z$.
Indeed, using Schwarz' lemma as in Proposition~\ref{prop:lipcont}, we obtain the
Lipschitz estimate
$$
  \|F^k_{A_2}-F^k_{A_1}\|_\mX\le C \|A_2-A_1\|_\infty \|f\|_2,
$$
uniformly for all $k$, since $\|F^k\|_\mX\le \|F\|_\mX\le C\|f\|$ by Proposition~\ref{prop:modnontang}.
Furthermore, by the monotone convergence theorem
we have 
$$
  \|F^k_{A_2}-F^k_{A_1}\|_\mX\nearrow \|F_{A_2}-F_{A_1}\|_\mX, \qquad k\longrightarrow \infty,
$$
so the desired Lipschitz continuity follows after taking limits.

To prove that $f(x)\mapsto F^k_{A_z}(t,x)$ is holomorphic, we note that 
\begin{multline*}
\|F^k\|^2_\mX\lesssim \int_{\R^n}\left( \sup_{t>0} 
  \int_{|s-t|<c_0t}\int_{|y-x|< c_1s/(1-c_0) } |F^k(s,y)|^2 \,\frac{dsdy}{s^{n+1}} \right)dx \\
\lesssim \int_{\R^n}\left(
  \int_{1/k}^{k}\int_{|y-x|< c_1s/(1-c_0) } |F^k(s,y)|^2 \,\frac{dsdy}{s^{n+1}} \right)dx 
\approx \int_{1/k}^{k} \|F^k_s\|^2_2 \, ds,
\end{multline*}
for fixed $k$,
where in the last step we use that $1/k\le s^{-1}\le k$.
Since $f(x)\mapsto F^k(t,x):\hat\mH^1\rightarrow L_2(\R^n\times(1/k,k))$ 
is holomorphic by Lemma~\ref{lem:qgivesanal}(ii)
and the embedding $L_2(\R^n\times(1/k,k)) \hookrightarrow \mX$ is continuous and 
independent of $z$, it follows that
$f(x)\mapsto F^k(t,x): \hat\mH^1\rightarrow \mX$ is holomorphic
for each fixed $k$. 
This completes the proof of Theorem~\ref{thm:main}.
\end{proof}


\bibliographystyle{acm}

\begin{thebibliography}{10}

\bibitem{ADMc}
{\sc Albrecht, D., Duong, X., and M$^\text{c}$Intosh, A.}
\newblock Operator theory and harmonic analysis.
\newblock In {\em Instructional Workshop on Analysis and Geometry, Part III
  (Canberra, 1995)}, vol.~34 of {\em Proc. Centre Math. Appl. Austral. Nat.
  Univ.} Austral. Nat. Univ., Canberra, 1996, pp.~77--136.

\bibitem{AAAHK}
{\sc Alfonseca, M., Auscher, P., Axelsson, A., Hofmann, S., and Kim, S.}
\newblock Analyticity of layer potentials and {$L^2$} solvability of boundary
  value problems for divergence form elliptic equations with complex
  {$L^\infty$} coefficients.
\newblock preprint.

\bibitem{AHLMcT}
{\sc Auscher, P., Hofmann, S., Lacey, M., M$^\text{c}$Intosh, A., and
  Tchamitchian, P.}
\newblock The solution of the {K}ato square root problem for second order
  elliptic operators on {${\mathbf R}^n$}.
\newblock {\em Ann. of Math. (2) 156}, 2 (2002), 633--654.

\bibitem{AMcN}
{\sc Auscher, P., M$^\text{c}$Intosh, A., and Nahmod, A.}
\newblock The square root problem of {K}ato in one dimension, and first order
  elliptic systems.
\newblock {\em Indiana Univ. Math. J. 46}, 3 (1997), 659--695.

\bibitem{Ax2}
{\sc Axelsson, A.}
\newblock Oblique and normal transmission problems for {D}irac operators with
  strongly {L}ipschitz interfaces.
\newblock {\em Comm. Partial Differential Equations 28}, 11-12 (2003),
  1911--1941.

\bibitem{Ax}
{\sc Axelsson, A.}
\newblock {\em Transmission problems for {D}irac's and {M}axwell's equations
  with {L}ipschitz interfaces.}
\newblock PhD thesis, The Australian National University, 2003.

\bibitem{Ax1}
{\sc Axelsson, A.}
\newblock Transmission problems and boundary operator algebras.
\newblock {\em Integral Equations Operator Theory 50}, 2 (2004), 147--164.

\bibitem{Ax3}
{\sc Axelsson, A.}
\newblock Transmission problems for {M}axwell's equations with weakly
  {L}ipschitz interfaces.
\newblock {\em Math. Methods Appl. Sci. 29}, 6 (2006), 665--714.

\bibitem{AKMc}
{\sc Axelsson, A., Keith, S., and M$^\text{c}$Intosh, A.}
\newblock Quadratic estimates and functional calculi of perturbed {D}irac
  operators.
\newblock {\em Invent. Math. 163}, 3 (2006), 455--497.

\bibitem{AMc}
{\sc Axelsson, A., and M$^\text{c}$Intosh, A.}
\newblock Hodge decompositions on weakly {L}ipschitz domains.
\newblock In {\em Advances in analysis and geometry}, Trends Math. Birkhäuser,
  Basel, 2004, pp.~3--29.

\bibitem{CJS}
{\sc Coifman, R., Jones, P., and Semmes, S.}
\newblock Two elementary proofs of the {$L\sp 2$} boundedness of {C}auchy
  integrals on {L}ipschitz curves.
\newblock {\em J. Amer. Math. Soc. 2}, 3 (1989), 553--564.

\bibitem{D}
{\sc Dahlberg, B.}
\newblock Estimates of harmonic measure.
\newblock {\em Arch. Rational Mech. Anal. 65}, 3 (1977), 275--288.

\bibitem{DJK}
{\sc Dahlberg, B., Jerison, D., and Kenig, C.}
\newblock Area integral estimates for elliptic differential operators with
  nonsmooth coefficients.
\newblock {\em Ark. Mat. 22}, 1 (1984), 97--108.

\bibitem{FJK}
{\sc Fabes, E., Jerison, D., and Kenig, C.}
\newblock Necessary and sufficient conditions for absolute continuity of
  elliptic-harmonic measure.
\newblock {\em Ann. of Math. (2) 119}, 1 (1984), 121--141.

\bibitem{HM}
{\sc Hofmann, S., and Martell, J.}
\newblock {$L\sp p$} bounds for {R}iesz transforms and square roots associated
  to second order elliptic operators.
\newblock {\em Publ. Mat. 47}, 2 (2003), 497--515.

\bibitem{JK2}
{\sc Jerison, D., and Kenig, C.}
\newblock The {N}eumann problem on {L}ipschitz domains.
\newblock {\em Bull. Amer. Math. Soc. (N.S.) 4}, 2 (1981), 203--207.

\bibitem{JK1}
{\sc Jerison, D.~S., and Kenig, C.~E.}
\newblock The {D}irichlet problem in nonsmooth domains.
\newblock {\em Ann. of Math. (2) 113}, 2 (1981), 367--382.

\bibitem{Kato}
{\sc Kato, T.}
\newblock {\em Perturbation theory for linear operators}, second~ed.
\newblock Springer-Verlag, Berlin, 1976.
\newblock Grundlehren der Mathematischen Wissenschaften, Band 132.

\bibitem{kenig}
{\sc Kenig, C.}
\newblock {\em Harmonic analysis techniques for second order elliptic boundary
  value problems.}, vol.~83 of {\em CBMS Regional Conference Series in
  Mathematics}.
\newblock American Mathematical Society, Providence, RI, 1994.

\bibitem{KKPT}
{\sc Kenig, C., Koch, H., Pipher, J., and Toro, T.}
\newblock A new approach to absolute continuity of elliptic measure, with
  applications to non-symmetric equations.
\newblock {\em Adv. Math. 153}, 2 (2000), 231--298.

\bibitem{KP}
{\sc Kenig, C., and Pipher, J.}
\newblock The {N}eumann problem for elliptic equations with nonsmooth
  coefficients.
\newblock {\em Invent. Math. 113}, 3 (1993), 447--509.

\bibitem{KR}
{\sc Kenig, C., and Rule, D.}
\newblock The regularity and {N}eumann problem for non-symmetric elliptic
  operators.
\newblock preprint.

\bibitem{McQ}
{\sc M$^\text{c}$Intosh, A., and Qian, T.}
\newblock Convolution singular integral operators on {L}ipschitz curves.
\newblock In {\em Harmonic analysis (Tianjin, 1988)}, vol.~1494 of {\em Lecture
  Notes in Math.} Springer, Berlin, 1991, pp.~142--162.

\bibitem{sneiberg}
{\sc {\v{S}}ne{\u\i}berg, I.~J.}
\newblock Spectral properties of linear operators in interpolation families of
  {B}anach spaces.
\newblock {\em Mat. Issled. 9}, 2(32) (1974), 214--229, 254--255.

\bibitem{SW}
{\sc Stein, E., and Weiss, G.}
\newblock On the theory of harmonic functions of several variables. {I}. {T}he
  theory of {$H\sp{p}$}-spaces.
\newblock {\em Acta Math. 103\/} (1960), 25--62.

\bibitem{V}
{\sc Verchota, G.}
\newblock Layer potentials and regularity for the {D}irichlet problem for
  {L}aplace's equation in {L}ipschitz domains.
\newblock {\em J. Funct. Anal. 59}, 3 (1984), 572--611.

\end{thebibliography}

\end{document}